\newtheoremstyle{mystyle}
  {}
  {}
  {\normalfont}
  { }
  {\bfseries}
  {}
  {10pt}
  { }
\theoremstyle{mystyle}
\newtheorem{theorem}{Theorem}
\newtheorem{lemma}{Lemma}
\newtheorem{remark}{Remark}
\newtheorem{prf}{Proof of Theorem}
\DeclareMathOperator*{\arginf}{arg\,inf}
\def\diag{\mathop{\rm diag}\nolimits}
\def\Diag{\mathop{\rm Diag}\nolimits}
\def\tr{\mathop{\rm tr}\nolimits}
\def\vec{\mathop{\rm vec}\nolimits}
\def\vech{\mathop{\rm vech}\nolimits}
\def\det{\mathop{\rm det}\nolimits}
\def\rank{\mathop{\rm rank}\nolimits}
\def\Int{\mathop{\rm Int}\nolimits}
\newcommand{\dd}{\mathrm d}
\newcommand{\E}{\mathbb E}
\newcommand{\PP}{\mathbb P}
\title[Statistical inference in factor analysis for diffusion processes]{Statistical inference in factor analysis for diffusion processes from discrete observations}
\author[S. Kusano]{Shogo Kusano $^{1}$}
\author[M. Uchida]{Masayuki Uchida $^{1,2}$}
\address{$^{1}$Graduate School of Engineering Science, Osaka University}
\address{$^{2}$Center for Mathematical Modeling and Data Science (MMDS), Osaka University and JST CREST}
\begin{document}
\begin{abstract}
We consider statistical inference in factor analysis for ergodic and non-ergodic diffusion processes from discrete observations. 
Factor model based on high frequency time series data has been mainly discussed in the field of high dimensional covariance matrix estimation. 
In this field, the method based on principal component analysis has been mainly used. However, this method is effective only for high dimensional model. On the other hand, 
there is a method based on the quasi-likelihood. However, since the factor is assumed to be observable, we cannot use this method when the factor is latent. Thus, the existing methods are not effective when the factor is latent and the dimension of the observable variable is not so high. Therefore, we propose an effective method in the situation.
\end{abstract}

\keywords{Factor analysis ; Asymptotic theory; High frequency times series model; Stochastic differential equation}

\maketitle

\section{Introduction}
We consider the following factor model
\begin{align}
    X_{t}=\Lambda_{k} f_{k,t}+e_{t}\quad (t\in [0,T]),
    \label{intro-1}
\end{align}
where $\{X_t\}_{t\geq 0}$ is a $p$-dimensional observable vector process, $\{f_{k,t}\}_{t\geq0}$ is a $k$-dimensional latent factor vector process, $\{e_t\}_{t\geq0}$ is a $p$-dimensional latent unique factor vector process, $\Lambda_{k}\in \mathbb{R}^{p\times k}$ is a constant factor loading matrix. We assume that $\{f_{k,t}\}_{t\geq0}$ satisfies the following stochastic differential equation:
\begin{align}
\begin{cases}
\dd f_{k,t}=b_{k}(f_{k,t})\dd t+S_{k}\dd W_t\quad (t\in [0,T]),\\
f_{k,0}=c_{1},
    \label{intro-2}
\end{cases}
\end{align}
where $W_t$ is an $r$-dimensional standard Wiener process, $b_{k}:\mathbb{R}^k\rightarrow\mathbb{R}^k$, $S_{k}\in \mathbb{R}^{k\times r}$, $c_1\in\mathbb{R}^k$. Furthermore, we assume that $\{e_t\}_{t\geq 0}$ satisfies
the following  stochastic differential equation:
\begin{align}
\begin{cases}
\dd e_t^{(i)}=B_{i}(e_t^{(i)})\dd t+\sigma_{i}\dd B_t^{(i)}
\quad (i=1,\cdots,p)\quad (t\in [0,T]),\\
e_0=c_2,
\label{intro-3}
\end{cases}
\end{align}
where for $i=1,\cdots,p$, $B_t^{(i)}$ is a one-dimensional standard Wiener process,
$B_t^{(1)},\cdots,B_t^{(p)}$ are independent, $B_t=(B_t^{(1)},\cdots,B_t^{(p)})^{\top}$ and $W_t$ are independent, 
$B_{i}:\mathbb{R}\rightarrow\mathbb{R}$, $\sigma_{i}\in\mathbb{R} \ (i=1,\cdots,p)$
and $c_2\in\mathbb{R}^p$. 
In order to avoid the rotational indeterminacy, 
we impose the following  identification conditions: 
$\Lambda_{k}=(I_k,A_{k})^{\top}$, where $I_k\in \mathbb{R}^{k\times k}$ is the identity matrix of size $k$, $A_{k}\in \mathbb{R}^{(p-k)\times k}$. Set
$\displaystyle\Sigma_{ff,k}=S_{k}S_{k}^\top$
and
$\theta_{k}=(\vec{A_{k}},\vech{\Sigma_{ff,k}},\sigma_{1}^2,
\cdots ,\sigma_{p}^2)^\top\in \Theta_{k}\subset\mathbb{R}^{q_k}$, 
where $\Theta_{k}$ is a convex compact space, $
q_{k}=(p-k)k+\frac{k(k+1)}{2}+p$,
$\vec$ is the vectorization of a matrix, $\vech$ is  the half-vectorization.
For the vectorization and the half-vectorization,
see for example Brown \cite{brown(1974)}. 
Let the true parameter be
$\theta_{k,0}=(\vec{A_{k,0}},\vech{\Sigma_{ff,k,0}},\sigma_{1,0}^2,
\cdots ,\sigma_{p,0}^2)^\top\in\Int{(\Theta_{k})}, 
$ where $A_{k,0}\in\mathbb{R}^{(p-k)\times k}$, $\Sigma_{ff,k,0}\in\mathbb{R}^{k\times k}$
and $\sigma_{i,0}\in\mathbb{R} \ (i=1,\cdots,p)$. 
$b_{k,0}(f_{k,t})$ denotes the true function of $b_{k}(f_{k,t})$.
For $i=1,\cdots, p$,
$B_{i,0}(e_{t}^{(i)})$ represents the true function of $B_{i}(e_{t}^{(i)})$. 
 Set $\Lambda_{k,0}=(I_{k},A_{k,0})^{\top}$.
$\{X_{t_{i}^n}\}_{i=1}^n$ are discrete observations,  where $t_{i}^n=ih_n$ and $T= nh_n$.
In this paper, we treat the factor models based on ergodic and non-ergodic diffusion processes.

Factor analysis (FA) 
is the method that describes the relationships between different variables. 
For FA, 
see Anderson and Amemiya \cite{anderson(1988)}, Ihara and Kano \cite{ihara(1986)}
and references therein. 
It has been used in various fields, e.g., behavioral science, 
economics and medical science. There are many studies on FA
for time series model. Most of them are studies for low frequency time series model, see Molenaar \cite{molenaar(1985)} and Toyoda \cite{toyoda(1997)}. 
On the other hand, there are not many studies for high frequency time series models. 
The factor model based on high frequency time series data was proposed in Fan and Xiu \cite{fan(2016)},
and
the factor model 
has been discussed mainly in the field of high dimensional covariance matrix estimation. 
Shephard and Xiu \cite{shephard(2017)}  considered parameter estimation 
based on the quasi-likelihood for a factor model with market microstructure noise
when the factor is observable.
On the other hand,  A{\"i}t-Sahalia and Xiu \cite{ait(2017)},
Dai et al.\ \cite{dai(2019)} and Pelger \cite{pelger(2019)} considered
the method when the factor is a latent. In these studies, 
the method based on principal component analysis is used for parameter estimation. 
Their proposed estimators have consistency for high dimensional model. 
However, for low dimensional model, 
this estimator do not have even consistency, see Bai \cite{bai}. 
Hence, we cannot use this method when the dimension of the observable variable is 
not so high.
Therefore, when the factor is latent and 
the dimension of the variable is not so high,
the methods of existing studies are not effective. 
Therefore, we propose the effective method in the situation. 
In this paper, 
since we assume that the factor is latent,
we discuss not only the parameter estimation but also statistical inference for the number of factors. 
This enables us to evaluate the model. Although we can apply this method to confirmatory FA and structural equation modeling (SEM),
we focus on exploratory FA in this paper. 
For details of confirmatory FA and SEM, see
Yuan et al. \cite{kano(1997)} and Kano \cite{kano(2001)}, respectively.

This paper is organized as follows. 
 In Section $2$, the notation and assumptions are stated. We propose the methods of parameter estimation for the factor models
based on ergodic and non-ergodic diffusion processes
and the test statistics for the number of factors in Section $3$. 
In Section $4$, we give an example and simulation studies.
Section $5$ is devoted to the proofs of the assertions in Section $3$. 

\section{Notation and assumptions}
Let \begin{align*}
    \partial_{\theta_{k}^{(i)}}=\frac{\partial}{\partial\theta_{k}^{(i)}}\  (i=1,\cdots,q_{k})\ ,\  \partial_{\theta_k}=(\partial_{\theta_k^{(1)}},\cdots,\partial_{\theta_k^{(q_k)}})^\top,\  \partial^2_{\theta_k}=\partial_{\theta_k}\partial_{\theta_k}^\top,
\end{align*}
where $\top$ is the transpose of a matrix. 
For a $p\times p$ matrix A, 
\begin{align*}
    \diag(A)=(a_{11},a_{22},\cdots,a_{pp})^\top.
\end{align*}
For a $p$-dimensional vector $a=(a_{1},a_{2},\cdots,a_{p})^\top$, 
    \begin{align*}
    \Diag(a)=\begin{pmatrix}
    a_{1} & 0 & 0 & 0\\
    0     & a_{2} & 0 & 0 \\
    \vdots & \vdots & \ddots & \vdots \\
    0 & 0 & 0 & a_{p}
    \end{pmatrix}. 
    \end{align*}
For a $p\times p$ symmetric matrix A, 
we define the $p^2\times\bar{p}$ matrix $D_p$ as the matrix that satisfies the following equation
\begin{align*}
    \vec{A}=D_p\vech{A},
\end{align*}
where 
\begin{align*}
    \bar{p}=\frac{p(p+1)}{2}.
\end{align*}
Let $D_{p}^{+}=(D_p^{\top}D_p)^{-1}D_p^\top$. Then, we have
\begin{align*}
    D_p^{+}\vec{A}=\vech{A}.
\end{align*}    
$R:\mathbb{R}\times \mathbb{R}^d\rightarrow \mathbb{R}$ 
is defined as
\begin{align*}
    |R(u,x)|\leq u C(1+|x|)^C
\end{align*}
for some $C>0$.
For an  $m\times n$ matrix A, 
\begin{align*}\|A\|^2=\tr{AA^\top}=\sum_{i=1}^m\sum_{j=1}^n A_{ij}^2.
\end{align*}
For $p \times p$ matrices $A$ and $B$, 
\begin{align*}
    (A\otimes B)_{st}=(A\otimes B)_{(i,j),(k,l)}=a_{ik}b_{jl}\quad (0\leq s\leq p^2,0\leq t\leq  p^2 ),
\end{align*}
where 
\begin{align*}
    s=p(i-1)+j\quad(1\leq i,j\leq p )\ , \  t=p(k-1)+l\quad(1\leq k,l\leq p ).
\end{align*}
For a smooth function $\phi_{k}:\mathbb{R}^k\rightarrow\mathbb{R}$, 
\begin{align*}
    \mathcal{L}(\phi_{k}(x))=\sum_{i=1}^{k} b_{k,0}^{(i)}(x)\frac{\partial \phi_{k}(x)}{\partial x^{(i)}}+\frac{1}{2}\sum_{i=1}^{k}\sum_{j=1}^{k}(\Sigma_{ff,k,0})_{ij}\frac{\partial^2\phi_{k}(x)}{\partial x^{(i)}\partial x^{(j)}}.
\end{align*}
Let
\begin{align*}
    \Delta_{k}=\frac{\partial}{\partial \theta_{k}}\vech{\Sigma(\theta_{k,0})}
\end{align*}
and
\begin{align*}
     \mathscr{F}^{n}_{i}=\sigma(W_{s},B_{s},s\leq t_{i}^n)\quad (0\leq i \leq n).
\end{align*}
Let $C^{k}_{\uparrow}(\mathbb R^d)$ be 
the space of all functions $f$ satisfying the following conditions.
\begin{itemize}
    \item[(i)] $f$ is continuously differentiable with respect to $x\in \mathbb{R}^d$ up to order k. 
    \item[(ii)] $f$ and all its derivatives are of polynomial growth in $x\in \mathbb{R}^d$, i.e., 
    $g$ is
of polynomial growth in $x\in \mathbb{R}^d$ if $\displaystyle g(x)=R(1,x)$. 
\end{itemize}
Let $N_{p}(\mu,\Sigma)$ be the $p$-dimensional-normal random variable
with mean $\mu\in\mathbb{R}^p$ and covariance matrix 
$\Sigma\in\mathbb{R}^{p\times p}$. 
$\chi^2_{r}$ denotes the random variable which has the chi-squared distribution with $r$ degrees of freedom. $\chi^2_{r}(\alpha)$ represents an upper $\alpha$ point of the chi-squared distribution with $r$ degrees of freedom, where $0\leq\alpha\leq 1$. 
\\

Furthermore, we make the following assumptions.\\
\begin{enumerate}
\renewcommand{\labelenumi}{{\textbf{[A\arabic{enumi}]}}}
\item 
\begin{enumerate}
\item
There exists a constant $C>0$ such that for any $x,y\in\mathbb R^{k}$, 
\begin{align*}
|b_{k}(x)-b_{k}(y)|\le C|x-y|.
\end{align*}
\item For all $\ell\geq 0$, 
$\displaystyle\sup_t\E\bigl[|f_{k,t}|^{\ell}\bigr]<\infty$.
\item 
$b_{k}\in C^{4}_{\uparrow}(\mathbb R^{k})$.
\end{enumerate}
\item 
The diffusion process  $f_{k,t}$ is ergodic with its invariant measure $\pi_{f, k}$. 
For any $\pi_{f, k}$-integrable function $g$, it holds that
\begin{align*}
\frac{1}{T}\int_{0}^{T}{g(f_{k,t})dt}\overset{P}{\longrightarrow}\int g(x)\pi_{f, k}(dx)
\end{align*}
as $T\to\infty$. 
\end{enumerate}

\begin{enumerate}
\renewcommand{\labelenumi}{{\textbf{[B\arabic{enumi}]}}}
\item 
\begin{enumerate}
\item
There exists a constant $C>0$ such that for any $x,y\in\mathbb R$, 
\begin{align*}
|B_{i}(x)-B_{i}(y)|\le C|x-y|\quad (i=1,\cdots p).
\end{align*}
\item   
For all $\ell\geq 0$, $\displaystyle\sup_t\E\bigl[|e_t|^{\ell}\bigr]<\infty$.

\item 
$B_{i}\in C^{4}_{\uparrow}(\mathbb R)\quad (i=1,\cdots,p)$.
\end{enumerate}

\item $\displaystyle \sigma_i>0\quad (i=1,\cdots,p)$.

\item 
The diffusion process $e_t$ is ergodic with its invariant measure $\pi_{e}$.
For any $\pi_{e}$-integrable function $g$, it holds that
\begin{align*}
\frac{1}{T}\int_{0}^{T}{g(e_t)dt}\overset{P}{\longrightarrow}\int g(x)\pi_{e}(dx)
\end{align*}
as $T\to\infty$. 
\end{enumerate}

\begin{remark}

[A1] and [B1] are the standard assumptions for ergodic diffusion processes. 
For example, see Kessler \cite{kessler(1997)}. 
[B2] implies that $\Sigma_{k}(\theta_{k})$ is non-singular. 
\end{remark}
\section{Main theorems}
\subsection{Ergodic case}
Let
\begin{align*}
    \Sigma_{k}(\theta_{k})=\Lambda_{k}\Sigma_{ff,k}\Lambda_{k}^{\top}+\Sigma_{ee},
\end{align*}
where 
\begin{align*}
    \Sigma_{ee}=\Diag(\sigma_1^2,\cdots,\sigma_p^2)^{\top}.
\end{align*}
For the estimation of $\Sigma_{k}(\theta_{k,0})$
in independent and identically distributed (i.i.d.) models, which means that
$X_1, X_2, \ldots, X_n$ are independent and each $X_i$ has an identical distribution, 
we use the sample covariance matrix 
\begin{align}
    S_n^2=\frac{1}{n}\sum_{i=1}^n(X_{i}-\bar{X}_n)(X_{i}-\bar{X}_n)^{\top}
    \label{main-1},
\end{align}
where 
\begin{align*}
    \bar{X}_n=\frac{1}{n}\sum_{i=1}^nX_{i}.
\end{align*}
Since  (\ref{main-1}) cannot be used as an estimator of $\Sigma_{k}(\theta_{k,0})$
for diffusion processes,
we consider the following realised covariance matrix
\begin{align*}
    Q_{XX}=\frac{1}{T}\sum_{i=1}^n(X_{t_{i}^n}-X_{t_{i-1}^n})(X_{t_{i}^n}-X_{t_{i-1}^n})^\top.
\end{align*}
 The realised covariance matrix is a well-known estimator in the field of covariance matrix estimation, see Bibinger et al. \cite{bibinger(2014)}, 
 Koike and Yoshida \cite{koike(2019)}, Podolskij and Vetter \cite{vetter(2009)} 
  and references therein.  
 First of all, we consider the asymptotic property of the realised covariance. Let
\begin{align*}
    W_{k}(\theta_{k})=2D_{p}^{+}(\Sigma_{k}(\theta_{k})\otimes\Sigma_{k}(\theta_{k}))D_{p}^{+\top}.
\end{align*}
Then, we have the following theorem.
\begin{theorem} Under assumptions [A1], [A2], [B1], [B2] and [B3], as $h_n\rightarrow0$ and $nh_n\rightarrow\infty$, 
\begin{align}
    Q_{XX}\stackrel{P_{\theta_{k,0}}}{\to} \Sigma_{k}(\theta_{k,0}).
    \label{the1-1}
\end{align}
Moreover, as $nh_n^2\rightarrow0$,
\begin{align}
    \sqrt{n}(\vech(Q_{XX})-\vech(\Sigma_{k}(\theta_{k,0})))\stackrel{d}{\to} N_{\bar{p}}(0,W_{k}(\theta_{k,0})).
    \label{the1-2}
\end{align}
\end{theorem}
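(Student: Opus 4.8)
The plan is to exploit a structural simplification: because $W$ and $B$ are independent and $S_{k}$, $\sigma_{i}$ are constant, the observable process has the representation
\begin{align*}
X_{t}=X_{0}+\int_{0}^{t}\mu_{s}\,\dd s+M_{t},\qquad M_{t}=\Lambda_{k,0}S_{k}W_{t}+\Diag(\sigma_{1,0},\dots,\sigma_{p,0})B_{t},
\end{align*}
with $\mu_{s}=\Lambda_{k,0}b_{k,0}(f_{k,s})+(B_{1,0}(e_{s}^{(1)}),\dots,B_{p,0}(e_{s}^{(p)}))^{\top}$, where $M$ is a \emph{Gaussian} martingale whose quadratic covariation rate is the \emph{constant} matrix $\Lambda_{k,0}S_{k}S_{k}^{\top}\Lambda_{k,0}^{\top}+\Diag(\sigma_{1,0}^{2},\dots,\sigma_{p,0}^{2})=\Sigma_{k}(\theta_{k,0})$. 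Writing $\Delta_{i}X=X_{t_{i}^{n}}-X_{t_{i-1}^{n}}$, $\Delta_{i}M=M_{t_{i}^{n}}-M_{t_{i-1}^{n}}$ and $\Delta_{i}A=\int_{t_{i-1}^{n}}^{t_{i}^{n}}\mu_{s}\,\dd s$, I would expand $\Delta_{i}X\Delta_{i}X^{\top}$ into the $MM$, $MA$ and $AA$ pieces and treat each separately. The key observation is that $\Delta_{i}M$ is independent of $\mathscr{F}^{n}_{i-1}$ with $\Delta_{i}M\sim N_{p}(0,h_{n}\Sigma_{k}(\theta_{k,0}))$, and the $\Delta_{i}M$ are mutually independent.

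First I would dispose of the drift. By [A1](a)--(b), [B1](a)--(b) and $b_{k,0},B_{i,0}\in C^{4}_{\uparrow}$, the moments $\E|\mu_{s}|^{\ell}$ are bounded uniformly in $s$, so $\E|\Delta_{i}A|^{\ell}\le Ch_{n}^{\ell}$. Hence $\frac{1}{T}\sum_{i}\|\Delta_{i}A\Delta_{i}A^{\top}\|=O_{P}(nh_{n}^{2}/T)=O_{P}(h_{n})=o_{P}(1)$, and by Cauchy--Schwarz $\frac{1}{T}\sum_{i}\|\Delta_{i}M\Delta_{i}A^{\top}\|\le\bigl(\frac{1}{T}\sum_{i}\|\Delta_{i}M\|^{2}\bigr)^{1/2}\bigl(\frac{1}{T}\sum_{i}\|\Delta_{i}A\|^{2}\bigr)^{1/2}=O_{P}(1)\,o_{P}(1)$. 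For the martingale piece, $\E[\Delta_{i}M\Delta_{i}M^{\top}\mid\mathscr{F}^{n}_{i-1}]=h_{n}\Sigma_{k}(\theta_{k,0})$, so $\frac{1}{T}\sum_{i}h_{n}\Sigma_{k}(\theta_{k,0})=\Sigma_{k}(\theta_{k,0})$ exactly, while $\frac{1}{T}\sum_{i}\bigl(\Delta_{i}M\Delta_{i}M^{\top}-h_{n}\Sigma_{k}(\theta_{k,0})\bigr)$ is a sum of independent centred terms with total variance $O(nh_{n}^{2}/T^{2})=O(1/n)\to0$ (Gaussian fourth moments). This gives (\ref{the1-1}). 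Note that after multiplication by $\sqrt{n}$ the drift contributions are $O_{P}(\sqrt{n}\,h_{n})=O_{P}(\sqrt{nh_{n}^{2}})=o_{P}(1)$ precisely under $nh_{n}^{2}\to0$, which is where that rate enters.

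For (\ref{the1-2}) I would set $\zeta_{i}^{n}=\frac{1}{\sqrt{n}h_{n}}\vech\bigl(\Delta_{i}M\Delta_{i}M^{\top}-h_{n}\Sigma_{k}(\theta_{k,0})\bigr)$, a martingale-difference array for $(\mathscr{F}^{n}_{i})$ satisfying $\sum_{i}\zeta_{i}^{n}=\sqrt{n}\bigl(\vech(Q_{XX})-\vech(\Sigma_{k}(\theta_{k,0}))\bigr)+o_{P}(1)$ by the previous paragraph, and apply the martingale CLT. For the predictable quadratic variation, $\sum_{i}\E[\zeta_{i}^{n}(\zeta_{i}^{n})^{\top}\mid\mathscr{F}^{n}_{i-1}]=\frac{1}{nh_{n}^{2}}\sum_{i}\mathrm{Cov}\bigl(\vech(\Delta_{i}M\Delta_{i}M^{\top})\bigr)$; using that for $Z\sim N_{p}(0,\Sigma)$ one has $\mathrm{Cov}(\vech(ZZ^{\top}))=2D_{p}^{+}(\Sigma\otimes\Sigma)D_{p}^{+\top}$, with $\Sigma=h_{n}\Sigma_{k}(\theta_{k,0})$ each summand equals $h_{n}^{2}W_{k}(\theta_{k,0})$, so the sum is exactly $W_{k}(\theta_{k,0})$. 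The Lyapunov condition holds because $\Delta_{i}M=\sqrt{h_{n}}\,Z$ in distribution gives $\E|\zeta_{i}^{n}|^{4}=O(h_{n}^{4}/(n^{2}h_{n}^{4}))=O(1/n^{2})$, whose sum over $i$ is $O(1/n)\to0$. Hence $\sum_{i}\zeta_{i}^{n}\stackrel{d}{\to}N_{\bar{p}}(0,W_{k}(\theta_{k,0}))$, which yields (\ref{the1-2}).

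Because the diffusion coefficient of $X$ is constant, the limiting covariance $W_{k}(\theta_{k,0})$ is deterministic, so ordinary (not stable) convergence suffices and there is no real analytic obstacle; the work is bookkeeping. The most delicate part is the identity $\mathrm{Cov}(\vech(ZZ^{\top}))=2D_{p}^{+}(\Sigma\otimes\Sigma)D_{p}^{+\top}$ for Gaussian $Z$, which I would derive from $\mathrm{Cov}(\vec(ZZ^{\top}))=(I_{p^{2}}+K_{p})(\Sigma\otimes\Sigma)$ with $K_{p}$ the $p^{2}\times p^{2}$ commutation matrix, together with $D_{p}^{+}\vec(\cdot)=\vech(\cdot)$ and $D_{p}D_{p}^{+}(I_{p^{2}}+K_{p})=2D_{p}D_{p}^{+}$, keeping careful track of the powers of $h_{n}$; and, more routinely, making the uniform moment bounds on $\mu_{s}$ rigorous from the polynomial-growth and $\sup_{t}\E|\cdot|^{\ell}<\infty$ assumptions in [A1] and [B1].
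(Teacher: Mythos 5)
Your route is genuinely different from the paper's. The paper never isolates the martingale part of $X$: it expands conditional moments of the increments of $f_{k}$ and $e$ separately to the required order in $h_{n}$ via the generator (Lemmas 2--4, following Kessler), assembles these into conditional moments of the increments of $X$ (Lemma 4), and then verifies the conditions of the martingale CLT for the full array $L_{i,n}$. Your observation that $X_{t}-X_{0}-\int_{0}^{t}\mu_{s}\,\dd s=\Lambda_{k,0}S_{k,0}W_{t}+\Diag(\sigma_{1,0},\dots,\sigma_{p,0})B_{t}$ is an exactly Gaussian martingale with constant quadratic variation rate $\Sigma_{k}(\theta_{k,0})$, whose increments are i.i.d.\ and independent of $\mathscr{F}^{n}_{i-1}$, replaces all of that $R(h_{n}^{m},\cdot)$ bookkeeping by exact Gaussian moment identities; the consistency argument, the identification of the asymptotic covariance as $W_{k}(\theta_{k,0})$ via $\mathrm{Cov}(\vec(ZZ^{\top}))=(I_{p^{2}}+K_{p})(\Sigma\otimes\Sigma)$, and the Lyapunov bound are all correct as sketched. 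This simplification is available precisely because the diffusion coefficients are constant.

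There is, however, one step that fails as written: the martingale--drift cross term at the $\sqrt{n}$ scale. Your only control on $\frac{1}{T}\sum_{i}\|\Delta_{i}M\Delta_{i}A^{\top}\|$ is the Cauchy--Schwarz bound $O_{P}(1)\cdot O_{P}(\sqrt{h_{n}})$, which after multiplication by $\sqrt{n}$ gives $O_{P}(\sqrt{nh_{n}})$ --- and $nh_{n}\to\infty$, so this does not make the cross term negligible in (\ref{the1-2}); the blanket claim that ``the drift contributions are $O_{P}(\sqrt{nh_{n}^{2}})$'' is justified only for the $AA$ piece. The cross term is in fact negligible, but showing it requires the conditional centering you use elsewhere rather than a product-of-norms bound: writing $\Delta_{i}A=h_{n}\mu_{t_{i-1}^{n}}+\int_{t_{i-1}^{n}}^{t_{i}^{n}}(\mu_{s}-\mu_{t_{i-1}^{n}})\,\dd s$ and using $\E[\Delta_{i}M\mid\mathscr{F}^{n}_{i-1}]=0$ together with $\E[|\mu_{s}-\mu_{t_{i-1}^{n}}|^{2}\mid\mathscr{F}^{n}_{i-1}]\leq C(s-t_{i-1}^{n})(1+|f_{k,t_{i-1}^{n}}|+|e_{t_{i-1}^{n}}|)^{C}$ (available from [A1](a), [B1](a) and standard SDE estimates), one gets $\E[\Delta_{i}M^{(j_{1})}\Delta_{i}A^{(j_{2})}\mid\mathscr{F}^{n}_{i-1}]=R(h_{n}^{2},\cdot)$, so the compensator of the normalized cross term is $O_{P}(\sqrt{nh_{n}^{2}})=o_{P}(1)$, while its martingale part has predictable variation $O_{P}(h_{n})$. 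This is essentially the role played by the paper's Lemma 2 (\ref{l2-1}) and Lemma 3 (\ref{l3-1}) inside Lemma 4. With that repaired, and the routine uniform moment bounds on $\mu_{s}$ you already flag, your argument goes through.
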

\begin{remark}
Theorem 1 has the same asymptotic property as 
the sample covariance matrix
when each $X_i$ in i.i.d. model has a normal distribution. 
Therefore, this result justifies the use of the realized covariance instead of the sample covariance matrix in factor analysis for diffusion processes.
\end{remark}

Next, for parameter estimation of the factor model,
we consider the following contrast function. 
\begin{align}
F_{k,n}(Q_{XX},\Sigma_{k}(\theta_{k}))=\left(\vech(Q_{XX})-\vech(\Sigma_{k}(\theta_{k}))\right)^{\top}W_{k}^{-1}(\theta_{k})\left(\vech(Q_{XX})-\vech(\Sigma_{k}(\theta_{k}))\right).
\end{align}
Moreover, the minimum contrast estimator $\hat{\theta}_{k,n}$ is defined as
\begin{align}
    \hat{\theta}_{k,n}=\arginf_{\theta_{k}\in\Theta_{k}}F_{k,n}(Q_{XX},\Sigma_{k}(\theta_{k})).
\end{align}
Furthermore, the following assumptions are made.
\begin{enumerate}
    \renewcommand{\labelenumi}{{\textbf{[C\arabic{enumi}]}}}
    \item $\Sigma_k(\theta_{k})=\Sigma_k(\theta_{k,0})\Rightarrow\theta_{k}=\theta_{k,0}$.
    \item $\rank{\Delta_{k}}=q_{k}$.
\end{enumerate}
Then, we have the following theorem.
\begin{theorem} Under assumptions [A1], [A2], [B1], [B2], [B3], [C1] and [C2], 
as $h_n\rightarrow0$ and $nh_n\rightarrow\infty$, 
\begin{align}
    \hat{\theta}_{k,n} 
    \stackrel{P_{\theta_{k,0}}}{\to}\theta_{k,0}.
    \label{the2-1}
\end{align}
Moreover, as $nh_n^2\rightarrow0$, 
\begin{align}
    \sqrt{n}(\hat{\theta}_{k,n}-\theta_{k,0})
    \stackrel{d}{\to}N_{q_{k}}\left(0,(\Delta_{k}^\top W_{k}^{-1}(\theta_{k,0})\Delta_{k})^{-1}\right). 
    \label{the2-2}
\end{align}
\end{theorem}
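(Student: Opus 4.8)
The plan is to treat $\hat\theta_{k,n}$ as a minimum-distance (GMM-type) estimator based on $\vech(Q_{XX})$ and to run the two standard steps: an identifiability/argmin argument for \eqref{the2-1}, then a Taylor expansion of the first-order condition for \eqref{the2-2}. All of the probabilistic content sits in Theorem 1; everything after it is a deterministic delta-method computation. Throughout I would write $g_n(\theta_k)=\vech(Q_{XX})-\vech(\Sigma_k(\theta_k))$, so that $F_{k,n}(Q_{XX},\Sigma_k(\theta_k))=g_n(\theta_k)^\top W_k^{-1}(\theta_k)g_n(\theta_k)$, and I would first record that $W_k(\theta_k)$ is positive definite and $\theta_k\mapsto W_k^{-1}(\theta_k)$ is smooth with bounded derivatives on $\Theta_k$: by [B2] (see the remark in Section 2) $\Sigma_k(\theta_k)$ is positive definite, hence so is $\Sigma_k(\theta_k)\otimes\Sigma_k(\theta_k)$, while $D_p^{+}$ has full row rank $\bar p$ and $\Theta_k$ is compact.

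For consistency I would introduce the limit contrast
\begin{align*}
F_k(\theta_k)=\bigl(\vech(\Sigma_k(\theta_{k,0}))-\vech(\Sigma_k(\theta_k))\bigr)^\top W_k^{-1}(\theta_k)\bigl(\vech(\Sigma_k(\theta_{k,0}))-\vech(\Sigma_k(\theta_k))\bigr),
\end{align*}
which is nonnegative and vanishes precisely when $\Sigma_k(\theta_k)=\Sigma_k(\theta_{k,0})$, i.e.\ by [C1] precisely at $\theta_k=\theta_{k,0}$; thus $\theta_{k,0}$ is the unique, and (by continuity on the compact $\Theta_k$) well-separated, minimizer of $F_k$. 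By \eqref{the1-1}, $\vech(Q_{XX})\overset{P_{\theta_{k,0}}}{\to}\vech(\Sigma_k(\theta_{k,0}))$, and since $(\theta_k,v)\mapsto(v-\vech(\Sigma_k(\theta_k)))^\top W_k^{-1}(\theta_k)(v-\vech(\Sigma_k(\theta_k)))$ is continuous on the compact $\Theta_k$, uniformly in $v$ near $\vech(\Sigma_k(\theta_{k,0}))$, one gets $\sup_{\theta_k\in\Theta_k}\bigl|F_{k,n}(Q_{XX},\Sigma_k(\theta_k))-F_k(\theta_k)\bigr|\overset{P_{\theta_{k,0}}}{\to}0$; the standard argmin theorem then yields \eqref{the2-1}.

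For asymptotic normality, write $\Delta_k$ for the $\bar p\times q_k$ Jacobian of $\theta_k\mapsto\vech(\Sigma_k(\theta_k))$ at $\theta_{k,0}$ (so that $\rank\Delta_k=q_k$ is exactly [C2]), whence $\partial_{\theta_k}g_n(\theta_{k,0})=-\Delta_k$. Since $\theta_{k,0}\in\Int(\Theta_k)$ and $\hat\theta_{k,n}$ is consistent, with probability tending to one $\hat\theta_{k,n}\in\Int(\Theta_k)$ and $\partial_{\theta_k}F_{k,n}(\hat\theta_{k,n})=0$; a coordinatewise mean value expansion around $\theta_{k,0}$ produces intermediate points $\tilde\theta_{k,n}^{(i)}\overset{P_{\theta_{k,0}}}{\to}\theta_{k,0}$ and a matrix $G_n$ assembled from the rows of $\partial^2_{\theta_k}F_{k,n}$ with $0=\partial_{\theta_k}F_{k,n}(\theta_{k,0})+G_n(\hat\theta_{k,n}-\theta_{k,0})$. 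From \eqref{the1-2}, $\sqrt n\,g_n(\theta_{k,0})\overset{d}{\to}Z\sim N_{\bar p}(0,W_k(\theta_{k,0}))$ and $g_n(\theta_{k,0})=O_p(n^{-1/2})$, so the summand of $\partial_{\theta_k}F_{k,n}$ that is quadratic in $g_n$ is $O_p(n^{-1})$ and
\begin{align*}
\sqrt n\,\partial_{\theta_k}F_{k,n}(\theta_{k,0})=-2\Delta_k^\top W_k^{-1}(\theta_{k,0})\sqrt n\,g_n(\theta_{k,0})+o_p(1)\overset{d}{\to}-2\Delta_k^\top W_k^{-1}(\theta_{k,0})Z.
\end{align*}
For the Hessian, $g_n(\tilde\theta_{k,n}^{(i)})\overset{P_{\theta_{k,0}}}{\to}0$ by \eqref{the1-1}, continuity of $\Sigma_k(\cdot)$ and $\tilde\theta_{k,n}^{(i)}\overset{P_{\theta_{k,0}}}{\to}\theta_{k,0}$, so every summand of $\partial^2_{\theta_k}F_{k,n}$ carrying a factor $g_n$ vanishes in probability and $G_n\overset{P_{\theta_{k,0}}}{\to}2\Delta_k^\top W_k^{-1}(\theta_{k,0})\Delta_k=:2M$, which is invertible by [C2]. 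Slutsky's theorem then gives
\begin{align*}
\sqrt n(\hat\theta_{k,n}-\theta_{k,0})=-G_n^{-1}\sqrt n\,\partial_{\theta_k}F_{k,n}(\theta_{k,0})\overset{d}{\to}M^{-1}\Delta_k^\top W_k^{-1}(\theta_{k,0})Z,
\end{align*}
a centered Gaussian with covariance $M^{-1}\Delta_k^\top W_k^{-1}(\theta_{k,0})W_k(\theta_{k,0})W_k^{-1}(\theta_{k,0})\Delta_k M^{-1}=M^{-1}=(\Delta_k^\top W_k^{-1}(\theta_{k,0})\Delta_k)^{-1}$, which is \eqref{the2-2}.

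I expect the work to be bookkeeping rather than anything conceptual. The points needing care are: the uniform-in-$\theta_k$ estimates behind the argmin theorem and behind differentiating $W_k^{-1}(\theta_k)$, resting on smoothness of $\Sigma_k(\cdot)$ and positive-definiteness from [B2]; verifying that $\hat\theta_{k,n}$ is eventually interior so the first-order condition is legitimate; and tracking that all the derivative-of-$W_k^{-1}$ terms in the gradient and the Hessian are negligible, which is exactly where the rate $g_n(\theta_{k,0})=O_p(n^{-1/2})$ from \eqref{the1-2} is used. Making this negligibility bookkeeping and the invocation of the argmin / $Z$-estimator machinery precise is the main obstacle.
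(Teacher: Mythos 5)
Your proposal is correct and follows essentially the same route as the paper: the paper also proves consistency by showing $\sup_{\theta_k\in\Theta_k}|F_{k,n}(Q_{XX},\Sigma_k(\theta_k))-U_k(\theta_k)|\stackrel{P_{\theta_{k,0}}}{\to}0$ for the same limit contrast $U_k$ and invoking [C1] for a well-separated minimum, and proves asymptotic normality by Taylor-expanding the first-order condition, discarding the derivative-of-$W_k^{-1}$ terms as $o_p(1)$ via the $O_p(n^{-1/2})$ rate from Theorem 1, and passing the Hessian to $2\Delta_k^\top W_k^{-1}(\theta_{k,0})\Delta_k$. The only cosmetic differences are that the paper uses an integral-form Taylor remainder rather than a coordinatewise mean value expansion, and it does not spell out the interiority of $\hat\theta_{k,n}$ that you rightly note is needed for the first-order condition.
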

\begin{remark}
This estimator has the same asymptotic property as the quasi-maximum likelihood estimator $\hat{\theta}_{k,n}^{ML}$ described below.
It follows from the Euler-Maruyama approximation
and (\ref{intro-2}) that
\begin{align}
    F_{k,t_{i}^n}-F_{k,t_{i-1}^n}
    =b_{k}(F_{k,t_{i-1}^n})h_n
    +S_{k}(W_{t_{i}^n}-W_{t_{i-1}^n}).
    \label{main1}
\end{align}
In the same way, we obtain from (\ref{intro-3}) that
\begin{align}
    E_{t_{i}^n}-E_{t_{i-1}^n}
    =B(E_{t_{i-1}^n})h_n
    +\bar{S}(B_{t_{i}^n}-B_{t_{i-1}^n}),
    \label{main2}
\end{align}
where \begin{align*}B(E_{t_{i-1}^n})&= (B_1(E_{t_{i-1}^n}),\cdots,B_p(E_{t_{i-1}^n}))^{\top}\\
\bar{S}&=\Diag{(\sigma_{1},\cdots,\sigma_{p})^{\top}}
\end{align*}
Since it follows from (\ref{main1}) and (\ref{main2}) that
\begin{align*}
    Z_{t_{i}^n}-Z_{t_{i-1}^n}
    &=\Lambda_{k}(F_{k,t_{i}^n}-F_{k,t_{i-1}^n})
    +E_{t_{i}^n}-E_{t_{i-1}^n}\\
    &=\Lambda_{k}b_{k}(F_{k,t_{i-1}^n})h_n
    +\Lambda_{k}S_{k}(W_{t_{i}^n}-W_{t_{i-1}^n}) +B(E_{t_{i-1}^n})h_n+\bar{S}(B_{t_{i}^n}-B_{t_{i-1}^n})\\
    &=\Lambda_{k} S_{k}(W_{t_{i}^n}-W_{t_{i-1}^n})+\bar{S}(B_{t_{i}^n}-B_{t_{i-1}^n})+R(h_n,F_{k,t_{i-1}^n})+R(h_n,E_{t_{i-1}^n}),
\end{align*}
we have
\begin{align*}
    \bar{Z}_{t_{i}^n}-\bar{Z}_{t_{i-1}^n}
    =\Lambda_{k}S_{k}(W_{t_{i}^n}-W_{t_{i-1}^n})+\bar{S}(B_{t_{i}^n}-B_{t_{i-1}^n})
\end{align*}
as an approximation to (\ref{intro-1}).
The property of the Brownian motion implies that
\begin{align*}
    \Lambda_k S_{k}(W_{t_{i}^n}-W_{t_{i-1}^n})&\sim N_{p}(0,h_n\Lambda_k^{\top}\Sigma_{ff,k}\Lambda_k),\\
    \bar{S}(B_{t_{i}^n}-B_{t_{i-1}^n})&\sim N_{p}(0,h_n\Sigma_{ee}),
\end{align*}
where for random variables $X$ and $Y$, $X \sim Y$ means that $X$ has the same distribution as $Y$.
Furthermore, the conditional distribution is as follows:
\begin{align*}
    \bar{Z}_{t_{i}^n}|\bar{Z}_{t_{i-1}^n}=\bar{z}_{t_{i-1}^n}
    \sim N_{p}(\bar{z}_{t_{i-1}^n},h_n\Sigma_{k}(\theta_{k})).
\end{align*}
Consequently, we have the following joint probability density function of 
$(\bar{Z}_{t_{i}^n})_{0\leq i\leq n}$ 
\begin{align*}
\prod_{i=1}^{n}\frac{1}{(2\pi h_n)^{\frac{p}{2}}\det{\Sigma_{k}(\theta_{k})}^{\frac{1}{2}}}\exp{\left\{-\frac{1}{2h_n}(\bar{z}_{t_{i}^n}-\bar{z}_{t_{i-1}^n})^{\top}\Sigma_{k}^{-1}(\theta_{k})(\bar{z}_{t_{i}^n}-\bar{z}_{t_{i-1}^n})\right\}}.
\end{align*}
Therefore, we define the quasi-likelihood function as
\begin{align}
    L_{k,n}(\theta_{k})=\prod_{i=1}^{n}\frac{1}{(2\pi h_n)^{\frac{p}{2}}\det{\Sigma_{k}(\theta_{k})}^{\frac{1}{2}}}\exp{\left\{-\frac{1}{2h_n}(X_{t_{i}^n}-X_{t_{i-1}^n})^{\top}\Sigma_{k}^{-1}(\theta_{k})(X_{t_{i}^n}-X_{t_{i-1}^n})\right\}}.
    \label{main3}
\end{align}
The logarithm of (\ref{main3}) is
\begin{align*}
    \ell_{k,n}(\theta_{k})&=\log L_{k,n}(\theta_{k})\\
    &=\sum_{i=1}^n\left\{-\frac{p}{2}\log(2\pi h_n)
    -\frac{1}{2}\log\det{\Sigma_{k}(\theta_{k})}-\frac{1}{2h_n}(X_{t_{i}^n}-X_{t_{i-1}^n})^{\top}\Sigma_{k}^{-1}(\theta_{k})(X_{t_{i}^n}-X_{t_{i-1}^n})\right\}\\
    &=-\frac{pn}{2}\log(2\pi h_n)-\frac{n}{2}\log\det{\Sigma_{k}(\theta_{k})}
    -\frac{1}{2h_n}\sum_{i=1}^n\tr{\left\{\Sigma_{k}^{-1}(\theta_{k})(X_{t_{i}^n}-X_{t_{i-1}^n})(X_{t_{i}^n}-X_{t_{i-1}^n})^{\top}\right\}}\\
    &=-\frac{pn}{2}\log(2\pi h_n)-\frac{n}{2}\log\det{\Sigma_{k}(\theta_{k})}
    -\frac{n}{2}\tr{\left\{\Sigma_{k}^{-1}(\theta_{k})Q_{XX}\right\}}. 
\end{align*}
If $Q_{XX}>0$, $\ell_{k,n}(\theta_{k})$ has a maximum value 
\begin{align}
    -\frac{pn}{2}\log(2\pi h_n)-\frac{n}{2}\log\det{Q_{XX}}-\frac{np}{2}
    \label{main4}
\end{align}
at $\Sigma_{k}(\theta_{k})=Q_{XX}$.
Set
\begin{align*}
    M_{k,n}(\theta_{k})
    &= -\frac{2}{n}\left\{\ell_{k,n}(\theta_{k})-\left\{-\frac{pn}{2}\log(2\pi h_n)-\frac{n}{2}\log\det{Q_{XX}}-\frac{np}{2}\right\}\right\}\\
    &=\log\det{\Sigma_{k}(\theta_{k})}-\log\det{Q_{XX}}
    +\tr{\left\{\Sigma_{k}^{-1}(\theta_{k})Q_{XX}\right\}}-p.
\end{align*}
In the same way as Theorem 1 in Shapiro \cite{shapiro(1985)}, we have
\begin{align*}
    M_{k,n}(Q_{XX},\theta_k)=F_{k,n}(Q_{XX},\theta_k)+o_p(1).
\end{align*}
Furthermore, we define the quasi-maximum likelihood estimator as
\begin{align*}
    \hat{\theta}_{k,n}^{ML}= \arg \sup_{\theta_k\in\Theta_k} (-M_{k,n}(\theta_{k})).
\end{align*}
In the same way as Theorem 2 in Shapiro \cite{shapiro(1985)}, 
$\hat{\theta}_{k,n}$ has the same asymptotic property as $\hat{\theta}_{k,n}^{ML}$.
\end{remark}
Theorem 2 has an asymptotic property
under the model of the true number of factors. 
However, we do not know the number of factors since we assume that the factor is latent. 
Hence, we have to discuss the number of factors $k$. 
In order to deal with this problem, we consider the following hypothesis testing.
\begin{align*}
    \left\{
    \begin{array}{ll}
    H_0: k=k^{*},\\
    H_1: k\neq k^{*},
    \end{array}
    \right.
\end{align*}
where $k^{*}\in\mathbb{N}$.
In order to study this hypothesis testing, 
we propose the following test statistic $T_{k^{*},n}$.
\begin{align*}
    T_{k^{*},n}= nF_{k^{*},n}(Q_{XX},\Sigma_{k^{*}}(\hat{\theta}_{k^{*},n})). 
\end{align*}
The asymptotic property of the test statistic $T_{k^{*},n}$ is as follows. 
\begin{theorem}
Under assumptions  [A1], [A2], [B1], [B2], [B3], [C1] and [C2], 
as $h_n\rightarrow0$, $nh_n\rightarrow\infty$ and $nh_n^2\rightarrow0$, 
\begin{align*}
    T_{k^{*},n} \stackrel{d}{\to}\chi^2_{\bar{p}-q_{k^{*}}}
\end{align*}
under $H_0$.
\end{theorem}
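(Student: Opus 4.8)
The plan is to carry out a standard Taylor-expansion analysis of the contrast function $F_{k^{*},n}$ around the minimum contrast estimator $\hat\theta_{k^{*},n}$, reducing $T_{k^{*},n}$ to a quadratic form in the asymptotically Gaussian vector $\sqrt{n}(\vech(Q_{XX})-\vech(\Sigma_{k^{*}}(\theta_{k^{*},0})))$ and then identifying that quadratic form with a $\chi^2$ projection. First I would set $u_n=\sqrt{n}(\vech(Q_{XX})-\vech(\Sigma_{k^{*}}(\theta_{k^{*},0})))$; by Theorem 1 (using that $nh_n^2\to0$), $u_n\stackrel{d}{\to}N_{\bar p}(0,W_{k^{*}}(\theta_{k^{*},0}))$. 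Writing $\vech(\Sigma_{k^{*}}(\hat\theta_{k^{*},n}))-\vech(\Sigma_{k^{*}}(\theta_{k^{*},0}))=\Delta_{k^{*}}(\hat\theta_{k^{*},n}-\theta_{k^{*},0})+o_p(n^{-1/2})$, which holds because of the consistency in \eqref{the2-1}, the smoothness of $\theta_{k^{*}}\mapsto\Sigma_{k^{*}}(\theta_{k^{*}})$, and the $\sqrt n$-rate in \eqref{the2-2}, one gets
\begin{align*}
\sqrt{n}\bigl(\vech(Q_{XX})-\vech(\Sigma_{k^{*}}(\hat\theta_{k^{*},n}))\bigr)
=u_n-\Delta_{k^{*}}\sqrt{n}(\hat\theta_{k^{*},n}-\theta_{k^{*},0})+o_p(1).
\end{align*}
Since $W_{k^{*}}(\theta_{k^{*}})$ is continuous and positive definite (here [B2] guarantees $\Sigma_{k^{*}}(\theta_{k^{*}})>0$, hence $W_{k^{*}}$ is invertible) and $\hat\theta_{k^{*},n}$ is consistent, $W_{k^{*}}^{-1}(\hat\theta_{k^{*},n})=W_{k^{*}}^{-1}(\theta_{k^{*},0})+o_p(1)$, so
\begin{align*}
T_{k^{*},n}=\bigl(u_n-\Delta_{k^{*}}\sqrt{n}(\hat\theta_{k^{*},n}-\theta_{k^{*},0})\bigr)^{\top}W_{k^{*}}^{-1}(\theta_{k^{*},0})\bigl(u_n-\Delta_{k^{*}}\sqrt{n}(\hat\theta_{k^{*},n}-\theta_{k^{*},0})\bigr)+o_p(1).
\end{align*}

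Next I would make explicit the first-order condition defining $\hat\theta_{k^{*},n}$. Differentiating $F_{k^{*},n}(Q_{XX},\Sigma_{k^{*}}(\theta_{k^{*}}))$ in $\theta_{k^{*}}$ and evaluating at $\hat\theta_{k^{*},n}$, then Taylor expanding the gradient around $\theta_{k^{*},0}$, gives (with all Jacobians evaluated at the true value up to $o_p(1)$ terms, using consistency and [C2])
\begin{align*}
\sqrt{n}(\hat\theta_{k^{*},n}-\theta_{k^{*},0})
=\bigl(\Delta_{k^{*}}^{\top}W_{k^{*}}^{-1}(\theta_{k^{*},0})\Delta_{k^{*}}\bigr)^{-1}\Delta_{k^{*}}^{\top}W_{k^{*}}^{-1}(\theta_{k^{*},0})\,u_n+o_p(1);
\end{align*}
this is exactly the computation already underlying \eqref{the2-2} in the proof of Theorem 2. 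Substituting this into the displayed expression for $T_{k^{*},n}$ yields $T_{k^{*},n}=u_n^{\top}Pu_n+o_p(1)$, where, with $W=W_{k^{*}}^{-1}(\theta_{k^{*},0})$ and $\Delta=\Delta_{k^{*}}$,
\begin{align*}
P=W-W\Delta(\Delta^{\top}W\Delta)^{-1}\Delta^{\top}W.
\end{align*}

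Finally I would identify the limiting law. Writing $V=W_{k^{*}}(\theta_{k^{*},0})$ so that $u_n\stackrel{d}{\to}N_{\bar p}(0,V)$ and $W=V^{-1}$, one checks the algebraic identity $PVP=P$ and $\tr(PV)=\bar p-q_{k^{*}}$: indeed $PV=I_{\bar p}-W\Delta(\Delta^{\top}W\Delta)^{-1}\Delta^{\top}$ is idempotent of rank $\bar p-\rank\Delta=\bar p-q_{k^{*}}$ by [C2], and $PVPV=PV$ gives $PVP=P$. Hence $V^{1/2}PV^{1/2}$ is an orthogonal projection of rank $\bar p-q_{k^{*}}$, and by the continuous mapping theorem $u_n^{\top}Pu_n=(V^{-1/2}u_n)^{\top}(V^{1/2}PV^{1/2})(V^{-1/2}u_n)\stackrel{d}{\to}\chi^2_{\bar p-q_{k^{*}}}$, since $V^{-1/2}u_n\stackrel{d}{\to}N_{\bar p}(0,I_{\bar p})$. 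Combining with the $o_p(1)$ remainder and Slutsky's lemma gives the claim. The main obstacle is controlling the remainder terms uniformly: one must justify (i) that the stochastic expansion of $\sqrt{n}(\hat\theta_{k^{*},n}-\theta_{k^{*},0})$ is valid, i.e. that the Hessian of $F_{k^{*},n}$ converges in probability uniformly on a neighborhood of $\theta_{k^{*},0}$ to the nonsingular matrix $\Delta^{\top}W\Delta$ (this uses [C1], [C2], and the smoothness of $\Sigma_{k^{*}}(\cdot)$ together with Theorem 1), and (ii) that the quadratic and higher-order terms in the Taylor expansion of $\vech(\Sigma_{k^{*}}(\hat\theta_{k^{*},n}))$ are genuinely $o_p(n^{-1/2})$ once multiplied by $\sqrt n$; both reduce to the consistency and rate already established in Theorem 2, so no new probabilistic input beyond Theorems 1 and 2 is needed.
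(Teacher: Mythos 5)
Your proposal is correct and arrives at the paper's limiting quadratic form $u_n^{\top}\bigl(W_{k^*}^{-1}(\theta_{k^*,0})-H_{k^*}(\theta_{k^*,0})\bigr)u_n$ by a somewhat different decomposition. The paper Taylor-expands the scalar map $\theta_{k^*}\mapsto nF_{k^*,n}(Q_{XX},\Sigma_{k^*}(\theta_{k^*}))$ to second order around $\theta_{k^*,0}$ with an integral remainder, and then shows separately that the zeroth-, first- and second-order terms converge to $u_n^{\top}W_{k^*}^{-1}(\theta_{k^*,0})u_n$, $-2u_n^{\top}H_{k^*}(\theta_{k^*,0})u_n$ and $u_n^{\top}H_{k^*}(\theta_{k^*,0})u_n$, respectively; you instead linearize the residual vector $\sqrt{n}\bigl(\vech(Q_{XX})-\vech(\Sigma_{k^*}(\hat\theta_{k^*,n}))\bigr)$ by the delta method, replace $W_{k^*}^{-1}(\hat\theta_{k^*,n})$ by $W_{k^*}^{-1}(\theta_{k^*,0})$ via consistency, and expand a single quadratic form. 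Both arguments rest on exactly the same two external inputs, namely the asymptotic normality of $Q_{XX}$ from Theorem 1 and the stochastic expansion $\sqrt{n}(\hat\theta_{k^*,n}-\theta_{k^*,0})=(\Delta_{k^*}^{\top}W_{k^*}^{-1}(\theta_{k^*,0})\Delta_{k^*})^{-1}\Delta_{k^*}^{\top}W_{k^*}^{-1}(\theta_{k^*,0})u_n+o_p(1)$ established in the proof of Theorem 2, and both conclude with the identical projection argument (symmetry, idempotence, and trace $\bar p-q_{k^*}$ of $W_{k^*}^{1/2}(W_{k^*}^{-1}-H_{k^*})W_{k^*}^{1/2}$, then Lemma 7). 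Your route is marginally more economical in that it only needs the first derivative of $\Sigma_{k^*}(\cdot)$ at the final stage and does not require re-controlling the integrated Hessian of $F_{k^*,n}$, at the small cost of having to justify that the quadratic remainder of $\vech(\Sigma_{k^*}(\hat\theta_{k^*,n}))$ is $o_p(n^{-1/2})$; this does follow from the $\sqrt{n}$-rate in Theorem 2 and the smoothness of $\theta_{k^*}\mapsto\Sigma_{k^*}(\theta_{k^*})$, so there is no gap.
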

\begin{remark}
This test statistic is asymptotically equivalent to the quasi-likelihood ratio test statistic derived as follows. Let
\begin{align*}
    L_{n}(\Sigma)=\prod_{i=1}^{n}\frac{1}{(2\pi h_n)^{\frac{p}{2}}\det{(\Sigma)}^{\frac{1}{2}}}\exp{\left\{-\frac{1}{2h_n}(X_{t_{i}^n}-X_{t_{i-1}^n})^{\top}\Sigma^{-1}(X_{t_{i}^n}-X_{t_{i-1}^n})\right\}},
\end{align*}
where $\Sigma\in\mathbb{R}^{p\times p}$ is an arbitrary positive definite matrix.
Furthermore, let
\begin{align*}
    \ell_{n}(\Sigma)= \log L_{n}(\Sigma) =-\frac{pn}{2}\log(2\pi h_n)-\frac{n}{2}\log\det{(\Sigma)}
    -\frac{n}{2}\tr{(\Sigma^{-1}Q_{XX})}.
\end{align*}
We define the quasi-likelihood ratio as follows:
 \begin{align*}
     \lambda_{k^{*},n}&= \frac{\max_{\theta_{k^{*}}\in\Theta_{k^{*}}}L_{k^*,n}(\Sigma_{k^{*}}(\theta_{k^{*}}))}{\max_{\Sigma>0}L_{n}(\Sigma)}.
 \end{align*}
For $Q_{XX}>0$, we obtain
 \begin{align*}
     -2\log\lambda_{k^{*},n}&=-2\max_{\theta_{k^{*}}\in\Theta_{k^{*}}}\ell_{n}(\Sigma_{k^{*}}(\theta_{k^{*}}))+2\max_{\Sigma>0}\ell_{n}(\Sigma)\\
     &=-2\left\{-\frac{pn}{2}\log(2\pi h_n)-\frac{n}{2}\log\det{\Sigma_{k^{*}}(\hat{\theta}_{k^{*},n}^{ML})}
    -\frac{n}{2}\tr{\{\Sigma_{k^{*}}^{-1}(\hat{\theta}_{k^{*},n}^{ML})Q_{XX}\}}\right\}\\
    &\quad +2\left\{-\frac{pn}{2}\log(2\pi h_n)-\frac{n}{2}\log\det{Q_{XX}}-\frac{np}{2}\right\}\\
    &=n\left\{\log\det{\Sigma_{k^{*}}(\hat{\theta}_{k^{*},n}^{ML})}-\log\det{Q_{XX}+\tr{\{\Sigma_{k^{*}}^{-1}(\hat{\theta}_{k^{*},n}^{ML})Q_{XX}\}}-p}\right\}\\
    &=nM_{k^{*},n}(Q_{XX},\Sigma_{k^*}(\hat{\theta}_{k^{*},n}^{ML}))
 \end{align*}
 Furthermore, in the same way as Theorem 2 in Shapiro \cite{shapiro(1985)}, 
it follows that
 \begin{align*}
     nF_{k^{*},n}(Q_{XX},\Sigma_{k^{*}}(\hat{\theta}_{k^{*},n}))-nM_{k^{*},n}(Q_{XX},\Sigma_{k^{*}}(\hat{\theta}_{k^{*},n}^{ML}))=o_{p}(1).
 \end{align*}
Therefore, $T_{k^*,n}$ is asymptotically equivalent to the quasi-likelihood ratio test statistic.
 \end{remark}
From the above results, we can construct the test of asymptotically significance level $\alpha$. The rejection region is as follows:
\begin{align*}
    \left\{t_{k^{*},n}>\chi^2_{\bar{p}-q_{k^*}}(\alpha)\right\}.
\end{align*}
Let
\begin{align*}
    U_{k^*}(\theta_{k^*})=(\vech(\Sigma_{k}(\theta_{k,0}))-\vech{(\Sigma_{k^*}(\theta_{k^*}))})^{\top}W_{k^*}^{-1}(\theta_{k^*})(\vech(\Sigma_{k}(\theta_{k,0}))-\vech{(\Sigma_{k^*}(\theta_{k^*}))})
\end{align*}
and $\bar{\theta}_{k^*}$ is defined as
\begin{align}
    \bar{\theta}_{k^*}=\arginf_{\theta_{k^*}\in\Theta_{k^*}}U_{k^*}(\theta_{k^*}).
\end{align}
Furthermore, we make the following assumption.
\begin{enumerate}
    \renewcommand{\labelenumi}{{\textbf{[C3]}}}
    \item $U_{k^*}(\theta_{k^*})=U_{k^*}(\bar{\theta}_{k^*})\Rightarrow\theta_{k^*}=\bar{\theta}_{k^*}$.
    \renewcommand{\labelenumi}{{\textbf{[C4]}}}
    \item $\Sigma_{k}(\theta_{k,0})\neq\Sigma_{k^{*}}(\bar{\theta}_{k^{*}})\quad (k\neq k^*)$.     
\end{enumerate}
\begin{theorem}
Under assumptions  [A1], [A2], [B1], [B2], [B3], [C2], [C3] and [C4], as $h_n\rightarrow0$ and $nh_n\rightarrow\infty$,  
\begin{align*}
    \PP(T_{k^{*},n}>\chi^2_{\bar{p}-q_k^{*}}(\alpha))\stackrel{}{\to}1
\end{align*}
under $H_1$.
\end{theorem}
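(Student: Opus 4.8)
The plan is to show that, under $H_1$, the statistic $T_{k^{*},n}$ diverges to $+\infty$ in probability. Since $\chi^2_{\bar p-q_{k^{*}}}(\alpha)$ is a fixed constant and $n\to\infty$, it is enough to prove that
\begin{align*}
F_{k^{*},n}(Q_{XX},\Sigma_{k^{*}}(\hat\theta_{k^{*},n}))\stackrel{P_{\theta_{k,0}}}{\longrightarrow}U_{k^{*}}(\bar\theta_{k^{*}})>0 ,
\end{align*}
because then $T_{k^{*},n}=nF_{k^{*},n}(Q_{XX},\Sigma_{k^{*}}(\hat\theta_{k^{*},n}))\to+\infty$ in probability, so $\PP(T_{k^{*},n}>\chi^2_{\bar p-q_{k^{*}}}(\alpha))\to1$ under $H_1$.

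First I would prove the consistency $\hat\theta_{k^{*},n}\stackrel{P_{\theta_{k,0}}}{\to}\bar\theta_{k^{*}}$. By Theorem 1 (only the convergence (\ref{the1-1}) is needed, which is why $nh_n^2\to0$ is not required here), $Q_{XX}\stackrel{P_{\theta_{k,0}}}{\to}\Sigma_{k}(\theta_{k,0})$. On the compact set $\Theta_{k^{*}}$ the maps $\theta_{k^{*}}\mapsto\Sigma_{k^{*}}(\theta_{k^{*}})$ and, by [B2], $\theta_{k^{*}}\mapsto W_{k^{*}}^{-1}(\theta_{k^{*}})$ are continuous, since $W_{k^{*}}(\theta_{k^{*}})=2D_{p}^{+}(\Sigma_{k^{*}}(\theta_{k^{*}})\otimes\Sigma_{k^{*}}(\theta_{k^{*}}))D_{p}^{+\top}$ is positive definite because $\Sigma_{k^{*}}(\theta_{k^{*}})$ is non-singular on $\Theta_{k^{*}}$. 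Hence $\theta_{k^{*}}\mapsto F_{k^{*},n}(Q_{XX},\Sigma_{k^{*}}(\theta_{k^{*}}))$ converges uniformly in probability on $\Theta_{k^{*}}$ to the continuous limit $\theta_{k^{*}}\mapsto U_{k^{*}}(\theta_{k^{*}})$. Combining this uniform convergence with the identifiability assumption [C3] for the minimizer $\bar\theta_{k^{*}}$, the standard argmin theorem yields $\hat\theta_{k^{*},n}\stackrel{P_{\theta_{k,0}}}{\to}\bar\theta_{k^{*}}$.

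Next, using $Q_{XX}\stackrel{P_{\theta_{k,0}}}{\to}\Sigma_{k}(\theta_{k,0})$, $\hat\theta_{k^{*},n}\stackrel{P_{\theta_{k,0}}}{\to}\bar\theta_{k^{*}}$ and the continuity of the map $(S,\vartheta)\mapsto(\vech S-\vech\Sigma_{k^{*}}(\vartheta))^{\top}W_{k^{*}}^{-1}(\vartheta)(\vech S-\vech\Sigma_{k^{*}}(\vartheta))$, the continuous mapping theorem gives $F_{k^{*},n}(Q_{XX},\Sigma_{k^{*}}(\hat\theta_{k^{*},n}))\stackrel{P_{\theta_{k,0}}}{\to}U_{k^{*}}(\bar\theta_{k^{*}})$. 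Finally, since $k\neq k^{*}$, assumption [C4] gives $\Sigma_{k}(\theta_{k,0})\neq\Sigma_{k^{*}}(\bar\theta_{k^{*}})$, so $\vech(\Sigma_{k}(\theta_{k,0}))-\vech(\Sigma_{k^{*}}(\bar\theta_{k^{*}}))\neq0$; as $W_{k^{*}}^{-1}(\bar\theta_{k^{*}})$ is positive definite, $U_{k^{*}}(\bar\theta_{k^{*}})>0$, which closes the argument.

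The main obstacle is the uniform-in-$\theta_{k^{*}}$ convergence of $F_{k^{*},n}$ that underlies the argmin step: one must control $\sup_{\theta_{k^{*}}\in\Theta_{k^{*}}}|F_{k^{*},n}(Q_{XX},\Sigma_{k^{*}}(\theta_{k^{*}}))-U_{k^{*}}(\theta_{k^{*}})|$, which reduces to the convergence $Q_{XX}\to\Sigma_{k}(\theta_{k,0})$ together with the equicontinuity coming from compactness of $\Theta_{k^{*}}$ and the smoothness and non-degeneracy of $\Sigma_{k^{*}}(\cdot)$ and $W_{k^{*}}(\cdot)$ on $\Theta_{k^{*}}$. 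One also has to confirm that $\bar\theta_{k^{*}}$ is the unique minimizer of $U_{k^{*}}$ so that [C3] makes the argmin theorem applicable; the remaining verifications (continuity and positive definiteness of $W_{k^{*}}^{-1}$) are routine given [B2].
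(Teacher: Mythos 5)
Your proposal is correct and follows essentially the same route as the paper: the paper's Lemma 8 is exactly your argmin-consistency step ($\hat{\theta}_{k^*,n}\stackrel{P}{\to}\bar{\theta}_{k^*}$ via uniform convergence of $F_{k^*,n}$ to $U_{k^*}$ plus [C3]), its proof of Theorem 4 then shows $\frac{1}{n}T_{k^*,n}\stackrel{P}{\to}U_{k^*}(\bar{\theta}_{k^*})>0$ using [C4] and the positive definiteness of $W_{k^*}$ (Lemma 5), and its Lemma 9 is precisely your closing observation that $nX_n\to\infty$ in probability when $X_n\stackrel{P}{\to}c>0$. No gaps.
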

\begin{remark}
It is shown that 
$\hat{\theta}_{k^*,n}$ converges to $\bar{\theta}_{k^*}$ in probability.
See Lemma 8 for the proof. Note that $\bar{\theta}_{k}=\theta_{k,0}$ under $H_0$.
\end{remark}
\begin{remark}
First of all, we consider the test with $H_0: k=1$. If the null hypothesis is accepted, we conclude that the model is correct. If the null hypothesis is rejected, we consider the test with $H_0: k=2$. Then, if the null hypothesis is accepted, we conclude that the model is correct. Similarly, if the null hypothesis is rejected, we continue the test until the null hypothesis is accepted. If the null hypothesis is not accepted in the positive range of degrees of freedom, we conclude that there is no factor structure. 
\end{remark}
\subsection{Non-ergodic case}
Next, we study the non-ergodic model
in the case where the assumptions [A2] and [B3] are not assumed and $T$ is fix.
By analogous manners to Theorems $1$-$4$, we have the following four theorems.
\begin{theorem} 
Under assumptions [A1], [B1] and [B2], as $h_n\rightarrow0$, 
\begin{align*}
    Q_{XX}\stackrel{P_{\theta_{k,0}}}{\to} \Sigma_{k}(\theta_{k,0})
\end{align*}
and
\begin{align*}
    \sqrt{n}(\vech(Q_{XX})-\vech(\Sigma_{k}(\theta_{k,0})))\stackrel{d}{\to} 
     N_{\bar{p}}(0,W_{k}(\theta_{k,0})). 
\end{align*}
\end{theorem}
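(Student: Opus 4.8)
The plan is to imitate the proof of Theorem 1, exploiting that here $T=nh_n$ is held fixed, so that $n=T/h_n\to\infty$ and $nh_n^2=Th_n\to0$ automatically as $h_n\to0$; this compensates for dropping the conditions $nh_n\to\infty$ and $nh_n^2\to0$, and ergodicity is not needed because the diffusion coefficient of $X$ is constant, so $\langle X\rangle_T=T\Sigma_k(\theta_{k,0})$ is deterministic. Write $X_t=\Lambda_{k,0}f_{k,t}+e_t$; from the defining equations of $f_{k,t}$ and $e_t$, $X$ is a continuous It\^o process with drift $\mu_t=\Lambda_{k,0}b_{k,0}(f_{k,t})+(B_{1,0}(e_t^{(1)}),\dots,B_{p,0}(e_t^{(p)}))^\top$ and diffusion part $\Lambda_{k,0}S_{k,0}\,dW_t+\bar S_0\,dB_t$, where $S_{k,0}S_{k,0}^\top=\Sigma_{ff,k,0}$, $\bar S_0=\Diag(\sigma_{1,0},\dots,\sigma_{p,0})^\top$ and $\bar S_0\bar S_0^\top=\Diag(\sigma_{1,0}^2,\dots,\sigma_{p,0}^2)$, so that $d\langle X\rangle_t=\Sigma_k(\theta_{k,0})\,dt$. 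Put $\Delta_iX=X_{t_i^n}-X_{t_{i-1}^n}$, $\Delta_iM=\Lambda_{k,0}S_{k,0}(W_{t_i^n}-W_{t_{i-1}^n})+\bar S_0(B_{t_i^n}-B_{t_{i-1}^n})$ and $\Delta_iA=\int_{t_{i-1}^n}^{t_i^n}\mu_s\,ds$, so $\Delta_iX=\Delta_iM+\Delta_iA$ and, by independence of the Brownian increments, $\{\Delta_iM\}_{i=1}^n$ are i.i.d.\ $N_p(0,h_n\Sigma_k(\theta_{k,0}))$. Since $b_{k,0}$ and the $B_{i,0}$ are of polynomial growth (by [A1](a),(c) and [B1](a),(c)), [A1](b) and [B1](b) give $\sup_{s\le T}\E[|\mu_s|^{\ell}]<\infty$ for every $\ell\ge1$.

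Next I isolate the main term. Setting $\xi_i=h_n^{-1/2}\Delta_iM$, the $\xi_i$ are i.i.d.\ $N_p(0,\Sigma_k(\theta_{k,0}))$, and since $T=nh_n$,
\begin{align*}
\frac1T\sum_{i=1}^n\Delta_iM\,\Delta_iM^\top=\frac1n\sum_{i=1}^n\xi_i\xi_i^\top.
\end{align*}
The strong law gives $\tfrac1n\sum_i\xi_i\xi_i^\top\stackrel{P}{\to}\E[\xi_1\xi_1^\top]=\Sigma_k(\theta_{k,0})$, and the classical multivariate central limit theorem (the $\xi_i$ have finite fourth moments) gives $\sqrt n\bigl(\tfrac1n\sum_i\vech(\xi_i\xi_i^\top)-\vech\Sigma_k(\theta_{k,0})\bigr)\stackrel{d}{\to}N_{\bar p}(0,V)$ with $V=\mathrm{Cov}\bigl(\vech(\xi_1\xi_1^\top)\bigr)$. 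Using the Gaussian fourth-moment identity $\mathrm{Cov}\bigl(\vec(\xi_1\xi_1^\top)\bigr)=(I_{p^2}+K_{pp})\bigl(\Sigma_k(\theta_{k,0})\otimes\Sigma_k(\theta_{k,0})\bigr)$, where $K_{pp}$ denotes the commutation matrix, together with $\vech(A)=D_p^+\vec(A)$ and $D_p^+K_{pp}=D_p^+$, one obtains $V=D_p^+(I_{p^2}+K_{pp})\bigl(\Sigma_k(\theta_{k,0})\otimes\Sigma_k(\theta_{k,0})\bigr)D_p^{+\top}=2D_p^+\bigl(\Sigma_k(\theta_{k,0})\otimes\Sigma_k(\theta_{k,0})\bigr)D_p^{+\top}=W_k(\theta_{k,0})$.

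It remains to show $\sqrt n\bigl(\vech(Q_{XX})-\tfrac1n\sum_i\vech(\xi_i\xi_i^\top)\bigr)\stackrel{P}{\to}0$, which together with the weak convergence just established and Slutsky's theorem proves both assertions. From $\Delta_iX=\Delta_iM+\Delta_iA$,
\begin{align*}
Q_{XX}-\frac1n\sum_{i=1}^n\xi_i\xi_i^\top=\frac1T\sum_{i=1}^n\bigl(\Delta_iM\,\Delta_iA^\top+\Delta_iA\,\Delta_iM^\top+\Delta_iA\,\Delta_iA^\top\bigr).
\end{align*}
By the Cauchy--Schwarz inequality and the moment bound for $\mu$, $\E\|\Delta_iA\|^2\le h_n\int_{t_{i-1}^n}^{t_i^n}\E|\mu_s|^2\,ds\le Ch_n^2$, so $\sqrt n\,\tfrac1T\sum_i\|\Delta_iA\,\Delta_iA^\top\|=O_p\bigl(\sqrt n\,nh_n^2/T\bigr)=O_p(\sqrt{Th_n})=o_p(1)$. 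For the cross term write $\Delta_iA=\mu_{t_{i-1}^n}h_n+R_i$ with $R_i=\int_{t_{i-1}^n}^{t_i^n}(\mu_s-\mu_{t_{i-1}^n})\,ds$; the Lipschitz bounds [A1](a), [B1](a) and the standard $L^2$-estimates for the increments of $f_{k,\cdot}$ and $e_\cdot$ give $\E\|R_i\|^2\le Ch_n^3$, while $\{\Delta_iM\,\mu_{t_{i-1}^n}^\top h_n\}_i$ is a martingale difference array for $(\mathscr F^n_i)$ because $\E[\Delta_iM\mid\mathscr F^n_{i-1}]=0$, with $\E\|\Delta_iM\,\mu_{t_{i-1}^n}^\top h_n\|^2=h_n^2\,\E\bigl[|\Delta_iM|^2|\mu_{t_{i-1}^n}|^2\bigr]\le Ch_n^3$; hence $\sqrt n\,\tfrac1T\sum_i\Delta_iM\,\mu_{t_{i-1}^n}^\top h_n$ has second moment $O(nh_n^3/T)=O(h_n)\to0$, and by Cauchy--Schwarz $\sqrt n\,\tfrac1T\sum_i\Delta_iM\,R_i^\top$ has $L^1$-norm $\le\sqrt n\,\tfrac1T\bigl(\E\sum_i|\Delta_iM|^2\bigr)^{1/2}\bigl(\E\sum_i|R_i|^2\bigr)^{1/2}=O(\sqrt{Th_n})\to0$, the transposed cross term being identical.

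The only genuinely delicate point is the drift--martingale cross term at rate $\sqrt n$: since $\Delta_iA$ is not independent of $\Delta_iM$ (the drift depends on $(W,B)$ up to the endpoint of the interval), one cannot condition on the drift and must instead exploit the martingale-difference structure of $\{\Delta_iM\,\mu_{t_{i-1}^n}^\top h_n\}$ together with the uniform moment and $L^2$-continuity estimates on $\mu_s$ coming from [A1] and [B1]; everything else is the classical i.i.d.\ law of large numbers and central limit theorem plus bookkeeping. Alternatively one could invoke a general stable limit theorem for realised covariances of continuous It\^o semimartingales, but the argument above is self-contained and parallels the proof of Theorem 1; note that because the spot covariance $\Sigma_k(\theta_{k,0})$ is constant the limit is non-random, so ordinary weak convergence (rather than stable convergence) suffices.
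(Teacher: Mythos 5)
Your proposal is correct, but it proves the theorem by a genuinely different route than the paper. The paper simply re-runs the proof of Theorem 1: conditional moment expansions of the increments via Kessler's Lemma 1 (Lemmas 2--4), verification of the hypotheses of the martingale central limit theorem of Hall and Heyde, and the observation that with $T$ fixed one has $nh_n^2=Th_n\to0$ automatically while the ergodic averages $\tfrac1n\sum_iR(1,f_{k,t_{i-1}^n})$, no longer convergent by [A2]/[B3], remain bounded in probability by the moment conditions [A1](b), [B1](b), which suffices since they are always multiplied by a power of $h_n$. You instead exploit the structural fact that the diffusion coefficient of $X$ is constant: the martingale part has exactly i.i.d.\ $N_p(0,h_n\Sigma_k(\theta_{k,0}))$ increments, so the leading term $\tfrac1n\sum_i\xi_i\xi_i^\top$ is handled by the classical i.i.d.\ law of large numbers and central limit theorem, the asymptotic covariance $W_k(\theta_{k,0})=2D_p^+(\Sigma_k\otimes\Sigma_k)D_p^{+\top}$ drops out of the Gaussian fourth-moment identity (equivalent to the paper's index computation of $\Gamma_k$ and $D_p^+\Gamma_kD_p^{+\top}$), and the drift contribution is killed at rate $\sqrt n$ by direct $L^1$/$L^2$ bounds, with the martingale-difference structure of $\Delta_iM\,\mu_{t_{i-1}^n}^\top h_n$ correctly invoked where independence of $\Delta_iA$ and $\Delta_iM$ fails. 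Your route is shorter and more transparent here -- it bypasses Lemmas 2--4 and the martingale CLT entirely and makes explicit why neither ergodicity nor the conditions $nh_n\to\infty$, $nh_n^2\to0$ are needed -- at the cost of relying on the constancy of the diffusion coefficients, whereas the paper's machinery would survive a state-dependent diffusion matrix and treats the ergodic and non-ergodic cases uniformly. Your closing remark that ordinary (rather than stable) convergence suffices because the limiting covariance is deterministic is also correct.
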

\begin{theorem} 
Under assumptions [A1], [B1], [B2], [C1] and [C2], as $h_n\rightarrow0$, 
\begin{align*}
    \hat{\theta}_{k,n}
    \stackrel{P_{\theta_{k,0}}}{\to}\theta_{k,0}
\end{align*}
and
\begin{align*}
    \sqrt{n}(\hat{\theta}_{k,n}-\theta_{k,0})
    \stackrel{d}{\to}
      N_{q_k}\left(0,(\Delta_{k}^\top W_{k}^{-1}(\theta_{k,0})\Delta_{k})^{-1}\right).
\end{align*}
\end{theorem}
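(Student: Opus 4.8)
The plan is to deduce this from Theorem 5 (the non-ergodic counterpart of Theorem 1) via the classical minimum-distance argument; since the contrast $F_{k,n}$, the estimator $\hat\theta_{k,n}$, and all limiting quantities depend on the data only through $Q_{XX}$, the proof is formally identical to that of Theorem 2 once Theorem 1 is replaced by Theorem 5.

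\emph{Consistency.} First I would note that $\theta_k\mapsto\Sigma_k(\theta_k)$ and $\theta_k\mapsto W_k^{-1}(\theta_k)$ are continuous on the compact set $\Theta_k$, where nonsingularity of $\Sigma_k(\theta_k)$, hence positive definiteness of $W_k(\theta_k)=2D_p^{+}(\Sigma_k(\theta_k)\otimes\Sigma_k(\theta_k))D_p^{+\top}$, follows from [B2] as in the Remark after Theorem 1. Combining this with $Q_{XX}\stackrel{P}{\to}\Sigma_k(\theta_{k,0})$ from Theorem 5 gives that $F_{k,n}(Q_{XX},\Sigma_k(\theta_k))$ converges in probability, uniformly in $\theta_k\in\Theta_k$, to
\begin{align*}
F_k(\theta_k):=\bigl(\vech(\Sigma_k(\theta_{k,0}))-\vech(\Sigma_k(\theta_k))\bigr)^{\top}W_k^{-1}(\theta_k)\bigl(\vech(\Sigma_k(\theta_{k,0}))-\vech(\Sigma_k(\theta_k))\bigr).
\end{align*}
The function $F_k$ is nonnegative, vanishes at $\theta_{k,0}$, and by [C1] vanishes only there, so $\theta_{k,0}$ is its unique minimizer; the standard argmin/continuous-mapping argument then yields $\hat\theta_{k,n}\stackrel{P}{\to}\theta_{k,0}$.

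\emph{Asymptotic normality.} Since $\theta_{k,0}\in\Int(\Theta_k)$ and $\hat\theta_{k,n}\stackrel{P}{\to}\theta_{k,0}$, with probability tending to one $\hat\theta_{k,n}$ solves $\partial_{\theta_k}F_{k,n}(Q_{XX},\Sigma_k(\hat\theta_{k,n}))=0$, and a Taylor expansion about $\theta_{k,0}$ gives
\begin{align*}
\sqrt n(\hat\theta_{k,n}-\theta_{k,0})=-\bigl(\partial^2_{\theta_k}F_{k,n}(Q_{XX},\Sigma_k(\tilde\theta_{k,n}))\bigr)^{-1}\sqrt n\,\partial_{\theta_k}F_{k,n}(Q_{XX},\Sigma_k(\theta_{k,0})),
\end{align*}
with $\tilde\theta_{k,n}$ on the segment joining $\hat\theta_{k,n}$ and $\theta_{k,0}$, hence $\tilde\theta_{k,n}\stackrel{P}{\to}\theta_{k,0}$. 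Writing $v_n=\vech(Q_{XX})$, $s(\theta_k)=\vech(\Sigma_k(\theta_k))$ and $\Delta_k=\partial_{\theta_k}\vech(\Sigma_k(\theta_{k,0}))$, one has $\partial_{\theta_k}F_{k,n}=-2(\partial_{\theta_k}s)^{\top}W_k^{-1}(v_n-s)+(v_n-s)^{\top}(\partial_{\theta_k}W_k^{-1})(v_n-s)$; at $\theta_{k,0}$ the second term is $O_p(n^{-1})$ by Theorem 5, so by Theorem 5 and Slutsky
\begin{align*}
\sqrt n\,\partial_{\theta_k}F_{k,n}(Q_{XX},\Sigma_k(\theta_{k,0}))=-2\Delta_k^{\top}W_k^{-1}(\theta_{k,0})\sqrt n\bigl(v_n-s(\theta_{k,0})\bigr)+o_p(1)\stackrel{d}{\to}N_{q_k}\bigl(0,4\Delta_k^{\top}W_k^{-1}(\theta_{k,0})\Delta_k\bigr).
\end{align*}
For the Hessian, uniform convergence of $\partial^2_{\theta_k}F_{k,n}$ together with $\tilde\theta_{k,n}\stackrel{P}{\to}\theta_{k,0}$ and $v_n-s(\tilde\theta_{k,n})\stackrel{P}{\to}0$ (which kills the terms carrying derivatives of $W_k^{-1}$) gives $\partial^2_{\theta_k}F_{k,n}(Q_{XX},\Sigma_k(\tilde\theta_{k,n}))\stackrel{P}{\to}2\Delta_k^{\top}W_k^{-1}(\theta_{k,0})\Delta_k$, which is invertible by [C2] and the positive definiteness of $W_k(\theta_{k,0})$. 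Slutsky's theorem then yields the claim, the sandwich $(2\Delta_k^{\top}W_k^{-1}\Delta_k)^{-1}(4\Delta_k^{\top}W_k^{-1}\Delta_k)(2\Delta_k^{\top}W_k^{-1}\Delta_k)^{-1}$ collapsing to $(\Delta_k^{\top}W_k^{-1}(\theta_{k,0})\Delta_k)^{-1}$.

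\emph{Main obstacle.} There is no genuine conceptual difficulty beyond Theorem 5: the whole argument is deterministic continuity/Taylor bookkeeping applied to the map $Q_{XX}\mapsto\hat\theta_{k,n}$, so it transfers verbatim from the ergodic case. The only points requiring care are the uniform-in-$\theta_k$ convergences and verifying that each remainder is $o_p(1)$ after multiplication by $\sqrt n$; in the write-up I would state these and simply refer to the proof of Theorem 2 with Theorem 1 replaced by Theorem 5.
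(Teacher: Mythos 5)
Your proposal is correct and follows essentially the same route as the paper: the paper proves Theorem 6 by observing that, since $T$ is fixed (so $nh_n^2 = h_nT \to 0$ automatically), the proof of Theorem 2 carries over verbatim with Theorem 1 replaced by Theorem 5, and your argument is precisely that proof (uniform convergence of $F_{k,n}$ to the limit contrast plus [C1] for consistency; Taylor expansion of the score, asymptotic normality of $-2\Delta_k^{\top}W_k^{-1}(\theta_{k,0})\sqrt{n}(\vech(Q_{XX})-\vech(\Sigma_k(\theta_{k,0})))$, and convergence of the Hessian to $2\Delta_k^{\top}W_k^{-1}(\theta_{k,0})\Delta_k$). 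The only cosmetic difference is that the paper writes the Hessian term as $\int_0^1\partial^2_{\theta_k}F_{k,n}(Q_{XX},\Sigma_k(\theta_{k,0}+\lambda(\hat\theta_{k,n}-\theta_{k,0})))\,d\lambda$ rather than evaluating at a single mean-value point $\tilde\theta_{k,n}$, which is the cleaner form for a vector-valued gradient.
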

\begin{theorem}
Under assumptions  [A1], [B1], [B2], [C1] and [C2], as $h_n\rightarrow0$,  
\begin{align*}
    T_{k^*,n} \stackrel{d}{\to}\chi^2_{\bar{p}-q_{k^*}}
\end{align*}
under $H_0$.
\end{theorem}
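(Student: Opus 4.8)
The plan is to carry over the proof of Theorem 3 almost verbatim, using Theorem 5 in place of Theorem 1 and Theorem 6 in place of Theorem 2; observe that with $T$ fixed one automatically has $nh_n^2=Th_n\to0$, so no additional rate condition is needed. Under $H_0$ set $k=k^{*}$, write $\theta_0=\theta_{k^{*},0}$ and $W=W_{k^{*}}(\theta_0)$, which is nonsingular by [B2], and let $\Delta=\Delta_{k^{*}}$ be the $\bar{p}\times q_{k^{*}}$ Jacobian of $\theta_{k^{*}}\mapsto\vech(\Sigma_{k^{*}}(\theta_{k^{*}}))$ at $\theta_0$, so that $\Delta^{\top}W^{-1}\Delta$ is the matrix whose inverse appears in the limit law of Theorem 6. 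Put
\begin{align*}
    Y_n=\sqrt{n}\bigl(\vech(Q_{XX})-\vech(\Sigma_{k^{*}}(\theta_0))\bigr).
\end{align*}
By Theorem 5, $Q_{XX}\stackrel{P}{\to}\Sigma_{k^{*}}(\theta_0)$ and $Y_n\stackrel{d}{\to}N_{\bar{p}}(0,W)$; by Theorem 6 and its proof, $\hat\theta_{k^{*},n}\stackrel{P}{\to}\theta_0$ and
\begin{align*}
    \sqrt{n}(\hat\theta_{k^{*},n}-\theta_0)=(\Delta^{\top}W^{-1}\Delta)^{-1}\Delta^{\top}W^{-1}Y_n+o_p(1),
\end{align*}
the matrix inverse being well defined because $\rank\Delta=q_{k^{*}}$ by [C2].

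First I would Taylor-expand $\vech(\Sigma_{k^{*}}(\hat\theta_{k^{*},n}))$ about $\theta_0$; since $\Sigma_{k^{*}}(\cdot)$ is smooth on the compact set $\Theta_{k^{*}}$ and $\hat\theta_{k^{*},n}$ is consistent, this together with the displayed linear representation gives
\begin{align*}
    \sqrt{n}\bigl(\vech(Q_{XX})-\vech(\Sigma_{k^{*}}(\hat\theta_{k^{*},n}))\bigr)
    =Y_n-\Delta\,\sqrt{n}(\hat\theta_{k^{*},n}-\theta_0)+o_p(1)=PY_n+o_p(1),
\end{align*}
where $P=I_{\bar{p}}-\Delta(\Delta^{\top}W^{-1}\Delta)^{-1}\Delta^{\top}W^{-1}$. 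Next, using $\hat\theta_{k^{*},n}\stackrel{P}{\to}\theta_0$, continuity of $W_{k^{*}}(\cdot)^{-1}$, and $PY_n+o_p(1)=O_p(1)$, I would replace $W_{k^{*}}^{-1}(\hat\theta_{k^{*},n})$ by $W^{-1}$ in $T_{k^{*},n}$ at the cost of an $o_p(1)$ term, so that $T_{k^{*},n}=Y_n^{\top}P^{\top}W^{-1}PY_n+o_p(1)$. Finally I would write $Y_n=W^{1/2}\xi_n$, where $\xi_n=W^{-1/2}Y_n\stackrel{d}{\to}\xi\sim N_{\bar{p}}(0,I_{\bar{p}})$, and put $G=W^{-1/2}\Delta$ and $\Pi=W^{-1/2}PW^{1/2}=I_{\bar{p}}-G(G^{\top}G)^{-1}G^{\top}$; since $W^{1/2}P^{\top}W^{-1}PW^{1/2}=\Pi^{\top}\Pi=\Pi$ and $\Pi$ is the orthogonal projection onto the orthogonal complement of the range of $G$, it is symmetric and idempotent with $\rank\Pi=\bar{p}-\rank G=\bar{p}-q_{k^{*}}$. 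Hence $T_{k^{*},n}=\xi_n^{\top}\Pi\xi_n+o_p(1)$, and the continuous mapping theorem, together with the standard fact that a quadratic form in a standard Gaussian vector associated with a symmetric idempotent matrix of rank $r$ has the $\chi^2_r$ distribution, yields $T_{k^{*},n}\stackrel{d}{\to}\xi^{\top}\Pi\xi\sim\chi^2_{\bar{p}-q_{k^{*}}}$ under $H_0$.

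The step I expect to require the most care is the double linearization --- the Taylor expansion of $\vech(\Sigma_{k^{*}})$ and the linear representation of $\hat\theta_{k^{*},n}$ coming from the first-order condition $\partial_{\theta_{k^{*}}}F_{k^{*},n}=0$ at $\hat\theta_{k^{*},n}$, where the contributions from differentiating $W_{k^{*}}^{-1}(\cdot)$ are of smaller order and must be shown to vanish --- and one must check that every remainder is genuinely $o_p(1)$ after multiplication by $\sqrt{n}$. This is done exactly as in the proof of Theorem 3, relying on $Y_n=O_p(1)$ from Theorem 5, on $\sqrt{n}(\hat\theta_{k^{*},n}-\theta_0)=O_p(1)$ from Theorem 6, and on the boundedness of the first and second derivatives of $\Sigma_{k^{*}}(\cdot)$ and of $W_{k^{*}}(\cdot)^{-1}$ on a neighbourhood of $\theta_0$. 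Since all these ingredients are already available once Theorems 5 and 6 are in hand, the remaining argument is essentially identical to that of Theorem 3.
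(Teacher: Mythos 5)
Your proposal is correct, and it reaches the same terminal quadratic form as the paper; the only real difference is how the expansion is organized. The paper (whose proof of this theorem is literally ``same as Theorem 3, since $nh_n^2=Th_n\to0$ automatically'') Taylor-expands the scalar objective $nF_{k^*,n}(Q_{XX},\Sigma_{k^*}(\cdot))$ to second order around $\theta_{k^*,0}$, evaluates the three resulting terms separately using the linear representation of $\sqrt{n}(\hat\theta_{k^*,n}-\theta_{k^*,0})$ established in the proof of Theorem 2/6, and sums them to get $Y_n^{\top}(W^{-1}-H)Y_n+o_p(1)$ with $H=W^{-1}\Delta(\Delta^{\top}W^{-1}\Delta)^{-1}\Delta^{\top}W^{-1}$. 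You instead linearize the fitted residual $\sqrt{n}\bigl(\vech(Q_{XX})-\vech(\Sigma_{k^*}(\hat\theta_{k^*,n}))\bigr)=PY_n+o_p(1)$ and then swap the weight matrix, which gives $Y_n^{\top}P^{\top}W^{-1}PY_n+o_p(1)$; a direct computation (using $HWH=H$) shows $P^{\top}W^{-1}P=W^{-1}-H$, so the two quadratic forms coincide, and your $\Pi=W^{-1/2}PW^{1/2}=I_{\bar p}-G(G^{\top}G)^{-1}G^{\top}$ is exactly the paper's projection matrix $P_{k^*}(\theta_{k^*,0})$ with the same rank $\bar p-q_{k^*}$. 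Your route is somewhat shorter and makes the projection structure visible immediately, at the cost of having to justify separately the Taylor expansion of $\vech(\Sigma_{k^*}(\cdot))$ and the replacement of $W_{k^*}^{-1}(\hat\theta_{k^*,n})$ by $W_{k^*}^{-1}(\theta_{k^*,0})$ (both routine given $Y_n=O_p(1)$ and consistency); the paper's route recycles the displayed intermediate results from the proof of the estimator's asymptotic normality without introducing new linearizations. Both rely on the same ingredients: the CLT for $Q_{XX}$ (Theorem 5), the asymptotic linearity of $\hat\theta_{k^*,n}$ (Theorem 6), and the chi-squared law for idempotent Gaussian quadratic forms (the paper's Lemma 7).
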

\begin{theorem}
Under assumptions  [A1], [B1], [B2], [C2], [C3] and [C4], as $h_n\rightarrow0$, 
\begin{align*}
    \PP(T_{k^*,n}>\chi^2_{\bar{p}-q_{k^*}}(\alpha))\stackrel{}{\to}1
\end{align*}
under $H_1$.
\end{theorem}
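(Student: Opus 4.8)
The plan is to mirror the proof of Theorem 4, replacing the ergodic limit theorem (Theorem 1) by its non-ergodic counterpart (Theorem 5). Under $H_1$ the true number of factors is some $k\neq k^{*}$, so Theorem 5 gives $Q_{XX}\stackrel{P}{\to}\Sigma_{k}(\theta_{k,0})$ as $h_n\to0$, and by [C4] this limit cannot be reproduced by the misspecified $k^{*}$-factor model; the test statistic $T_{k^{*},n}$ will therefore diverge.

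First I would establish $\hat{\theta}_{k^{*},n}\stackrel{P}{\to}\bar{\theta}_{k^{*}}$, the non-ergodic analogue of the consistency used in the ergodic case. Since $\Theta_{k^{*}}$ is compact and the maps $\theta_{k^{*}}\mapsto\Sigma_{k^{*}}(\theta_{k^{*}})$ and $\theta_{k^{*}}\mapsto W_{k^{*}}^{-1}(\theta_{k^{*}})$ are continuous on $\Theta_{k^{*}}$ — the latter because [B2] forces $\Sigma_{k^{*}}(\theta_{k^{*}})$, hence $W_{k^{*}}(\theta_{k^{*}})$, to be nonsingular there — the function $(v,\theta_{k^{*}})\mapsto(v-\vech(\Sigma_{k^{*}}(\theta_{k^{*}})))^{\top}W_{k^{*}}^{-1}(\theta_{k^{*}})(v-\vech(\Sigma_{k^{*}}(\theta_{k^{*}})))$ is jointly continuous in $(v,\theta_{k^{*}})\in\mathbb{R}^{\bar{p}}\times\Theta_{k^{*}}$. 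Combining this with $\vech(Q_{XX})\stackrel{P}{\to}\vech(\Sigma_{k}(\theta_{k,0}))$ from Theorem 5 yields $\sup_{\theta_{k^{*}}\in\Theta_{k^{*}}}|F_{k^{*},n}(Q_{XX},\Sigma_{k^{*}}(\theta_{k^{*}}))-U_{k^{*}}(\theta_{k^{*}})|\stackrel{P}{\to}0$. Assumption [C3] says the minimizer $\bar{\theta}_{k^{*}}$ of $U_{k^{*}}$ is well-separated, so the usual argmin continuous-mapping argument gives $\hat{\theta}_{k^{*},n}\stackrel{P}{\to}\bar{\theta}_{k^{*}}$.

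Next I would pass to the limit in $T_{k^{*},n}$. From the uniform convergence above, the continuity of $U_{k^{*}}$, and $\hat{\theta}_{k^{*},n}\stackrel{P}{\to}\bar{\theta}_{k^{*}}$, one gets $F_{k^{*},n}(Q_{XX},\Sigma_{k^{*}}(\hat{\theta}_{k^{*},n}))\stackrel{P}{\to}U_{k^{*}}(\bar{\theta}_{k^{*}})$. The key point is strict positivity of the limit: by [C4] the vector $\vech(\Sigma_{k}(\theta_{k,0}))-\vech(\Sigma_{k^{*}}(\bar{\theta}_{k^{*}}))$ is nonzero, and $W_{k^{*}}^{-1}(\bar{\theta}_{k^{*}})$ is positive definite (as $\Sigma_{k^{*}}(\bar\theta_{k^{*}})$ is nonsingular by [B2] and $D_p^{+\top}$ has full column rank), so $U_{k^{*}}(\bar{\theta}_{k^{*}})>0$. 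Writing $c=\chi^2_{\bar{p}-q_{k^{*}}}(\alpha)$, we have $T_{k^{*},n}=nF_{k^{*},n}(Q_{XX},\Sigma_{k^{*}}(\hat{\theta}_{k^{*},n}))=n\bigl(U_{k^{*}}(\bar{\theta}_{k^{*}})+o_p(1)\bigr)$, and since $T=nh_n$ is fixed with $h_n\to0$ we have $n\to\infty$, hence $T_{k^{*},n}\stackrel{P}{\to}+\infty$ and $\PP(T_{k^{*},n}>c)\to1$, which is the assertion.

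I expect the main obstacle to be the consistency step $\hat{\theta}_{k^{*},n}\stackrel{P}{\to}\bar{\theta}_{k^{*}}$ under misspecification: one must verify that the convergence of $F_{k^{*},n}$ to $U_{k^{*}}$ is genuinely uniform over all of $\Theta_{k^{*}}$ (which relies on compactness of $\Theta_{k^{*}}$ and on a uniform bound for $W_{k^{*}}^{-1}(\theta_{k^{*}})$ coming from the nonsingularity of $\Sigma_{k^{*}}$ on $\Theta_{k^{*}}$), and that [C3] is precisely the separation condition needed to transfer convergence of the objective to convergence of its argmin. Once this is in place, positivity of the quadratic form, $n\to\infty$, and the final probability bound are all routine.
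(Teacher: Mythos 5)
Your proposal is correct and follows essentially the same route as the paper: the paper proves Theorem 8 by reducing to the argument for Theorem 4, i.e.\ first showing $\hat{\theta}_{k^*,n}\stackrel{P}{\to}\bar{\theta}_{k^*}$ via uniform convergence of $F_{k^*,n}$ to $U_{k^*}$ (Lemma 8, using the consistency of $Q_{XX}$ from Theorem 5 and the identification condition [C3]), then $\frac{1}{n}T_{k^*,n}\stackrel{P}{\to}U_{k^*}(\bar{\theta}_{k^*})>0$ by [C4] and the positive definiteness of $W_{k^*}^{-1}$, and finally concluding divergence of $T_{k^*,n}$. The only cosmetic difference is that the paper packages your last step ($n(c+o_p(1))\stackrel{P}{\to}\infty$) as a separate lemma (Lemma 9, from Kitagawa and Uchida), which is equivalent to what you wrote.
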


\section{SIMULATION STUDY}
\subsection{Model}
We consider the case where $p=6$ and $k=2$. 
For $t\in [0,T]$, 
\begin{align*}
    X_t^{(1)}&=\quad \ f_{2,t}^{(1)}\quad \quad \quad \quad \ +e_t^{(1)},\\
    X_t^{(2)}&=\quad \quad \quad \quad \quad \ \  f_t^{(2)}+e_t^{(2)},\\
    X_t^{(3)}&=a_{11}f_{2,t}^{(1)}+a_{12}f_{2,t}^{(2)}+e_t^{(3)}, \\  
    X_t^{(4)}&=a_{21}f_{2,t}^{(1)}+a_{22}f_{2,t}^{(2)}+e_t^{(4)},\\
    X_t^{(5)}&=a_{31}f_{2,t}^{(1)}+a_{32}f_{2,t}^{(2)}+e_t^{(5)},\\
    X_t^{(6)}&=a_{41}f_{2,t}^{(1)}+a_{42}f_{2,t}^{(2)}+e_t^{(6)},
\end{align*}
where $a_{ij}\in\mathbb{R}\quad(i=1,\cdots 4,j=1,\cdots,2)$.
We assume that $\{f_{2,t}\}_{t\geq0}$ is the following two-dimensional-OU process
\begin{align*}
\begin{cases}
\dd f_{2,t}=-(B_2f_{2,t}-\mu_2)\dd t+S_2\dd W_t\quad (t\in [0,T]),\\
f_{2,0}=c_1,
\end{cases}
\end{align*}
where $W_t$ is the two-dimensional standard Wiener process, $B_2\in\mathbb{R}^{2\times 2}$, $\mu_2\in \mathbb{R}^2$, $S_2\in \mathbb{R}^{2\times 2}$ and $c_1\in \mathbb{R}^2$. 
Furthermore, we assume that $\{e_t\}_{t\geq0}$ is the following six-dimensional-OU process
\begin{align*}
\begin{cases}
\dd e_t^{(i)}=-(\bar{B}_{i}e_t^{(i)}-\bar{\mu}_{i})\dd t
+\sigma_{i}\dd B_t^{(i)}\quad (i=1,\cdots,6)\quad (t\in [0,T]),\\
e_0=c_2,
\end{cases}
\end{align*}where 
for $i=1,\cdots,6$, 
$B_t^{(i)}$ is a one-dimensional standard Wiener process, 
$B_t^{(1)},\cdots, B_t^{(6)}$ are independent, 
$B_t=(B_t^{(1)},\cdots,B_t^{(6)})^{\top}$ and $W_t$ are independent, 
$\bar{B}_{i}\in\mathbb{R}$, $\bar{\mu}_{i}\in \mathbb{R}$, $\sigma_{i}\in\mathbb{R}\ (i=1,\cdots,6)$
and $c_2\in \mathbb{R}^6$. 
Next, we set the true parameter and the initial value. Let 
\begin{align*}
    A_{2,0}=\begin{pmatrix}
    3 & 1 \\
    1 & 5 \\
    7 &-4 \\
    -3& 2 
    \end{pmatrix}
    \ , \
    B_{2,0}&=\begin{pmatrix}
    0.5 & 0.3 \\
    0.2 & 0.4 \\
    \end{pmatrix}\ ,\ 
    \mu_{2,0}=\begin{pmatrix}
    2  \\
    4  \\
    \end{pmatrix}\ ,\ 
    S_{2,0}=\begin{pmatrix}
    2 & 3 \\
    5 & 1 \\
    \end{pmatrix}\ ,\ 
    c_1=\begin{pmatrix}
    3 \\
    5 
    \end{pmatrix},
    \end{align*}
    \begin{align*}
    \begin{pmatrix}
    \bar{B}_{1,0}\\
    \bar{B}_{2,0}\\
    \bar{B}_{3,0}\\
    \bar{B}_{4,0}\\  
    \bar{B}_{5,0}\\
    \bar{B}_{6,0}
    \end{pmatrix}=
    \begin{pmatrix}
    3\\
    2\\
    3\\
    2\\
    6\\
    2
    \end{pmatrix}\ , \ 
    \begin{pmatrix}
    \bar{\mu}_{1,0}\\
    \bar{\mu}_{2,0}\\
    \bar{\mu}_{3,0}\\
    \bar{\mu}_{4,0}\\  
    \bar{\mu}_{5,0}\\
    \bar{\mu}_{6,0}
    \end{pmatrix}=
    \begin{pmatrix}
    0\\
    0\\
    0\\
    0\\
    0\\
    0
    \end{pmatrix}\ ,\ 
    \begin{pmatrix}
    \sigma_{1,0}\\
    \sigma_{2,0}\\
    \sigma_{3,0}\\
    \sigma_{4,0}\\  
    \sigma_{5,0}\\
    \sigma_{6,0}
    \end{pmatrix}=
    \begin{pmatrix}
    2\\
    4\\
    5\\
    1\\
    3\\
    2
    \end{pmatrix}\ , \ 
    c_{2}=
    \begin{pmatrix}
    0\\
    0\\
    0\\
    0\\
    0\\
    0
    \end{pmatrix}.
\end{align*}
Since one has
\begin{align*}
    \Lambda_{2,0}=\begin{pmatrix}
    I_2\\
    A_{2,0}
    \end{pmatrix}\ , \
     \Sigma_{ff,2,0}=S_{2,0}S_{2,0}^\top=\begin{pmatrix}
    13 & 13 \\
    13 & 26 \\
    \end{pmatrix}\ ,
    \Sigma_{ee,0}=\Diag{(4,16,25,1,9,4)^{\top}},
\end{align*}
it follows that
\begin{align*}
    \theta_{k,0}=\theta_{2,0}&=(
    \vec{A_{2,0}}^\top,\vech{(\Sigma_{ff,2,0})^\top},\diag{(\Sigma_{ee,0})}^\top
    )^\top\\
    &=(
    3,1,7,-3,1,5,-4,2,13,13,26,4,16,25,1,9,4
    )^\top\in\Theta_{2},
\end{align*}
where $\Theta_{2}=[-30,30]^{17}$. 
Furthermore, 
\begin{align*}
 \Sigma_{k}({\theta_{k,0}})= \Sigma_{2}({\theta_{2,0}})=\Lambda_{2,0}\Sigma_{ff,2,0}\Lambda_{2,0}^\top+\Sigma_{ee,0}=\begin{pmatrix}
    17&13&52&78&39&-13 \\
    13&42&65&143&-13&13\\
    52&65&246&377&104&-26\\
    78&143&377&794&-26&52\\
    39&-13&104&-26&334&-143\\
    -13&13&-26&52&-143&69
    \end{pmatrix}.
\end{align*}
Finally, we confirm that the model satisfies the assumptions.  
For all $x,y\in \mathbb{R}^2$, 
\begin{align*}
    &\quad\left|-(B_{2,0} x-\mu_{2,0})-\{-(B_{2,0} y -\mu_{2,0})\}\right|\\
    &=\left|
    \begin{pmatrix}
    0.5(x^{(1)}-y^{(1)})+0.3(x^{(2)}-y^{(2)}) \\
    0.2(x^{(1)}-y^{(1)})+0.4(x^{(2)}-y^{(2)}) \\
    \end{pmatrix}
    \right|\\
    &=\sqrt{\{0.5(x^{(1)}-y^{(1)})+0.3(x^{(2)}-y^{(2)})\}^2+\{0.2(x^{(1)}-y^{(1)})+0.4(x^{(2)}-y^{(2)})\}^2}\\
    &\leq \sqrt{2\{0.5^2(x^{(1)}-y^{(1)})^2+0.3^2(x^{(2)}-y^{(2)})^2\}+2\{0.2^2(x^{(1)}-y^{(1)})^2+0.4^2(x^{(2)}-y^{(2)})^2\}}\\
    &\leq \sqrt{2(x^{(1)}-y^{(1)})^2+2(x^{(2)}-y^{(2)})^2}=\sqrt{2}|x-y|,
\end{align*}
so that [A1] (a) is satisfied. For all $x\in\mathbb{R}^{2}$, 
\begin{align*}
    x^\top\{-(B_{2,0} x-\mu_{2,0})\}
    &=x^{(1)}(-0.5x^{(1)}-0.3x^{(2)}+2)+x^{(2)}(-0.2x^{(1)}-0.4x^{(2)}+4)\\
    &=-\frac{1}{2}x^{{(1)}2}+\frac{1}{2}x^{(1)}x^{(2)}-\frac{2}{5}x^{(2)2}+2x^{(1)}+4x^{(2)}\\
    &=-\frac{1}{4}(x^{(1)}+x^{(2)})^2-\frac{1}{20}\|x\|^2-\frac{1}{5}(x^{(1)}-5)^2+5-\frac{1}{10}(x^{(2)}-20)^2+40\\
    &\leq -\frac{1}{20}\|x\|^2+45.
\end{align*}
Moreover, noting that
\begin{align*}
    \det{\Sigma_{ff,2,0}}=\begin{pmatrix}
    13 & 13 \\
    13 & 26 
    \end{pmatrix}=169>0,
\end{align*}
we have $\Sigma_{ff,2,0}>0$. 
Hence,  there exists $K>0$ such that
\begin{align*}
    \frac{1}{K}\begin{pmatrix}
    1 & 0 \\
    0 & 1 \\
    \end{pmatrix}<
    \begin{pmatrix}
    13 & 13 \\
    13 & 26 \\
    \end{pmatrix}<K
    \begin{pmatrix}
    1 & 0 \\
    0 & 1 \\
    \end{pmatrix},
\end{align*}
which meets [A1] (b) and [A2]. 
It follows from $\displaystyle-(B_{2,0}x-\mu_{2,0})\in C^{4}_{\uparrow}(\mathbb R^2)$ that
[A1] (c) is satisfied. 
For all $x,y\in \mathbb{R}$, 
\begin{align*}
    |-(\bar{B}_{i,0}x-\bar{\mu}_{i,0})-\{-(\bar{B}_{i,0}y-\bar{\mu}_{i,0})\}|\leq 6 |x-y|\quad (i=1,\cdots,6),
\end{align*}
which fulfills [B1] (a).
We have $-(\bar{B}_{i,0}x-\bar{\mu}_{i,0})\in C^{4}_{\uparrow}(\mathbb R)\ (i=1,\cdots,6)$, 
so that [B1] (c) is satisfied. 
For all $x\in\mathbb{R}$, 
\begin{align*}
    x\{-(\bar{B}_{i,0}x-\bar{\mu}_{i,0})\}\leq -2x^2\quad (i=1,\cdots,6).
\end{align*}
Furthermore, there exists $K>0$ such that
\begin{align*}
    \frac{1}{K}<\sigma_{i,0}<K\ (i=1,\cdots,6),
\end{align*}
which meets [B1] (b) and [B3].
Noting that $\sigma_{i,0}>0\ (i=1,\cdots,6)$, we have [B2]. 
Since $\Lambda_{2,0}{\Sigma_{ff,2,0}^{\frac{1}{2}}}$ meets
the identifiability condition
in Anderson and Rubin \cite{anderson(1956)}, 
[C1] is satisfied. 
It follows from $\displaystyle \rank{\Delta_{2}}=17$ that [C2] is fulfilled. 

\subsection{Result}
In the simulation, optim() is used with the BFGS method in R language. We choose the initial parameter $\theta_k=\theta_0$. The number of iteration is 10000.
\subsubsection{Ergodic case} 
We set $(n,h_n,T)=(10^6,10^{-4},10^2)$. First of all, we see the simulation result of $Q_{XX}$. Table 1 shows a sample mean and a sample standard deviation of $Q_{XX}$. We can see that $Q_{XX}$ has consistency. Figure 1 shows histogram, Q-Q plot and empirical distribution of $Q_{XX}$. We can see that $Q_{XX}$ has asymptotic normality. Therefore, these simulation results show that 
Theorem 1 holds true in this example. Next, we investigate
the simulation result of $\hat{\theta}_{2,n}$. 
Table 2 shows a sample mean and a sample standard deviation of $\hat{\theta}_{2,n}$,
which implies that $\hat{\theta}_{2,n}$ has consistency. 
Figure 2 shows histogram, Q-Q plot and empirical distribution of $\hat{\theta}_{2,n}$.
It seems that $\hat{\theta}_{2,n}$ has asymptotic normality. 
Therefore, these simulation results yield that the result of Theorem 2 is correct
in this example. Finally, we check the simulation result of goodness-of-fit test.

We consider the following hypothesis testing.
\begin{align*}
    \left\{
    \begin{array}{ll}
    H_0: k=1, \\
    H_1: k\neq 1. 
    \end{array}
    \right.
\end{align*}
The test statistic is as follows:
\begin{align*}
    T_{1,n}= nF_{1,n}(Q_{XX},\Sigma_{1}(\hat{\theta}_{1,n})).
\end{align*}
The rejection region is 
\begin{align*}
    \left\{t_{1,n}>\chi^2_{10}(0.05)=18.31\right\}.
\end{align*}
Table 3 shows the quartiles of $T_{1,n}$. Since the minimum value of $T_{1,n}$ is greater than 18.31, 
we can see from Table 5 that $H_0$ is rejected all 10000 times.
Next, we study the following hypothesis testing.

\begin{align*}
    \left\{
    \begin{array}{ll}
    H_0: k=2, \\
    H_1: k\neq 2. 
    \end{array}
    \right.
\end{align*}
The test statistic is as follows:
\begin{align*}
    T_{2,n}= nF_{2,n}(Q_{XX},\Sigma_{2}(\hat{\theta}_{2,n})).
\end{align*}
The rejection region is  
\begin{align*}
    \left\{t_{2,n}>\chi^2_{4}(0.05)=9.89\right\}.
\end{align*}
Figure 3 shows the histogram, QQ-plot and empirical distribution of $T_{2,n}$. Table 4 shows the sample mean and sample standard deviation of $T_{2,n}$. We can see that $T_{2,n}$ converges in distribution to $\chi_4^2$.
Hence, 
these simulation results show that Theorem 3 is correct in this example. 
Table 5 shows the number of rejections of the test with $H_0:k= 1,k= 2$,
which implies that
Theorem 4 holds true for this example.
\subsubsection{Non-ergodic case}
We set $(n,h_n,T)=(10^3,10^{-3},10^1)$. 
Tables 6-7 show a sample mean and a sample standard deviation of $Q_{XX}$ and  $\hat{\theta}_{2,n}$, respectively. 
Figures 4-6 show histogram, Q-Q plot and empirical distribution of $Q_{XX}$, $\hat{\theta}_{2,n}$ and $T_{2,n}$, respectively. 
Table 8 shows the quartiles of $T_{1,n}$. 
Table 9 shows the sample mean and sample standard deviation of $T_{2,n}$. 
Table 10 shows the number of rejections of the test with $H_0:k= 1,k= 2$.
All estimators and test statistics in non-ergodic case 
have the same good performances as those in the ergodic case.

\clearpage
\begin{table}[]
    \centering
    \setlength{\doublerulesep}{0.4pt}
        \caption{Sample mean and sample standard deviation (SD)  of $Q_{XX}$.}
    \begin{tabular}{ccccc}
        \hline\hline
        & $Q_{XX,11}$&$Q_{XX,12}$&$Q_{XX,13}$&\\\hline
        Mean (true value) & 17.001 (17.000) & 13.001 (13.000) & 52.003 (52.000) \\
         SD (theoretical value) & 0.024 (0.024) & 0.030 (0.030) & 0.083 (0.083)\\\hline
        $Q_{XX,14}$&$Q_{XX,15}$&$Q_{XX,16}$&$Q_{XX,22}$\\\hline
         78.003 (78.000) &39.002 (39.000) & -13.001 (-13.000) & 42.003 (42.000) \\
         0.140 (0.140) &0.085 (0.085) & 0.037 (0.037) & 0.059 (0.059)\\\hline
        $Q_{XX,23}$&$Q_{XX,24}$&$Q_{XX,25}$&$Q_{XX,26}$&\\\hline
        65.004 (65.000) & 143.006 (143.000) &-12.999 (-13.000) & 13.000 (13.000) \\
         0.121 (0.121) & 0.231 (0.232) &0.119 (0.119) & 0.055 (0.055) \\\hline
        $Q_{XX,33}$&$Q_{XX,34}$&$Q_{XX,35}$&$Q_{XX,36}$&\\\hline
         246.016 (246.000) & 377.014 (377.000) & 104.011 (104.000)& -26.004 (-26.000)\\
         0.348 (0.348) & 0.582 (0.581) & 0.302 (0.305) & 0.131 (0.133)\\\hline
        $Q_{XX,44}$&$Q_{XX,45}$&$Q_{XX,46}$&$Q_{XX,55}$\\\hline
        794.019 (794.000) & -25.989 (-26.000) & 51.996 (52.000) & 334.015 (334.000)\\
        1.122 (1.123) & 0.514 (0.516) & 0.238 (0.240) & 0.475 (0.472) \\\hline
        $Q_{XX,56}$&$Q_{XX,66}$\\\hline
        -143.005 (-143.000) & 69.002 (69.000)\\
        0.209 (0.209) & 0.098 (0.098)\\\hline\\
    \end{tabular}
\end{table}
\begin{figure}[h]
\centering
\includegraphics[width=0.15\columnwidth]{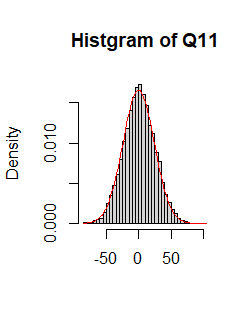}
\includegraphics[width=0.15\columnwidth]{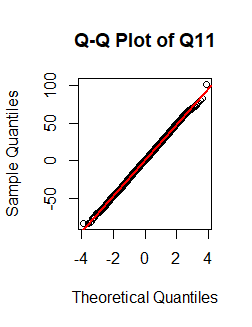}
\includegraphics[width=0.15\columnwidth]{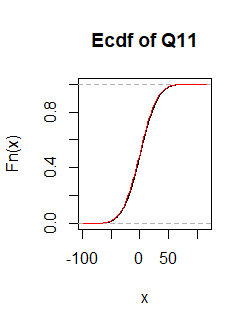}
\quad 
\includegraphics[width=0.15\columnwidth]{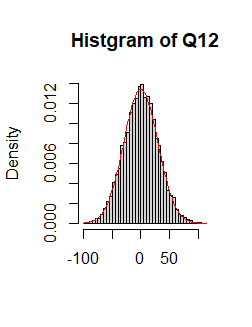}
\includegraphics[width=0.15\columnwidth]{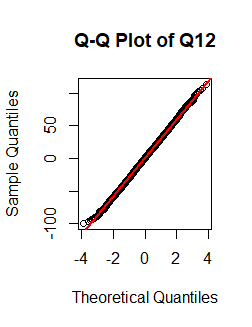}
\includegraphics[width=0.15\columnwidth]{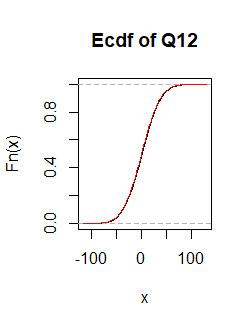}
\includegraphics[width=0.15\columnwidth]{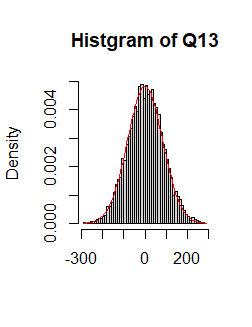}
\includegraphics[width=0.15\columnwidth]{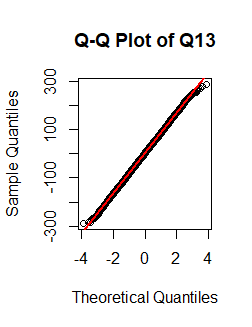}
\includegraphics[width=0.15\columnwidth]{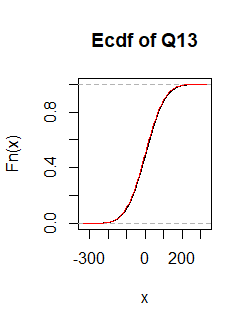}\quad
\includegraphics[width=0.15\columnwidth]{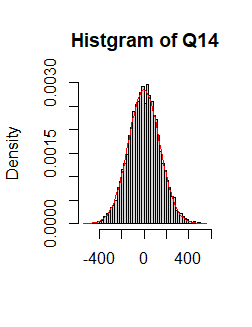}
\includegraphics[width=0.15\columnwidth]{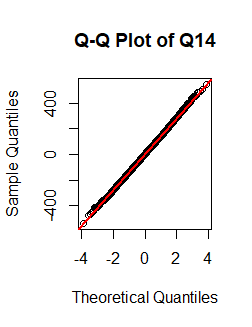}
\includegraphics[width=0.15\columnwidth]{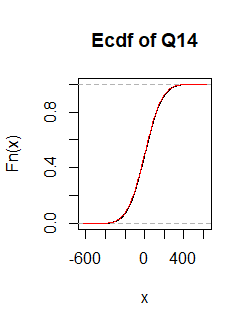}\\
\includegraphics[width=0.15\columnwidth]{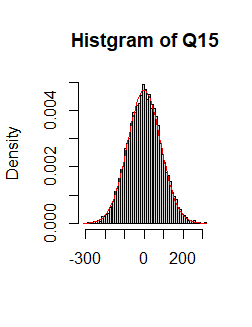}
\includegraphics[width=0.15\columnwidth]{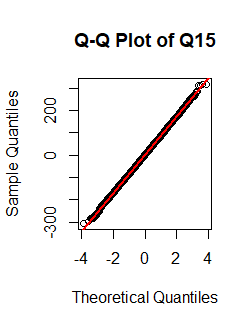}
\includegraphics[width=0.15\columnwidth]{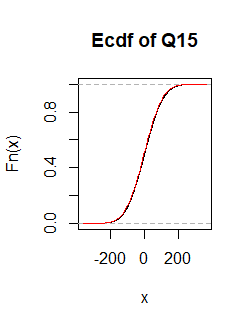}\quad
\includegraphics[width=0.15\columnwidth]{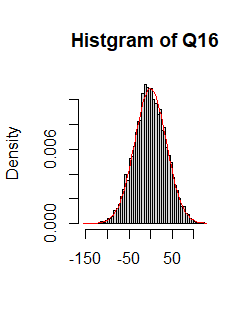}
\includegraphics[width=0.15\columnwidth]{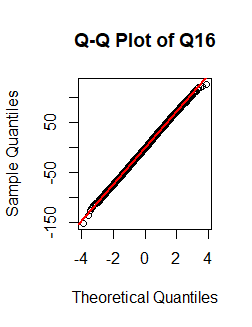}
\includegraphics[width=0.15\columnwidth]{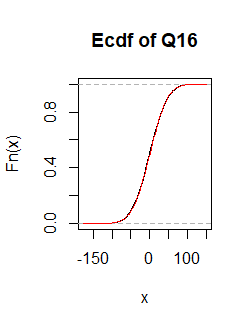}\\
\includegraphics[width=0.15\columnwidth]{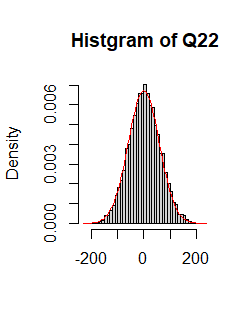}
\includegraphics[width=0.15\columnwidth]{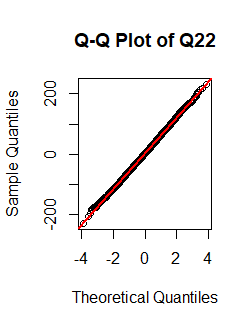}
\includegraphics[width=0.15\columnwidth]{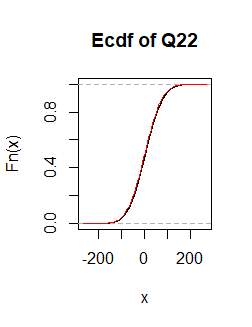}\quad
\includegraphics[width=0.15\columnwidth]{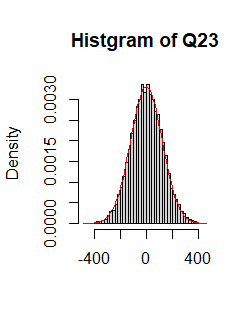}
\includegraphics[width=0.15\columnwidth]{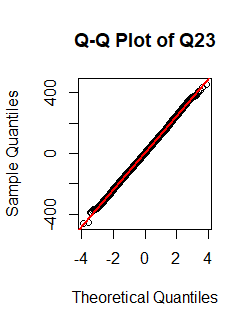}
\includegraphics[width=0.15\columnwidth]{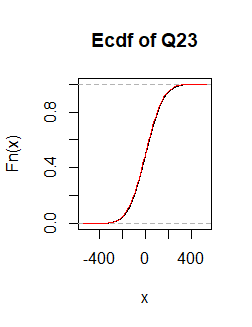}
\end{figure}
\begin{figure}
\includegraphics[width=0.15\columnwidth]{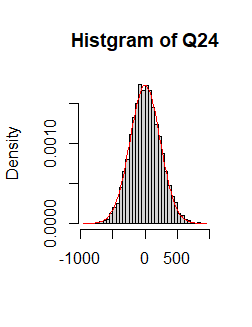}
\includegraphics[width=0.15\columnwidth]{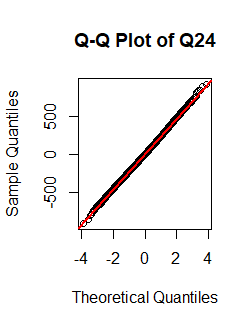}
\includegraphics[width=0.15\columnwidth]{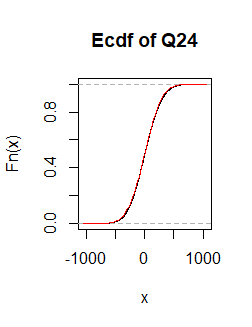}\quad
\includegraphics[width=0.15\columnwidth]{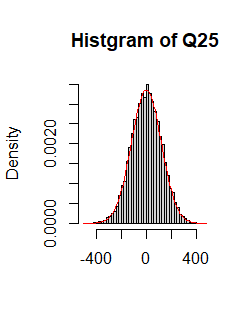}
\includegraphics[width=0.15\columnwidth]{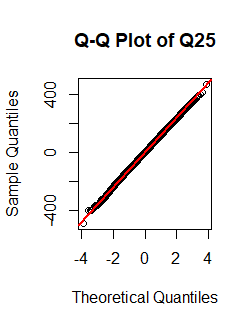}
\includegraphics[width=0.15\columnwidth]{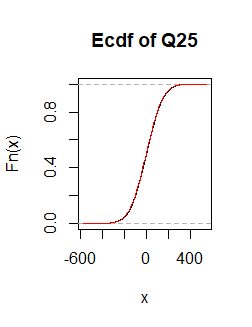}\\
\includegraphics[width=0.15\columnwidth]{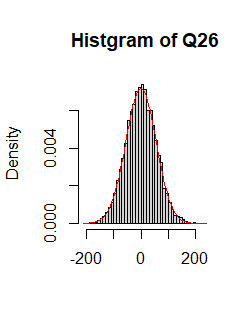}
\includegraphics[width=0.15\columnwidth]{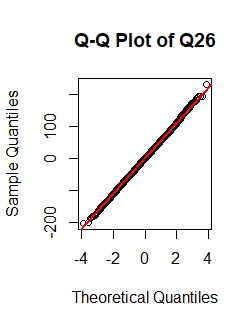}
\includegraphics[width=0.15\columnwidth]{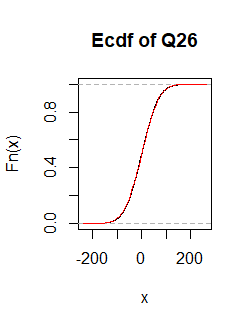}\quad
\includegraphics[width=0.15\columnwidth]{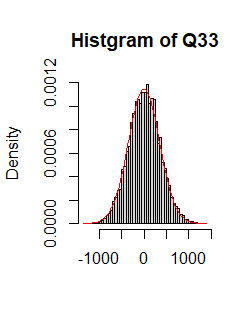}
\includegraphics[width=0.15\columnwidth]{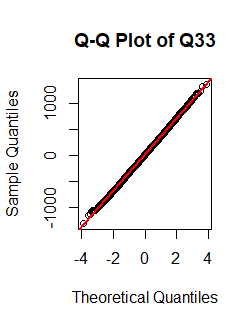}
\includegraphics[width=0.15\columnwidth]{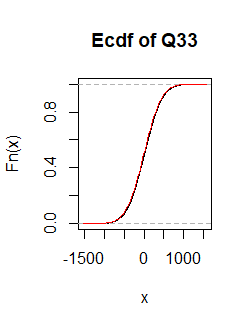}\\
\includegraphics[width=0.15\columnwidth]{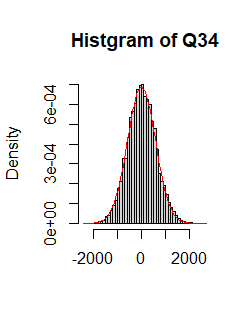}
\includegraphics[width=0.15\columnwidth]{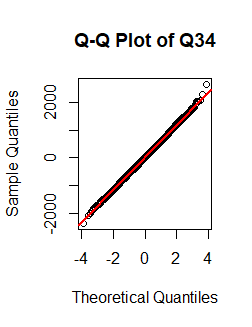}
\includegraphics[width=0.15\columnwidth]{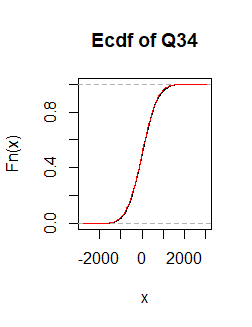}\quad
\includegraphics[width=0.15\columnwidth]{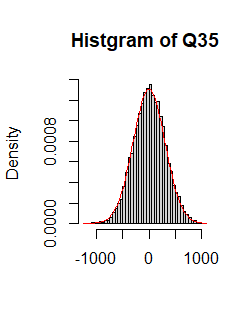}
\includegraphics[width=0.15\columnwidth]{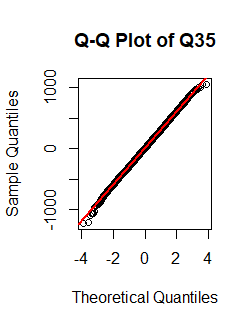}
\includegraphics[width=0.15\columnwidth]{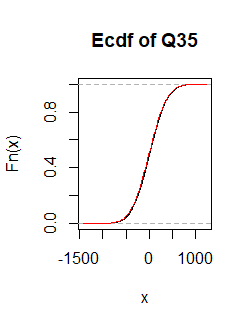}\\
\includegraphics[width=0.15\columnwidth]{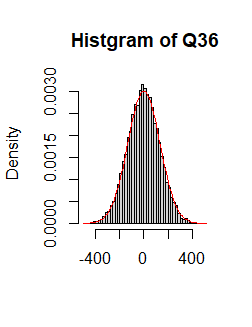}
\includegraphics[width=0.15\columnwidth]{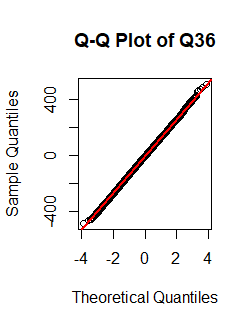}
\includegraphics[width=0.15\columnwidth]{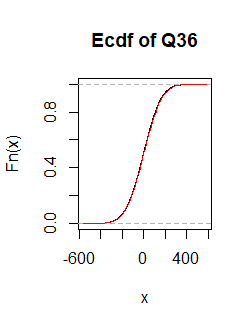}\quad
\includegraphics[width=0.15\columnwidth]{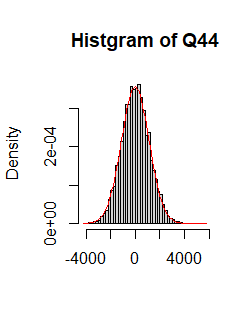}
\includegraphics[width=0.15\columnwidth]{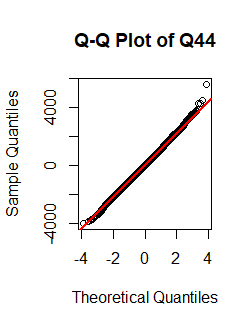}
\includegraphics[width=0.15\columnwidth]{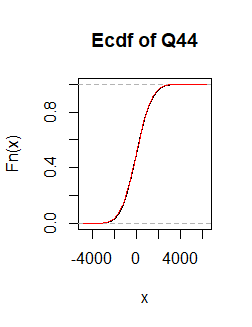}\\
\includegraphics[width=0.15\columnwidth]{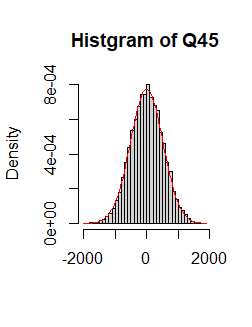}
\includegraphics[width=0.15\columnwidth]{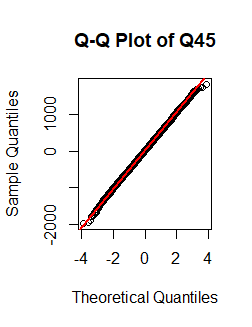}
\includegraphics[width=0.15\columnwidth]{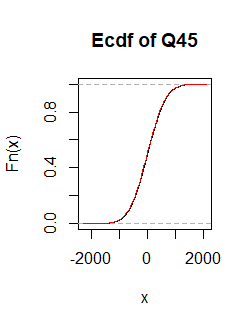}\quad
\includegraphics[width=0.15\columnwidth]{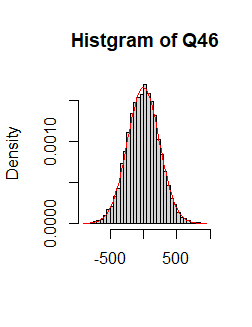}
\includegraphics[width=0.15\columnwidth]{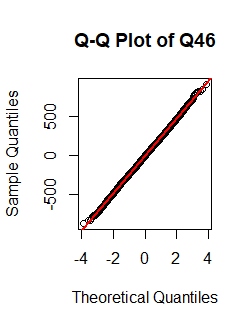}
\includegraphics[width=0.15\columnwidth]{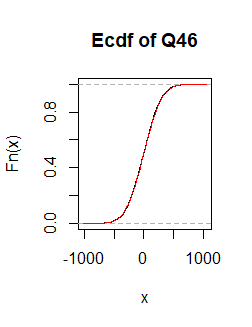}\\
\includegraphics[width=0.15\columnwidth]{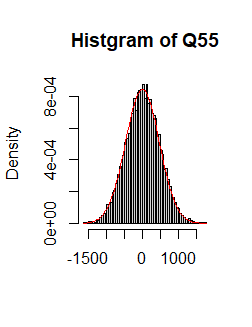}
\includegraphics[width=0.15\columnwidth]{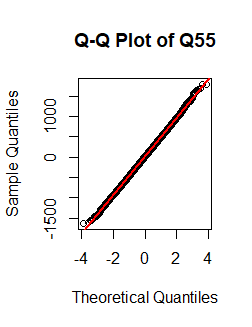}
\includegraphics[width=0.15\columnwidth]{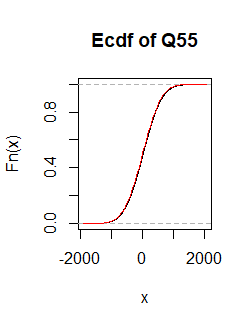}\quad
\includegraphics[width=0.15\columnwidth]{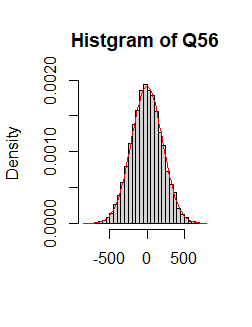}
\includegraphics[width=0.15\columnwidth]{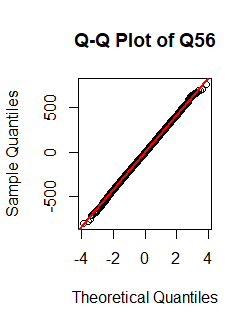}
\includegraphics[width=0.15\columnwidth]{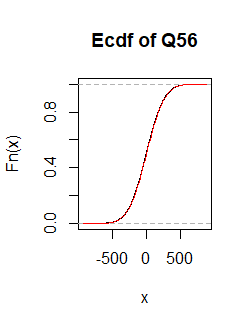}\\
\includegraphics[width=0.15\columnwidth]{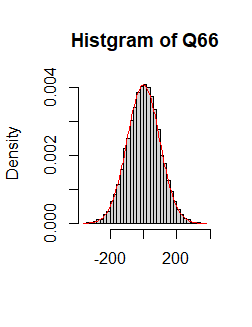}
\includegraphics[width=0.15\columnwidth]{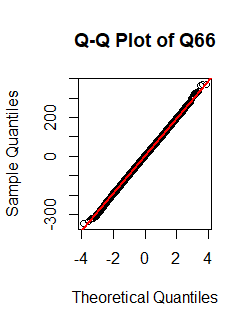}
\includegraphics[width=0.15\columnwidth]{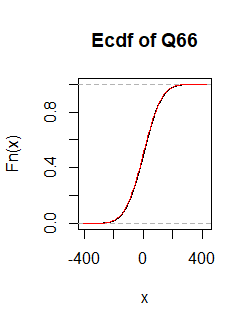}\\
\caption{Histogram (left), Q-Q plot (middle) and empirical distribution (right) of $Q_{XX}$.}
\end{figure}
\clearpage
\newpage
\begin{table}[]
    \centering
    \setlength{\doublerulesep}{0.4pt}
    \caption{Sample mean and sample standard deviation (SD)  of $\hat{\theta}_{2,n}$.}
    \begin{tabular}{cccccc}
        \hline\hline
        &$\hat{\theta}_{2,n}^{(1)}$&$\hat{\theta}_{2,n}^{(2)}$&$\hat{\theta}_{2,n}^{(3)}$&$\hat{\theta}_{2,n}^{(4)}$&\\\hline
        Mean (true value) & 3.000 (3.000) & 1.000 (1.000) & 7.000 (7.000) & -3.000 (-3.000)\\
         SD (theoretical value) & 0.003 (0.003) & 0.008 (0.007) & 0.008 (0.009) & 0.004 (0.004) \\\hline
        $\hat{\theta}_{2,n}^{(5)}$&$\hat{\theta}_{2,n}^{(6)}$&$\hat{\theta}_{2,n}^{(7)}$&$\hat{\theta}_{2,n}^{(8)}$&$\hat{\theta}_{2,n}^{(9)}$&\\\hline
         1.000 (1.000) & 5.000 (5.000) & -4.000 (-4.000) & 2.000 (2.000) & 13.001 (13.000)\\
         0.002 (0.002) & 0.006 (0.006) & 0.006 (0.006) & 0.003 (0.003) & 0.024 (0.024) \\\hline
        $\hat{\theta}_{2,n}^{(10)}$&$\hat{\theta}_{2,n}^{(11)}$&$\hat{\theta}_{2,n}^{(12)}$&$\hat{\theta}_{2,n}^{(13)}$&$\hat{\theta}_{2,n}^{(14)}$&\\\hline
        13.001 (13.000) & 26.002 (26.000) & 4.000 (4.000) & 16.002 (16.000) & 25.004 (25.000)\\
         0.029 (0.029) & 0.055 (0.055) & 0.065 (0.060) & 0.023 (0.023) & 0.049 (0.050)\\\hline
        $\hat{\theta}_{2,n}^{(15)}$&$\hat{\theta}_{2,n}^{(16)}$&$\hat{\theta}_{2,n}^{(17)}$&\\\hline
         1.001 (1.000)& 9.003 (9.000) & 4.000 (4.000) &\\
        0.134 (0.136)& 0.061 (0.061) & 0.013 (0.013) &\\\hline\\
    \end{tabular}
\end{table}
\begin{figure}[h]
\centering
\includegraphics[width=0.15\columnwidth]{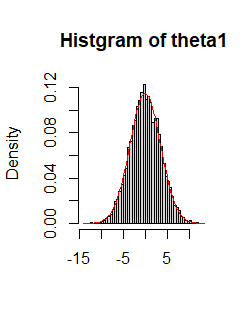}
\includegraphics[width=0.15\columnwidth]{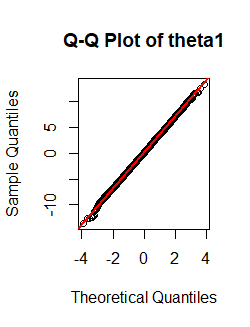}
\includegraphics[width=0.15\columnwidth]{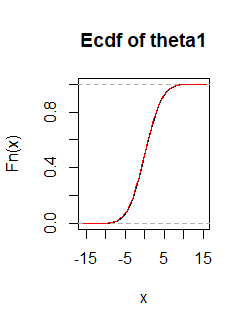}
\quad 
\includegraphics[width=0.15\columnwidth]{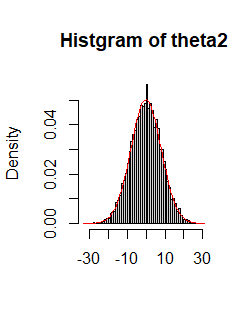}
\includegraphics[width=0.15\columnwidth]{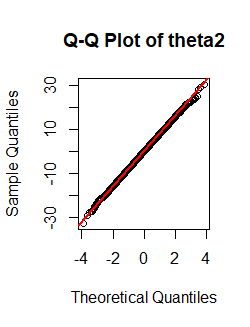}
\includegraphics[width=0.15\columnwidth]{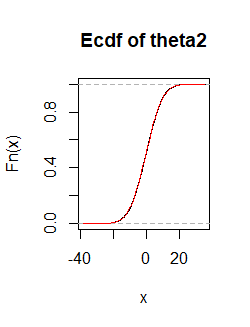}
\\
\includegraphics[width=0.15\columnwidth]{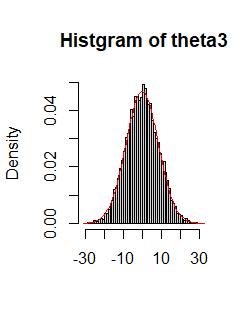}
\includegraphics[width=0.15\columnwidth]{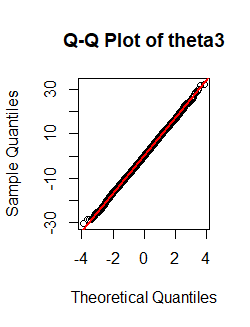}
\includegraphics[width=0.15\columnwidth]{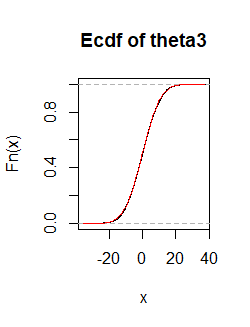}
\quad
\includegraphics[width=0.15\columnwidth]{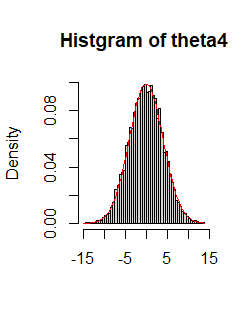}
\includegraphics[width=0.15\columnwidth]{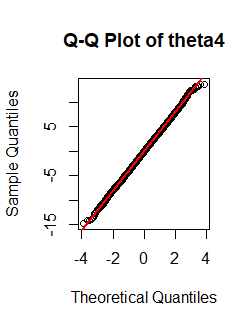}
\includegraphics[width=0.15\columnwidth]{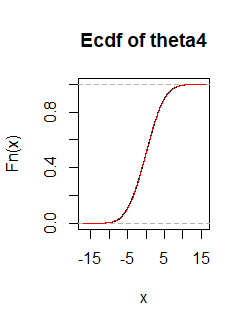}
\\
\includegraphics[width=0.15\columnwidth]{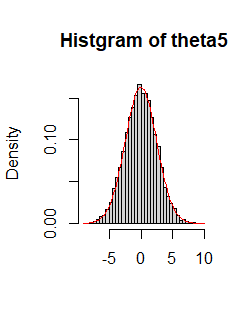}
\includegraphics[width=0.15\columnwidth]{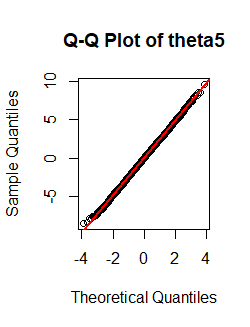}
\includegraphics[width=0.15\columnwidth]{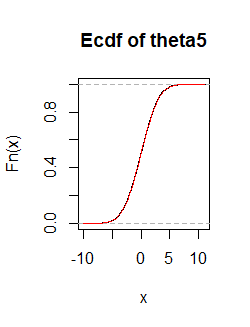}
\quad
\includegraphics[width=0.15\columnwidth]{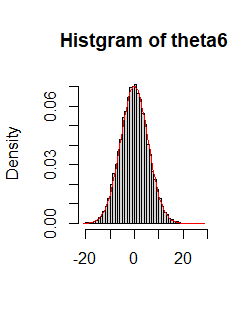}
\includegraphics[width=0.15\columnwidth]{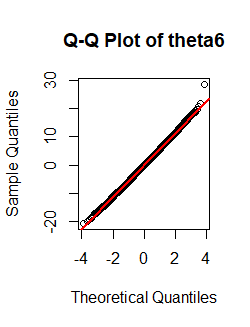}
\includegraphics[width=0.15\columnwidth]{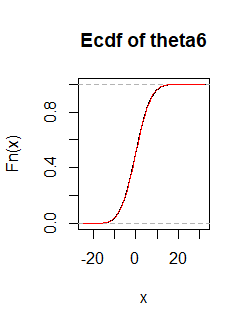}
\\
\includegraphics[width=0.15\columnwidth]{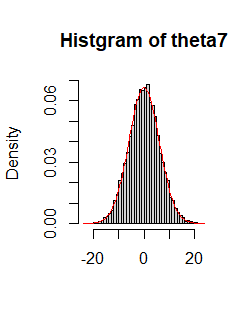}
\includegraphics[width=0.15\columnwidth]{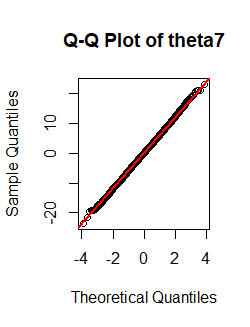}
\includegraphics[width=0.15\columnwidth]{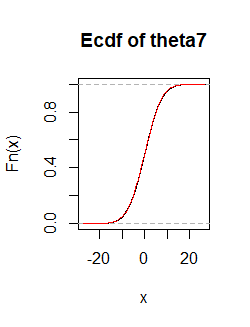}
\quad
\includegraphics[width=0.15\columnwidth]{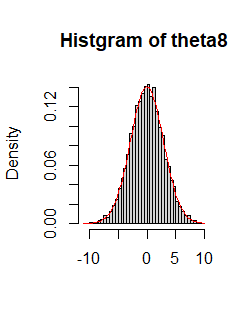}
\includegraphics[width=0.15\columnwidth]{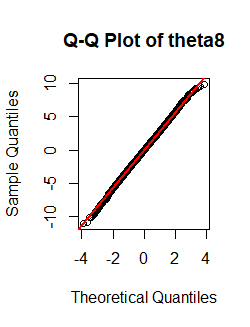}
\includegraphics[width=0.15\columnwidth]{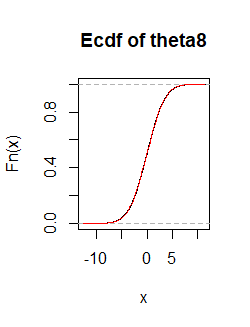}
\end{figure}
\begin{figure}
\includegraphics[width=0.15\columnwidth]{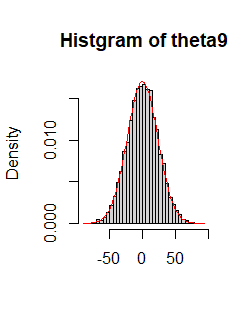}
\includegraphics[width=0.15\columnwidth]{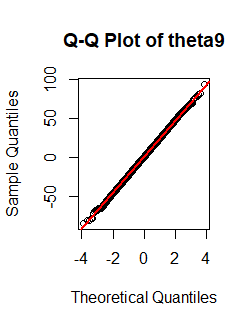}
\includegraphics[width=0.15\columnwidth]{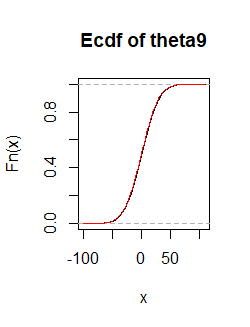}
\quad
\includegraphics[width=0.15\columnwidth]{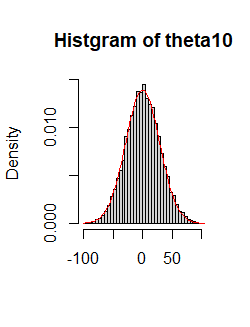}
\includegraphics[width=0.15\columnwidth]{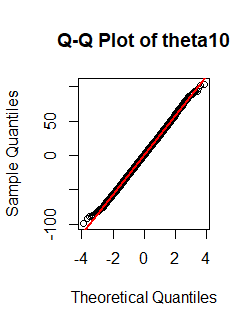}
\includegraphics[width=0.15\columnwidth]{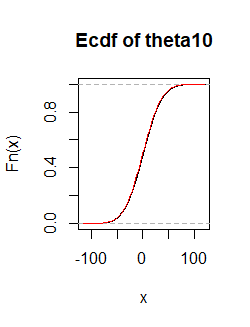}
\\
\includegraphics[width=0.15\columnwidth]{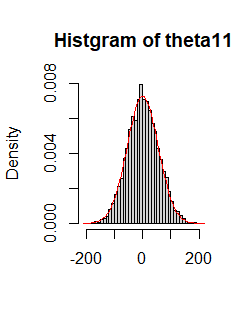}
\includegraphics[width=0.15\columnwidth]{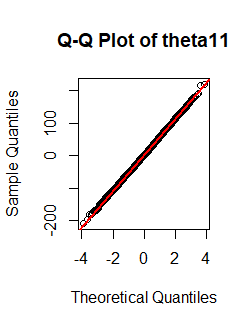}
\includegraphics[width=0.15\columnwidth]{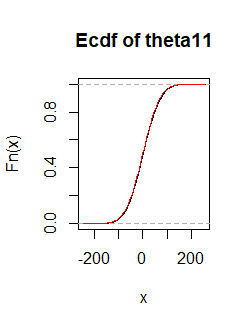}
\quad
\includegraphics[width=0.15\columnwidth]{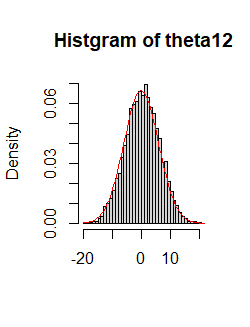}
\includegraphics[width=0.15\columnwidth]{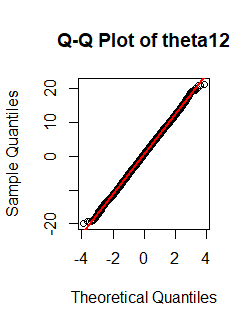}
\includegraphics[width=0.15\columnwidth]{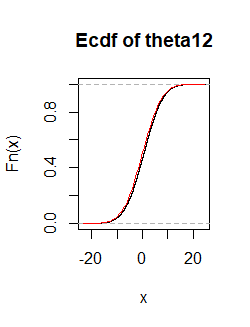}\\
\includegraphics[width=0.15\columnwidth]{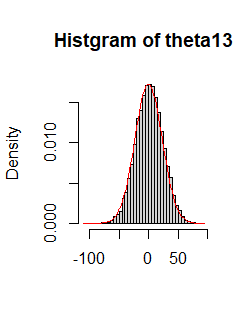}
\includegraphics[width=0.15\columnwidth]{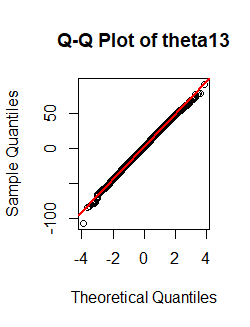}
\includegraphics[width=0.15\columnwidth]{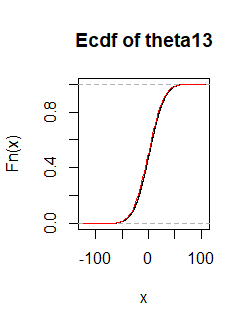}
\quad
\includegraphics[width=0.15\columnwidth]{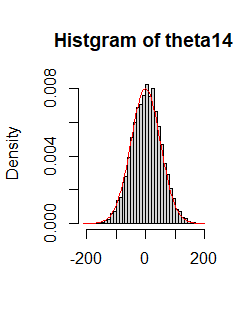}
\includegraphics[width=0.15\columnwidth]{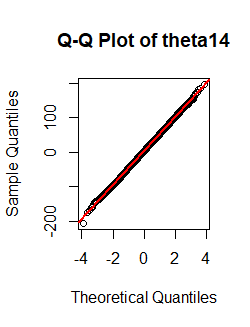}
\includegraphics[width=0.15\columnwidth]{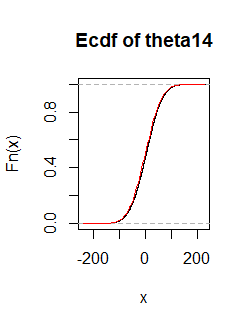}\\
\includegraphics[width=0.15\columnwidth]{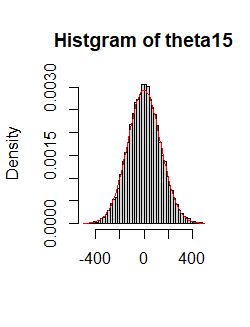}
\includegraphics[width=0.15\columnwidth]{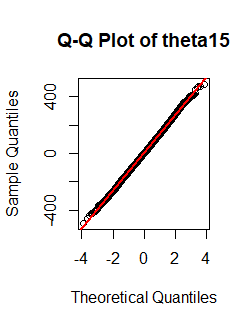}
\includegraphics[width=0.15\columnwidth]{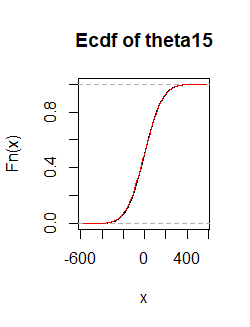}\quad
\includegraphics[width=0.15\columnwidth]{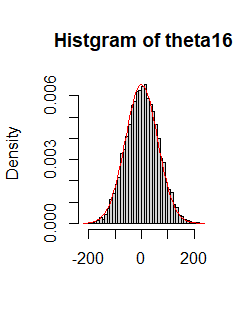}
\includegraphics[width=0.15\columnwidth]{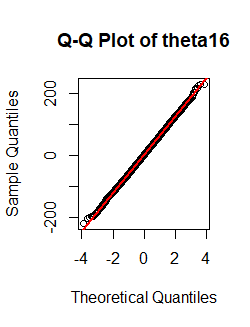}
\includegraphics[width=0.15\columnwidth]{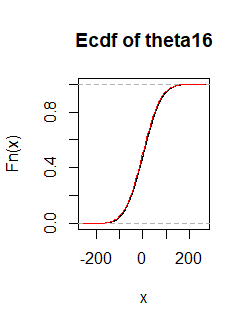}
\\
\includegraphics[width=0.15\columnwidth]{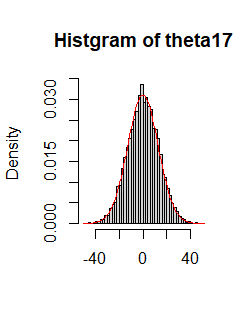}
\includegraphics[width=0.15\columnwidth]{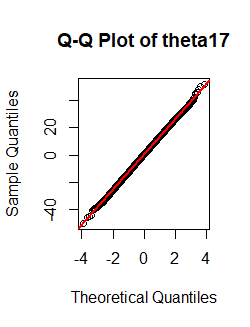}
\includegraphics[width=0.15\columnwidth]{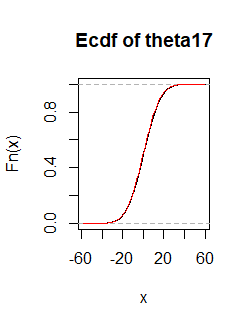}
\caption{Histogram (left), Q-Q plot (middle) and empirical distribution (right) of $\hat{\theta}_{2,n}$. }
\end{figure}
\begin{table}[h]
    \centering
    \setlength{\doublerulesep}{0.4pt}
    \caption{Quartile of $T_{1,n}$.} 
    \begin{tabular}{ccccc}
    \hline\hline
    Min&$Q_{1}$&$Q_{2}$ (Median) &$Q_{3}$&Max\\\hline
    1208126 & 1210011 & 1210451 & 1210880 & 1213047\\\hline
    \end{tabular} 
\end{table}
\begin{table}[h]
    \centering
    \setlength{\doublerulesep}{0.4pt}
    \caption{Sample mean and sample standard deviation (SD)  of $T_{2,n}$.} 
    \begin{tabular}{lcc}
    \hline\hline& sample mean (true value) & sample SD (theoretical value)\\\hline
     &  3.936 (4.000)& 2.800 (2.828)\\\hline
    \end{tabular} 
\end{table}
\begin{table}[h]
    \centering
    \setlength{\doublerulesep}{0.4pt}
    \caption{The number of rejections of the test with $H_0$: $k=1$, $k=2$.} 
    \begin{tabular}{ccc}
    \hline\hline
    $H_0$&$k=1$&$k=2$ \\\hline
    The number of rejections & 10000 & 472\\\hline
    \end{tabular} 
\end{table}
\begin{figure}[h]
\begin{center}
\includegraphics[width=0.3\columnwidth]{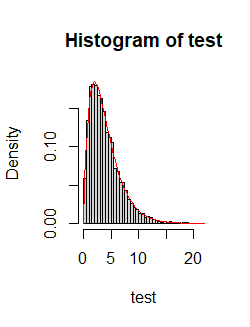}
\includegraphics[width=0.3\columnwidth]{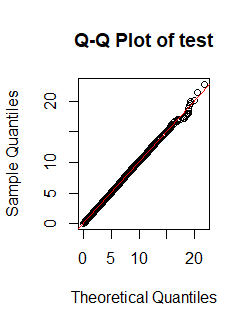}
\includegraphics[width=0.3\columnwidth]{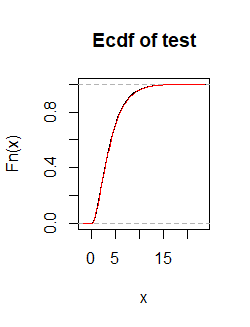}
\caption{Sample mean and sample standard deviation (SD)  of $T_{2,n}$.}
\end{center}
\end{figure}
\clearpage

\begin{table}[]
    \centering
    \setlength{\doublerulesep}{0.4pt}
    \caption{Sample mean and sample standard deviation (SD)  of $Q_{XX}$.}
    \begin{tabular}{ccccc}
        \hline\hline
        & $Q_{XX,11}$&$Q_{XX,12}$&$Q_{XX,13}$&\\\hline
        Mean (true value) & 17.027 (17.000) & 13.004 (13.000) & 52.054 (52.000) \\
         SD (theoretical value) & 0.762 (0.760) & 0.940 (0.940) & 2.634 (2.624)\\\hline
        $Q_{XX,14}$&$Q_{XX,15}$&$Q_{XX,16}$&$Q_{XX,22}$\\\hline
         78.060 (78.000) &39.078 (39.000) & -13.029 (-13.000) & 42.005 (42.000) \\
         4.419 (4.425) &2.701 (2.683) & 1.161 (1.158) & 1.847 (1.878)\\\hline
        $Q_{XX,23}$&$Q_{XX,24}$&$Q_{XX,25}$&$Q_{XX,26}$&\\\hline
        64.991 (65.000) & 142.958 (143.000) &-12.967 (-13.000) & 12.991 (13.000) \\
         3.793 (3.815) & 7.278 (7.335) &3.767 (3.768) & 1.746 (1.751) \\\hline
        $Q_{XX,33}$&$Q_{XX,34}$&$Q_{XX,35}$&$Q_{XX,36}$&\\\hline
         246.160 (246.000) & 377.058 (377.000) & 104.240 (104.000)& -26.080 (-26.000)\\
         10.996 (11.001) & 18.329 (18.370) & 9.747 (9.643) & 4.226 (4.201)\\\hline
        $Q_{XX,44}$&$Q_{XX,45}$&$Q_{XX,46}$&$Q_{XX,55}$\\\hline
        793.898 (794.000) & -25.742 (-26.000) & 51.914 (52.000) & 334.362 (334.000)\\
        35.434 (35.509) & 16.391 (16.306) & 7.594 (7.582) & 14.863 (14.937) \\\hline
        $Q_{XX,56}$&$Q_{XX,66}$\\\hline
        -143.143 (-143.000) & 69.063 (69.000)\\
        6.552 (6.600) & 3.073 (3.086)\\\hline\\
    \end{tabular}
\end{table}
\begin{figure}[h]
\centering
\includegraphics[width=0.15\columnwidth]{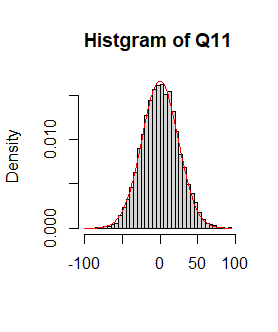}
\includegraphics[width=0.15\columnwidth]{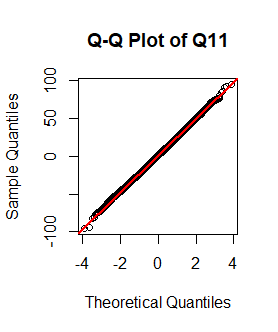}
\includegraphics[width=0.15\columnwidth]{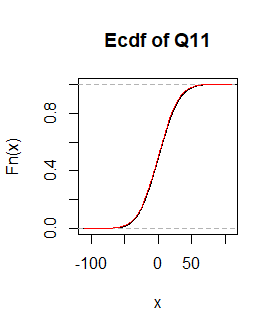}
\quad 
\includegraphics[width=0.15\columnwidth]{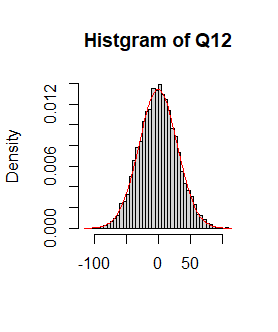}
\includegraphics[width=0.15\columnwidth]{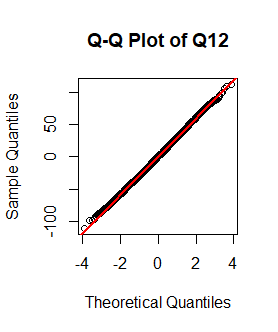}
\includegraphics[width=0.15\columnwidth]{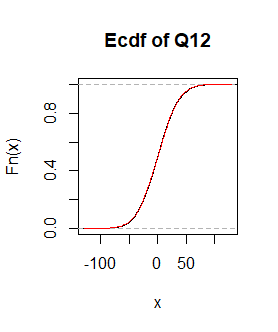}\\
\includegraphics[width=0.15\columnwidth]{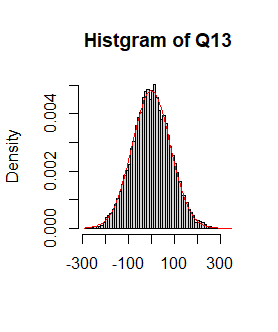}
\includegraphics[width=0.15\columnwidth]{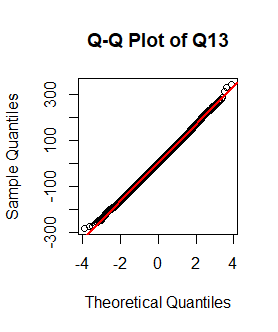}
\includegraphics[width=0.15\columnwidth]{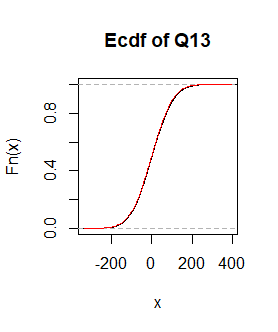}\quad
\includegraphics[width=0.15\columnwidth]{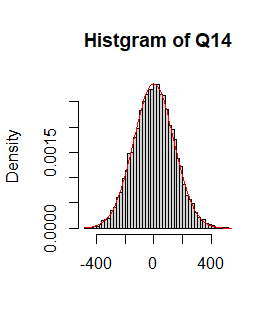}
\includegraphics[width=0.15\columnwidth]{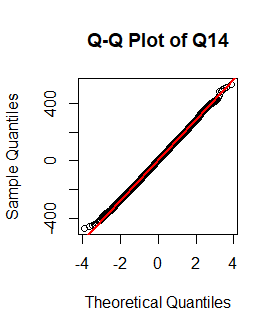}
\includegraphics[width=0.15\columnwidth]{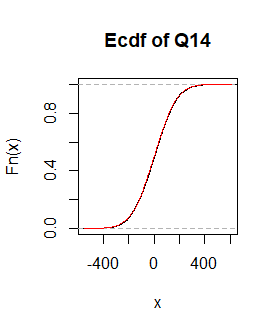}\\
\includegraphics[width=0.15\columnwidth]{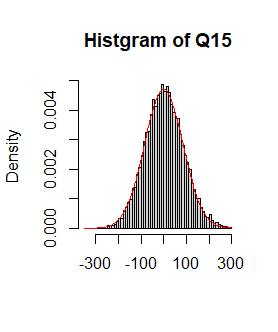}
\includegraphics[width=0.15\columnwidth]{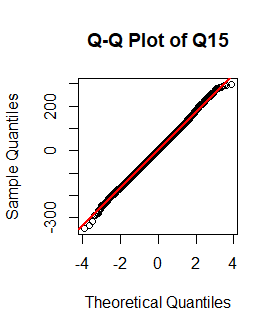}
\includegraphics[width=0.15\columnwidth]{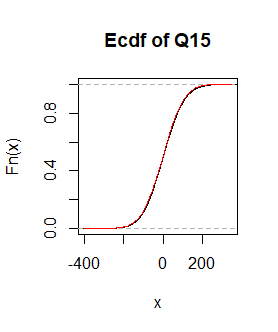}\quad
\includegraphics[width=0.15\columnwidth]{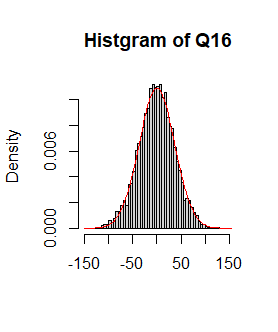}
\includegraphics[width=0.15\columnwidth]{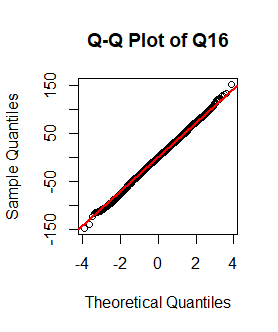}
\includegraphics[width=0.15\columnwidth]{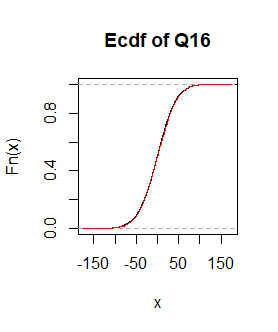}\\
\includegraphics[width=0.15\columnwidth]{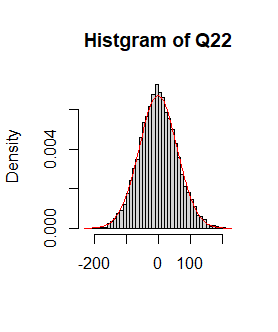}
\includegraphics[width=0.15\columnwidth]{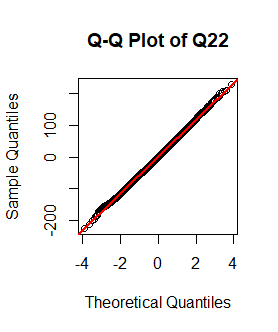}
\includegraphics[width=0.15\columnwidth]{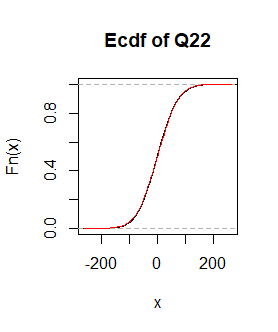}\quad
\includegraphics[width=0.15\columnwidth]{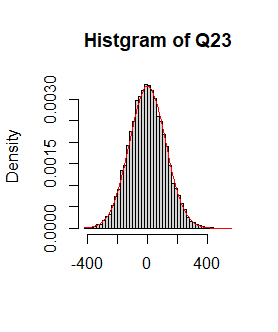}
\includegraphics[width=0.15\columnwidth]{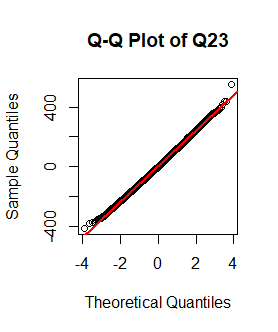}
\includegraphics[width=0.15\columnwidth]{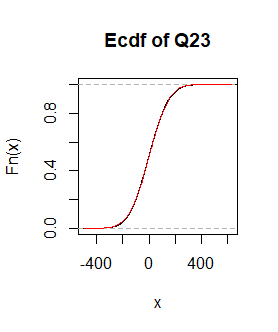}
\end{figure}
\begin{figure}
\includegraphics[width=0.15\columnwidth]{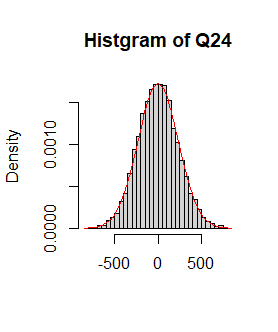}
\includegraphics[width=0.15\columnwidth]{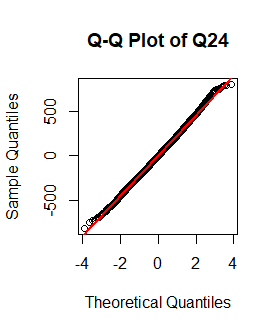}
\includegraphics[width=0.15\columnwidth]{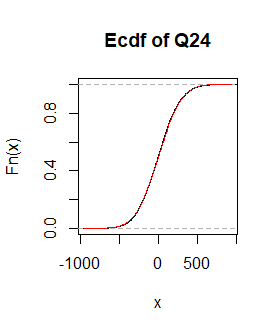}\quad
\includegraphics[width=0.15\columnwidth]{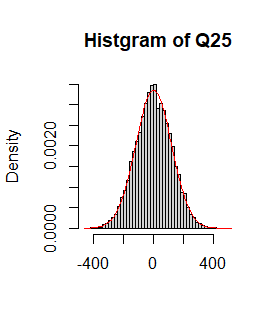}
\includegraphics[width=0.15\columnwidth]{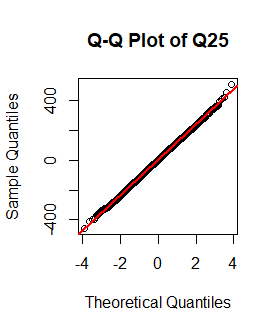}
\includegraphics[width=0.15\columnwidth]{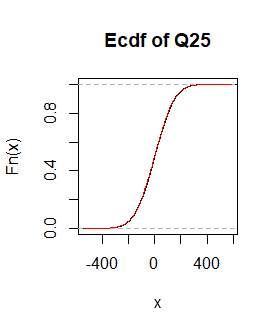}\\
\includegraphics[width=0.15\columnwidth]{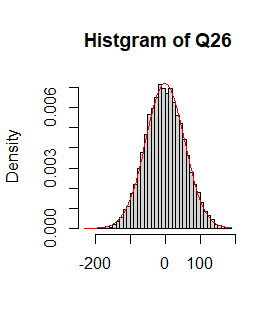}
\includegraphics[width=0.15\columnwidth]{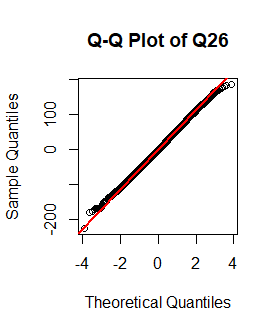}
\includegraphics[width=0.15\columnwidth]{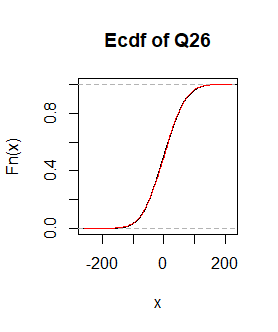}\quad
\includegraphics[width=0.15\columnwidth]{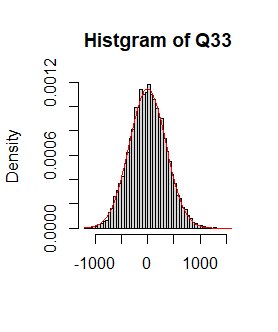}
\includegraphics[width=0.15\columnwidth]{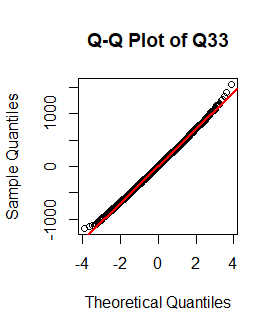}
\includegraphics[width=0.15\columnwidth]{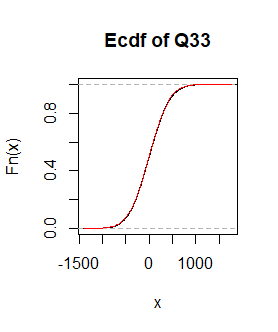}\\
\includegraphics[width=0.15\columnwidth]{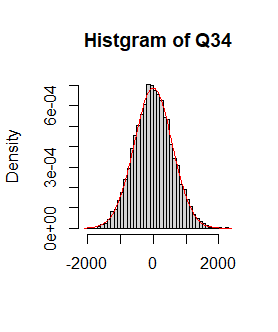}
\includegraphics[width=0.15\columnwidth]{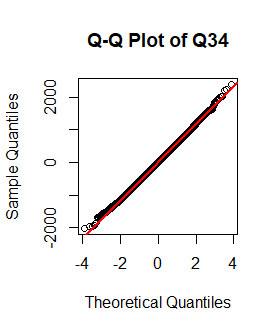}
\includegraphics[width=0.15\columnwidth]{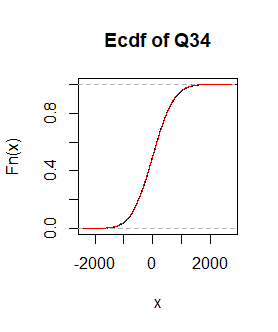}\quad
\includegraphics[width=0.15\columnwidth]{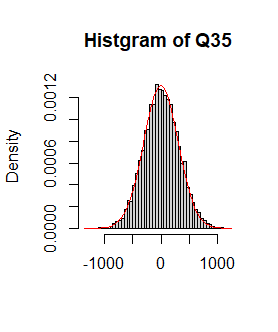}
\includegraphics[width=0.15\columnwidth]{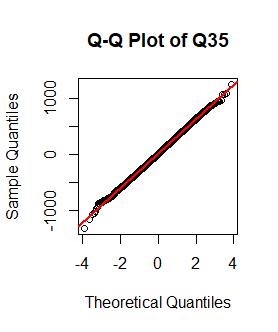}
\includegraphics[width=0.15\columnwidth]{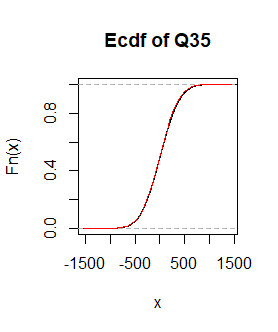}\\
\includegraphics[width=0.15\columnwidth]{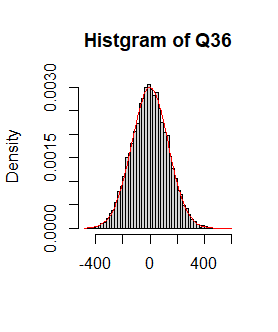}
\includegraphics[width=0.15\columnwidth]{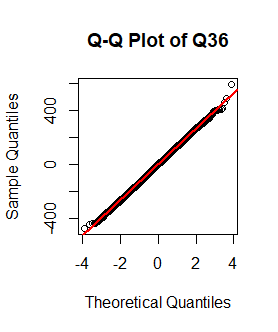}
\includegraphics[width=0.15\columnwidth]{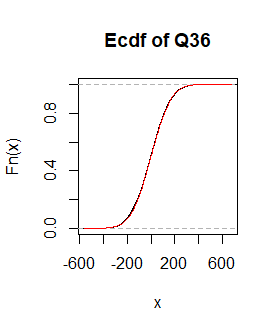}\quad
\includegraphics[width=0.15\columnwidth]{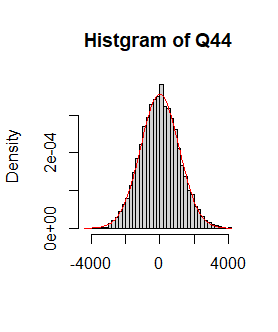}
\includegraphics[width=0.15\columnwidth]{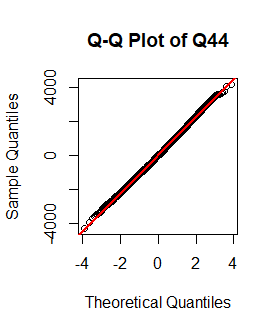}
\includegraphics[width=0.15\columnwidth]{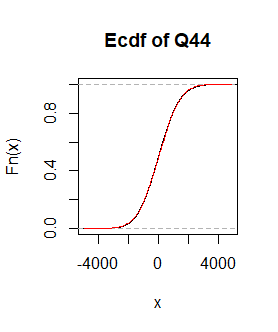}
\includegraphics[width=0.15\columnwidth]{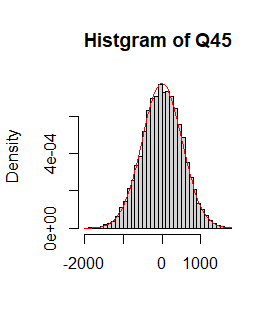}
\includegraphics[width=0.15\columnwidth]{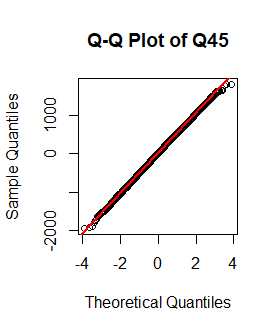}
\includegraphics[width=0.15\columnwidth]{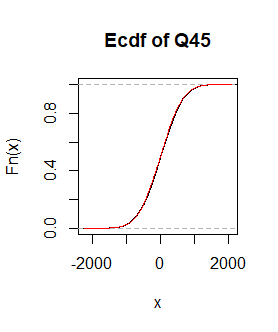}\quad
\includegraphics[width=0.15\columnwidth]{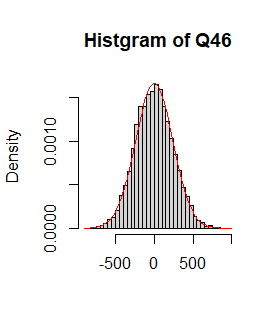}
\includegraphics[width=0.15\columnwidth]{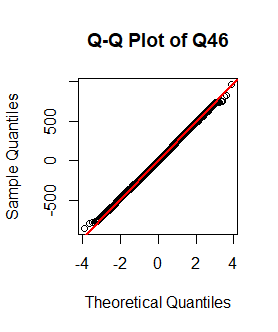}
\includegraphics[width=0.15\columnwidth]{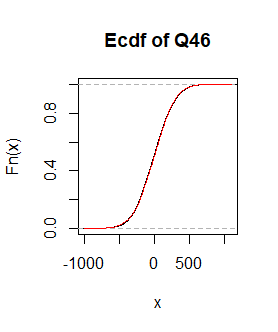}\\
\includegraphics[width=0.15\columnwidth]{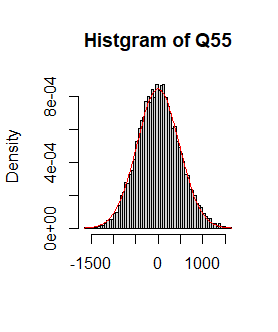}
\includegraphics[width=0.15\columnwidth]{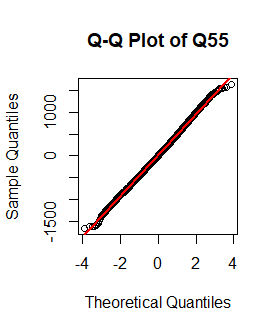}
\includegraphics[width=0.15\columnwidth]{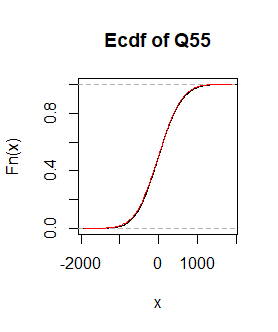}\quad
\includegraphics[width=0.15\columnwidth]{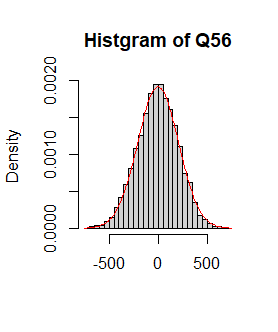}
\includegraphics[width=0.15\columnwidth]{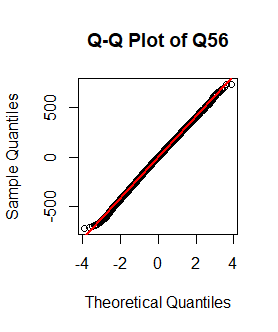}
\includegraphics[width=0.15\columnwidth]{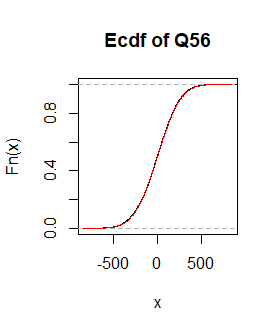}\\
\includegraphics[width=0.15\columnwidth]{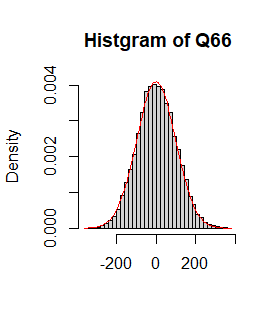}
\includegraphics[width=0.15\columnwidth]{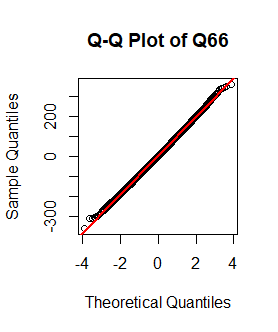}
\includegraphics[width=0.15\columnwidth]{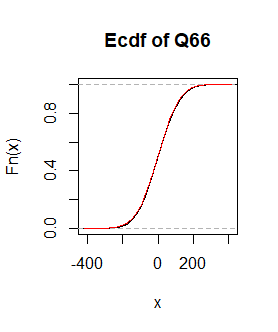}\\
\caption{Histogram (left), Q-Q plot (middle) and empirical distribution (right) of $Q_{XX}$.}
\end{figure}

\clearpage

\begin{table}[]
    \centering
    \setlength{\doublerulesep}{0.4pt}
    \caption{Sample mean and sample standard deviation (SD)  of $\hat{\theta}_{2,n}$.}
    \begin{tabular}{cccccc}
        \hline\hline
        &$\hat{\theta}_{2,n}^{(1)}$&$\hat{\theta}_{2,n}^{(2)}$&$\hat{\theta}_{2,n}^{(3)}$&$\hat{\theta}_{2,n}^{(4)}$&\\\hline
        Mean (true value) & 3.000 (3.000) & 0.995 (1.000) & 7.008 (7.000) & -3.003 (-3.000)\\
         SD (theoretical value) & 0.110 (0.109) & 0.253 (0.252) & 0.271 (0.270) & 0.128 (0.128) \\\hline
        $\hat{\theta}_{2,n}^{(5)}$&$\hat{\theta}_{2,n}^{(6)}$&$\hat{\theta}_{2,n}^{(7)}$&$\hat{\theta}_{2,n}^{(8)}$&$\hat{\theta}_{2,n}^{(9)}$&\\\hline
         1.000 (1.000) & 5.001 (5.000) & -4.009 (-4.000) & 2.004 (2.000) & 13.027 (13.000)\\
         0.078 (0.077) & 0.179 (0.179) & 0.189 (0.190) & 0.090 (0.090) & 0.743 (0.742) \\\hline
        $\hat{\theta}_{2,n}^{(10)}$&$\hat{\theta}_{2,n}^{(11)}$&$\hat{\theta}_{2,n}^{(12)}$&$\hat{\theta}_{2,n}^{(13)}$&$\hat{\theta}_{2,n}^{(14)}$&\\\hline
        13.004 (13.000) & 26.016 (26.000) & 4.007 (4.000) & 16.030 (16.000) & 25.068 (25.000)\\
         0.906 (0.909) & 1.717 (1.743) & 0.192 (0.190) & 0.730 (0.730) & 1.573 (1.574)\\\hline
        $\hat{\theta}_{2,n}^{(15)}$&$\hat{\theta}_{2,n}^{(16)}$&$\hat{\theta}_{2,n}^{(17)}$&\\\hline
         0.848 (1.000)& 8.996 (9.000) & 4.009 (4.000) &\\
        4.190 (4.308)& 1.958 (1.932) & 0.412 (0.406) &\\\hline\\
    \end{tabular}
\end{table}
\begin{figure}[h]
\centering
\includegraphics[width=0.15\columnwidth]{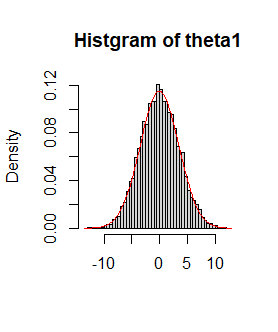}
\includegraphics[width=0.15\columnwidth]{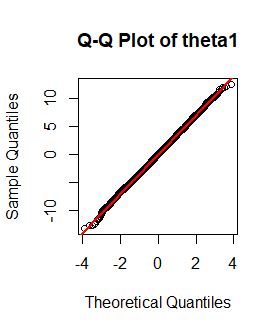}
\includegraphics[width=0.15\columnwidth]{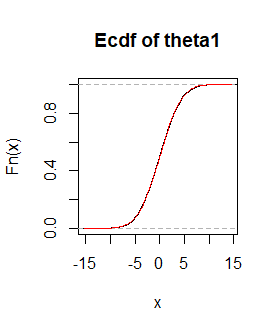}
\quad 
\includegraphics[width=0.15\columnwidth]{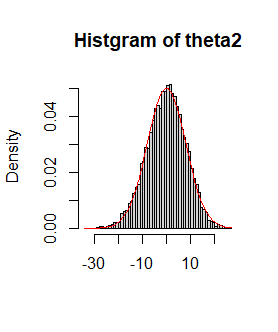}
\includegraphics[width=0.15\columnwidth]{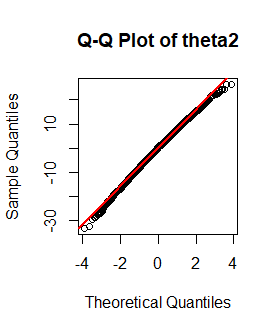}
\includegraphics[width=0.15\columnwidth]{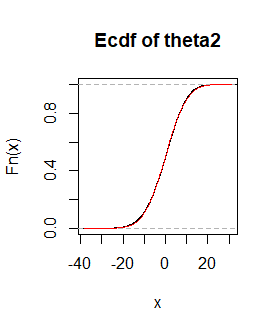}\\
\includegraphics[width=0.15\columnwidth]{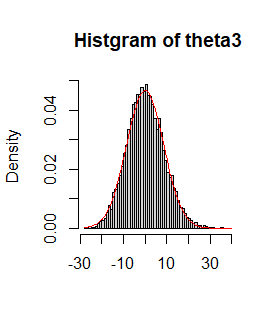}
\includegraphics[width=0.15\columnwidth]{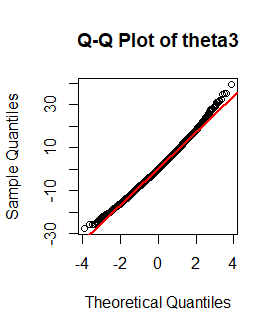}
\includegraphics[width=0.15\columnwidth]{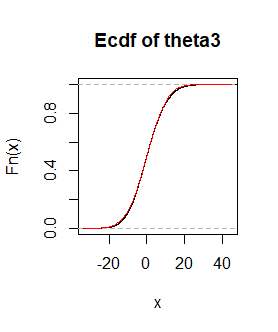}
\quad
\includegraphics[width=0.15\columnwidth]{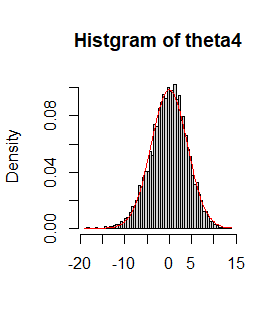}
\includegraphics[width=0.15\columnwidth]{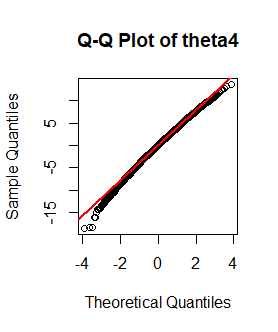}
\includegraphics[width=0.15\columnwidth]{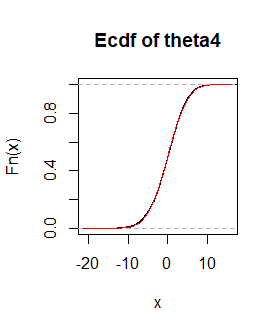}\\
\includegraphics[width=0.15\columnwidth]{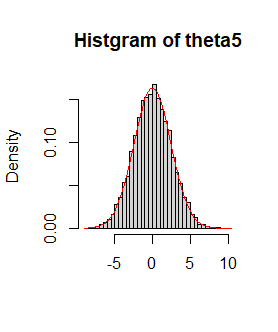}
\includegraphics[width=0.15\columnwidth]{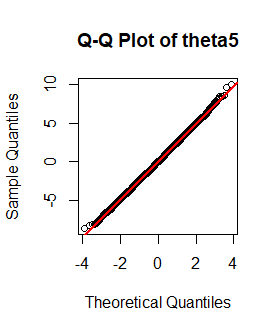}
\includegraphics[width=0.15\columnwidth]{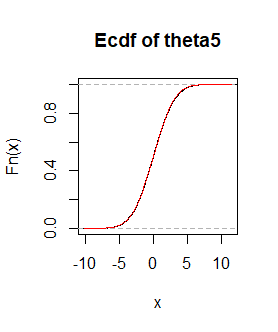}
\quad
\includegraphics[width=0.15\columnwidth]{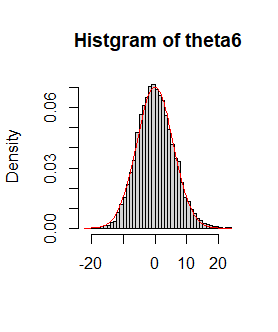}
\includegraphics[width=0.15\columnwidth]{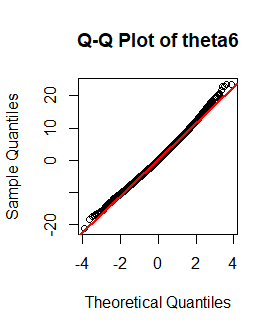}
\includegraphics[width=0.15\columnwidth]{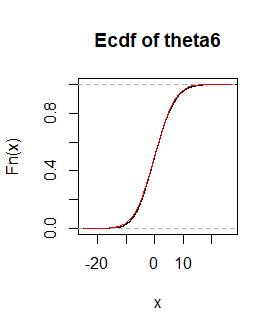}
\\
\includegraphics[width=0.15\columnwidth]{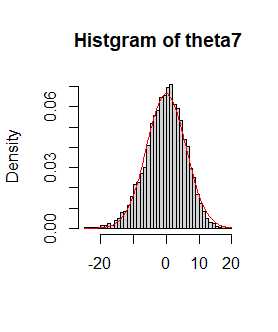}
\includegraphics[width=0.15\columnwidth]{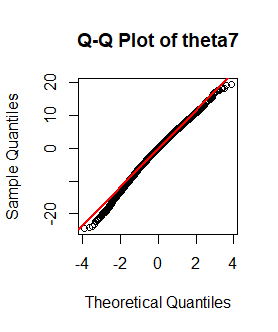}
\includegraphics[width=0.15\columnwidth]{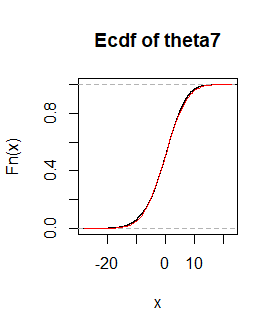}
\quad
\includegraphics[width=0.15\columnwidth]{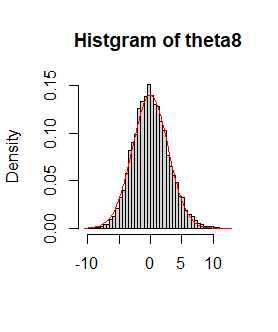}
\includegraphics[width=0.15\columnwidth]{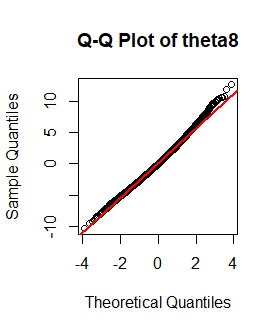}
\includegraphics[width=0.15\columnwidth]{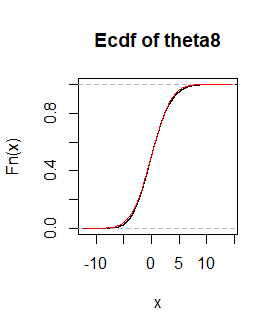}
\end{figure}
\clearpage
\begin{figure}
\includegraphics[width=0.15\columnwidth]{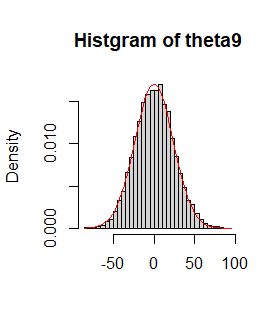}
\includegraphics[width=0.15\columnwidth]{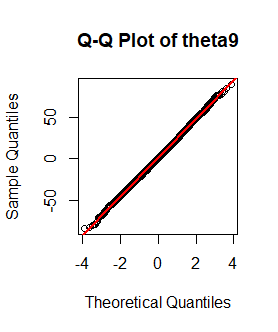}
\includegraphics[width=0.15\columnwidth]{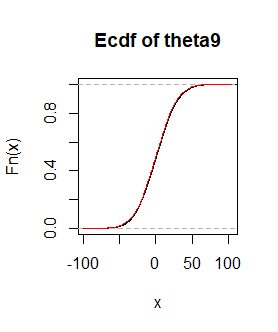}
\quad
\includegraphics[width=0.15\columnwidth]{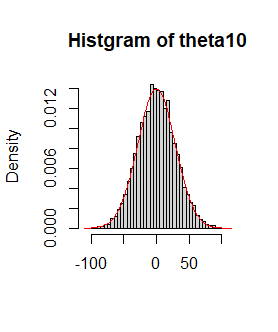}
\includegraphics[width=0.15\columnwidth]{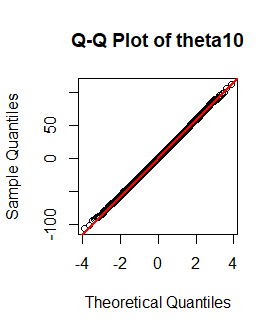}
\includegraphics[width=0.15\columnwidth]{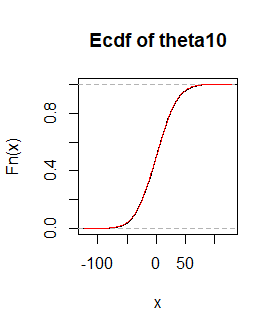}
\\
\includegraphics[width=0.15\columnwidth]{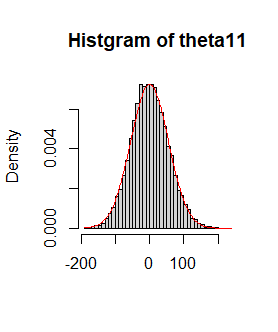}
\includegraphics[width=0.15\columnwidth]{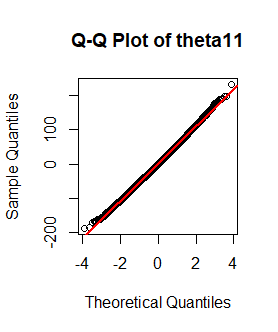}
\includegraphics[width=0.15\columnwidth]{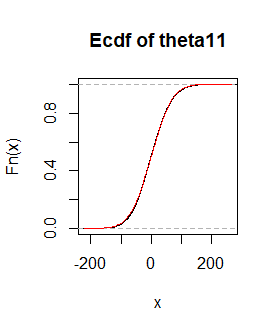}
\quad
\includegraphics[width=0.15\columnwidth]{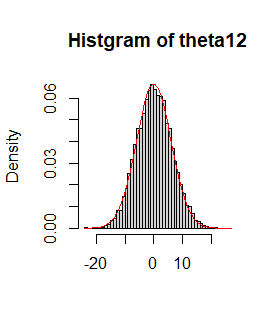}
\includegraphics[width=0.15\columnwidth]{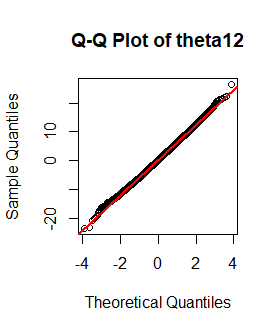}
\includegraphics[width=0.15\columnwidth]{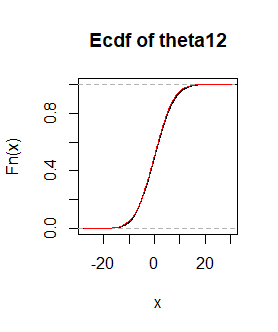}\\
\includegraphics[width=0.15\columnwidth]{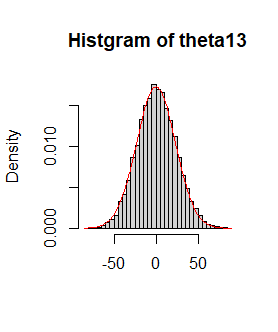}
\includegraphics[width=0.15\columnwidth]{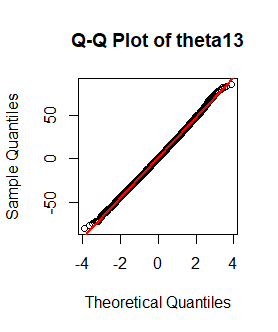}
\includegraphics[width=0.15\columnwidth]{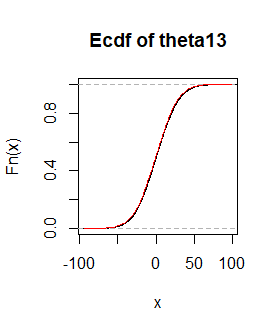}
\quad
\includegraphics[width=0.15\columnwidth]{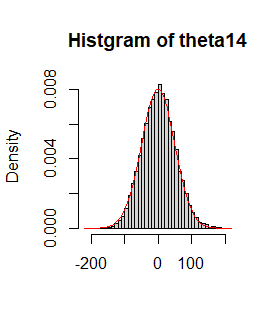}
\includegraphics[width=0.15\columnwidth]{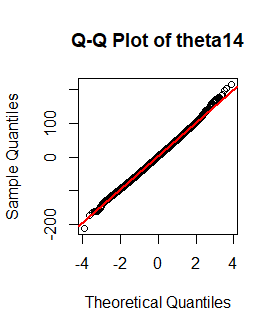}
\includegraphics[width=0.15\columnwidth]{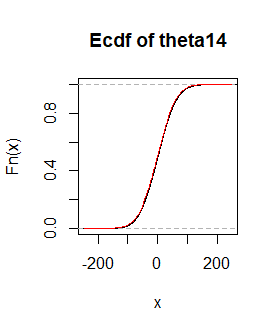}\\
\includegraphics[width=0.15\columnwidth]{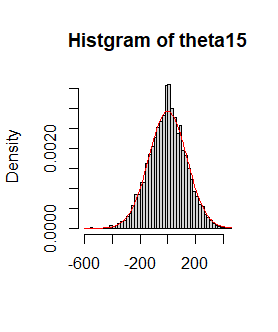}
\includegraphics[width=0.15\columnwidth]{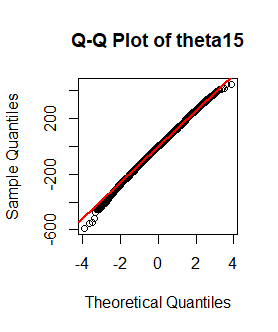}
\includegraphics[width=0.15\columnwidth]{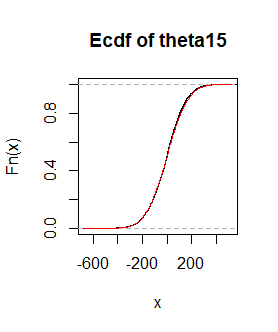}\quad
\includegraphics[width=0.15\columnwidth]{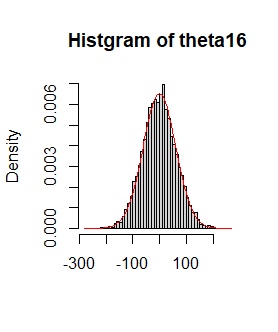}
\includegraphics[width=0.15\columnwidth]{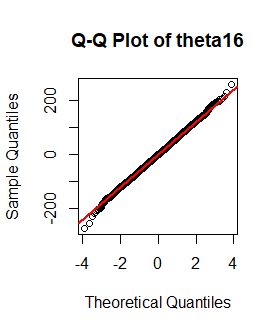}
\includegraphics[width=0.15\columnwidth]{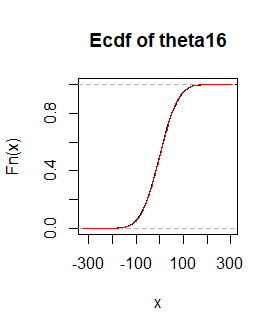}
\end{figure}
\begin{figure}
\includegraphics[width=0.15\columnwidth]{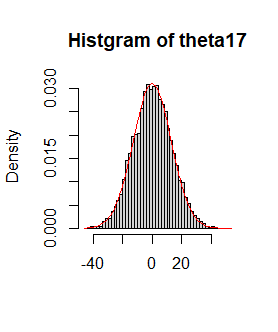}
\includegraphics[width=0.15\columnwidth]{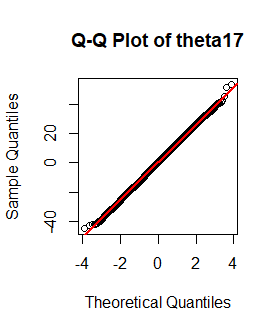}
\includegraphics[width=0.15\columnwidth]{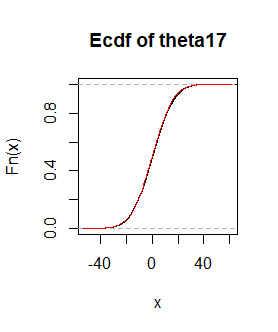}
\caption{Histogram (left), Q-Q plot (middle) and empirical distribution (right) of $\hat{\theta}_{2,n}$. }
\end{figure}
\begin{table}[h]
    \centering
    \setlength{\doublerulesep}{0.4pt}
    \caption{Quartile of $T_{1,n}$.} 
    \begin{tabular}{ccccc}
    \hline\hline
    Min&$Q_{1}$&$Q_{2}$ (Median) &$Q_{3}$&Max\\\hline
    1122 & 1198 & 1211 & 1224 & 1284\\\hline
    \end{tabular} 
\end{table}
\begin{table}[h]
    \centering
    \setlength{\doublerulesep}{0.4pt}
    \caption{Sample mean and sample standard deviation (SD)  of $T_{2,n}$.} 
    \begin{tabular}{lcc}
    \hline\hline& sample mean (true value) & sample SD (theoretical value)\\\hline
     &  4.017 (4.000)& 2.792 (2.828)\\\hline
    \end{tabular} 
\end{table}
\begin{table}[h]
    \centering
    \setlength{\doublerulesep}{0.4pt}
    \caption{The number of rejections of the test with $H_0$: $k=1$, $k=2$.} 
    \begin{tabular}{ccc}
    \hline\hline
    $H_0$&$k=1$&$k=2$ \\\hline
    The number of rejections & 10000 & 490\\\hline
    \end{tabular} 
\end{table}
\begin{figure}[h]
\begin{center}
\includegraphics[width=0.3\columnwidth]{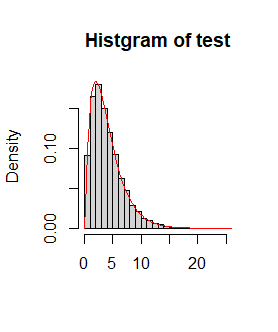}
\includegraphics[width=0.3\columnwidth]{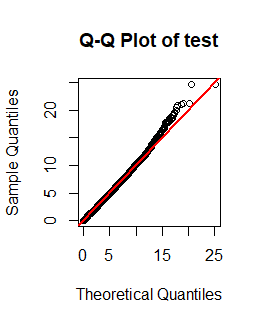}
\includegraphics[width=0.3\columnwidth]{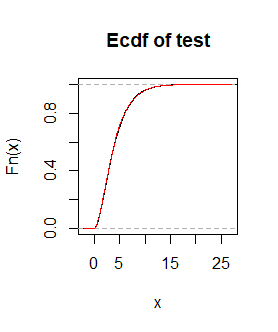}
\caption{Histogram (left), Q-Q plot (middle) and empirical distribution (right) of $T_{2,n}$.
}
\end{center}
\end{figure}


\clearpage
\section{Proofs}\label{sec5} 
For the proof, we define the following notation.
    \begin{align*}
        Q_{ff,k}&=\frac{1}{T}\sum_{i=1}^n(f_{k,t_{i}^n}-f_{k,t_{i-1}^n})(f_{k,t_{i}^n}-f_{k,t_{i-1}^n})^\top,\\
        Q_{fe,k}&=\frac{1}{T}\sum_{i=1}^n(f_{k,t_{i}^n}-f_{k,t_{i-1}^n})(e_{t_{i}^n}-e_{t_{i-1}^n})^\top,\\
        Q_{ee}&= { \frac{1}{T}\sum_{i=1}^n(e_{t_{i}^n}-e_{t_{i-1}^n})(e_{t_{i}^n}-e_{t_{i-1}^n})^\top .}
    \end{align*}
\begin{lemma}
Under assumptions [A1], [A2], [B1] and [B2],  if $h_n\rightarrow0$ and $nh_n\rightarrow\infty$, then
\begin{align}
    \label{l1-1}
    Q_{ff,k}&\stackrel{P_{\theta_{k,0}}}{\to} {\Sigma_{ff,k,0},}\\
    \label{l1-2}
    Q_{fe,k}&\stackrel{P_{\theta_{k,0}}}{\to}{ 0,}\\
    \label{l1-3}
    Q_{ee}&\stackrel{P_{\theta_{k,0}}}{\to}{ \Sigma_{ee,0}. }
\end{align}
\end{lemma}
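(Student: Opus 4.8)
The plan is to strip the drift from every increment and reduce the three limits to the quadratic variation of the driving Brownian motions, which here is immediate because the diffusion coefficients are constant. Write $\delta_{i}W=W_{t_{i}^n}-W_{t_{i-1}^n}$, $\delta_{i}B=B_{t_{i}^n}-B_{t_{i-1}^n}$, and $\bar{S}_{0}=\Diag((\sigma_{1,0},\dots,\sigma_{p,0})^{\top})$. By \eqref{intro-2} and \eqref{intro-3} with the true coefficients,
\[
f_{k,t_{i}^n}-f_{k,t_{i-1}^n}=\zeta_{i}^{f}+S_{k,0}\,\delta_{i}W,\qquad e_{t_{i}^n}-e_{t_{i-1}^n}=\zeta_{i}^{e}+\bar{S}_{0}\,\delta_{i}B,
\]
where $\zeta_{i}^{f}=\int_{t_{i-1}^n}^{t_{i}^n}b_{k,0}(f_{k,s})\,ds$ and $\zeta_{i}^{e}=\int_{t_{i-1}^n}^{t_{i}^n}(B_{1,0}(e_{s}^{(1)}),\dots,B_{p,0}(e_{s}^{(p)}))^{\top}\,ds$. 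The Lipschitz condition [A1](a) gives $|b_{k,0}(x)|\le C(1+|x|)$, so with [A1](b) one has $\sup_{s}\E[|b_{k,0}(f_{k,s})|^{2}]<\infty$, and then Jensen's inequality (and Tonelli) yields $\E[|\zeta_{i}^{f}|^{2}]\le h_{n}\int_{t_{i-1}^n}^{t_{i}^n}\E[|b_{k,0}(f_{k,s})|^{2}]\,ds\le Ch_{n}^{2}$; likewise $\E[|\zeta_{i}^{e}|^{2}]\le Ch_{n}^{2}$ using [B1](a),(b).

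I would then insert these decompositions into $Q_{ff,k}$, $Q_{fe,k}$, $Q_{ee}$ and expand the products. Each normalized sum splits into a term built only from Brownian increments plus remainder terms, each of the form $\frac1T\sum_{i=1}^{n}$ of either a product of two of the $\zeta_{i}$'s or a product of one $\zeta_{i}$ with $\delta_{i}W$ or $\delta_{i}B$ (and transposes). Since $\|vw^{\top}\|=|v|\,|w|$, $\E[|\delta_{i}W|^{2}]=rh_{n}$, $\E[|\delta_{i}B|^{2}]=ph_{n}$, the Cauchy--Schwarz inequality bounds each remainder in $L^{1}$ by $\frac1T\sum_{i=1}^{n}O(h_{n}^{2})=O(h_{n})$ for the $\zeta$--$\zeta$ terms and by $\frac1T\sum_{i=1}^{n}O(h_{n}^{3/2})=O(h_{n}^{1/2})$ for the mixed ones; both tend to $0$ since $T=nh_{n}$ and $h_{n}\to0$, so every remainder is $o_{p}(1)$.

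It then remains to identify the limits of the Brownian parts, namely $\frac1T\sum_{i=1}^{n}\delta_{i}W(\delta_{i}W)^{\top}\stackrel{P}{\to}I_{r}$, $\frac1T\sum_{i=1}^{n}\delta_{i}W(\delta_{i}B)^{\top}\stackrel{P}{\to}0$ and $\frac1T\sum_{i=1}^{n}\delta_{i}B(\delta_{i}B)^{\top}\stackrel{P}{\to}I_{p}$. I would prove these entrywise by a second-moment computation: increments over disjoint intervals are independent, $\E[\delta_{i}W(\delta_{i}W)^{\top}]=h_{n}I_{r}$, so the first sum has mean exactly $\frac{nh_{n}}{T}I_{r}=I_{r}$ and each entry has variance $O(nh_{n}^{2}/T^{2})=O(1/n)\to0$; the same argument, using $W\perp B$ (so the relevant means vanish) and the mutual independence of $B^{(1)},\dots,B^{(p)}$ (which kills the off-diagonal entries of $\delta_{i}B(\delta_{i}B)^{\top}$), handles the other two. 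Combining with the previous paragraph gives $Q_{ff,k}\stackrel{P_{\theta_{k,0}}}{\to}S_{k,0}S_{k,0}^{\top}=\Sigma_{ff,k,0}$, $Q_{fe,k}\stackrel{P_{\theta_{k,0}}}{\to}S_{k,0}\cdot0\cdot\bar{S}_{0}^{\top}=0$ and $Q_{ee}\stackrel{P_{\theta_{k,0}}}{\to}\bar{S}_{0}I_{p}\bar{S}_{0}^{\top}=\Diag(\sigma_{1,0}^{2},\dots,\sigma_{p,0}^{2})=\Sigma_{ee,0}$, which are \eqref{l1-1}--\eqref{l1-3}.

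The calculations are all routine, and $L^{1}$ convergence automatically gives the stated convergence in probability; the only point demanding a little care is the accounting of the mixed $\zeta$--Brownian remainders, where one must check that, although there are $n=T/h_{n}$ of them, each is only $O(h_{n}^{3/2})$ in $L^{1}$, so the normalized sum still vanishes. I remark that neither ergodicity hypothesis is in fact used here --- only the uniform moment bounds [A1](b), [B1](b) and the constancy of $S_{k,0}$ and $\bar{S}_{0}$ --- so the same argument also proves the analogous statement in the non-ergodic case with $T$ fixed.
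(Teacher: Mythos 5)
Your proof is correct, but it takes a genuinely different route from the paper. The paper never decomposes the increments explicitly; instead it computes the conditional moments $\E_{\theta_{k,0}}[(f_{k,t_i^n}-f_{k,t_{i-1}^n})^{(j_1)}(f_{k,t_i^n}-f_{k,t_{i-1}^n})^{(j_2)}\mid\mathscr{F}^n_{i-1}]=h_n(\Sigma_{ff,k,0})_{j_1j_2}+R(h_n^2,f_{k,t_{i-1}^n})$ and the corresponding fourth conditional moments via Lemma 7 of Kessler (1997), and then invokes Lemma 9 of Genon-Catalot and Jacod (1993) (the triangular-array criterion: convergence of the summed conditional means plus negligibility of the summed conditional fourth moments implies convergence in probability of the sum itself). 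You instead exploit the fact that $S_k$ and the $\sigma_i$ are constant, strip off the drift integrals $\zeta_i^f,\zeta_i^e$ with $L^2$ bounds of order $h_n^2$, and reduce everything to the realised quadratic variation of the driving Brownian motions, which you handle by a direct mean--variance computation using independence of increments. Your argument is more elementary and self-contained (no external limit theorem needed, and the $L^1$/$L^2$ accounting of the cross terms is done carefully and correctly), but it relies essentially on the constancy of the diffusion coefficients; the paper's conditional-moment machinery is what generalizes to state-dependent volatility and, more to the point here, is reused verbatim in Lemmas 2--4 and in the martingale CLT for Theorem 1, so the authors get Lemma 1 almost for free from infrastructure they need anyway. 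Your closing observation that neither ergodicity assumption is actually used --- only the uniform moment bounds --- is accurate and is consistent with the paper's Theorems 5--8, where the same conclusions are asserted in the non-ergodic, fixed-$T$ setting.
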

\begin{proof}
First, we will prove (\ref{l1-1}). 
Since it follows from Lemma 7 in Kessler \cite{kessler(1997)} that
    \begin{align*}
    &\quad \frac{1}{T}\sum_{i=1}^n\E_{\theta_{k,0}}\Bigl [  (f_{k,t_i^n}^{(j_1)}-f_{k,t_{i-1}^n}^{(j_1)})(f_{k,t_i^n}^{(j_2)}-f_{k,t_{i-1}^n}^{(j_2)}) | \mathscr{F}^{n}_{i-1} \Bigl ]\\
        &=\frac{1}{T}\sum_{i=1}^n\{h_n(S_{k,0}S_{k,0}^\top)_{j_1j_2}+R(h_n^2,f_{k,t_{i-1}^n})\}\\
        &=(\Sigma_{ff,k,0})_{j_1j_2}+h_n\frac{1}{n}\sum_{i=1}^n R(1,f_{k,t_{i-1}^n})
        \stackrel{P_{\theta_{k,0}}}{\to} (\Sigma_{ff,k,0})_{j_1j_2}
        \quad (j_1,j_2=1,\cdots,k), 
    \end{align*}
we have
\begin{align*}
     \sum_{i=1}^n\E_{\theta_{k,0}}\Bigl [  \frac{1}{T}(f_{k,t_i^n}-f_{k,t_{i-1}^n})(f_{k,t_i^n}-f_{k,t_{i-1}^n})^\top | \mathscr{F}^{n}_{i-1} \Bigl ] \stackrel{P_{\theta_{k,0}}}{\to} \Sigma_{ff,k,0}.
     \end{align*}
In the same way, noting that
     \begin{align*}
    &\quad \frac{1}{T}\sum_{i=1}^n\E_{\theta_{k,0}}\Bigl [  (f_{k,t_i^n}^{(j_1)}-f_{k,t_{i-1}^n}^{(j_1)})(f_{k,t_i^n}^{(j_2)}-f_{k,t_{i-1}^n}^{(j_2)})(f_{k,t_i^n}^{(j_3)}-f_{k,t_{i-1}^n}^{(j_3)})(f_{k,t_i^n}^{(j_4)}-f_{k,t_{i-1}^n}^{(j_4)}) | \mathscr{F}^{n}_{i-1} \Bigl ]\\
    &=\frac{1}{T}\sum_{i=1}^n R(h_n^2,f_{k,t_{i-1}^n})\\
    &=h_n\frac{1}{n}\sum_{i=1}^n R(1,f_{k,t_{i-1}^n})\stackrel{P_{\theta_{k,0}}}{\to} 0
    \quad (j_1,j_2,j_3,j_4=1,\cdots,k),
    \end{align*}
we obtain
    \begin{align*}
     \sum_{i=1}^n\E_{\theta_{k,0}}\Bigl [  \frac{1}{T}(f_{k,t_i^n}-f_{k,t_{i-1}^n})(f_{k,t_i^n}-f_{k,t_{i-1}^n})^\top (f_{k,t_i^n}-f_{k,t_{i-1}^n})(f_{k,t_i^n}-f_{k,t_{i-1}^n})^\top| \mathscr{F}^{n}_{i-1} \Bigl ] \stackrel{P_{\theta_{k,0}}}{\to} 0.
     \end{align*}
Therefore, it follows from Lemma 9 in Genon-Catalot and Jacod \cite{genon(1993)} that
    \begin{align*}
        Q_{ff,k}=\frac{1}{T}\sum_{i=1}^n (f_{k,t_i^n}-f_{k,t_{i-1}^n})(f_{k,t_i^n}-f_{k,t_{i-1}^n})^\top \stackrel{P_{\theta_{k,0}}}{\to} \Sigma_{ff,k,0}.
    \end{align*}
(\ref{l1-2}) and (\ref{l1-3}) also are derived by an analogous manner. \end{proof}
\begin{lemma}
Under assumption [A1], 
\begin{align}
    \begin{split}
    \label{l2-1}
    &\quad \E_{\theta_{k,0}}\left[\Lambda_{k,0}[j_1,\cdot](f_{k,t_{i}^n}-f_{k,t_{i-1}^n})
    |\mathscr{F}^{n}_{i-1}\right]
    =R(h_n,f_{k,t_{i-1}^n})\quad (j_1=1,\cdots,p),
    \end{split}\\
    \begin{split}
    \label{l2-2}
    &\quad \E_{\theta_{k,0}}\left[\Lambda_{k,0}[j_1,\cdot](f_{k,t_{i}^n}-f_{k,t_{i-1}^n})\Lambda_{k,0}[j_2,\cdot](f_{k,t_{i}^n}-f_{k,t_{i-1}^n})
    |\mathscr{F}^{n}_{i-1}\right]\\
    &=h_n(\Lambda_{k,0}\Sigma_{ff,k,0}\Lambda_{k,0}^\top)_{j_1j_2}+R(h_n^2,f_{k,t_{i-1}^n})\quad(j_1,j_2=1,\cdots,p), 
    \end{split}\\
    \begin{split}
    \label{l2-3}
    &\quad \E_{\theta_{k,0}}\left[\Lambda_{k,0}[j_1,\cdot](f_{k,t_{i}^n}-f_{k,t_{i-1}^n})\Lambda_{k,0}[j_2,\cdot](f_{k,t_{i}^n}-f_{k,t_{i-1}^n})\Lambda_{k,0}[j_3,\cdot](f_{k,t_{i}^n}-f_{k,t_{i-1}^n})
    |\mathscr{F}^{n}_{i-1}\right]\\
    &=R(h_n^2,f_{k,t_{i-1}^n})\quad(j_1,j_2,j_3=1,\cdots,p), 
    \end{split}\\
    \begin{split}
    \label{l2-4}
    &\quad \E_{\theta_{k,0}}\left[\Lambda_{k,0}[j_1,\cdot](f_{k,t_{i}^n}-f_{k,t_{i-1}^n})\Lambda_{k,0}[j_2,\cdot](f_{k,t_{i}^n}-f_{k,t_{i-1}^n})\right.\\
    &\left.\qquad\qquad\qquad\qquad\qquad\qquad\quad\times\Lambda_{k,0}[j_3,\cdot](f_{k,t_{i}^n}-f_{k,t_{i-1}^n})\Lambda_{k,0}[j_4,\cdot](f_{k,t_{i}^n}-f_{k,t_{i-1}^n})|\mathscr{F}^{n}_{i-1}\right]\\
    &=h_n^2\{(\Lambda_{k,0}\Sigma_{ff,k,0}\Lambda_{k,0}^\top)_{j_1j_2}(\Lambda_{k,0}\Sigma_{ff,k,0}\Lambda_{k,0}^\top)_{j_3j_4}\\
    &\quad+(\Lambda_{k,0}\Sigma_{ff,k,0}\Lambda_{k,0}^\top)_{j_1j_3}(\Lambda_{k,0}\Sigma_{ff,k,0}\Lambda_{k,0}^\top)_{j_2j_4}+(\Lambda_{k,0}\Sigma_{ff,k,0}\Lambda_{k,0}^\top)_{j_1j_4}(\Lambda_{k,0}\Sigma_{ff,k,0}\Lambda_{k,0}^\top)_{j_2j_3}\}\\
    &\quad+R(h_n^3,f_{k,t_{i-1}^n})\quad (j_1,j_2,j_3,j_4=1,\cdots,p).
    \end{split}
\end{align}
\end{lemma}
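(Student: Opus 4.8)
The plan is to reduce all four assertions to conditional-moment expansions for the increments of the factor process $f_{k,t}$ alone, and then contract those expansions against the constant matrix $\Lambda_{k,0}$. Write $\Delta_i f=f_{k,t_{i}^n}-f_{k,t_{i-1}^n}$. Since $\Lambda_{k,0}$ has constant entries, $\Lambda_{k,0}[j,\cdot]\Delta_i f=\sum_{l=1}^{k}(\Lambda_{k,0})_{jl}(\Delta_i f)^{(l)}$ depends linearly on the components of $\Delta_i f$, so by multilinearity of the conditional expectation each product in (\ref{l2-1})--(\ref{l2-4}) expands into a finite sum, with the constant weights $\prod_{m}(\Lambda_{k,0})_{j_m l_m}$, of conditional moments of products $(\Delta_i f)^{(l_1)}\cdots(\Delta_i f)^{(l_m)}$. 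I will also use freely the calculus of the symbol $R$: a finite constant-coefficient linear combination of terms $R(h_n^{a},x)$ is again a term $R(h_n^{a},x)$, and $R(h_n^{a},x)=R(h_n^{b},x)$ whenever $a\ge b$ and $h_n\le 1$.

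First I would record the needed expansions for $\Delta_i f$, which follow from the It\^o--Taylor expansion for the diffusion (\ref{intro-2}) under [A1], exactly as in Lemma 7 of Kessler \cite{kessler(1997)} (already invoked in the proof of Lemma 1): for $l,l_1,l_2,l_3,l_4\in\{1,\dots,k\}$,
\begin{align*}
\E_{\theta_{k,0}}\bigl[(\Delta_i f)^{(l)}\mid\mathscr{F}^{n}_{i-1}\bigr]&=h_n b_{k,0}^{(l)}(f_{k,t_{i-1}^n})+R(h_n^2,f_{k,t_{i-1}^n})=R(h_n,f_{k,t_{i-1}^n}),\\
\E_{\theta_{k,0}}\bigl[(\Delta_i f)^{(l_1)}(\Delta_i f)^{(l_2)}\mid\mathscr{F}^{n}_{i-1}\bigr]&=h_n(\Sigma_{ff,k,0})_{l_1 l_2}+R(h_n^2,f_{k,t_{i-1}^n}),\\
\E_{\theta_{k,0}}\bigl[(\Delta_i f)^{(l_1)}(\Delta_i f)^{(l_2)}(\Delta_i f)^{(l_3)}\mid\mathscr{F}^{n}_{i-1}\bigr]&=R(h_n^2,f_{k,t_{i-1}^n}),\\
\E_{\theta_{k,0}}\bigl[(\Delta_i f)^{(l_1)}(\Delta_i f)^{(l_2)}(\Delta_i f)^{(l_3)}(\Delta_i f)^{(l_4)}\mid\mathscr{F}^{n}_{i-1}\bigr]&=h_n^2\bigl\{(\Sigma_{ff,k,0})_{l_1 l_2}(\Sigma_{ff,k,0})_{l_3 l_4}+(\Sigma_{ff,k,0})_{l_1 l_3}(\Sigma_{ff,k,0})_{l_2 l_4}\\
&\quad+(\Sigma_{ff,k,0})_{l_1 l_4}(\Sigma_{ff,k,0})_{l_2 l_3}\bigr\}+R(h_n^3,f_{k,t_{i-1}^n}).
\end{align*}
Here the first line uses that $b_{k,0}$ is of linear growth by [A1](a); the second line is the expansion already used in Lemma 1; in the third line the $O(h_n)$ term vanishes because the odd conditional moments of the centred Gaussian martingale increment $\int_{t_{i-1}^n}^{t_i^n}S_{k,0}\,dW_s$ are zero, leaving only $O(h_n^2)$ contributions; and the $h_n^2$ term in the fourth line is Isserlis' theorem applied to that Gaussian increment, using $S_{k,0}S_{k,0}^\top=\Sigma_{ff,k,0}$. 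The remainders are controlled exactly as in Kessler \cite{kessler(1997)}, where the smoothness [A1](c) is used.

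Finally I would contract. For instance, for (\ref{l2-2}),
\begin{align*}
\E_{\theta_{k,0}}\bigl[\Lambda_{k,0}[j_1,\cdot]\Delta_i f\;\Lambda_{k,0}[j_2,\cdot]\Delta_i f\mid\mathscr{F}^{n}_{i-1}\bigr]
&=\sum_{l_1,l_2}(\Lambda_{k,0})_{j_1 l_1}(\Lambda_{k,0})_{j_2 l_2}\bigl\{h_n(\Sigma_{ff,k,0})_{l_1 l_2}+R(h_n^2,f_{k,t_{i-1}^n})\bigr\}\\
&=h_n(\Lambda_{k,0}\Sigma_{ff,k,0}\Lambda_{k,0}^\top)_{j_1 j_2}+R(h_n^2,f_{k,t_{i-1}^n}),
\end{align*}
and (\ref{l2-1}), (\ref{l2-3}) and (\ref{l2-4}) follow the same way; in (\ref{l2-4}) each of the three Isserlis pairings becomes one of the three $(\Lambda_{k,0}\Sigma_{ff,k,0}\Lambda_{k,0}^\top)$ products after summation against the constant weights. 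The only genuinely delicate step is the fourth-moment expansion: one must split off the Gaussian martingale increment, apply Isserlis to obtain the three-term structure with the correct $h_n^2$ coefficient, and verify that every contribution of the drift integral (including the cross terms with the martingale part) is absorbed into $R(h_n^3,f_{k,t_{i-1}^n})$. This is precisely the bookkeeping that Kessler's expansion provides, so I would cite it rather than reproduce it.
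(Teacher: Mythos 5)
Your proposal is correct, and it reaches the same conclusions through the same underlying machinery (Kessler's It\^o--Taylor expansions) but organizes the work in a genuinely different way. The paper contracts first and expands second: for each assertion it defines the scalar polynomial $\phi_{m,k}(x)=\prod_{m'}\Lambda_{k,0}[j_{m'},\cdot](x-f_{k,t_{i-1}^n})$, computes all of its partial derivatives up to order four, evaluates $\mathcal{L}\phi_{m,k}$ (and $\mathcal{L}^2\phi_{4,k}$ for \eqref{l2-4}) explicitly, and then invokes Kessler's Lemma 1 to read off the $h_n$-expansion; the three-term pairing structure in \eqref{l2-4} emerges from the twenty-four permutation terms of $\partial^4\phi_{4,k}$ inside $\mathcal{L}^2$. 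You instead expand first and contract second: you quote the component-wise conditional moment expansions of $\Delta_i f$ (Kessler's Lemma 7, which the paper itself uses in the proofs of Lemmas 1 and 3), and obtain each identity by multilinearity, with the Isserlis pairings of $\Sigma_{ff,k,0}$ contracting against the constant weights $\lambda_{k,0,j_m l_m}$ to give the $(\Lambda_{k,0}\Sigma_{ff,k,0}\Lambda_{k,0}^\top)$ products. Your route buys considerable brevity --- it replaces several pages of derivative bookkeeping with a one-line contraction --- at the price of taking the multidimensional fourth-moment expansion (in particular the vanishing of the odd-moment contributions and the absorption of all drift cross-terms into $R(h_n^3,f_{k,t_{i-1}^n})$) as a citable black box, which is exactly what the paper's generator computation re-derives from scratch. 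Both arguments are sound, and your justification of the third- and fourth-moment claims via the exact Gaussianity of $\int_{t_{i-1}^n}^{t_i^n}S_{k,0}\,\dd W_s$ (since $S_{k,0}$ is constant) is a legitimate shortcut.
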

\begin{proof}
First, we will prove (\ref{l2-1}). Let 
\begin{align*}
    \phi_{1,k}(x)=\Lambda_{k,0}[j_1,\cdot](x-f_{k,t_{i-1}^n})\quad (j_1=1,\cdots,p).
\end{align*}
Since
\begin{align*}
    \frac{\partial\phi_{1,k}(x)}{\partial x^{(u_1)}}
    &=\lambda_{k,0,j_1u_1}\quad(j_1=1,\cdots,p)\quad (u_1=1,\cdots,k),
    \\
    \frac{\partial^2 \phi_{1,k}(x)}{\partial x^{(u_1)}\partial x^{(u_2)}}
    &=0\quad(j_1=1,\cdots,p)\quad(u_1,u_2=1,\cdots,k),
\end{align*}
it is shown that
\begin{align*}
     \mathcal{L}(\phi_{1,k}(f_{k,t_{i-1}^n}))&=\sum_{u_1=1}^{k}b_{k,0}^{(u_1)}(f_{k,t_{i-1}^n})\left.\frac{\partial \phi_{1,k}(x)}{\partial x^{(u_1)}}\right|_{x=f_{k,t_{i-1}^n}}\\
    &\qquad\qquad+\frac{1}{2}\sum_{u_1=1}^{k}\sum_{u_2=1}^{k}(\Sigma_{ff,k,0})_{u_1u_2}\left.\frac{\partial^2 \phi_{1,k}(x)}{\partial x^{(u_1)}\partial x^{(u_2)}}\right|_{x=f_{k,t_{i-1}^n}}\\
    &=\sum_{u_1=1}^{k} b_{k,0}^{(u_1)}(f_{k,t_{i-1}^n})\lambda_{k,0,j_1u_1}\\
    &=\Lambda_{k,0}[j_1,\cdot] b_{k,0}(f_{k,t_{i-1}^n})\quad (j_1=1,\cdots,p).
\end{align*}
Therefore, 
noting that by Lemma 1 in Kessler \cite{kessler(1997)}
\begin{align*}
    \E_{\theta_{k,0}}\left[\phi_{1,k}(f_{k,t_{i}^n})
    |\mathscr{F}^{n}_{i-1}\right]
    &=\phi_{1,k}(f_{k,t_{i-1}^n})+h_n\mathcal{L}(\phi_{1,k}(f_{k,t_{i-1}^n}))
    +R(h_n^2,f_{k,t_{i-1}^n})\\
    &=h_n\Lambda_{k,0}[j_{1},\cdot] b_{k,0}(f_{k,t_{i-1}^n})+R(h_n^2,f_{k,t_{i-1}^n})\\
    &=R(h_n,f_{k,t_{i-1}^n})\quad (j_1=1,\cdots,p),
\end{align*}
we obtain $(\ref{l2-1})$. Next, we will prove (\ref{l2-2}). Let
\begin{align*}
    \phi_{2,k}(x)= \Lambda_{k,0}[j_1,\cdot](x-f_{k,t_{i-1}^n})\Lambda_{k,0}[j_2,\cdot](x-f_{k,t_{i-1}^n})\quad (j_1,j_2=1,\cdots,p).
\end{align*}
Since
\begin{align*}
    \frac{\partial \phi_{2,k}(x)}{\partial x^{(u_1)}}
    &=\lambda_{k,0,j_1u_1}\Lambda_{k,0}[j_2,\cdot](x-f_{k,t_{i-1}^n})+\lambda_{k,0,j_2u_1}\Lambda_{k,0}[j_1,\cdot](x-f_{k,t_{i-1}^n})\\
    &\qquad\qquad\qquad\qquad\qquad\qquad\qquad (j_1,j_2=1,\cdots,p)\quad (u_1=1,\cdots,k),
\end{align*}
\begin{align*}
    \frac{\partial^2 \phi_{2,k}(x)}{\partial x^{(u_1)}\partial x^{(u_2)}}
    =\lambda_{k,0,j_1u_1}\lambda_{k,0,j_2u_2}+\lambda_{k,0,j_2u_1}\lambda_{k,0,j_1u_2}\quad (j_1,j_2=1,\cdots,p)\quad  (u_1,u_2=1,\cdots,k),
\end{align*}
we have
\begin{align*}
    \mathcal{L}(\phi_{2,k}(f_{k,t_{i-1}^n}))&=\sum_{u_1=1}^{k} b_{k,0}^{(u_{1})}(f_{k,t_{i-1}^n})\left.\frac{\partial \phi_{2,k}(x)}{\partial x^{(u_1)}}\right|_{x=f_{k,t_{i-1}^n}}\\
    &\qquad\qquad
    +\frac{1}{2}\sum_{u_1=1}^{k}\sum_{u_2=1}^{k}(\Sigma_{ff,k,0})_{u_1u_2}\left.\frac{\partial^2 \phi_{2,k}(x)}{\partial x^{(u_1)}\partial x^{(u_2)}}\right|_{x=f_{k,t_{i-1}^n}}\\
    &=\frac{1}{2}\sum_{u_1=1}^{k}\sum_{u_2=1}^{k}
    (\Sigma_{ff,k,0})_{u_1u_2}(\lambda_{k,0,j_1u_1}\lambda_{k,0,j_2u_2}+\lambda_{k,0,j_2u_1}\lambda_{k,0,j_1u_2})\\
    &=\frac{1}{2}\sum_{u_1=1}^{k}\sum_{u_2=1}^{k}
    \lambda_{k,0,j_1u_1}(\Sigma_{ff,k,0})_{u_1u_2}\lambda_{k,0,j_2u_2}\\
    &\qquad\qquad+\frac{1}{2}\sum_{u_1=1}^{k}\sum_{u_2=1}^{k}\lambda_{k,0,j_2u_1}(\Sigma_{ff,k,0})_{u_1u_2}\lambda_{k,0,j_1u_2}\\
    &=\frac{1}{2}(\Lambda_{k,0} \Sigma_{ff,k,0}\Lambda_{k,0}^\top)_{j_1j_2}+\frac{1}{2}(\Lambda_{k,0} \Sigma_{ff,k,0}\Lambda_{k,0}^\top)_{j_2j_1}\\
    &=(\Lambda_{k,0}\Sigma_{ff,k,0}\Lambda_{k,0}^\top)_{j_1j_2}\quad (j_1,j_2=1,\cdots,p).
\end{align*}
By using Lemma 1 in Kessler \cite{kessler(1997)},
\begin{align*}
    \E_{\theta_{k,0}}\left[\phi_{2,k}(f_{k,t_{i}^n})
    |\mathscr{F}^{n}_{i-1}\right]
    &=\phi_{2,k}(f_{k,t_{i-1}^n})+h_n\mathcal{L}(\phi_{2,k}(f_{k,t_{i-1}^n}))
    +R(h_n^2,f_{k,t_{i-1}^n})\\
    &=h_n(\Lambda_{k,0}\Sigma_{ff,k,0}\Lambda_{k,0}^\top)_{j_1j_2}+R(h_n^2,f_{k,t_{i-1}^n})\quad (j_1,j_2=1,\cdots,p), 
\end{align*}
 and (\ref{l2-2}) are deduced. Next, we will prove (\ref{l2-3}). Let
\begin{align*}
    \phi_{3,k}(x)&=\Lambda_{k,0}[j_1,\cdot](x-f_{k,t_{i-1}^n})\Lambda_{k,0}[j_2,\cdot](x-f_{k,t_{i-1}^n})\Lambda_{k,0}[j_3,\cdot](x-f_{k,t_{i-1}^n})\quad\\ &\qquad\qquad\qquad\qquad\qquad\qquad\qquad\qquad\qquad\qquad\quad(j_1,j_2,j_3=1,\cdots,p).
\end{align*}
It is shown that
\begin{align*}
    \frac{\partial \phi_{3,k}(x)}{\partial x^{(u_1)}}
    &=\lambda_{k,0,j_1u_1}\Lambda_{k,0}[j_2,\cdot](x-f_{k,t_{i-1}^n})\Lambda_{k,0}[j_3,\cdot](x-f_{k,t_{i-1}^n})\\
    &\quad+\lambda_{k,0,j_2u_1}\Lambda_{k,0}[j_1,\cdot](x-f_{k,t_{i-1}^n})\Lambda_{k,0}[j_3,\cdot](x-f_{k,t_{i-1}^n})\\
    &\quad +\lambda_{k,0,j_3u_1}\Lambda_{k,0}[j_1,\cdot](x-f_{k,t_{i-1}^n})\Lambda_{k,0}[j_2,\cdot](x-f_{k,t_{i-1}^n})\\
    &\qquad\qquad\qquad\qquad\quad (j_1,j_2,j_3=1,\cdots,p)\quad (u_1=1,\cdots,k),
\end{align*}
\begin{align*}
    \frac{\partial^2 \phi_{3,k}(x)}{\partial x^{(u_1)}\partial x^{(u_2)}}
    &=\lambda_{k,0,j_1u_1}\lambda_{k,0,j_2u_2}\Lambda_{k,0}[j_3,\cdot](x-f_{k,t_{i-1}^n})+\lambda_{k,0,j_1u_1}\lambda_{k,0,j_3u_2}\Lambda_{k,0}[j_2,\cdot](x-f_{k,t_{i-1}^n})\\
    &\quad +\lambda_{k,0,j_2u_1}\lambda_{k,0,j_1u_2}\Lambda_{k,0}[j_3,\cdot](x-f_{k,t_{i-1}^n})+\lambda_{k,0,j_2u_1}\lambda_{k,0,j_3u_2}\Lambda_{k,0}[j_1,\cdot](x-f_{k,t_{i-1}^n})\\
    &\quad +\lambda_{k,0,j_3u_1}\lambda_{k,0,j_1u_2}\Lambda_{k,0}[j_2,\cdot](x-f_{k,t_{i-1}^n})+\lambda_{k,0,j_3u_1}\lambda_{k,0,j_2u_2}\Lambda_{k,0}[j_1,\cdot](x-f_{k,t_{i-1}^n})\\
    &\qquad\qquad\qquad\qquad\qquad\qquad\qquad\qquad\qquad (j_1,j_2,j_3=1,\cdots,p)\quad (u_1,u_2=1,\cdots,k),
\end{align*}
so that we have 
\begin{align*}
    \mathcal{L}(\phi_{3,k}(f_{k,t_{i-1}^n}))&=\sum_{u_1=1}^{k}
    b_{k,0}^{(u_1)}(f_{k,t_{i-1}^n})\left.\frac{\partial \phi_{3,k}(x)}{\partial x^{(u_1)}}\right|_{x=f_{k,t_{i-1}^n}}\\
    &\qquad\qquad+\frac{1}{2}\sum_{u_1=1}^{k}\sum_{u_2=1}^{k}(\Sigma_{ff,k,0})_{u_1u_2}\left.\frac{\partial^2 \phi_{3,k}(x)}{\partial x^{(u_1)}\partial x^{(u_2)}}\right|_{x=f_{k,t_{i-1}^n}}\\
    &=0\quad (j_1,j_2,j_3=1,\cdots,p).
\end{align*}
Since it follows from Lemma 1 in Kessler \cite{kessler(1997)} that
\begin{align*}
    \E_{\theta_{k,0}}\left[\phi_{3,k}(f_{k,t_{i}^n})
    |\mathscr{F}^{n}_{i-1}\right]
    &=\phi_{3,k}(f_{k,t_{i-1}^n})+h_n\mathcal{L}(\phi_{3,k}(f_{k,t_{i-1}^n}))
    +R(h_n^2,f_{k,t_{i-1}^n})\\
    &=R(h_n^2,f_{k,t_{i-1}^n})\quad (j_1,j_2,j_3=1,\cdots,p), 
\end{align*}
$(\ref{l2-3})$ is obtained. Next, we will prove (\ref{l2-4}). Let
\begin{align*}
    \phi_{4,k}(x)&=\Lambda_{k,0}[j_1,\cdot](x-{f_{k,t_{i-1}^n}})
    \Lambda_{k,0}[j_2,\cdot](x-{f_{k,t_{i-1}^n}})
    \Lambda_{k,0}[j_3,\cdot](x-f_{k,t_{i-1}^n})
    \Lambda_{k,0}[j_4,\cdot](x-f_{k,t_{i-1}^n})\\
    &\qquad\qquad\qquad\qquad\qquad\qquad\qquad\qquad\qquad\qquad\qquad\qquad\qquad\qquad(j_1,j_2,j_3,j_4=1,\cdots,p).
\end{align*}
Noting that
\begin{align*}
    \frac{\partial \phi_{4,k}(x)}{\partial x^{(u_1)}}
    &=\lambda_{k,0,j_1u_1}\Lambda_{k,0}[j_2,\cdot](x-{f_{k,t_{i-1}^n}})
    \Lambda_{k,0}[j_3,\cdot](x-f_{k,t_{i-1}^n})
    \Lambda_{k,0}[j_4,\cdot](x-f_{k,t_{i-1}^n})\\
    &\quad+\lambda_{k,0,j_2u_1}\Lambda_{k,0}[j_1,\cdot](x-{f_{k,t_{i-1}^n}})\Lambda_{k,0}[j_3,\cdot](x-f_{k,t_{i-1}^n})
    \Lambda_{k,0}[j_4,\cdot](x-f_{k,t_{i-1}^n})\\
    &\quad+\lambda_{k,0,j_3u_1}\Lambda_{k,0}[j_1,\cdot](x-{f_{k,t_{i-1}^n}})\Lambda_{k,0}[j_2,\cdot](x-f_{k,t_{i-1}^n})
    \Lambda_{k,0}[j_4,\cdot](x-f_{k,t_{i-1}^n})\\
    &\quad+\lambda_{k,0,j_4u_1}\Lambda_{k,0}[j_1,\cdot](x-{f_{k,t_{i-1}^n}})\Lambda_{k,0}[j_2,\cdot](x-f_{k,t_{i-1}^n})
    \Lambda_{k,0}[j_3,\cdot](x-f_{k,t_{i-1}^n})\quad\\
    &\quad \qquad\qquad\qquad\qquad\qquad\qquad\qquad\qquad(j_1,j_2,j_3,j_4=1,\cdots,p)\quad (u_1=1,\cdots,k),
    \end{align*}
    \begin{align*}
    \frac{\partial^2 \phi_{4,k}(x)}{\partial x^{(u_1)}\partial x^{(u_2)}}
    &=\lambda_{k,0,j_1u_1}\lambda_{k,0,j_2u_2}
    \Lambda_{k,0}[j_3,\cdot](x-f_{k,t_{i-1}^n})
    \Lambda_{k,0}[j_4,\cdot](x-f_{k,t_{i-1}^n})
    \\&\quad+\lambda_{k,0,j_1u_1}\lambda_{k,0,j_3u_2}
    \Lambda_{k,0}[j_2,\cdot](x-f_{k,t_{i-1}^n})
    \Lambda_{k,0}[j_4,\cdot](x-f_{k,t_{i-1}^n})\\
    &\quad+\lambda_{k,0,j_1u_1}\lambda_{k,0,j_4u_2}
    \Lambda_{k,0}[j_2,\cdot](x-f_{k,t_{i-1}^n})
    \Lambda_{k,0}[j_3,\cdot](x-f_{k,t_{i-1}^n})\\
    &\quad+\lambda_{k,0,j_2u_1}\lambda_{k,0,j_1u_2}
    \Lambda_{k,0}[j_3,\cdot](x-f_{k,t_{i-1}^n})
    \Lambda_{k,0}[j_4,\cdot](x-f_{k,t_{i-1}^n})\\
    &\quad+\lambda_{k,0,j_2u_1}\lambda_{k,0,j_3u_2}
    \Lambda_{k,0}[j_1,\cdot](x-f_{k,t_{i-1}^n})
    \Lambda_{k,0}[j_4,\cdot](x-f_{k,t_{i-1}^n})\\
    &\quad+\lambda_{k,0,j_2u_1}\lambda_{k,0,j_4u_2}
    \Lambda_{k,0}[j_1,\cdot](x-f_{k,t_{i-1}^n})
    \Lambda_{k,0}[j_3,\cdot](x-f_{k,t_{i-1}^n})\\
    &\quad+\lambda_{k,0,j_3u_1}\lambda_{k,0,j_1u_2}
    \Lambda_{k,0}[j_2,\cdot](x-f_{k,t_{i-1}^n})
    \Lambda_{k,0}[j_4,\cdot](x-f_{k,t_{i-1}^n})\\
    &\quad+\lambda_{k,0,j_3u_1}\lambda_{k,0,j_2u_2}
    \Lambda_{k,0}[j_1,\cdot](x-f_{k,t_{i-1}^n})
    \Lambda_{k,0}[j_4,\cdot](x-f_{k,t_{i-1}^n})\\
    &\quad+\lambda_{k,0,j_3u_1}\lambda_{k,0,j_4u_2}
    \Lambda_{k,0}[j_1,\cdot](x-f_{k,t_{i-1}^n})
    \Lambda_{k,0}[j_2,\cdot](x-f_{k,t_{i-1}^n})\\
    &\quad+\lambda_{k,0,j_4u_1}\lambda_{k,0,j_1u_2}
    \Lambda_{k,0}[j_2,\cdot](x-f_{k,t_{i-1}^n})
    \Lambda_{k,0}[j_3,\cdot](x-f_{k,t_{i-1}^n})\\
    &\quad+\lambda_{k,0,j_4u_1}\lambda_{k,0,j_2u_2}
    \Lambda_{k,0}[j_1,\cdot](x-f_{k,t_{i-1}^n})
    \Lambda_{k,0}[j_3,\cdot](x-f_{k,t_{i-1}^n})\\
    &\quad+\lambda_{k,0,j_4u_1}\lambda_{k,0,j_3u_2}
    \Lambda_{k,0}[j_1,\cdot](x-f_{k,t_{i-1}^n})
    \Lambda_{k,0}[j_2,\cdot](x-f_{k,t_{i-1}^n})\\
    &\qquad\qquad\qquad\qquad\qquad (j_1,j_2,j_3,j_4=1,\cdots,p)\quad (u_1,u_2=1,\cdots,k),
\end{align*}
\begin{align*}
     \frac{\partial^3 \phi_{4,k}(x)}{\partial x^{(u_1)}\partial x^{(u_2)}\partial x^{(u_3)}}
    &=\lambda_{k,0,j_1u_1}\lambda_{k,0,j_2u_2}\lambda_{k,0,j_3u_3}
    \Lambda_{k,0}[j_4,\cdot](x-f_{k,t_{i-1}^n})\\
    &\quad+\lambda_{k,0,j_1u_1}\lambda_{k,0,j_2u_2}\lambda_{k,0,j_4u_3}\Lambda_{k,0}[j_3,\cdot](x-f_{k,t_{i-1}^n})\\
    &\quad+\lambda_{k,0,j_1u_1}\lambda_{k,0,j_3u_2}
    \lambda_{k,0,j_2u_3}\Lambda_{k,0}[j_4,\cdot](x-f_{k,t_{i-1}^n})\\
    &\quad+\lambda_{k,0,j_1u_1}\lambda_{k,0,j_3u_2}\lambda_{k,0,j_4u_3}
    \Lambda_{k,0}[j_2,\cdot](x-f_{k,t_{i-1}^n})\\
    &\quad+\lambda_{k,0,j_1u_1}\lambda_{k,0,j_4u_2}
    \lambda_{k,0,j_2u_3}
    \Lambda_{k,0}
    [j_3,\cdot](x-f_{k,t_{i-1}^n})\\
    &\quad+\lambda_{k,0,j_1u_1}\lambda_{k,0,j_4u_2}\lambda_{k,0,j_3u_3}
    \Lambda_{k,0}[j_2,\cdot](x-f_{k,t_{i-1}^n})\\
    &\quad+\lambda_{k,0,j_2u_1}\lambda_{k,0,j_1u_2}
    \lambda_{k,0,j_3u_3}\Lambda_{k,0}[j_4,\cdot](x-f_{k,t_{i-1}^n})\\
    &\quad+\lambda_{k,0,j_2u_1}\lambda_{k,0,j_1u_2}\lambda_{k,0,j_4u_3}
    \Lambda_{k,0}[j_3,\cdot](x-f_{k,t_{i-1}^n})\\
    &\quad+\lambda_{k,0,j_2u_1}\lambda_{k,0,j_3u_2}
    \lambda_{k,0,j_1u_3}
    \Lambda_{k,0}[j_4,\cdot](x-f_{k,t_{i-1}^n})\\
    &\quad+\lambda_{k,0,j_2u_1}\lambda_{k,0,j_3u_2}\lambda_{k,0,j_4u_3}
    \Lambda_{k,0}[j_1,\cdot](x-f_{k,t_{i-1}^n})\\
    &\quad+\lambda_{k,0,j_2u_1}\lambda_{k,0,j_4u_2}
    \lambda_{k,0,j_1u_3}
    \Lambda_{k,0}[j_3,\cdot](x-f_{k,t_{i-1}^n})\\
    &\quad+\lambda_{k,0,j_2u_1}\lambda_{k,0,j_4u_2}
    \lambda_{k,0,j_3u_3}
    \Lambda_{k,0}[j_1,\cdot](x-f_{k,t_{i-1}^n})\\
    &\quad+\lambda_{k,0,j_3u_1}\lambda_{k,0,j_1u_2}\lambda_{k,0,j_2u_3}
    \Lambda_{k,0}[j_4,\cdot](x-f_{k,t_{i-1}^n})\\
    &\quad+\lambda_{k,0,j_3u_1}\lambda_{k,0,j_1u_2}\lambda_{k,0,j_4u_3}
    \Lambda_{k,0}[j_2,\cdot](x-f_{k,t_{i-1}^n})\\
    &\quad+\lambda_{k,0,j_3u_1}\lambda_{k,0,j_2u_2}
    \lambda_{k,0,j_1u_3}
    \Lambda_{k,0}[j_4,\cdot](x-f_{k,t_{i-1}^n})\\
    &\quad+\lambda_{k,0,j_3u_1}\lambda_{k,0,j_2u_2}
    \lambda_{k,0,j_4u_3}
    \Lambda_{k,0}[j_1,\cdot](x-f_{k,t_{i-1}^n})\\
    &\quad+\lambda_{k,0,j_3u_1}\lambda_{k,0,j_4u_2}
    \lambda_{k,0,j_1u_3}
    \Lambda_{k,0}[j_2,\cdot](x-f_{k,t_{i-1}^n})\\
    &\quad+\lambda_{k,0,j_3u_1}\lambda_{k,0,j_4u_2}
    \lambda_{k,0,j_2u_3}
    \Lambda_{k,0}[j_1,\cdot](x-f_{k,t_{i-1}^n})\\
    &\quad+\lambda_{k,0,j_4u_1}\lambda_{k,0,j_1u_2}
    \lambda_{k,0,j_2u_3}
    \Lambda_{k,0}[j_3,\cdot](x-f_{k,t_{i-1}^n})\\
    &\quad+\lambda_{k,0,j_4u_1}\lambda_{k,0,j_1u_2}
    \lambda_{k,0,j_3u_3}
    \Lambda_{k,0}[j_2,\cdot](x-f_{k,t_{i-1}^n})\\
    &\quad+\lambda_{k,0,j_4u_1}\lambda_{k,0,j_2u_2}
    \lambda_{k,0,j_1u_3}
    \Lambda_{k,0}[j_3,\cdot](x-f_{k,t_{i-1}^n})\\
    &\quad+\lambda_{k,0,j_4u_1}\lambda_{k,0,j_2u_2}
    \lambda_{k,0,j_3u_3}
    \Lambda_{k,0}[j_1,\cdot](x-f_{k,t_{i-1}^n})\\
    &\quad+\lambda_{k,0,j_4u_1}\lambda_{k,0,j_3u_2}
    \lambda_{k,0,j_1u_3}
    \Lambda_{k,0}[j_2,\cdot](x-f_{k,t_{i-1}^n})\\
    &\quad+\lambda_{k,0,j_4u_1}\lambda_{k,0,j_3u_2}
    \lambda_{k,0,j_2u_3}
    \Lambda_{k,0}[j_1,\cdot](x-f_{k,t_{i-1}^n})\\
    &\qquad(j_1,j_2,j_3,j_4=1,\cdots,p)\quad(u_1,u_2,u_3=1,\cdots,k),
\end{align*}
and 
\begin{align*}
    \frac{\partial^4 \phi_{4,k}(x)}{\partial x^{(u_1)}\partial x^{(u_2)}\partial x^{(u_3)}\partial x^{(u_4)}}
    &=\lambda_{k,0,j_1u_1}\lambda_{k,0,j_2u_2}\lambda_{k,0,j_3u_3}
    \lambda_{k,0,j_4u_4}\\
    &\quad+\lambda_{k,0,j_1u_1}\lambda_{k,0,j_2u_2}
    \lambda_{k,0,j_4u_3}\lambda_{k,0,j_3u_4}\\   
    &\quad +\lambda_{k,0,j_1u_1}\lambda_{k,0,j_3u_2}
    \lambda_{k,0,j_2u_3}\lambda_{k,0,j_4u_4}\\&\quad+\lambda_{k,0,j_1u_1}\lambda_{k,0,j_3u_2}\lambda_{k,0,j_4u_3}\lambda_{k,0,j_2u_4}\\
    &\quad+\lambda_{k,0,j_1u_1}\lambda_{k,0,j_4u_2}
    \lambda_{k,0,j_2u_3}\lambda_{k,0,j_3u_4}
    \\&\quad+\lambda_{k,0,j_1u_1}\lambda_{k,0,j_4u_2}\lambda_{k,0,j_3u_3}\lambda_{k,0,j_2u_4}\\
    &\quad+\lambda_{k,0,j_2u_1}\lambda_{k,0,j_1u_2}
    \lambda_{k,0,j_3u_3}\lambda_{k,0,j_4u_4}
    \\&\quad+\lambda_{k,0,j_2u_1}\lambda_{k,0,j_1u_2}\lambda_{k,0,j_4u_3}\lambda_{k,0,j_3u_4}\\
    &\quad+\lambda_{k,0,j_2u_1}\lambda_{k,0,j_3u_2}
    \lambda_{k,0,j_1u_3}
    \lambda_{k,0,j_4u_4}\\
   &\quad+\lambda_{k,0,j_2u_1}\lambda_{k,0,j_3u_2}\lambda_{k,0,j_4u_3}\lambda_{k,0,j_1u_4}\\
    &\quad+\lambda_{k,0,j_2u_1}\lambda_{k,0,j_4u_2}
    \lambda_{k,0,j_1u_3}
    \lambda_{k,0,j_3u_4}\\
    &\quad+\lambda_{k,0,j_2u_1}\lambda_{k,0,j_4u_2}
    \lambda_{k,0,j_3u_3}
    \lambda_{k,0,j_1u_4}\\
    &\quad+\lambda_{k,0,j_3u_1}\lambda_{k,0,j_1u_2}\lambda_{k,0,j_2u_3}\lambda_{k,0,j_4u_4}\\
    &\quad+\lambda_{k,0,j_3u_1}\lambda_{k,0,j_1u_2}\lambda_{k,0,j_4u_3}
    \lambda_{k,0,j_2u_4}\\
    &\quad+\lambda_{k,0,j_3u_1}\lambda_{k,0,j_2u_2}
    \lambda_{k,0,j_1u_3}
    \lambda_{k,0,j_4u_4}\\
    &\quad+\lambda_{k,0,j_3u_1}\lambda_{k,0,j_2u_2}
    \lambda_{k,0,j_4u_3}
    \lambda_{k,0,j_1u_4}\\
    &\quad+\lambda_{k,0,j_3u_1}\lambda_{k,0,j_4u_2}
    \lambda_{k,0,j_1u_3}
    \lambda_{k,0,j_2u_4}\\
    &\quad+\lambda_{k,0,j_3u_1}\lambda_{k,0,j_4u_2}
    \lambda_{k,0,j_2u_3}
    \lambda_{k,0,j_1u_4}\\
    &\quad+\lambda_{k,0,j_4u_1}\lambda_{k,0,j_1u_2}
    \lambda_{k,0,j_2u_3}
    \lambda_{k,0,j_3u_4}\\
    &\quad+\lambda_{k,0,j_4u_1}\lambda_{k,0,j_1u_2}
    \lambda_{k,0,j_3u_3}
    \lambda_{k,0,j_2u_4}\\
    &\quad+\lambda_{k,0,j_4u_1}\lambda_{k,0,j_2u_2}
    \lambda_{k,0,j_1u_3}
    \lambda_{k,0,j_3u_4}\\
    &\quad+\lambda_{k,0,j_4u_1}\lambda_{k,0,j_2u_2}
    \lambda_{k,0,j_3u_3}
    \lambda_{k,0,j_1u_4}\\
    &\quad+\lambda_{k,0,j_4u_1}\lambda_{k,0,j_3u_2}
    \lambda_{k,0,j_1u_3}
    \lambda_{k,0,j_2u_4}\\
    &\quad+\lambda_{k,0,j_4u_1}\lambda_{k,0,j_3u_2}
    \lambda_{k,0,j_2u_3}
    \lambda_{k,0,j_1u_4}\\
    (j_1,j_2,j_3,&j_4=1,\cdots,p)\quad (u_1,u_2,u_3,u_4=1,\cdots,k),
\end{align*}
we have
\begin{align*}
   \mathcal{L}(\phi_{4,k}(f_{k,t_{i-1}^n}))
    &=\sum_{u_1=1}^{k}b_{k,0}^{(u_1)}(f_{k,t_{i-1}^n})\left.\frac{\partial \phi_{4,k}(x)}{\partial x^{(u_1)}}\right|_{x=f_{k,t_{i-1}^n}}\\
    &\qquad\qquad +\frac{1}{2}\sum_{u_1=1}^{k}\sum_{u_2=1}^{k}{(\Sigma_{ff,k,0})}_{u_1u_2}\left.\frac{\partial^2\phi_{4,k}(x)}{\partial x^{(u_1)}\partial x^{(u_2)}}\right|_{x=f_{k,t_{i-1}^n}}\\
    &=0\quad(j_1,j_2,j_3,j_4=1,\cdots,p),
\end{align*}
and 
\begin{align*}
    &\quad\ \mathcal{L}^2(\phi_{4,k}(f_{k,t_{i-1}^n}))\\
    &=\sum_{u_1=1}^{k}b_{k,0}^{(u_1)}(f_{k,t_{i-1}^n})\left.\frac{\partial \mathcal{L}(\phi_{4,k}(x))}{\partial x^{(u_1)}}\right|_{x=f_{k,t_{i-1}^n}}+\frac{1}{2}\sum_{u_1=1}^{k}\sum_{u_2=1}^{k}{(\Sigma_{ff,k,0})}_{u_1u_2}\left.\frac{\partial^2\mathcal{L}(\phi_{4,k}(x))}{\partial x^{(u_1)}\partial x^{(u_2)}}\right|_{x=f_{k,t_{i-1}^n}}\\
    &=\sum_{u_1=1}^{k}\sum_{u_3=1}^{k}b_{k,0}^{(u_1)}(f_{k,t_{i-1}^n})b_{k,0}^{(u_3)}(f_{k,t_{i-1}^n})\left.\frac{\partial^2\phi_{4,k}(x)}{\partial x^{(u_1)}\partial x^{(u_3)}}
    \right|_{x=f_{k,t_{i-1}^n}}\\
    &\quad +\frac{1}{2}\sum_{u_1=1}^{k}\sum_{u_3=1}^{k}\sum_{u_4=1}^{k}b_{k,0}^{(u_1)}(f_{k,t_{i-1}^n})(\Sigma_{ff,k,0})_{u_3u_4}\left.\frac{\partial^3\phi_{4,k}(x)}{\partial x^{(u_1)}\partial x^{(u_3)}\partial x^{(u_4)}}
    \right|_{x=f_{k,t_{i-1}^n}}\\   
    &\quad+\frac{1}{2}\sum_{u_1=1}^{k}\sum_{u_2=1}^{k}\sum_{u_3=1}^{k}b_{k,0}^{(u_3)}(f_{k,t_{i-1}^n})(\Sigma_{ff,k,0})_{u_1u_2}\left.\frac{\partial^3\phi_{4,k}(x)}{\partial x^{(u_1)}\partial x^{(u_2)}\partial x^{(u_3)}}
    \right|_{x=f_{k,t_{i-1}^n}}\\
    &\quad+\frac{1}{4}\sum_{u_1=1}^{k}\sum_{u_2=1}^{k}\sum_{u_3=1}^{k}\sum_{u_4=1}^{k}(\Sigma_{ff,k,0})_{u_1u_2}(\Sigma_{ff,k,0})_{u_3u_4}\left.\frac{\partial^4\phi_{4,k}(x)}{\partial x^{(u_1)}\partial x^{(u_2)}\partial x^{(u_3)}\partial x^{(u_4)}}\right|_{x=f_{k,t_{i-1}^n}}\\
    &=\frac{1}{4}\sum_{u_1=1}^{k}\sum_{u_2=1}^{k}\sum_{u_3=1}^{k}\sum_{u_4=1}^{k}(\Sigma_{ff,k,0})_{u_1u_2}(\Sigma_{ff,k,0})_{u_3u_4}\\
    &\quad\times\{\lambda_{k,0,j_1u_1}\lambda_{k,0,j_2u_2}\lambda_{k,0,j_3u_3}\lambda_{k,0,j_4u_4}+\lambda_{k,0,j_1u_1}\lambda_{k,0,j_2u_2}\lambda_{k,0,j_4u_3}\lambda_{k,0,j_3u_4}\\
    &\qquad +\lambda_{k,0,j_1u_1}\lambda_{k,0,j_3u_2}\lambda_{k,0,j_2u_3}\lambda_{k,0,j_4u_4}+\lambda_{k,0,j_1u_1}\lambda_{k,0,j_3u_2}\lambda_{k,0,j_4u_3}\lambda_{k,0,j_2u_4}\\
    &\qquad+\lambda_{k,0,j_1u_1}\lambda_{k,0,j_4u_2}\lambda_{k,0,j_2u_3}\lambda_{k,0,j_3u_4}+\lambda_{k,0,j_1u_1}\lambda_{k,0,j_4u_2}\lambda_{k,0,j_3u_3}\lambda_{k,0,j_2u_4}\\
    &\qquad+\lambda_{k,0,j_2u_1}\lambda_{k,0,j_1u_2}\lambda_{k,0,j_3u_3}\lambda_{k,0,j_4u_4}+\lambda_{k,0,j_2u_1}\lambda_{k,0,j_1u_2}\lambda_{k,0,j_4u_3}\lambda_{k,0,j_3u_4}\\
    &\qquad+\lambda_{k,0,j_2u_1}\lambda_{k,0,j_3u_2}\lambda_{k,0,j_1u_3}\lambda_{k,0,j_4u_4}+\lambda_{k,0,j_2u_1}\lambda_{k,0,j_3u_2}\lambda_{k,0,j_4u_3}\lambda_{k,0,j_1u_4}\\
    &\qquad+\lambda_{k,0,j_2u_1}\lambda_{k,0,j_4u_2}\lambda_{k,0,j_1u_3}\lambda_{k,0,j_3u_4}+\lambda_{k,0,j_2u_1}\lambda_{k,0,j_4u_2}\lambda_{k,0,j_3u_3}\lambda_{k,0,j_1u_4}\\
    &\qquad+\lambda_{k,0,j_3u_1}\lambda_{k,0,j_1u_2}\lambda_{k,0,j_2u_3}
    \lambda_{k,0,j_4u_4}+\lambda_{k,0,j_3u_1}\lambda_{k,0,j_1u_2}\lambda_{k,0,j_4u_3}
    \lambda_{k,0,j_2u_4}\\
    &\qquad+\lambda_{k,0,j_3u_1}\lambda_{k,0,j_2u_2}\lambda_{k,0,j_1u_3}\lambda_{k,0,j_4u_4}+\lambda_{k,0,j_3u_1}\lambda_{k,0,j_2u_2}\lambda_{k,0,j_4u_3}\lambda_{k,0,j_1u_4}\\
    &\qquad+\lambda_{k,0,j_3u_1}\lambda_{k,0,j_4u_2}\lambda_{k,0,j_1u_3}\lambda_{k,0,j_2u_4}+\lambda_{k,0,j_3u_1}\lambda_{k,0,j_4u_2}\lambda_{k,0,j_2u_3}\lambda_{k,0,j_1u_4}\\
    &\qquad+\lambda_{k,0,j_4u_1}\lambda_{k,0,j_1u_2}\lambda_{k,0,j_2u_3}\lambda_{k,0,j_3u_4}+\lambda_{k,0,j_4u_1}\lambda_{k,0,j_1u_2}\lambda_{k,0,j_3u_3}\lambda_{k,0,j_2u_4}\\
    &\qquad+\lambda_{k,0,j_4u_1}\lambda_{k,0,j_2u_2}\lambda_{k,0,j_1u_3}\lambda_{k,0,j_3u_4}+\lambda_{k,0,j_4u_1}\lambda_{k,0,j_2u_2}\lambda_{k,0,j_3u_3}\lambda_{k,0,j_1u_4}\\
    &\qquad+\lambda_{k,0,j_4u_1}\lambda_{k,0,j_3u_2}\lambda_{k,0,j_1u_3}\lambda_{k,0,j_2u_4}+\lambda_{k,0,j_4u_1}\lambda_{k,0,j_3u_2}\lambda_{k,0,j_2u_3}\lambda_{k,0,j_1u_4}\}\\
    &=\frac{1}{4}\{(\Lambda_{k,0}\Sigma_{ff,k,0}\Lambda_{k,0}^\top)_{j_1j_2}(\Lambda_{k,0}\Sigma_{ff,k,0}\Lambda_{k,0}^\top)_{j_3j_4}
    +(\Lambda_{k,0}\Sigma_{ff,k,0}\Lambda_{k,0}^\top)_{j_1j_2}(\Lambda_{k,0}\Sigma_{ff,k,0}\Lambda_{k,0}^\top)_{j_4j_3}\\
    &\quad+(\Lambda_{k,0}\Sigma_{ff,k,0}\Lambda_{k,0}^\top)_{j_1j_3}(\Lambda_{k,0}\Sigma_{ff,k,0}\Lambda_{k,0}^\top)_{j_2j_4}
    +(\Lambda_{k,0}\Sigma_{ff,k,0}\Lambda_{k,0}^\top)_{j_1j_3}(\Lambda_{k,0}\Sigma_{ff,k,0}\Lambda_{k,0}^\top)_{j_4j_2}\\ &\quad+(\Lambda_{k,0}\Sigma_{ff,k,0}\Lambda_{k,0}^\top)_{j_1j_4}(\Lambda_{k,0}\Sigma_{ff,k,0}\Lambda_{k,0}^\top)_{j_2j_3} +(\Lambda_{k,0}\Sigma_{ff,k,0}\Lambda_{k,0}^\top)_{j_1j_4}(\Lambda_{k,0}\Sigma_{ff,k,0}\Lambda_{k,0}^\top)_{j_3j_2}\\
    &\quad+(\Lambda_{k,0}\Sigma_{ff,k,0}\Lambda_{k,0}^\top)_{j_2j_1}(\Lambda_{k,0}\Sigma_{ff,k,0}\Lambda_{k,0}^\top)_{j_3j_4}+(\Lambda_{k,0}\Sigma_{ff,k,0}\Lambda_{k,0}^\top)_{j_2j_1}(\Lambda_{k,0}\Sigma_{ff,k,0}\Lambda_{k,0}^\top)_{j_4j_3}\\
    &\quad+(\Lambda_{k,0}\Sigma_{ff,k,0}\Lambda_{k,0}^\top)_{j_2j_3}(\Lambda_{k,0}\Sigma_{ff,k,0}\Lambda_{k,0}^\top)_{j_1j_4}+(\Lambda_{k,0}\Sigma_{ff,k,0}\Lambda_{k,0}^\top)_{j_2j_3}(\Lambda_{k,0}\Sigma_{ff,k,0}\Lambda_{k,0}^\top)_{j_4j_1} \\
    &\quad+(\Lambda_{k,0}\Sigma_{ff,k,0}\Lambda_{k,0}^\top)_{j_2j_4}(\Lambda_{k,0}\Sigma_{ff,k,0}\Lambda_{k,0}^\top)_{j_1j_3} +(\Lambda_{k,0}\Sigma_{ff,k,0}\Lambda_{k,0}^\top)_{j_2j_4}(\Lambda_{k,0}\Sigma_{ff,k,0}\Lambda_{k,0}^\top)_{j_3j_1} \\
    &\quad+(\Lambda_{k,0}\Sigma_{ff,k,0}\Lambda_{k,0}^\top)_{j_3j_1}(\Lambda_{k,0}\Sigma_{ff,k,0}\Lambda_{k,0}^\top)_{j_2j_4}   +(\Lambda_{k,0}\Sigma_{ff,k,0}\Lambda_{k,0}^\top)_{j_3j_1}(\Lambda_{k,0}\Sigma_{ff,k,0}\Lambda_{k,0}^\top)_{j_4j_2}\\
    &\quad+(\Lambda_{k,0}\Sigma_{ff,k,0}\Lambda_{k,0}^\top)_{j_3j_2}(\Lambda_{k,0}\Sigma_{ff,k,0}\Lambda_{k,0}^\top)_{j_1j_4}+(\Lambda_{k,0}\Sigma_{ff,k,0}\Lambda_{k,0}^\top)_{j_3j_2}(\Lambda_{k,0}\Sigma_{ff,k,0}\Lambda_{k,0}^\top)_{j_4j_1}\\ 
    &\quad+(\Lambda_{k,0}\Sigma_{ff,k,0}\Lambda_{k,0}^\top)_{j_3j_4}(\Lambda_{k,0}\Sigma_{ff,k,0}\Lambda_{k,0}^\top)_{j_1j_2} 
    +(\Lambda_{k,0}\Sigma_{ff,k,0}\Lambda_{k,0}^\top)_{j_3j_4}(\Lambda_{k,0}\Sigma_{ff,k,0}\Lambda_{k,0}^\top)_{j_2j_1} \\
    &\quad+(\Lambda_{k,0}\Sigma_{ff,k,0}\Lambda_{k,0}^\top)_{j_4j_1}(\Lambda_{k,0}\Sigma_{ff,k,0}\Lambda_{k,0}^\top)_{j_2j_3} +(\Lambda_{k,0}\Sigma_{ff,k,0}\Lambda_{k,0}^\top)_{j_4j_1}(\Lambda_{k,0}\Sigma_{ff,k,0}\Lambda_{k,0}^\top)_{j_3j_2} \\
    &\quad+(\Lambda_{k,0}\Sigma_{ff,k,0}\Lambda_{k,0}^\top)_{j_4j_2}(\Lambda_{k,0}\Sigma_{ff,k,0}\Lambda_{k,0}^\top)_{j_1j_3}+(\Lambda_{k,0}\Sigma_{ff,k,0}\Lambda_{k,0}^\top)_{j_4j_2}(\Lambda_{k,0}\Sigma_{ff,k,0}\Lambda_{k,0}^\top)_{j_3j_1} \\
    &\quad+(\Lambda_{k,0}\Sigma_{ff,k,0}\Lambda_{k,0}^\top)_{j_4j_3}(\Lambda_{k,0}\Sigma_{ff,k,0}\Lambda_{k,0}^\top)_{j_1j_2}+(\Lambda_{k,0}\Sigma_{ff,k,0}\Lambda_{k,0}^\top)_{j_4j_3}(\Lambda_{k,0}\Sigma_{ff,k,0}\Lambda_{k,0}^\top)_{j_2j_1}\}\\
    &=2\{(\Lambda_{k,0}\Sigma_{ff,k,0}\Lambda_{k,0}^\top)_{j_1j_2}(\Lambda_{k,0}\Sigma_{ff,k,0}\Lambda_{k,0}^\top)_{j_3j_4}+(\Lambda_{k,0}\Sigma_{ff,k,0}\Lambda_{k,0}^\top)_{j_1j_3}(\Lambda_{k,0}\Sigma_{ff,k,0}\Lambda_{k,0}^\top)_{j_2j_4}\\
    &\quad +(\Lambda_{k,0}\Sigma_{ff,k,0}\Lambda_{k,0}^\top)_{j_1j_4}(\Lambda_{k,0}\Sigma_{ff,k,0}\Lambda_{k,0}^\top)_{j_2j_3}\}\quad  (j_1,j_2,j_3,j_4=1,\cdots,p).
\end{align*}
Therefore, Lemma 1 in Kessler \cite{kessler(1997)} yields that
\begin{align*}
    \E_{\theta_{k,0}}\left[\phi_{4,k}(f_{k,t_{i}^n})| \mathscr{F}^{n}_{i-1}\right]
    &=\phi_{4,k}(f_{k,t_{i-1}^n})+h_n\mathcal{L}(\phi_{4,k}(f_{k,t_{i-1}^n}))+\frac{h_n^2}{2}\mathcal{L}^2(\phi_{4,k}(f_{k,t_{i-1}^n}))+R(h_n^3,f_{k,t_{i-1}^n})\\
    &=h_n^2\{(\Lambda_{k,0}\Sigma_{ff,k,0}\Lambda_{k,0}^\top)_{j_1j_2}(\Lambda_{k,0}\Sigma_{ff,k,0}\Lambda_{k,0}^\top)_{j_3j_4}\\
    &\quad+(\Lambda_{k,0}\Sigma_{ff,k,0}\Lambda_{k,0}^\top)_{j_1j_3}(\Lambda_{k,0}\Sigma_{ff,k,0}\Lambda_{k,0}^\top)_{j_2j_4}\\
    &\quad+(\Lambda_{k,0}\Sigma_{ff,k,0}\Lambda_{k,0}^\top)_{j_1j_4}(\Lambda_{k,0}\Sigma_{ff,k,0}\Lambda_{k,0}^\top)_{j_2j_3}\}
    +R(h_n^3,f_{k,t_{i-1}^n})\\ &\qquad\qquad\qquad\qquad\qquad\qquad\qquad\qquad\qquad\qquad(j_1,j_2,j_3,j_4=1,\cdots,p),
\end{align*}
so that we obtain $(\ref{l2-4})$. 
\end{proof}
\begin{lemma}
Under assumption [B1], 
\begin{align}
    \begin{split}
    \label{l3-1}
    &\quad \E_{\theta_{k,0}}\left[e_{t_{i}^n}^{(j_1)}-e_{t_{i-1}^n}^{(j_1)}
    |\mathscr{F}^{n}_{i-1}\right]
    =R(h_n,e_{t_{i-1}^n})\quad (j_1=1,\cdots,p), 
    \end{split}\\
    \begin{split}
    \label{l3-2}
    &\quad \E_{\theta_{k,0}}\left[(e_{t_{i}^n}^{(j_1)}-e_{t_{i-1}^n}^{(j_1)})(e_{t_{i}^n}^{(j_2)}-e_{t_{i-1}^n}^{(j_2)})
    |\mathscr{F}^{n}_{i-1}\right]
    =h_n(\Sigma_{ee,0})_{j_1j_2}+R(h_n^2,e_{t_{i-1}^n})\quad(j_1,j_2=1,\cdots,p),
    \end{split}\\
    \begin{split}
    \label{l3-3}
    &\quad \E_{\theta_{k,0}}\left[(e_{t_{i}^n}^{(j_1)}-e_{t_{i-1}^n}^{(j_1)})(e_{t_{i}^n}^{(j_2)}-e_{t_{i-1}^n}^{(j_2)})(e_{t_{i}^n}^{(j_3)}-e_{t_{i-1}^n}^{(j_3)})
    |\mathscr{F}^{n}_{i-1}\right]
    =R(h_n^2,e_{t_{i-1}^n})\quad(j_1,j_2,j_3=1,\cdots,p),
    \end{split}\\
    \begin{split}
    \label{l3-4}
    &\quad \E_{\theta_{k,0}}\left[(e_{t_{i}^n}^{(j_1)}-e_{t_{i-1}^n}^{(j_1)})(e_{t_{i}^n}^{(j_2)}-e_{t_{i-1}^n}^{(j_2)})(e_{t_{i}^n}^{(j_3)}-e_{t_{i-1}^n}^{(j_3)})(e_{t_{i}^n}^{(j_4)}-e_{t_{i-1}^n}^{(j_4)})
    |\mathscr{F}^{n}_{i-1}\right]\\
    &=h_n^2\left\{(\Sigma_{ee,0})_{j_1j_2}(\Sigma_{ee,0})_{j_3j_4}
    +(\Sigma_{ee,0})_{j_1j_3}(\Sigma_{ee,0})_{j_2j_4}+(\Sigma_{ee,0})_{j_1j_4}(\Sigma_{ee,0})_{j_2j_3}\right\}+R(h_n^3,e_{t_{i-1}^n})\\
    &\qquad\qquad\qquad\qquad\qquad\qquad\qquad\qquad\qquad\qquad\qquad\qquad\qquad\qquad\qquad(j_1,j_2,j_3,j_4=1,\cdots,p).
    \end{split}
\end{align}
\end{lemma}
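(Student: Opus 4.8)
The plan is to imitate the proof of Lemma 2 almost verbatim, with the factor process $f_{k,t}$ replaced by the unique-factor process $e_t$ and the weighted increments $\Lambda_{k,0}[j,\cdot](f_{k,t_{i}^n}-f_{k,t_{i-1}^n})$ replaced by the plain coordinate increments $e_{t_{i}^n}^{(j)}-e_{t_{i-1}^n}^{(j)}$. Two structural features make the present case strictly easier: the diffusion coefficient of $e_t$ is diagonal, $\Sigma_{ee,0}=\Diag(\sigma_{1,0}^2,\dots,\sigma_{p,0}^2)$, so the generator
\[
\mathcal{L}_{e}\phi(x)=\sum_{i=1}^p B_{i,0}(x^{(i)})\,\partial_{x^{(i)}}\phi(x)+\frac12\sum_{i=1}^p\sigma_{i,0}^2\,\partial_{x^{(i)}}^2\phi(x)
\]
has no mixed second-order part, and there is no loading matrix to propagate. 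By [B1](c) the coefficients $B_{i,0}$ lie in $C^{4}_{\uparrow}(\mathbb R)$, which is exactly what is needed to apply Lemma 1 in Kessler \cite{kessler(1997)} to the test functions below and to control the remainders as $R(\,\cdot\,,e_{t_{i-1}^n})$.

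For each $m\in\{1,2,3,4\}$ I would set
\[
\psi_{m,e}(x)=\prod_{\ell=1}^{m}\bigl(x^{(j_\ell)}-e_{t_{i-1}^n}^{(j_\ell)}\bigr),
\]
which vanishes at $x=e_{t_{i-1}^n}$ together with all its partial derivatives of order $\le m-1$. For (\ref{l3-1})--(\ref{l3-3}) I would use the first-order Kessler expansion $\E_{\theta_{k,0}}[\psi_{m,e}(e_{t_{i}^n})\mid\mathscr{F}^{n}_{i-1}]=\psi_{m,e}(e_{t_{i-1}^n})+h_n\mathcal{L}_{e}\psi_{m,e}(e_{t_{i-1}^n})+R(h_n^2,e_{t_{i-1}^n})$ and evaluate $\mathcal{L}_{e}\psi_{m,e}$ at $e_{t_{i-1}^n}$: for $m=1$ only the drift term survives, giving $B_{j_1,0}(e_{t_{i-1}^n}^{(j_1)})=R(1,e_{t_{i-1}^n})$, hence (\ref{l3-1}); for $m=2$ the drift part vanishes and the diffusion part collapses to $\sigma_{j_1,0}^2\delta_{j_1j_2}=(\Sigma_{ee,0})_{j_1j_2}$, hence (\ref{l3-2}); for $m=3$ every surviving term of $\mathcal{L}_{e}\psi_{3,e}$ still contains at least one centered factor $x^{(j_\ell)}-e_{t_{i-1}^n}^{(j_\ell)}$ and therefore vanishes at $e_{t_{i-1}^n}$, giving (\ref{l3-3}).

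For (\ref{l3-4}) I would instead use the second-order expansion $\E_{\theta_{k,0}}[\psi_{4,e}(e_{t_{i}^n})\mid\mathscr{F}^{n}_{i-1}]=\psi_{4,e}(e_{t_{i-1}^n})+h_n\mathcal{L}_{e}\psi_{4,e}(e_{t_{i-1}^n})+\tfrac{h_n^2}{2}\mathcal{L}_{e}^{2}\psi_{4,e}(e_{t_{i-1}^n})+R(h_n^3,e_{t_{i-1}^n})$. Here $\psi_{4,e}$ and $\mathcal{L}_{e}\psi_{4,e}$ vanish at $e_{t_{i-1}^n}$ as before, and in $\mathcal{L}_{e}^{2}\psi_{4,e}$ every term carrying a drift factor or fewer than four differentiations still contains a centered factor and drops out; only the pure fourth-derivative term $\frac14\sum_{u_1,u_2,u_3,u_4}(\Sigma_{ee,0})_{u_1u_2}(\Sigma_{ee,0})_{u_3u_4}\,\partial_{u_1}\partial_{u_2}\partial_{u_3}\partial_{u_4}\psi_{4,e}$ survives. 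This is the one genuinely computational step, the analogue of the long $\mathcal{L}^2(\phi_{4,k})$ display in Lemma 2, and it is where I expect the bookkeeping to be heaviest; it is however lighter here because $(\Sigma_{ee,0})_{uv}=\sigma_{u,0}^2\delta_{uv}$. Using that $\partial_{u_1}\partial_{u_2}\partial_{u_3}\partial_{u_4}\psi_{4,e}=\sum_{\pi\in S_4}\prod_{\ell=1}^{4}\delta_{j_{\pi(\ell)}u_\ell}$ and the diagonality of $\Sigma_{ee,0}$, the quadruple sum reduces to $2\{(\Sigma_{ee,0})_{j_1j_2}(\Sigma_{ee,0})_{j_3j_4}+(\Sigma_{ee,0})_{j_1j_3}(\Sigma_{ee,0})_{j_2j_4}+(\Sigma_{ee,0})_{j_1j_4}(\Sigma_{ee,0})_{j_2j_3}\}$; multiplying by $h_n^2/2$ yields exactly the right-hand side of (\ref{l3-4}), with the polynomial-growth bound on $B_{i,0}$ again ensuring the error is of the form $R(h_n^3,e_{t_{i-1}^n})$.
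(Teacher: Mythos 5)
Your proof is correct, but it takes a more laborious route than the paper, which disposes of this lemma in one line by citing Lemma 7 of Kessler \cite{kessler(1997)} directly: that lemma already gives the conditional moment expansions of increments of a diffusion with smooth coefficients of polynomial growth, and the process $e_t$ is exactly such a $p$-dimensional diffusion with constant diagonal diffusion matrix $\bar{S}=\Diag(\sigma_{1},\dots,\sigma_{p})^{\top}$, so the four displayed identities are instances of Kessler's result with $\bar{S}\bar{S}^{\top}=\Sigma_{ee,0}$. What you do instead is rerun the proof of Lemma 2 from scratch: you apply Kessler's Lemma 1 to the centered monomials $\psi_{m,e}$, compute $\mathcal{L}_{e}\psi_{m,e}$ and (for $m=4$) $\mathcal{L}_{e}^{2}\psi_{4,e}$ at $e_{t_{i-1}^n}$, and use the permutation identity $\partial_{u_1}\partial_{u_2}\partial_{u_3}\partial_{u_4}\psi_{4,e}=\sum_{\pi\in S_4}\prod_{\ell}\delta_{j_{\pi(\ell)}u_\ell}$ together with the symmetry of $\Sigma_{ee,0}$ to collapse the $24$ permutations into the three pairings with multiplicity $8$, giving the factor $\frac14\cdot 8=2$ and hence, after multiplying by $h_n^2/2$, the stated coefficient $h_n^2$. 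All of these steps check out, including the observation that diagonality of $\Sigma_{ee,0}$ automatically produces the vanishing cross-covariances coming from the independence of $B^{(1)},\dots,B^{(p)}$. The trade-off is clear: the paper's citation is shorter and is the intended reading (the lemma is literally a special case of the cited result), while your derivation is self-contained, makes the multidimensional applicability explicit rather than implicit, and mirrors the structure of Lemma 2 so that the two lemmas can be verified in parallel. Either is acceptable; yours simply redoes work that the reference already supplies.
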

\begin{proof}
See  Lemma 7 in Kessler \cite{kessler(1997)}.
\end{proof}
\begin{lemma}
Under assumptions [A1] and [B1], 
\begin{align}
        \begin{split}
        \label{l4-1}
        &\quad \E_{\theta_{k,0}}\left[(X_{t_{i}^n}^{(j_1)}-X_{t_{i-1}^n}^{(j_1)})(X_{t_{i}^n}^{(j_2)}-X_{t_{i-1}^n}^{(j_2)})|\mathscr{F}^{n}_{i-1}\right]\\
        &=h_n(\Sigma_{k}(\theta_{k,0}))_{j_1j_2}
        +h_n^2\{R(1,f_{k,t_{i-1}^n})+R(1,f_{k,t_{i-1}^n})R(1,e_{t_{i-1}^n})+R(1,e_{t_{i-1}^n})\}\quad(j_1,j_2=1,\cdots,p), 
        \end{split}\\
        \begin{split}
        \label{l4-2}
        &\quad \E_{\theta_{k,0}}\left[ (X^{(j_1)}_{t_{i}^n}-X^{(j_1)}_{t_{i-1}^n})(X^{(j_2)}_{t_{i}^n}-X^{(j_2)}_{t_{i-1}^n})(X^{(j_3)}_{t_{i}^n}-X^{(j_3)}_{t_{i-1}^n})(X^{(j_4)}_{t_{i}^n}-X^{(j_4)}_{t_{i-1}^n})| \mathscr{F}^{n}_{i-1}\right]\\
        &=h_n^2\{(\Sigma_{k}(\theta_{k,0}))_{j_1j_2}(\Sigma_{k}(\theta_{k,0}))_{j_3j_4}+(\Sigma_{k}(\theta_{k,0}))_{j_1j_3}(\Sigma_{k}(\theta_{k,0}))_{j_2j_4}+(\Sigma_{k}(\theta_{k,0}))_{j_1j_4}(\Sigma_{k}(\theta_{k,0}))_{j_2j_3}\}\\
        &\quad +h_n^3\{R(1,f_{k,t_{i-1}^n})  +R(1,f_{k,t_{i-1}^n})R(1,e_{t_{i-1}^n})+R(1,e_{t_{i-1}^n})\}\quad (j_1,j_2,j_3,j_4=1,\cdots,p). 
        \end{split}
\end{align}
\end{lemma}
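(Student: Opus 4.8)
The plan is to expand the increments of $X$ coordinatewise via
$X_{t_{i}^n}^{(j)}-X_{t_{i-1}^n}^{(j)}=\Lambda_{k,0}[j,\cdot](f_{k,t_{i}^n}-f_{k,t_{i-1}^n})+(e_{t_{i}^n}^{(j)}-e_{t_{i-1}^n}^{(j)})$
and to reduce everything to Lemma 2 and Lemma 3. The key structural observation is that, since $W_t$ and $B_t$ are independent and the drift of $\{f_{k,t}\}$ depends only on $f_{k,t}$ while that of each $\{e_t^{(i)}\}$ depends only on $e_t^{(i)}$, the processes $\{f_{k,t}\}$ and $\{e_t\}$ are independent; moreover $f_{k,t_{i}^n}-f_{k,t_{i-1}^n}$ is a measurable functional of $f_{k,t_{i-1}^n}$ and of the increments of $W$ on $[t_{i-1}^n,t_{i}^n]$, and analogously for the $e$-increment. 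Hence, conditionally on $\mathscr{F}^{n}_{i-1}$, the vectors $f_{k,t_{i}^n}-f_{k,t_{i-1}^n}$ and $e_{t_{i}^n}-e_{t_{i-1}^n}$ are independent, so the conditional expectation of any product that splits into a factor depending only on the $f$-increment and a factor depending only on the $e$-increment equals the product of the two conditional expectations, each of which is then supplied by Lemma 2 or Lemma 3. Throughout I use the elementary calculus of $R$-terms: $R(h_n^a,x)R(h_n^b,y)$ is of the form $h_n^{a+b}R(1,x)R(1,y)$, $h_nR(h_n^a,x)=R(h_n^{a+1},x)$, and a higher-order $R$-term is absorbed into a lower-order one because $h_n$ is bounded.

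For (\ref{l4-1}), writing $\Delta f=f_{k,t_{i}^n}-f_{k,t_{i-1}^n}$ and $\Delta e=e_{t_{i}^n}-e_{t_{i-1}^n}$, the product $(X^{(j_1)}_{t_{i}^n}-X^{(j_1)}_{t_{i-1}^n})(X^{(j_2)}_{t_{i}^n}-X^{(j_2)}_{t_{i-1}^n})$ expands into $(\Lambda_{k,0}[j_1,\cdot]\Delta f)(\Lambda_{k,0}[j_2,\cdot]\Delta f)$, the two cross terms $(\Lambda_{k,0}[j_a,\cdot]\Delta f)\Delta e^{(j_b)}$, and $\Delta e^{(j_1)}\Delta e^{(j_2)}$. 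By (\ref{l2-2}) the first term has conditional expectation $h_n(\Lambda_{k,0}\Sigma_{ff,k,0}\Lambda_{k,0}^\top)_{j_1j_2}+R(h_n^2,f_{k,t_{i-1}^n})$, by (\ref{l3-2}) the last has conditional expectation $h_n(\Sigma_{ee,0})_{j_1j_2}+R(h_n^2,e_{t_{i-1}^n})$, and by conditional independence together with (\ref{l2-1}) and (\ref{l3-1}) each cross term has conditional expectation $R(h_n,f_{k,t_{i-1}^n})R(h_n,e_{t_{i-1}^n})$. Summing and using $\Sigma_{k}(\theta_{k,0})=\Lambda_{k,0}\Sigma_{ff,k,0}\Lambda_{k,0}^\top+\Sigma_{ee,0}$ gives (\ref{l4-1}).

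For (\ref{l4-2}), expand the product of the four increments into $2^4=16$ terms indexed by the set $I\subseteq\{1,2,3,4\}$ of positions carrying an $f$-factor $\Lambda_{k,0}[j_a,\cdot]\Delta f$, the complementary positions carrying an $e$-factor $\Delta e^{(j_a)}$. By conditional independence, the conditional expectation of the $I$-term factors into the conditional expectation of the product of the $|I|$ $f$-factors times the conditional expectation of the product of the $4-|I|$ $e$-factors. For $|I|\in\{1,3\}$, (\ref{l2-1}) or (\ref{l2-3}) gives an $f$-factor of order $h_n$ or $h_n^2$ while (\ref{l3-3}) or (\ref{l3-1}) gives the matching $e$-factor of order $h_n^2$ or $h_n$, so each such term equals $h_n^3R(1,f_{k,t_{i-1}^n})R(1,e_{t_{i-1}^n})$. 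For $|I|=4$, (\ref{l2-4}) contributes $h_n^2$ times the sum over the three pairings of products of entries of $\Lambda_{k,0}\Sigma_{ff,k,0}\Lambda_{k,0}^\top$, plus $R(h_n^3,f_{k,t_{i-1}^n})$; for $|I|=0$, (\ref{l3-4}) contributes the analogous three-pairing sum for $\Sigma_{ee,0}$, plus $R(h_n^3,e_{t_{i-1}^n})$; and for each of the six sets $I$ with $|I|=2$, (\ref{l2-2}) and (\ref{l3-2}) contribute $h_n^2$ times the product of the entry of $\Lambda_{k,0}\Sigma_{ff,k,0}\Lambda_{k,0}^\top$ on the two indices in $I$ and the entry of $\Sigma_{ee,0}$ on the two indices not in $I$, plus $R$-remainders of order $h_n^3$. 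It then remains to check the elementary algebraic identity obtained by substituting $\Sigma_{k}(\theta_{k,0})=\Lambda_{k,0}\Sigma_{ff,k,0}\Lambda_{k,0}^\top+\Sigma_{ee,0}$ into the target expression $(\Sigma_k(\theta_{k,0}))_{j_1j_2}(\Sigma_k(\theta_{k,0}))_{j_3j_4}+(\Sigma_k(\theta_{k,0}))_{j_1j_3}(\Sigma_k(\theta_{k,0}))_{j_2j_4}+(\Sigma_k(\theta_{k,0}))_{j_1j_4}(\Sigma_k(\theta_{k,0}))_{j_2j_3}$ and expanding by bilinearity: the resulting twelve monomials are precisely the three pure-$\Lambda_{k,0}\Sigma_{ff,k,0}\Lambda_{k,0}^\top$ monomials from $|I|=4$, the three pure-$\Sigma_{ee,0}$ monomials from $|I|=0$, and the six mixed monomials from the six choices with $|I|=2$. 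Collecting the remainder terms, and absorbing the stray powers of $h_n$, yields (\ref{l4-2}).

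The routine but lengthy bookkeeping of this $16$-term expansion, and in particular the verification that the mixed monomials (each a product of one entry of $\Lambda_{k,0}\Sigma_{ff,k,0}\Lambda_{k,0}^\top$ and one entry of $\Sigma_{ee,0}$) from the $|I|=2$ terms combine with the pure monomials from $|I|\in\{0,4\}$ to reproduce the Isserlis-type three-pairing formula for $\Sigma_k(\theta_{k,0})$, is the main obstacle; a secondary point requiring care is the justification of the conditional-independence factorisation, which rests on the independence of $W$ and $B$ and on the fact that over a single interval the $f$- and $e$-increments are measurable functionals of the respective (independent) driving Brownian increments and of the $\mathscr{F}^{n}_{i-1}$-measurable initial values.
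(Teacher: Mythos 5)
Your proposal is correct and follows essentially the same route as the paper: the same coordinatewise expansion of the $X$-increments into $f$- and $e$-parts, the same conditional-independence factorisation, the same invocation of Lemmas 2 and 3 term by term, and the same recombination via $\Sigma_{k}(\theta_{k,0})=\Lambda_{k,0}\Sigma_{ff,k,0}\Lambda_{k,0}^\top+\Sigma_{ee,0}$; the paper simply enumerates the sixteen terms explicitly where you index them by subsets and argue by bilinearity. Your explicit justification of the conditional independence of the $f$- and $e$-increments given $\mathscr{F}^{n}_{i-1}$ is a welcome addition, as the paper uses that factorisation without comment.
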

\begin{proof}
First, we will prove (\ref{l4-1}). From Lemma 2 (\ref{l2-1}), (\ref{l2-2}) and Lemma 3 (\ref{l3-1}), (\ref{l3-2}), 
it follows that
\begin{align*}
 &\quad \E_{\theta_{k,0}}\left[(X_{t_{i}^n}^{(j_1)}-X_{t_{i-1}^n}^{(j_1)})(X_{t_{i}^n}^{(j_2)}-X_{t_{i-1}^n}^{(j_2)})|\mathscr{F}^{n}_{i-1}\right]\\
     &=\E_{\theta_{k,0}}\left[(\Lambda_{k,0}[j_1,\cdot](f_{k,t_{i}^n}-f_{k,t_{i-1}^n})+e_{t_{i}^n}^{(j_1)}-e_{t_{i-1}^n}^{(j_1)})(\Lambda_{k,0}[j_2,\cdot](f_{k,t_{i}^n}-f_{k,t_{i-1}^n})+e_{t_{i}^n}^{(j_2)}-e_{t_{i-1}^n}^{(j_2)})
     |\mathscr{F}^{n}_{i-1}\right]\\
    &=\E_{\theta_{k,0}}\left[\Lambda_{k,0}[j_1,\cdot](f_{k,t_{i}^n}-f_{k,t_{i-1}^n})\Lambda_{k,0}[j_2,\cdot](f_{k,t_{i}^n}-f_{k,t_{i-1}^n})|\mathscr{F}^{n}_{i-1}\right]
    \\
    &\quad
    +\E_{\theta_{k,0}}\left[\Lambda_{k,0}[j_1,\cdot](f_{k,t_{i}^n}-f_{k,t_{i-1}^n})(e_{t_{i}^n}^{(j_2)}-e_{t_{i-1}^n}^{(j_2)})|\mathscr{F}^{n}_{i-1}\right]\\
    &\quad 
    +\E_{\theta_{k,0}}\left[(e_{t_{i}^n}^{(j_1)}-e_{t_{i-1}^n}^{(j_1)})\Lambda_{k,0}[j_2,\cdot](f_{k,t_{i}^n}-f_{k,t_{i-1}^n})|\mathscr{F}^{n}_{i-1}\right]\\
    &\quad+\E_{\theta_{k,0}}\left[(e_{t_{i}^n}^{(j_1)}-e_{t_{i-1}^n}^{(j_1)})(e_{t_{i}^n}^{(j_2)}-e_{t_{i-1}^n}^{(j_2)})|\mathscr{F}^{n}_{i-1}\right]\\
    &=\E_{\theta_{k,0}}\left[\Lambda_{k,0}[j_1,\cdot](f_{k,t_{i}^n}-f_{k,t_{i-1}^n})\Lambda_{k,0}[j_2,\cdot](f_{k,t_{i}^n}-f_{k,t_{i-1}^n})|\mathscr{F}^{n}_{i-1}\right]
    \\
    &\quad+\E_{\theta_{k,0}}\left[\Lambda_{k,0}[j_1,\cdot](f_{k,t_{i}^n}-f_{k,t_{i-1}^n})|\mathscr{F}^{n}_{i-1}\right]\E_{\theta_{k,0}}\left[e_{t_{i}^n}^{(j_2)}-e_{t_{i-1}^n}^{(j_2)}|\mathscr{F}^{n}_{i-1}\right]\\
    &\quad
    +\E_{\theta_{k,0}}\left[e_{t_{i}^n}^{(j_1)}-e_{t_{i-1}^n}^{(j_1)}|\mathscr{F}^{n}_{i-1}\right]\E_{\theta_{k,0}}\left[\Lambda_{k,0}[j_2,\cdot](f_{k,t_{i}^n}-f_{k,t_{i-1}^n})|\mathscr{F}^{n}_{i-1}\right]\\
    &\quad+\E_{\theta_{k,0}}\left[(e_{t_{i}^n}^{(j_1)}-e_{t_{i-1}^n}^{(j_1)})(e_{t_{i}^n}^{(j_2)}-e_{t_{i-1}^n}^{(j_2)})|\mathscr{F}^{n}_{i-1}\right]\\
    &=h_n(\Lambda_{k,0} \Sigma_{ff,k,0}\Lambda_{k,0}^\top)_{j_1j_2}+R(h_n^2,f_{k,t_{i-1}^n})+R(h_n,f_{k,t_{i-1}^n})R(h_n,e_{t_{i-1}^n})\\
    &\quad +R(h_n,e_{t_{i-1}^n})R(h_n,f_{k,t_{i-1}^n})
    +h_n{(\Sigma_{ee,0})}_{j_1j_2}+R(h_n^2,e_{t_{i-1}^n})\\
    &=h_n(\Sigma_{k}(\theta_{k,0}))_{j_1j_2}+h_n^2\{R(1,f_{k,t_{i-1}^n})+R(1,f_{k,t_{i-1}^n})R(1,e_{t_{i-1}^n})+R(1,e_{t_{i-1}^n})\}\quad(j_1,j_2=1,\cdots,p),
\end{align*}
so that we have (\ref{l4-1}). Next, we will prove (\ref{l4-2}). 
It is shown that
\begin{align}
    &\quad \E_{\theta_{k,0}}\left[ (X^{(j_1)}_{t_{i}^n}-X^{(j_1)}_{t_{i-1}^n})(X^{(j_2)}_{t_{i}^n}-X^{(j_2)}_{t_{i-1}^n})(X^{(j_3)}_{t_{i}^n}-X^{(j_3)}_{t_{i-1}^n})(X^{(j_4)}_{t_{i}^n}-X^{(j_4)}_{t_{i-1}^n})| \mathscr{F}^{n}_{i-1}\right]\nonumber \\
    &=\E_{\theta_{k,0}}\left[
    (\Lambda_{k,0}[j_1,\cdot](f_{k,t_{i}^n}-f_{k,t_{i-1}^n})+e^{(j_1)}_{t_{i}^n}-e^{(j_1)}_{t_{i-1}^n})(\Lambda_{k,0}[j_2,\cdot](f_{k,t_{i}^n}-f_{k,t_{i-1}^n})+e^{(j_2)}_{t_{i}^n}-e^{(j_2)}_{t_{i-1}^n})
    \nonumber \right.\\
    &\qquad \left.\times (\Lambda_{k,0}[j_3,\cdot](f_{k,t_{i}^n}-f_{k,t_{i-1}^n})+e^{(j_3)}_{t_{i}^n}-e^{(j_3)}_{t_{i-1}^n})(\Lambda_{k,0}[j_4,\cdot](f_{k,t_{i}^n}-f_{k,t_{i-1}^n})+e^{(j_4)}_{t_{i}^n}-e^{(j_4)}_{t_{i-1}^n})|\mathscr{F}^{n}_{i-1}\right]\nonumber \\
    &=\E_{\theta_{k,0}}\left[
    \Lambda_{k,0}[j_1,\cdot](f_{k,t_{i}^n}-f_{k,t_{i-1}^n})
    \Lambda_{k,0}[j_2,\cdot](f_{k,t_{i}^n}-f_{k,t_{i-1}^n})\right.\nonumber\\
    &\qquad\left.\qquad\qquad\qquad\qquad\qquad\qquad\qquad\times 
    \Lambda_{k,0}[j_3,\cdot](f_{k,t_{i}^n}-f_{k,t_{i-1}^n})
    \Lambda_{k,0}[j_4,\cdot](f_{k,t_{i}^n}-f_{k,t_{i-1}^n})
    |\mathscr{F}^{n}_{i-1}\right]\nonumber\\
    &\quad+\E_{\theta_{k,0}}\left[
    (e^{(j_1)}_{t_{i}^n}-e^{(j_1)}_{t_{i-1}^n})
    \Lambda_{k,0}[j_2,\cdot](f_{k,t_{i}^n}-f_{k,t_{i-1}^n})\right.\nonumber\\
    &\qquad \left.\qquad\qquad\qquad\qquad\qquad\qquad\qquad\times
    \Lambda_{k,0}[j_3,\cdot](f_{k,t_{i}^n}-f_{k,t_{i-1}^n})
    \Lambda_{k,0}[j_4,\cdot](f_{k,t_{i}^n}-f_{k,t_{i-1}^n})
    |\mathscr{F}^{n}_{i-1}\right]\nonumber\\
    &\quad+\E_{\theta_{k,0}}\left[
    \Lambda_{k,0}[j_1,\cdot](f_{k,t_{i}^n}-f_{k,t_{i-1}^n})
    (e^{(j_2)}_{t_{i}^n}-e^{(j_2)}_{t_{i-1}^n})\right.\nonumber\\
    &\qquad \left. \qquad\qquad\qquad\qquad\qquad\qquad\qquad\times
    \Lambda_{k,0}[j_3,\cdot](f_{k,t_{i}^n}-f_{k,t_{i-1}^n})
    \Lambda_{k,0}[j_4,\cdot](f_{k,t_{i}^n}-f_{k,t_{i-1}^n})
    |\mathscr{F}^{n}_{i-1}\right]\nonumber\\
    &\quad+\E_{\theta_{k,0}}\left[
    \Lambda_{k,0}[j_1,\cdot](f_{k,t_{i}^n}-f_{k,t_{i-1}^n})
    \Lambda_{k,0}[j_2,\cdot](f_{k,t_{i}^n}-f_{k,t_{i-1}^n})\right.\nonumber\\
    &\qquad \left. \qquad\qquad\qquad\qquad\qquad\qquad\qquad\qquad\qquad\times
    (e^{(j_3)}_{t_{i}^n}-e^{(j_3)}_{t_{i-1}^n})
    \Lambda_{k,0}[j_4,\cdot](f_{k,t_{i}^n}-f_{k,t_{i-1}^n})
    |\mathscr{F}^{n}_{i-1}\right]\nonumber\\
    &\quad+\E_{\theta_{k,0}}\left[
    \Lambda_{k,0}[j_1,\cdot](f_{k,t_{i}^n}-f_{k,t_{i-1}^n})
    \Lambda_{k,0}[j_2,\cdot](f_{k,t_{i}^n}-f_{k,t_{i-1}^n})\right.\nonumber\\
    &\qquad \left. \qquad\qquad\qquad\qquad\qquad\qquad\qquad\qquad\qquad\times
    \Lambda_{k,0}[j_3,\cdot](f_{k,t_{i}^n}-f_{k,t_{i-1}^n})
    (e^{(j_4)}_{t_{i}^n}-e^{(j_4)}_{t_{i-1}^n})
    |\mathscr{F}^{n}_{i-1}\right]\nonumber\\
    &\quad+\E_{\theta_{k,0}}\left[
    (e^{(j_1)}_{t_{i}^n}-e^{(j_1)}_{t_{i-1}^n})
    (e^{(j_2)}_{t_{i}^n}-e^{(j_2)}_{t_{i-1}^n})
    \Lambda_{k,0}[j_3,\cdot](f_{k,t_{i}^n}-f_{k,t_{i-1}^n})
    \Lambda_{k,0}[j_4,\cdot](f_{k,t_{i}^n}-f_{k,t_{i-1}^n})
    |\mathscr{F}^{n}_{i-1}\right]\nonumber\\
    &\quad+\E_{\theta_{k,0}}\left[
    (e^{(j_1)}_{t_{i}^n}-e^{(j_1)}_{t_{i-1}^n})
    \Lambda_{k,0}[j_2,\cdot](f_{k,t_{i}^n}-f_{k,t_{i-1}^n})
    (e^{(j_3)}_{t_{i}^n}-e^{(j_3)}_{t_{i-1}^n})
    \Lambda_{k,0}[j_4,\cdot](f_{k,t_{i}^n}-f_{k,t_{i-1}^n})
    |\mathscr{F}^{n}_{i-1}\right]\nonumber\\
    &\quad+\E_{\theta_{k,0}}\left[
    (e^{(j_1)}_{t_{i}^n}-e^{(j_1)}_{t_{i-1}^n})
    \Lambda_{k,0}[j_2,\cdot](f_{k,t_{i}^n}-f_{k,t_{i-1}^n})
    \Lambda_{k,0}[j_3,\cdot](f_{k,t_{i}^n}-f_{k,t_{i-1}^n})
    (e^{(j_4)}_{t_{i}^n}-e^{(j_4)}_{t_{i-1}^n})
    |\mathscr{F}^{n}_{i-1}\right]\nonumber\\
    &\quad+\E_{\theta_{k,0}}\left[
    \Lambda_{k,0}[j_1,\cdot](f_{k,t_{i}^n}-f_{k,t_{i-1}^n})
    (e^{(j_2)}_{t_{i}^n}-e^{(j_2)}_{t_{i-1}^n})
    (e^{(j_3)}_{t_{i}^n}-e^{(j_3)}_{t_{i-1}^n})
    \Lambda_{k,0}[j_4,\cdot](f_{k,t_{i}^n}-f_{k,t_{i-1}^n})
    |\mathscr{F}^{n}_{i-1}\right]\nonumber\\
    &\quad+\E_{\theta_{k,0}}\left[
    \Lambda_{k,0}[j_1,\cdot](f_{k,t_{i}^n}-f_{k,t_{i-1}^n})
    (e^{(j_2)}_{t_{i}^n}-e^{(j_2)}_{t_{i-1}^n})
    \Lambda_{k,0}[j_3,\cdot](f_{k,t_{i}^n}-f_{k,t_{i-1}^n})
    (e^{(j_4)}_{t_{i}^n}-e^{(j_4)}_{t_{i-1}^n})
    |\mathscr{F}^{n}_{i-1}\right]\nonumber\\
    &\quad+\E_{\theta_{k,0}}\left[
    \Lambda_{k,0}[j_1,\cdot](f_{k,t_{i}^n}-f_{k,t_{i-1}^n})
    \Lambda_{k,0}[j_2,\cdot](f_{k,t_{i}^n}-f_{k,t_{i-1}^n})
    (e^{(j_3)}_{t_{i}^n}-e^{(j_3)}_{t_{i-1}^n})
    (e^{(j_4)}_{t_{i}^n}-e^{(j_4)}_{t_{i-1}^n})
    |\mathscr{F}^{n}_{i-1}\right]\nonumber\\
    &\quad+\E_{\theta_{k,0}}\left[
    \Lambda_{k,0}[j_1,\cdot](f_{k,t_{i}^n}-f_{k,t_{i-1}^n})
    (e^{(j_2)}_{t_{i}^n}-e^{(j_2)}_{t_{i-1}^n})
    (e^{(j_3)}_{t_{i}^n}-e^{(j_3)}_{t_{i-1}^n})
    (e^{(j_4)}_{t_{i}^n}-e^{(j_4)}_{t_{i-1}^n})
    |\mathscr{F}^{n}_{i-1}\right]\nonumber\\
    &\quad+\E_{\theta_{k,0}}\left[
    (e^{(j_1)}_{t_{i}^n}-e^{(j_1)}_{t_{i-1}^n})
    \Lambda_{k,0}[j_2,\cdot](f_{k,t_{i}^n}-f_{k,t_{i-1}^n})
    (e^{(j_3)}_{t_{i}^n}-e^{(j_3)}_{t_{i-1}^n})
    (e^{(j_4)}_{t_{i}^n}-e^{(j_4)}_{t_{i-1}^n})
    |\mathscr{F}^{n}_{i-1}\right]\nonumber\\
    &\quad+\E_{\theta_{k,0}}\left[
    (e^{(j_1)}_{t_{i}^n}-e^{(j_1)}_{t_{i-1}^n})
    (e^{(j_2)}_{t_{i}^n}-e^{(j_2)}_{t_{i-1}^n})
    \Lambda_{k,0}[j_3,\cdot](f_{k,t_{i}^n}-f_{k,t_{i-1}^n})
    (e^{(j_4)}_{t_{i}^n}-e^{(j_4)}_{t_{i-1}^n})
    |\mathscr{F}^{n}_{i-1}\right]\nonumber\\
    &\quad+\E_{\theta_{k,0}}\left[
    (e^{(j_1)}_{t_{i}^n}-e^{(j_1)}_{t_{i-1}^n})
    (e^{(j_2)}_{t_{i}^n}-e^{(j_2)}_{t_{i-1}^n})
    (e^{(j_3)}_{t_{i}^n}-e^{(j_3)}_{t_{i-1}^n})
    \Lambda_{k,0}[j_4,\cdot](f_{k,t_{i}^n}-f_{k,t_{i-1}^n})
    |\mathscr{F}^{n}_{i-1}\right]\nonumber\\
    &\quad+\E_{\theta_{k,0}}\left[
    (e^{(j_1)}_{t_{i}^n}-e^{(j_1)}_{t_{i-1}^n})
    (e^{(j_2)}_{t_{i}^n}-e^{(j_2)}_{t_{i-1}^n})
    (e^{(j_3)}_{t_{i}^n}-e^{(j_3)}_{t_{i-1}^n})
    (e^{(j_4)}_{t_{i}^n}-e^{(j_4)}_{t_{i-1}^n})
    |\mathscr{F}^{n}_{i-1}\right]\nonumber\\ 
    &\quad \qquad\qquad\qquad\qquad\qquad\qquad\qquad\qquad\qquad\qquad\qquad\qquad\qquad(j_1,j_2,j_3,j_4=1,\cdots,p)\label{l4-3}.
\end{align}
From Lemma 2 (\ref{l2-4}), the first term of (\ref{l4-3}) is 
\begin{align*}
    &\quad \E_{\theta_{k,0}}\left[
    \Lambda_{k,0}[j_1,\cdot](f_{k,t_{i}^n}-f_{k,t_{i-1}^n})
    \Lambda_{k,0}[j_2,\cdot](f_{k,t_{i}^n}-f_{k,t_{i-1}^n})\right.\\
    &\qquad\qquad\qquad\qquad\qquad\qquad\qquad\qquad\left.\times
    \Lambda_{k,0}[j_3,\cdot](f_{k,t_{i}^n}-f_{k,t_{i-1}^n})
    \Lambda_{k,0}[j_4,\cdot](f_{k,t_{i}^n}-f_{k,t_{i-1}^n})
    |\mathscr{F}^{n}_{i-1}\right]\\
    &=h_n^2\{(\Lambda_{k,0}\Sigma_{ff,k,0}\Lambda_{k,0}^\top)_{j_1j_2}(\Lambda_{k,0}\Sigma_{ff,k,0}\Lambda_{k,0}^\top)_{j_3j_4}
    +(\Lambda_{k,0}\Sigma_{ff,k,0}\Lambda_{k,0}^\top)_{j_1j_3}(\Lambda_{k,0}\Sigma_{ff,k,0}\Lambda_{k,0}^\top)_{j_2j_4}\\
    &\quad +(\Lambda_{k,0}\Sigma_{ff,k,0}\Lambda_{k,0}^\top)_{j_1j_4}(\Lambda_{k,0}\Sigma_{ff,k,0}\Lambda_{k,0}^\top)_{j_2j_3}\}
    +h_n^3R(1,f_{k,t_{i-1}^n})\quad (j_1,j_2,j_3,j_4=1,\cdots,p).
\end{align*}
From Lemma 2 (\ref{l2-3}) and Lemma 3 (\ref{l3-1}), the second term of (\ref{l4-3}) is 
\begin{align*}
    &\quad \E_{\theta_{k,0}}\left[
    (e^{(j_1)}_{t_{i}^n}-e^{(j_1)}_{t_{i-1}^n})
    \Lambda_{k,0}[j_2,\cdot](f_{k,t_{i}^n}-f_{k,t_{i-1}^n})\right.\\
    &\left.\qquad\qquad\qquad\qquad\qquad\qquad\qquad\times
    \Lambda_{k,0}[j_3,\cdot](f_{k,t_{i}^n}-f_{k,t_{i-1}^n})
    \Lambda_{k,0}[j_4,\cdot](f_{k,t_{i}^n}-f_{k,t_{i-1}^n})
    |\mathscr{F}^{n}_{i-1}\right]\\
    &=\E_{\theta_{k,0}}\left[
    \Lambda_{k,0}[j_2,\cdot](f_{k,t_{i}^n}-f_{k,t_{i-1}^n})
    \Lambda_{k,0}[j_3,\cdot](f_{k,t_{i}^n}-f_{k,t_{i-1}^n})\right.\\
    &\qquad\qquad\qquad\qquad\qquad\qquad\qquad\left.\times
    \Lambda_{k,0}[j_4,\cdot](f_{k,t_{i}^n}-f_{k,t_{i-1}^n})|\mathscr{F}^{n}_{i-1}\right]\E_{\theta_{k,0}}\left[
    e^{(j_1)}_{t_{i}^n}-e^{(j_1)}_{t_{i-1}^n}
    |\mathscr{F}^{n}_{i-1}\right]\\
    &=R(h_n^2,f_{k,t_{i-1}^n})R(h_n,e_{t_{i-1}^n})\\
    &=h_n^3R(1,f_{k,t_{i-1}^n})R(1,e_{t_{i-1}^n})\quad (j_1,j_2,j_3,j_4=1,\cdots,p).
\end{align*}
From Lemma 2 (\ref{l2-3}) and Lemma 3 (\ref{l3-1}), the third term of (\ref{l4-3}) is 
\begin{align*}
    &\quad \E_{\theta_{k,0}}\left[
    \Lambda_{k,0}[j_1,\cdot](f_{k,t_{i}^n}-f_{k,t_{i-1}^n})
    (e^{(j_2)}_{t_{i}^n}-e^{(j_2)}_{t_{i-1}^n})\right.\\
    &\left.\qquad\qquad\qquad\qquad\qquad\qquad\qquad\times\Lambda_{k,0}[j_3,\cdot](f_{k,t_{i}^n}-f_{k,t_{i-1}^n})
    \Lambda_{k,0}[j_4,\cdot](f_{k,t_{i}^n}-f_{k,t_{i-1}^n})
    |\mathscr{F}^{n}_{i-1}\right]\\
    &=\E_{\theta_{k,0}}\left[
    \Lambda_{k,0}[j_1,\cdot](f_{k,t_{i}^n}-f_{k,t_{i-1}^n})
    \Lambda_{k,0}[j_3,\cdot](f_{k,t_{i}^n}-f_{k,t_{i-1}^n})\right.\\
    &\qquad\qquad\qquad\qquad\qquad\qquad\qquad\left.\times
    \Lambda_{k,0}[j_4,\cdot](f_{k,t_{i}^n}-f_{k,t_{i-1}^n})|\mathscr{F}^{n}_{i-1}\right]\E_{\theta_{k,0}}\left[
    e^{(j_2)}_{t_{i}^n}-e^{(j_2)}_{t_{i-1}^n}
    |\mathscr{F}^{n}_{i-1}\right]\\
    &=R(h_n^2,f_{k,t_{i-1}^n})
    R(h_n,e_{t_{i-1}^n})\\
    &=h_n^3R(1,f_{k,t_{i-1}^n})
    R(1,e_{t_{i-1}^n})\quad (j_1,j_2,j_3,j_4=1,\cdots,p).
\end{align*}
From Lemma 2 (\ref{l2-3}) and Lemma 3 (\ref{l3-1}), the fourth term of (\ref{l4-3}) is 
\begin{align*}
    &\quad \E_{\theta_{k,0}}\left[
    \Lambda_{k,0}[j_1,\cdot](f_{k,t_{i}^n}-f_{k,t_{i-1}^n})
    \Lambda_{k,0}[j_2,\cdot](f_{k,t_{i}^n}-f_{k,t_{i-1}^n})\right.\\
    &\qquad\qquad\qquad\qquad\qquad\qquad\qquad\qquad\qquad\qquad\left.\times (e^{(j_3)}_{t_{i}^n}-e^{(j_3)}_{t_{i-1}^n})
    \Lambda_{k,0}[j_4,\cdot](f_{k,t_{i}^n}-f_{k,t_{i-1}^n})
    |\mathscr{F}^{n}_{i-1}\right]\\
    &=\E_{\theta_{k,0}}\left[
    \Lambda_{k,0}[j_1,\cdot](f_{k,t_{i}^n}-f_{k,t_{i-1}^n})
    \Lambda_{k,0}[j_2,\cdot](f_{k,t_{i}^n}-f_{k,t_{i-1}^n})\right.\\
    &\qquad\qquad\qquad\qquad\qquad\qquad\qquad\qquad\left.\times
    \Lambda_{k,0}[j_4,\cdot](f_{k,t_{i}^n}-f_{k,t_{i-1}^n})|\mathscr{F}^{n}_{i-1}\right]\E_{\theta_{k,0}}\left[
    e^{(j_3)}_{t_{i}^n}-e^{(j_3)}_{t_{i-1}^n}
    |\mathscr{F}^{n}_{i-1}\right]\\
    &=R(h_n^2,f_{k,t_{i-1}^n})
    R(h_n,e_{t_{i-1}^n})\\
    &=h_n^3R(1,f_{k,t_{i-1}^n})R(1,e_{t_{i-1}^n})\quad (j_1,j_2,j_3,j_4=1,\cdots,p).
\end{align*}
From Lemma 2 (\ref{l2-3}) and Lemma 3 (\ref{l3-1}), the fifth term of (\ref{l4-3}) is 
\begin{align*}
    &\quad \E_{\theta_{k,0}}\left[
    \Lambda_{k,0}[j_1,\cdot](f_{k,t_{i}^n}-f_{k,t_{i-1}^n})
    \Lambda_{k,0}[j_2,\cdot](f_{k,t_{i}^n}-f_{k,t_{i-1}^n})\right.\\
    &\left.\qquad\qquad\qquad\qquad\qquad\qquad\qquad\qquad\qquad\qquad\qquad\times
    \Lambda_{k,0}[j_3,\cdot](f_{k,t_{i}^n}-f_{k,t_{i-1}^n})
    (e^{(j_4)}_{t_{i}^n}-e^{(j_4)}_{t_{i-1}^n})
    |\mathscr{F}^{n}_{i-1}\right] \\
    &=\E_{\theta_{k,0}}\left[
    \Lambda_{k,0}[j_1,\cdot](f_{k,t_{i}^n}-f_{k,t_{i-1}^n})
    \Lambda_{k,0}[j_2,\cdot](f_{k,t_{i}^n}-f_{k,t_{i-1}^n})\right.\\
    &\qquad\qquad\qquad\qquad\qquad\qquad\qquad\qquad\left.\times
    \Lambda_{k,0}[j_3,\cdot](f_{k,t_{i}^n}-f_{k,t_{i-1}^n})|\mathscr{F}^{n}_{i-1}\right]\E_{\theta_{k,0}}\left[
    e^{(j_4)}_{t_{i}^n}-e^{(j_4)}_{t_{i-1}^n}
    |\mathscr{F}^{n}_{i-1}\right]\\
    &=R(h_n^2,f_{k,t_{i-1}^n})R(h_n,e_{t_{i-1}^n})\\
    &=h_n^3R(1,f_{k,t_{i-1}^n})R(1,e_{t_{i-1}^n})\quad (j_1,j_2,j_3,j_4=1,\cdots,p).
\end{align*}
From Lemma 2 (\ref{l2-2}) and Lemma 3 (\ref{l3-2}), the sixth term of (\ref{l4-3}) is 
\begin{align*}
    &\quad \E_{\theta_{k,0}}\left[
    (e^{(j_1)}_{t_{i}^n}-e^{(j_1)}_{t_{i-1}^n})
    (e^{(j_2)}_{t_{i}^n}-e^{(j_2)}_{t_{i-1}^n})
    \Lambda_{k,0}[j_3,\cdot](f_{k,t_{i}^n}-f_{k,t_{i-1}^n})
    \Lambda_{k,0}[j_4,\cdot](f_{k,t_{i}^n}-f_{k,t_{i-1}^n})
    |\mathscr{F}^{n}_{i-1}\right]\\
    &=\E_{\theta_{k,0}}\left[
    \Lambda_{k,0}[j_3,\cdot](f_{k,t_{i}^n}-f_{k,t_{i-1}^n})
    \Lambda_{k,0}[j_4,\cdot](f_{k,t_{i}^n}-f_{k,t_{i-1}^n})
    |\mathscr{F}^{n}_{i-1}\right]\\
    &\qquad\qquad\qquad\qquad\qquad\qquad\qquad\qquad\qquad\qquad\qquad\times\E_{\theta_{k,0}}\left[
    (e^{(j_1)}_{t_{i}^n}-e^{(j_1)}_{t_{i-1}^n})
    (e^{(j_2)}_{t_{i}^n}-e^{(j_2)}_{t_{i-1}^n})
    |\mathscr{F}^{n}_{i-1}\right]\\
    &=\{h_n(\Lambda_{k,0}\Sigma_{ff,k,0}\Lambda_{k,0}^\top)_{j_3j_4}+R(h_n^2,f_{k,t_{i-1}^n})\}\{h_n(\Sigma_{ee,0})_{j_1j_2}+R(h_n^2,e_{t_{i-1}^n})\}\\
    &=h_n^2(\Lambda_{k,0}\Sigma_{ff,k,0}\Lambda_{k,0}^\top)_{j_3j_4}(\Sigma_{ee,0})_{j_1j_2}\\
    &\quad+h_n^3 R(1,f_{k,t_{i-1}^n})+h_n^4R(1,f_{k,t_{i-1}^n})R(1,e_{t_{i-1}^n})+h_n^3 R(1,e_{t_{i-1}^n})\quad (j_1,j_2,j_3,j_4=1,\cdots,p).
\end{align*}
From Lemma 2 (\ref{l2-2}) and Lemma 3 (\ref{l3-2}), the seventh term of (\ref{l4-3}) is 
\begin{align*}
    &\quad \E_{\theta_{k,0}}\left[
    (e^{(j_1)}_{t_{i}^n}-e^{(j_1)}_{t_{i-1}^n})
    \Lambda_{k,0}[j_2,\cdot](f_{k,t_{i}^n}-f_{k,t_{i-1}^n})
    (e^{(j_3)}_{t_{i}^n}-e^{(j_3)}_{t_{i-1}^n})
    \Lambda_{k,0}[j_4,\cdot](f_{k,t_{i}^n}-f_{k,t_{i-1}^n})
    |\mathscr{F}^{n}_{i-1}\right]\\
    &=\E_{\theta_{k,0}}\left[
    \Lambda_{k,0}[j_2,\cdot](f_{k,t_{i}^n}-f_{k,t_{i-1}^n})
    \Lambda_{k,0}[j_4,\cdot](f_{k,t_{i}^n}-f_{k,t_{i-1}^n})
    |\mathscr{F}^{n}_{i-1}\right]\\
    &\qquad\qquad\qquad\qquad\qquad\qquad\qquad\qquad\qquad\qquad\qquad\times\E_{\theta_{k,0}}\left[
    (e^{(j_1)}_{t_{i}^n}-e^{(j_1)}_{t_{i-1}^n})
    (e^{(j_3)}_{t_{i}^n}-e^{(j_3)}_{t_{i-1}^n})
    |\mathscr{F}^{n}_{i-1}\right]\\
    &=\{h_n(\Lambda_{k,0}\Sigma_{ff,k,0}\Lambda_{k,0}^\top)_{j_2j_4}+R(h_n^2,f_{k,t_{i-1}^n})\}\{h_n(\Sigma_{ee,0})_{j_1j_3}+R(h_n^2,e_{t_{i-1}^n})\}\\
    &=h_n^2(\Lambda_{k,0}\Sigma_{ff,k,0}\Lambda_{k,0}^\top)_{j_2j_4}(\Sigma_{ee,0})_{j_1j_3}\\
    &\quad+h_n^3 R(1,f_{k,t_{i-1}^n})+h_n^4R(1,f_{k,t_{i-1}^n})R(1,e_{t_{i-1}^n})+h_n^3 R(1,e_{t_{i-1}^n})\quad (j_1,j_2,j_3,j_4=1,\cdots,p).
\end{align*}
From Lemma 2 (\ref{l2-2}) and Lemma 3 (\ref{l3-2}), the eighth term of (\ref{l4-3}) is 
\begin{align*}
    &\quad \E_{\theta_{k,0}}\left[
    (e^{(j_1)}_{t_{i}^n}-e^{(j_1)}_{t_{i-1}^n})
    \Lambda_{k,0}[j_2,\cdot](f_{k,t_{i}^n}-f_{k,t_{i-1}^n})
    \Lambda_{k,0}[j_3,\cdot](f_{k,t_{i}^n}-f_{k,t_{i-1}^n})
    (e^{(j_4)}_{t_{i}^n}-e^{(j_4)}_{t_{i-1}^n})
    |\mathscr{F}^{n}_{i-1}\right]\\
    &=\E_{\theta_{k,0}}\left[
    \Lambda_{k,0}[j_2,\cdot](f_{k,t_{i}^n}-f_{k,t_{i-1}^n})
    \Lambda_{k,0}[j_3,\cdot](f_{k,t_{i}^n}-f_{k,t_{i-1}^n})
    |\mathscr{F}^{n}_{i-1}\right]\\
    &\qquad\qquad\qquad\qquad\qquad\qquad\qquad\qquad\qquad\qquad\times\E_{\theta_{k,0}}\left[
    (e^{(j_1)}_{t_{i}^n}-e^{(j_1)}_{t_{i-1}^n})
    (e^{(j_4)}_{t_{i}^n}-e^{(j_4)}_{t_{i-1}^n})
    |\mathscr{F}^{n}_{i-1}\right]\\
    &=\{h_n(\Lambda_{k,0}\Sigma_{ff,k,0}\Lambda_{k,0}^\top)_{j_2j_3}+R(h_n^2,f_{k,t_{i-1}^n})\}\{h_n(\Sigma_{ee,0})_{j_1j_4}+R(h_n^2,e_{t_{i-1}^n})\}\\
    &=h_n^2(\Lambda_{k,0}\Sigma_{ff,k,0}\Lambda_{k,0}^\top)_{j_2j_3}(\Sigma_{ee,0})_{j_1j_4}\\
    &\quad+h_n^3 R(1,f_{k,t_{i-1}^n})+h_n^4R(1,f_{k,t_{i-1}^n})R(1,e_{t_{i-1}^n})+h_n^3 R(1,e_{t_{i-1}^n})\quad (j_1,j_2,j_3,j_4=1,\cdots,p).
\end{align*}
From Lemma 2 (\ref{l2-2}) and Lemma 3 (\ref{l3-2}), the ninth term of (\ref{l4-3}) is 
\begin{align*}
    &\quad \E_{\theta_0}\left[
    \Lambda_{k,0}[j_1,\cdot](f_{k,t_{i}^n}-f_{k,t_{i-1}^n})
    (e^{(j_2)}_{t_{i}^n}-e^{(j_2)}_{t_{i-1}^n})
    (e^{(j_3)}_{t_{i}^n}-e^{(j_3)}_{t_{i-1}^n})
    \Lambda_{k,0}[j_4,\cdot](f_{k,t_{i}^n}-f_{k,t_{i-1}^n})
    |\mathscr{F}^{n}_{i-1}\right]\\
    &=\E_{\theta_{k,0}}\left[
    \Lambda_{k,0}[j_1,\cdot](f_{k,t_{i}^n}-f_{k,t_{i-1}^n})
    \Lambda_{k,0}[j_4,\cdot](f_{k,t_{i}^n}-f_{k,t_{i-1}^n})
    |\mathscr{F}^{n}_{i-1}\right]\\
    &\qquad\qquad\qquad\qquad\qquad\qquad\qquad\qquad\qquad\qquad\times
    \E_{\theta_{k,0}}\left[
    (e^{(j_2)}_{t_{i}^n}-e^{(j_2)}_{t_{i-1}^n})
    (e^{(j_3)}_{t_{i}^n}-e^{(j_3)}_{t_{i-1}^n})
    |\mathscr{F}^{n}_{i-1}\right]\\
    &=\{h_n(\Lambda_{k,0}\Sigma_{ff,k,0}\Lambda_{k,0}^\top)_{j_1j_4}+R(h_n^2,f_{k,t_{i-1}^n})\}\{h_n(\Sigma_{ee,0})_{j_2j_3}+R(h_n^2,e_{t_{i-1}^n})\}\\
    &=h_n^2(\Lambda_{k,0}\Sigma_{ff,k,0}\Lambda_{k,0}^\top)_{j_1j_4}(\Sigma_{ee,0})_{j_2j_3}\\
    &\quad+h_n^3 R(1,f_{k,t_{i-1}^n})+h_n^4R(1,f_{k,t_{i-1}^n})R(1,e_{t_{i-1}^n})+h_n^3 R(1,e_{t_{i-1}^n})\quad (j_1,j_2,j_3,j_4=1,\cdots,p).
\end{align*}
From Lemma 2 (\ref{l2-2}) and Lemma 3 (\ref{l3-2}), the tenth term of (\ref{l4-3}) is 
\begin{align*}
    &\quad \E_{\theta_{k,0}}\left[
    \Lambda_{k,0}[j_1,\cdot](f_{k,t_{i}^n}-f_{k,t_{i-1}^n})
    (e^{(j_2)}_{t_{i}^n}-e^{(j_2)}_{t_{i-1}^n})
    \Lambda_{k,0}[j_3,\cdot](f_{k,t_{i}^n}-f_{k,t_{i-1}^n})
    (e^{(j_4)}_{t_{i}^n}-e^{(j_4)}_{t_{i-1}^n})
    |\mathscr{F}^{n}_{i-1}\right]\\
    &=\E_{\theta_{k,0}}\left[
    \Lambda_{k,0}[j_1,\cdot](f_{k,t_{i}^n}-f_{k,t_{i-1}^n})
    \Lambda_{k,0}[j_3,\cdot](f_{k,t_{i}^n}-f_{k,t_{i-1}^n})
    |\mathscr{F}^{n}_{i-1}\right]\\
    &\qquad\qquad\qquad\qquad\qquad\qquad\qquad\qquad\qquad\qquad\times\E_{\theta_{k,0}}\left[
    (e^{(j_2)}_{t_{i}^n}-e^{(j_2)}_{t_{i-1}^n})
    (e^{(j_4)}_{t_{i}^n}-e^{(j_4)}_{t_{i-1}^n})
    |\mathscr{F}^{n}_{i-1}\right]\\
    &=\{h_n(\Lambda_{k,0}\Sigma_{ff,k,0}\Lambda_{k,0}^\top)_{j_1j_3}+R(h_n^2,f_{k,t_{i-1}^n})\}\{h_n(\Sigma_{ee,0})_{j_2j_4}+R(h_n^2,e_{t_{i-1}^n})\}\\
    &=h_n^2(\Lambda_{k,0}\Sigma_{ff,k,0}\Lambda_{k,0}^\top)_{j_1j_3}(\Sigma_{ee,0})_{j_2j_4}\\
    &\quad+h_n^3 R(1,f_{k,t_{i-1}^n})+h_n^4R(1,f_{k,t_{i-1}^n})R(1,e_{t_{i-1}^n})+h_n^3 R(1,e_{t_{i-1}^n})\quad (j_1,j_2,j_3,j_4=1,\cdots,p).
\end{align*}
From Lemma 2 (\ref{l2-2}) and Lemma 3 (\ref{l3-2}), the eleventh term of (\ref{l4-3}) is 
\begin{align*}
    &\quad \E_{\theta_{k,0}}\left[
    \Lambda_{k,0}[j_1,\cdot](f_{k,t_{i}^n}-f_{k,t_{i-1}^n})
    \Lambda_{k,0}[j_2,\cdot](f_{k,t_{i}^n}-f_{k,t_{i-1}^n})
    (e^{(j_3)}_{t_{i}^n}-e^{(j_3)}_{t_{i-1}^n})
    (e^{(j_4)}_{t_{i}^n}-e^{(j_4)}_{t_{i-1}^n})
    |\mathscr{F}^{n}_{i-1}\right]\\
    &=\E_{\theta_{k,0}}\left[
    \Lambda_{k,0}[j_1,\cdot](f_{k,t_{i}^n}-f_{k,t_{i-1}^n})
    \Lambda_{k,0}[j_2,\cdot](f_{k,t_{i}^n}-f_{k,t_{i-1}^n})
    |\mathscr{F}^{n}_{i-1}\right]\\
    &\qquad\qquad\qquad\qquad\qquad\qquad\qquad\qquad\qquad\qquad\times\E_{\theta_{k,0}}\left[
    (e^{(j_3)}_{t_{i}^n}-e^{(_3)}_{t_{i-1}^n})
    (e^{(j_4)}_{t_{i}^n}-e^{(j_4)}_{t_{i-1}^n})
    |\mathscr{F}^{n}_{i-1}\right]\\
    &=\{h_n(\Lambda_{k,0}\Sigma_{ff,k,0}\Lambda_{k,0}^\top)_{j_1j_2}+R(h_n^2,f_{k,t_{i-1}^n})\}\{h_n(\Sigma_{ee,0})_{j_3j_4}+R(h_n^2,e_{t_{i-1}^n})\}\\
    &=h_n^2(\Lambda_{k,0}\Sigma_{ff,k,0}\Lambda_{k,0}^\top)_{j_1j_2}(\Sigma_{ee,0})_{j_3j_4}\\
    &\quad+h_n^3 R(1,f_{k,t_{i-1}^n})+h_n^4R(1,f_{k,t_{i-1}^n})R(1,e_{t_{i-1}^n})+h_n^3 R(1,e_{t_{i-1}^n})\quad (j_1,j_2,j_3,j_4=1,\cdots,p).
\end{align*}
From Lemma 2 (\ref{l2-1}) and Lemma 3 (\ref{l3-3}), the twelfth term of (\ref{l4-3}) is 
\begin{align*}
    &\quad \E_{\theta_{k,0}}\left[
    \Lambda_{k,0}[j_1,\cdot](f_{k,t_{i}^n}-f_{k,t_{i-1}^n})
    (e^{(j_2)}_{t_{i}^n}-e^{(j_2)}_{t_{i-1}^n})
    (e^{(j_3)}_{t_{i}^n}-e^{(j_3)}_{t_{i-1}^n})
    (e^{(j_4)}_{t_{i}^n}-e^{(j_4)}_{t_{i-1}^n})
    |\mathscr{F}^{n}_{i-1}\right]\\
    &=\E_{\theta_{k,0}}\left[\Lambda_{k,0}[j_1,\cdot](f_{k,t_{i}^n}-f_{k,t_{i-1}^n})|\mathscr{F}^{n}_{i-1}\right]\E_{\theta_{k,0}}\left[
    (e^{(j_2)}_{t_{i}^n}-e^{(j_2)}_{t_{i-1}^n})
    (e^{(j_3)}_{t_{i}^n}-e^{(j_3)}_{t_{i-1}^n})
    (e^{(j_4)}_{t_{i}^n}-e^{(j_4)}_{t_{i-1}^n})
    |\mathscr{F}^{n}_{i-1}\right]\\
    &=R(h_n,f_{k,t_{i-1}^n})R(h_n^2,e_{t_{i-1}^n})\\
    &=h_n^3R(1,f_{k,t_{i-1}^n})R(1,e_{t_{i-1}^n})\quad (j_1,j_2,j_3,j_4=1,\cdots,p).
\end{align*}
From Lemma 2 (\ref{l2-1}) and Lemma 3 (\ref{l3-3}), the thirteenth term of (\ref{l4-3}) is 
\begin{align*}
    &\quad \E_{\theta_{k,0}}\left[
    (e^{(j_1)}_{t_{i}^n}-e^{(j_1)}_{t_{i-1}^n})
    \Lambda_{k,0}[j_2,\cdot](f_{k,t_{i}^n}-f_{k,t_{i-1}^n})
    (e^{(j_3)}_{t_{i}^n}-e^{(j_3)}_{t_{i-1}^n})
    (e^{(j_4)}_{t_{i}^n}-e^{(j_4)}_{t_{i-1}^n})
    |\mathscr{F}^{n}_{i-1}\right]\\
    &=\E_{\theta_{k,0}}\left[\Lambda_{k,0}[j_2,\cdot](f_{k,t_{i}^n}-f_{k,t_{i-1}^n})|\mathscr{F}^{n}_{i-1}\right]\E_{\theta_{k,0}}\left[
    (e^{(j_1)}_{t_{i}^n}-e^{(j_1)}_{t_{i-1}^n})
    (e^{(j_3)}_{t_{i}^n}-e^{(j_3)}_{t_{i-1}^n})
    (e^{(j_4)}_{t_{i}^n}-e^{(j_4)}_{t_{i-1}^n})
    |\mathscr{F}^{n}_{i-1}\right]\\
    &=R(h_n,f_{k,t_{i-1}^n})R(h_n^2,e_{t_{i-1}^n})\\
    &=h_n^3 R(1,f_{k,t_{i-1}^n})R(1,e_{t_{i-1}^n})\quad (j_1,j_2,j_3,j_4=1,\cdots,p).
\end{align*}
From Lemma 2 (\ref{l2-1}) and Lemma 3 (\ref{l3-3}), the fourteenth term of (\ref{l4-3}) is 
\begin{align*}
    &\quad \E_{\theta_{k,0}}\left[
    (e^{(j_1)}_{t_{i}^n}-e^{(j_1)}_{t_{i-1}^n})
    (e^{(j_2)}_{t_{i}^n}-e^{(j_2)}_{t_{i-1}^n})
    \Lambda_{k,0}[j_3,\cdot](f_{k,t_{i}^n}-f_{k,t_{i-1}^n})
    (e^{(j_4)}_{t_{i}^n}-e^{(j_4)}_{t_{i-1}^n})
    |\mathscr{F}^{n}_{i-1}\right]\\
    &=\E_{\theta_{k,0}}\left[\Lambda_{k,0}[j_3,\cdot](f_{k,t_{i}^n}-f_{k,t_{i-1}^n})|\mathscr{F}^{n}_{i-1}\right]\E_{\theta_{k,0}}\left[
    (e^{(j_1)}_{t_{i}^n}-e^{(j_1)}_{t_{i-1}^n})
    (e^{(j_2)}_{t_{i}^n}-e^{(j_2)}_{t_{i-1}^n})
    (e^{(j_4)}_{t_{i}^n}-e^{(j_4)}_{t_{i-1}^n})
    |\mathscr{F}^{n}_{i-1}\right]\\
    &=R(h_n,f_{k,t_{i-1}^n})R(h_n^2,e_{t_{i-1}^n})\\
    &=h_n^3 R(1,f_{k,t_{i-1}^n})R(1,e_{t_{i-1}^n})\quad (j_1,j_2,j_3,j_4=1,\cdots,p).
\end{align*}
From Lemma 2 (\ref{l2-1}) and Lemma 3 (\ref{l3-3}), the fifteenth term of (\ref{l4-3}) is 
\begin{align*}
    &\quad \E_{\theta_{k,0}}\left[
    (e^{(j_1)}_{t_{i}^n}-e^{(j_1)}_{t_{i-1}^n})
    (e^{(j_2)}_{t_{i}^n}-e^{(j_2)}_{t_{i-1}^n})
    (e^{(j_3)}_{t_{i}^n}-e^{(j_3)}_{t_{i-1}^n})
    \Lambda_{k,0}[j_4,\cdot](f_{k,t_{i}^n}-f_{k,t_{i-1}^n})
    |\mathscr{F}^{n}_{i-1}\right]\\
    &=\E_{\theta_{k,0}}\left[\Lambda_{k,0}[j_4,\cdot](f_{k,t_{i}^n}-f_{k,t_{i-1}^n})|\mathscr{F}^{n}_{i-1}\right]\E_{\theta_{k,0}}\left[
    (e^{(j_1)}_{t_{i}^n}-e^{(j_1)}_{t_{i-1}^n})
    (e^{(j_2)}_{t_{i}^n}-e^{(j_2)}_{t_{i-1}^n})
    (e^{(j_3)}_{t_{i}^n}-e^{(j_3)}_{t_{i-1}^n})
    |\mathscr{F}^{n}_{i-1}\right]\\
    &=R(h_n,f_{k,t_{i-1}^n})R(h_n^2,e_{t_{i-1}^n})\\
    &=h_n^3R(1,f_{k,t_{i-1}^n})R(1,e_{t_{i-1}^n})\quad (j_1,j_2,j_3,j_4=1,\cdots,p).
\end{align*}
From Lemma 3 (\ref{l3-4}), the sixteenth term of (\ref{l4-3}) is 
\begin{align*}
    &\quad \E_{\theta_{k,0}}\left[
    (e^{(j_1)}_{t_{i}^n}-e^{(j_1)}_{t_{i-1}^n})
    (e^{(j_2)}_{t_{i}^n}-e^{(j_2)}_{t_{i-1}^n})
    (e^{(j_3)}_{t_{i}^n}-e^{(j_3)}_{t_{i-1}^n})
    (e^{(j_4)}_{t_{i}^n}-e^{(j_4)}_{t_{i-1}^n})
    |\mathscr{F}^{n}_{i-1}\right]\\
    &=h_n^2\{(\Sigma_{ee,0})_{j_1j_2}(\Sigma_{ee,0})_{j_3j_4}
    +(\Sigma_{ee,0})_{j_1j_3}(\Sigma_{ee,0})_{j_2j_4}
    +(\Sigma_{ee,0})_{j_1j_4}(\Sigma_{ee,0})_{j_2j_3}\}
    +R(h_n^3,e_{t_{i-1}^n})\\
    &=h_n^2\left\{(\Sigma_{ee,0})_{j_1j_2}(\Sigma_{ee,0})_{j_3j_4}
    +(\Sigma_{ee,0})_{j_1j_3}(\Sigma_{ee,0})_{j_2j_4}
    +(\Sigma_{ee,0})_{j_1j_4}(\Sigma_{ee,0})_{j_2j_3}\right\}
    +h_n^3R(1,e_{t_{i-1}^n})\\ 
    &\qquad\qquad\qquad\qquad\qquad\qquad\qquad\qquad\qquad\qquad\qquad\qquad\qquad\qquad\quad (j_1,j_2,j_3,j_4=1,\cdots,p).
\end{align*}
Since
\begin{align*}
    &\quad \E_{\theta_{k,0}}\left[ (X^{(j_1)}_{t_{i}^n}-X^{(j_1)}_{t_{i-1}^n})(X^{(j_2)}_{t_{i}^n}-X^{(j_2)}_{t_{i-1}^n})(X^{(j_3)}_{t_{i}^n}-X^{(j_3)}_{t_{i-1}^n})(X^{(j_4)}_{t_{i}^n}-X^{(j_4)}_{t_{i-1}^n})| \mathscr{F}^{n}_{i-1}\right]\\
    &=h_n^2\{(\Lambda_{k,0}\Sigma_{ff,k,0}\Lambda_{k,0}^\top)_{j_1j_2}(\Lambda_{k,0}\Sigma_{ff,k,0}\Lambda_{k,0}^\top)_{j_3j_4}
    +(\Lambda_{k,0}\Sigma_{ff,k,0}\Lambda_{k,0}^\top)_{j_1j_3}(\Lambda_{k,0}\Sigma_{ff,k,0}\Lambda_{k,0}^\top)_{j_2j_4}\\
    &\quad
    +(\Lambda_{k,0}\Sigma_{ff,k,0}\Lambda_{k,0}^\top)_{j_1j_4}(\Lambda_{k,0}\Sigma_{ff,k,0}\Lambda_{k,0}^\top)_{j_2j_3}\}+h_n^2(\Lambda_{k,0}\Sigma_{ff,k,0}\Lambda_{k,0}^\top)_{j_3j_4}(\Sigma_{ee,0})_{j_1j_2}\\
    &\quad+h_n^2(\Lambda_{k,0}\Sigma_{ff,k,0}\Lambda_{k,0}^\top)_{j_2j_4}(\Sigma_{ee,0})_{j_1j_3}
    +h_n^2(\Lambda_{k,0}\Sigma_{ff,k,0}\Lambda_{k,0}^\top)_{j_2j_3}(\Sigma_{ee,0})_{j_1j_4}\\
    &\quad+h_n^2(\Lambda_{k,0}\Sigma_{ff,k,0}\Lambda_{k,0}^\top)_{j_1j_4}(\Sigma_{ee,0})_{j_2j_3}+h_n^2(\Lambda_{k,0}\Sigma_{ff,k,0}\Lambda_{k,0}^\top)_{j_1j_3}(\Sigma_{ee,0})_{j_2j_4}\\
    &\quad+h_n^2(\Lambda_{k,0}\Sigma_{ff,k,0}\Lambda_{k,0}^\top)_{j_1j_2}(\Sigma_{ee,0})_{j_3j_4} +h_n^2\{(\Sigma_{ee,0})_{j_1j_2}(\Sigma_{ee,0})_{j_3j_4}+(\Sigma_{ee,0})_{j_1j_3}(\Sigma_{ee,0})_{j_2j_4}\\
    &\quad+(\Sigma_{ee,0})_{j_1j_4}(\Sigma_{ee,0})_{j_2j_3}\}+h_n^3\{R(1,f_{k,t_{i-1}^n})+R(1,f_{k,t_{i-1}^n})R(1,e_{t_{i-1}^n})+R(1,e_{t_{i-1}^n})\}\\
    &=h_n^2\left\{(\Sigma_{k}(\theta_{k,0}))_{j_1j_2}(\Sigma_{k}(\theta_{k,0}))_{j_3j_4}+(\Sigma_{k}(\theta_{k,0}))_{j_1j_3}(\Sigma_{k}(\theta_{k,0}))_{j_2j_4}+(\Sigma_{k}(\theta_{k,0}))_{j_1j_4}(\Sigma_{k}(\theta_{k,0}))_{j_2j_3}\right\}\\
    &\quad +h_n^3\{R(1,f_{k,t_{i-1}^n})+R(1,f_{k,t_{i-1}^n})R(1,e_{t_{i-1}^n})+R(1,e_{t_{i-1}^n})\}\quad (j_1,j_2,j_3,j_4=1,\cdots,p),
\end{align*}
(\ref{l4-2}) is deduced. 
\end{proof}
\begin{prf}
First, we will prove (\ref{the1-1}). 
It follows from Lemma 1 and  Slutsky's theorem that
\begin{align*}
    Q_{XX}&=\frac{1}{T}\sum_{i=1}^n(X_{t_{i}^n}-X_{t_{i-1}^n})(X_{t_{i}^n}-X_{t_{i-1}^n})^\top\\
    &=\frac{1}{T}\sum_{i=1}^n\{\Lambda_{k,0}(f_{k,t_{i}^n}-f_{k,t_{i-1}^n})+(e_{t_{i}^n}-e_{t_{i-1}^n})\}\{\Lambda_{k,0}(f_{k,t_{i}^n}-f_{k,t_{i-1}^n})+(e_{t_{i}^n}-e_{t_{i-1}^n})\}^\top\\
    &=\Lambda_{k,0}\left\{\frac{1}{T}\sum_{i=1}^n(f_{k,t_{i}^n}-f_{k,t_{i-1}^n})(f_{k,t_{i}^n}-f_{k,t_{i-1}^n})^\top\right\}\Lambda_{k,0}^\top\\ &\quad +\Lambda_{k,0}\left\{\frac{1}{T}\sum_{i=1}^n(f_{k,t_{i}^n}-f_{k,t_{i-1}^n})(e_{t_{i}^n}-e_{t_{i-1}^n})^\top\right\}\\
    &\quad +\left\{\frac{1}{T}\sum_{i=1}^n(e_{t_{i}^n}-e_{t_{i-1}^n})(f_{k,t_{i}^n}-f_{k,t_{i-1}^n})^\top\right\}\Lambda_{k,0}^\top\\
    &\quad+\frac{1}{T}\sum_{i=1}^n(e_{t_{i}^n}-e_{t_{i-1}^n})(e_{t_{i}^n}-e_{t_{i-1}^n})^\top\\
    &=\Lambda_{k,0} Q_{ff,k}\Lambda_{k,0}^\top+\Lambda_{k,0} Q_{fe,k}+Q_{fe,k}^\top\Lambda_{k,0}^\top
    +Q_{ee}\\
    &\stackrel{P_{\theta_{k,0}}}{\to} 
    \Lambda_{k,0}\Sigma_{ff,k,0}\Lambda_{k,0}^\top+\Sigma_{ee,0}=\Sigma_{k}(\theta_{k,0}), 
\end{align*}
so that we obtain (\ref{the1-1}). 
Next, in order to show  (\ref{the1-2}), we prove that
\begin{align}
    \label{th1-1}
    \sqrt{n}(\vec(Q_{XX})-\vec(\Sigma_{k}(\theta_{k,0})))\stackrel{d}{\to} N_{p^2}(0,\Gamma_{k}(\theta_{k,0})),
\end{align}
where 
\begin{align*}
    \Gamma_{k}(\theta_{k,0})_{(j_1,j_2),(j_3,j_4)}&=(\Sigma_{k}(\theta_{k,0}))_{j_{1}j_{3}}(\Sigma_{k}(\theta_{k,0}))_{j_{2}j_{4}}+(\Sigma_{k}(\theta_{k,0}))_{j_{1}j_{4}}(\Sigma_{k}(\theta_{k,0}))_{j_{2}j_{3}}\\ &\qquad\qquad\qquad\qquad\qquad\qquad\qquad\qquad(j_1,j_2,j_3,j_4=1,\cdots,p).
\end{align*}
Setting
\begin{align*}
L_{i,n}=
\begin{pmatrix}
\frac{1}{\sqrt{n}h_n}(X_{t_{i}^n}^{(1)}-X_{t_{i-1}^n}^{(1)})^2-\frac{1}{\sqrt{n}}(\Sigma_{k}(\theta_{k,0}))_{11}\\
\vdots\\
\frac{1}{\sqrt{n}h_n}(X_{t_{i}^n}^{(p)}-X_{t_{i-1}^n}^{(p)})(X_{t_{i}^n}^{(p)}-X_{t_{i-1}^n}^{(1)})
-\frac{1}{\sqrt{n}}(\Sigma_{k}(\theta_{k,0}))_{p1}\\
\vdots\\
\vdots\\
\frac{1}{\sqrt{n}h_n}(X_{t_{i}^n}^{(1)}-X_{t_{i-1}^n}^{(1)})(X_{t_{i}^n}^{(p)}-X_{t_{i-1}^n}^{(p)})
-\frac{1}{\sqrt{n}}(\Sigma_{k}(\theta_{k,0}))_{1p}\\
\vdots\\
\frac{1}{\sqrt{n}h_n}(X_{t_{i}^n}^{(p)}-X_{t_{i-1}^n}^{(p)})^2-\frac{1}{\sqrt{n}}(\Sigma_{k}(\theta_{k,0}))_{pp}
\end{pmatrix}, 
\end{align*}
we have
\begin{align*}
\sqrt{n}(\vec(Q_{XX})-\vec(\Sigma_{k}(\theta_{k,0})))
=\sum_{i=1}^n L_{i,n}.
\end{align*}
Let 
\begin{align*}
    M_{i,n}= L_{i,n}-\E_{\theta_{k,0}}\left[L_{i,n}|\mathscr{F}^{n}_{i-1}\right]\quad(i=1,\cdots,p^2).
\end{align*}
Note that $M_{i,n}$ is zero-mean martingale.
If it is shown that
\begin{align}
    \label{th1-2}
    \sum_{i=1}^n\E_{\theta_{k,0}}\left[M_{i,n}M_{i,n}^\top| \mathscr{F}^{n}_{i-1}\right]&\stackrel{P_{\theta_{k,0}}}{\to}
    \Gamma_{k}(\theta_{k,0}),
\end{align}
and
\begin{align}
    \label{th1-3}
    \sum_{i=1}^n\E_{\theta_{k,0}}\left[|M_{i,n}|^4| \mathscr{F}^{n}_{i-1}\right]&\stackrel{P_{\theta_{k,0}}}{\to}0,
\end{align}
then, we see from Theorem 3.2. in Hall and Heyde \cite{hall(1981)} that
\begin{align*}
    \sum_{i=1}^nM_{i,n}\stackrel{d}{\to} N_{p^2}(0,\Gamma_{k}(\theta_{k,0})).
\end{align*}
Furthermore, if it is proved that
\begin{align}
    \label{th1-4}
    \sum_{i=1}^n\E_{\theta_{k,0}}\left[L_{i,n}| \mathscr{F}^{n}_{i-1}\right]\stackrel{P_{\theta_{k,0}}}{\to}0,
\end{align}
we have 
\begin{align*}
    \sum_{i=1}^n M_{i,n}-\sum_{i=1}^n L_{i,n}\stackrel{P_{\theta_{k,0}}}{\to}0.
\end{align*}
Hence, 
(\ref{th1-2}), (\ref{th1-3}) and (\ref{th1-4}) imply that
\begin{align*}
     \sum_{i=1}^n L_{i,n}\stackrel{d}{\to} N_{p^2}(0,\Gamma_{k}(\theta_{k,0}))
\end{align*}
from Slutsky's theorem, and (\ref{th1-1}) is obtained. 
Since
\begin{align*}
    &\quad\sum_{i=1}^n\E_{\theta_{k,0}}\left[M_{i,n}M_{i,n}^\top| \mathscr{F}^{n}_{i-1}\right]\\
    &=\sum_{i=1}^n\E_{\theta_{k,0}}\left[(L_{i,n}-\E_{\theta_{k,0}}\left[L_{i,n}| \mathscr{F}^{n}_{i-1}\right])(L_{i,n}-\E_{\theta_{k,0}}\left[L_{i,n}| \mathscr{F}^{n}_{i-1}\right])^\top| \mathscr{F}^{n}_{i-1}\right]\\
    &=\sum_{i=1}^n\E_{\theta_{k,0}}\left[L_{i,n}L_{i,n}^\top| \mathscr{F}^{n}_{i-1}\right]
    -\sum_{i=1}^n\E_{\theta_{k,0}}\left[L_{i,n}| \mathscr{F}^{n}_{i-1}\right]
    \E_{\theta_{k,0}}\left[L_{i,n}| \mathscr{F}^{n}_{i-1}\right]^\top,
\end{align*}
and 
\begin{align*}
    0\leq\sum_{i=1}^n\E_{\theta_{k,0}}\left[|M_{i,n}|^4| \mathscr{F}^{n}_{i-1}\right]
    &=\sum_{i=1}^n\E_{\theta_{k,0}}\left[|L_{i,n}-\E_{\theta_{k,0}}\left[L_{i,n}| \mathscr{F}^{n}_{i-1}\right]|^4| \mathscr{F}^{n}_{i-1}\right]\\
    &\leq8\sum_{i=1}^n\left\{\E_{\theta_{k,0}}\left[|L_{i,n}|^4| \mathscr{F}^{n}_{i-1}\right]+|\E_{\theta_{k,0}}\left[L_{i,n}| \mathscr{F}^{n}_{i-1}\right]|^4\right\}\\
    &\leq8\sum_{i=1}^n\left\{\E_{\theta_{k,0}}\left[|L_{i,n}|^4| \mathscr{F}^{n}_{i-1}\right]+\E_{\theta_{k,0}}\left[|L_{i,n}|^4| \mathscr{F}^{n}_{i-1}\right]\right\}\\
    &=16\sum_{i=1}^n\E_{\theta_{k,0}}\left[|L_{i,n}|^4| \mathscr{F}^{n}_{i-1}\right],
\end{align*}
it is sufficient to prove the following three convergences in order to show (\ref{th1-1}).
\begin{align}
    \label{th1-5}
    \sum_{i=1}^n\E_{\theta_{k,0}}\left[L_{i,n}| \mathscr{F}^{n}_{i-1}\right]&\stackrel{P_{\theta_{k,0}}}{\to}0, \\
    \label{th1-6}
    \sum_{i=1}^n\E_{\theta_{k,0}}\left[L_{i,n}L_{i,n}^\top| \mathscr{F}^{n}_{i-1}\right]
    -\sum_{i=1}^n\E_{\theta_{k,0}}\left[L_{i,n}| \mathscr{F}^{n}_{i-1}\right]
    \E_{\theta_{k,0}}\left[L_{i,n}| \mathscr{F}^{n}_{i-1}\right]^\top
    &\stackrel{P_{\theta_{k,0}}}{\to} \Gamma_{k}(\theta_{k,0}),
    \\
    \label{th1-7}
    \sum_{i=1}^n\E_{\theta_{k,0}}\left[|L_{i,n}|^4
    |\mathscr{F}^{n}_{i-1}\right]&\stackrel{P_{\theta_{k,0}}}{\to}0. 
\end{align}
In order to show (\ref{th1-5}), it is sufficient to prove that
\begin{align*}
    &\sum_{i=1}^n\left\{\frac{1}{\sqrt{n}h_n}\E_{\theta_{k,0}}\left[(X_{t_{i}^n}^{(j_1)}-X_{t_{i-1}^n}^{(j_1)})(X_{t_{i}^n}^{(j_2)}-X_{t_{i-1}^n}^{(j_2)})|\mathscr{F}^{n}_{i-1}\right]-\frac{1}{\sqrt{n}}(\Sigma_{k}(\theta_{k,0}))_{j_1j_2}\right\}\stackrel{P_{\theta_{k,0}}}{\to}0\\
    &\qquad\qquad\qquad\qquad\qquad\qquad\qquad\qquad\qquad\qquad\qquad\qquad\qquad\qquad\qquad\quad(j_1,j_2=1,\cdots p).
\end{align*}
Since it follows from Lemma 4 that
\begin{align*}
    &\quad \sum_{i=1}^n\left\{\frac{1}{\sqrt{n}h_n}\E_{\theta_{k,0}}\left[(X_{t_{i}^n}^{(j_1)}-X_{t_{i-1}^n}^{(j_1)})(X_{t_{i}^n}^{(j_2)}-X_{t_{i-1}^n}^{(j_2)})|\mathscr{F}^{n}_{i-1}\right]-\frac{1}{\sqrt{n}}(\Sigma_{k}(\theta_{k,0}))_{j_1j_2}\right\}\\
    &=\frac{1}{\sqrt{n}}\sum_{i=1}^n\left\{
    \frac{1}{h_n}\left\{h_n(\Sigma_{k}(\theta_{k,0}))_{j_1j_2}+h_n^2\{R(1,f_{k,t_{i-1}^n})\right.\right.\\
    &\qquad\qquad\qquad\qquad\qquad\qquad\qquad\left.\left.+R(1,f_{k,t_{i-1}^n})R(1,e_{t_{i-1}^n})
    +R(1,e_{t_{i-1}^n})\}\right\}-(\Sigma_{k}(\theta_{k,0}))_{j_1j_2}\right\}\\
    &=\frac{h_n}{\sqrt{n}}\sum_{i=1}^nR(1,f_{k,t_{i-1}^n})
    +\frac{h_n}{\sqrt{n}}\sum_{i=1}^nR(1,f_{k,t_{i-1}^n})R(1,e_{t_{i-1}^n})
    +\frac{h_n}{\sqrt{n}}\sum_{i=1}^nR(1,e_{t_{i-1}^n})
    \\
    &=\sqrt{nh_n^2}\frac{1}{n}\sum_{i=1}^nR(1,f_{k,t_{i-1}^n})+\sqrt{nh_n^2}\frac{1}{n}\sum_{i=1}^nR(1,f_{k,t_{i-1}^n})R(1,e_{t_{i-1}^n})+\sqrt{nh_n^2}\frac{1}{n}\sum_{i=1}^nR(1,e_{t_{i-1}^n})\\
    &\stackrel{P_{\theta_{k,0}}}{\to}0   \quad (j_1,j_2=1,\cdots p),
\end{align*}
we obtain (\ref{th1-5}). In order to prove (\ref{th1-6}), it is sufficient to show that
\begin{align}
    &\quad \sum_{i=1}^n\E_{\theta_{k,0}}\left[\left\{\frac{1}{\sqrt{n}h_n}(X_{t_{i}^n}^{(j_1)}-X_{t_{i-1}^n}^{(j_1)})(X_{t_{i}^n}^{(j_2)}-X_{t_{i-1}^n}^{(j_2)})-\frac{1}{\sqrt{n}}(\Sigma_{k}(\theta_{k,0}))_{j_1j_2}\right\}\nonumber \right.\\
    &\left.\qquad \qquad \qquad \times \left\{\frac{1}{\sqrt{n}h_n} (X_{t_{i}^n}^{(j_3)}-X_{t_{i-1}^n}^{(j_3)})(X_{t_{i}^n}^{(j_4)}-X_{t_{i-1}^n}^{(j_4)})-\frac{1}{\sqrt{n}}(\Sigma_{k}(\theta_{k,0}))_{j_3j_4}\right\}
    | \mathscr{F}^{n}_{i-1}\right]\nonumber \\
    &\quad -\sum_{i=1}^n\E_{\theta_{k,0}}\left[\left\{\frac{1}{\sqrt{n}h_n}(X_{t_{i}^n}^{(j_1)}-X_{t_{i-1}^n}^{(j_1)})(X_{t_{i}^n}^{(j_2)}-X_{t_{i-1}^n}^{(j_2)})-\frac{1}{\sqrt{n}}(\Sigma_{k}(\theta_{k,0}))_{j_1j_2}\right\}| \mathscr{F}^{n}_{i-1}\right]\label{th1-8}\\
    &\qquad \qquad \qquad \times \E_{\theta_{k,0}}\left[\left\{\frac{1}{\sqrt{n}h_n} (X_{t_{i}^n}^{(j_3)}-X_{t_{i-1}^n}^{(j_3)})(X_{t_{i}^n}^{(j_4)}-X_{t_{i-1}^n}^{(j_4)})-\frac{1}{\sqrt{n}}(\Sigma_{k}(\theta_{k,0}))_{j_3j_4}\right\}| \mathscr{F}^{n}_{i-1}\right]\nonumber \\
    &\stackrel{P_{\theta_{k,0}}}{\to}(\Gamma_{k}(\theta_{k,0}))_{(j_1,j_2),(j_3,j_4)}\quad(j_1,j_2,j_3,j_4=1,\cdots p).\nonumber 
\end{align}
From Lemma 4, the first term of (\ref{th1-8}) is 
\begin{align*}
    &\quad \sum_{i=1}^n\E_{\theta_{k,0}}\left[\left\{\frac{1}{\sqrt{n}h_n}(X_{t_{i}^n}^{(j_1)}-X_{t_{i-1}^n}^{(j_1)})(X_{t_{i}^n}^{(j_2)}-X_{t_{i-1}^n}^{(j_2)})-\frac{1}{\sqrt{n}}(\Sigma_{k}(\theta_{k,0}))_{j_1j_2}\right\}\right.\\
    &\left.\qquad \qquad \qquad \times \left\{\frac{1}{\sqrt{n}h_n} (X_{t_{i}^n}^{(j_3)}-X_{t_{i-1}^n}^{(j_3)})(X_{t_{i}^n}^{(j_4)}-X_{t_{i-1}^n}^{(j_4)})-\frac{1}{\sqrt{n}}(\Sigma_{k}(\theta_{k,0}))_{j_3j_4}\right\}
    | \mathscr{F}^{n}_{i-1}\right]
    \\
    &=\frac{1}{nh_n^2}\sum_{i=1}^n\E_{\theta_{k,0}}\left[(X_{t_{i}^n}^{(j_1)}-X_{t_{i-1}^n}^{(j_1)})(X_{t_{i}^n}^{(j_2)}-X_{t_{i-1}^n}^{(j_2)})(X_{t_{i}^n}^{(j_3)}-X_{t_{i-1}^n}^{(j_3)})(X_{t_{i}^n}^{(j_4)}-X_{t_{i-1}^n}^{(j_4)})| \mathscr{F}^{n}_{i-1}\right]\\
    &\quad -\frac{1}{nh_n}\sum_{i=1}^n\E_{\theta_{k,0}}\left[(X_{t_{i}^n}^{(j_1)}-X_{t_{i-1}^n}^{(j_1)})(X_{t_{i}^n}^{(j_2)}-X_{t_{i-1}^n}^{(j_2)})| \mathscr{F}^{n}_{i-1}\right](\Sigma_{k}(\theta_{k,0}))_{j_3j_4}\\
    &\quad -\frac{1}{nh_n}\sum_{i=1}^n\E_{\theta_{k,0}}\left[(X_{t_{i}^n}^{(j_3)}-X_{t_{i-1}^n}^{(j_3)})(X_{t_{i}^n}^{(j_4)}-X_{t_{i-1}^n}^{(j_4)})| \mathscr{F}^{n}_{i-1}\right](\Sigma_{k}(\theta_{k,0}))_{j_1j_2}\\
    &\quad+(\Sigma_{k}(\theta_{k,0}))_{j_1j_2}(\Sigma_{k}(\theta_{k,0}))_{j_3j_4}\\
     &=\frac{1}{nh_n^2}\sum_{i=1}^nh_n^2\{(\Sigma_{k}(\theta_{k,0}))_{j_1j_2}(\Sigma_{k}(\theta_{k,0}))_{j_3j_4}+(\Sigma_{k}(\theta_{k,0}))_{j_1j_3}(\Sigma_{k}(\theta_{k,0}))_{j_2j_4}\\
     &\qquad\qquad\qquad\qquad\qquad\qquad\qquad\qquad\qquad\qquad\qquad+(\Sigma_{k}(\theta_{k,0}))_{j_1j_4}(\Sigma_{k}(\theta_{k,0}))_{j_2j_3}\}\\
    &\quad +\frac{1}{nh_n^2}\sum_{i=1}^nh_n^3\{R(1,f_{k,t_{i-1}^n})+R(1,f_{k,t_{i-1}^n})R(1,e_{t_{i-1}^n})+R(1,e_{t_{i-1}^n})  \}
    \\
    &\quad-\frac{1}{nh_n}\sum_{i=1}^n\left\{h_n(\Sigma_k(\theta_{k,0}))_{j_1j_2}+h_n^2\{R(1,f_{k,t_{i-1}^n})\right.\\
    &\qquad\qquad\qquad\qquad\qquad\left.+R(1,f_{k,t_{i-1}^n})R(1,e_{t_{i-1}^n})+R(1,e_{t_{i-1}^n})\}\right\}(\Sigma_{k}(\theta_{k,0}))_{j_3j_4}\\
    &\quad-\frac{1}{nh_n}\sum_{i=1}^n\left\{h_n(\Sigma_{k}(\theta_{k,0}))_{j_3j_4}+h_n^2\{R(1,f_{k,t_{i-1}^n})\right.\\
    &\quad \left.\qquad \qquad \qquad \qquad +R(1,f_{k,t_{i-1}^n})R(1,e_{t_{i-1}^n})+R(1,e_{t_{i-1}^n})\}\right\}(\Sigma_{k}(\theta_{k,0}))_{j_1j_2}\\
    &\quad+(\Sigma_{k}(\theta_{k,0}))_{j_1j_2}(\Sigma_{k}(\theta_{k,0}))_{j_3j_4}\\
    &=(\Sigma_{k}(\theta_{k,0}))_{j_1j_3}(\Sigma_{k}(\theta_{k,0}))_{j_2j_4}+(\Sigma_{k}(\theta_{k,0}))_{j_1j_4}(\Sigma_{k}(\theta_{k,0}))_{j_2j_3}\\
    &\quad+h_n\frac{1}{n}\sum_{i=1}^nR(1,f_{k,t_{i-1}^n})
    +h_n\frac{1}{n}\sum_{i=1}^nR(1,f_{k,t_{i-1}^n})R(1,e_{t_{i-1}^n})
    +h_n\frac{1}{n}\sum_{i=1}^nR(1,e_{t_{i-1}^n})\\
    &\stackrel{P_{\theta_{k,0}}}{\to}(\Sigma_{k}(\theta_{k,0}))_{j_1j_3}(\Sigma_{k}(\theta_{k,0}))_{j_2j_4}+(\Sigma_{k}(\theta_{k,0}))_{j_1j_4}(\Sigma_{k}(\theta_{k,0}))_{j_2j_3}(=\Gamma_{k}(\theta_{k,0})_{(j_1,j_2),(j_3,j_4)})\\
    &\qquad\qquad\qquad\qquad\qquad\qquad\qquad\qquad\qquad\qquad\qquad\qquad\qquad\qquad\quad(j_1,j_2,j_3,j_4=1,\cdots p).
\end{align*}
The second term of (\ref{th1-8}) is 
\begin{align*}
    &\quad \sum_{i=1}^n\E_{\theta_{k,0}}\left[\left\{\frac{1}{\sqrt{n}h_n}(X_{t_{i}^n}^{(j_1)}-X_{t_{i-1}^n}^{(j_1)})(X_{t_{i}^n}^{(j_2)}-X_{t_{i-1}^n}^{(j_2)})-\frac{1}{\sqrt{n}}(\Sigma_{k}(\theta_{k,0}))_{j_1j_2}\right\}| \mathscr{F}^{n}_{i-1}\right]\\
    &\qquad \qquad \qquad \times \E_{\theta_{k,0}}\left[\left\{\frac{1}{\sqrt{n}h_n} (X_{t_{i}^n}^{(j_3)}-X_{t_{i-1}^n}^{(j_3)})(X_{t_{i}^n}^{(j_4)}-X_{t_{i-1}^n}^{(j_4)})-\frac{1}{\sqrt{n}}(\Sigma_{k}(\theta_{k,0}))_{j_3j_4}\right\}| \mathscr{F}^{n}_{i-1}\right]\\
    &=\sum_{i=1}^n\left\{\frac{1}{\sqrt{n}}R(h_n,f_{k,t_{i-1}^n})+\frac{1}{\sqrt{n}}R(1,f_{k,t_{i-1}^n})R(h_n,e_{t_{i-1}^n})
    +\frac{1}{\sqrt{n}}R(h_n,e_{t_{i-1}^n})
    \right\}\\
    &\qquad \qquad \qquad \times\left\{\frac{1}{\sqrt{n}}R(h_n,f_{k,t_{i-1}^n})+\frac{1}{\sqrt{n}}R(1,f_{k,t_{i-1}^n})R(h_n,e_{t_{i-1}^n})
    +\frac{1}{\sqrt{n}}R(h_n,e_{t_{i-1}^n})
    \right\}\\
    &=h_n^2\frac{1}{n}\sum_{i=1}^nR(1,f_{k,t_{i-1}^n})
    +h_n^2\frac{1}{n}\sum_{i=1}^nR(1,f_{k,t_{i-1}^n})R(1,e_{t_{i-1}^n})+h_n^2\frac{1}{n}\sum_{i=1}^nR(1,e_{t_{i-1}^n})\stackrel{P_{\theta_{k,0}}}{\to}0\\
    &\qquad\qquad\qquad\qquad\qquad\qquad\qquad\qquad\qquad\qquad\qquad\qquad\qquad\qquad\qquad\quad(j_1,j_2,j_3,j_4=1,\cdots p),
\end{align*}
so that (\ref{th1-8}) is deduced from Slutsky's theorem, and we obtain (\ref{th1-6}). 
Finally, for the proof of (\ref{th1-7}), we note that
\begin{align*}
    0 &\leq \sum_{i=1}^n\E_{\theta_{k,0}}\left[|L_{i,n}|^4| \mathscr{F}^{n}_{i-1}\right]\\
    &=\sum_{i=1}^n\E_{\theta_{k,0}}\left[\left|\sum_{u=1}^{p^2}L_{i,n}^{(u)2}\right|^2| \mathscr{F}^{n}_{i-1}\right]
    \leq C_{1}\sum_{u=1}^{p^2}\sum_{i=1}^n\E_{\theta_{k,0}}\left[|L_{i,n}^{(u)}|^4|\mathscr{F}^{n}_{i-1}\right].
\end{align*}
Hence, if it is proved that
\begin{align}
    \sum_{i=1}^n\E_{\theta_{k,0}}\left[|L_{i,n}^{(u)}|^4|\mathscr{F}^{n}_{i-1}\right]\stackrel{P_{\theta_{k,0}}}{\to}0\quad (u=1,\cdots,p^2), 
    \label{th1-9}
\end{align}
then, (\ref{th1-7}) is obtained. In order to prove (\ref{th1-9}), it is sufficient to show that
\begin{align*}
    &\sum_{i=1}^n\E_{\theta_{k,0}}\left[\left|\frac{1}{\sqrt{n}h_n}(X^{(j_1)}_{t_{i}^n}-X^{(j_1)}_{t_{i-1}^n})(X^{(j_2)}_{t_{i}^n}-X^{(j_2)}_{t_{i-1}^n})-\frac{1}{\sqrt{n}}(\Sigma_{k}(\theta_{k,0}))_{j_1j_2}
    \right|^4|\mathscr{F}^{n}_{i-1}\right]\stackrel{P_{\theta_{k,0}}}{\to}0\\\
    &\qquad\qquad\qquad\qquad\qquad\qquad\qquad\qquad\qquad\qquad\qquad\qquad\qquad\qquad\qquad(j_1,j_2=1,\cdots p).
\end{align*}
Since it follows from Lemma 6 in Kessler \cite{kessler(1997)} that
\begin{align*}
    &0\leq \sum_{i=1}^n\E_{\theta_{k,0}}\left[\left|\frac{1}{\sqrt{n}h_n}(X^{(j_1)}_{t_{i}^n}-X^{(j_1)}_{t_{i-1}^n})(X^{(j_2)}_{t_{i}^n}-X^{(j_2)}_{t_{i-1}^n})-\frac{1}{\sqrt{n}}(\Sigma_{k}(\theta_{k,0}))_{j_1j_2}
    \right|^4|\mathscr{F}^{n}_{i-1}\right]\\
    &\leq C_2\left\{\frac{1}{n^2h_n^4}\sum_{i=1}^n\E_{\theta_{k,0}}\left[\left|(X^{(j_1)}_{t_{i}^n}-X^{(j_1)}_{t_{i-1}^n})(X^{(j_2)}_{t_{i}^n}-X^{(j_2)}_{t_{i-1}^n})
    \right|^4|\mathscr{F}^{n}_{i-1}\right]
    +\frac{1}{n^2}(\Sigma_{k}(\theta_{k,0}))_{j_1j_2}^4\right\}\\
    &=\frac{C_2}{n^2h_n^4}\sum_{i=1}^n\E_{\theta_{k,0}}\left[\left|\{(\Lambda_{k,0}[j_1,\cdot](f_{k,t_{i}^n}-f_{k,t_{i-1}^n})
    +e^{(j_1)}_{t_{i}^n}-e^{(j_1)}_{t_{i-1}^n}\}\right.\right.\\
    &\left.\left.\qquad\qquad\qquad\qquad\qquad\times\{\Lambda_{k,0}[j_2,\cdot](f_{k,t_{i}^n}-f_{k,t_{i-1}^n})+e^{(j_2)}_{t_{i}^n}-e^{(j_2)}_{t_{i-1}^n}\}\right|^4|\mathscr{F}^{n}_{i-1}\right]+\frac{C_2}{n^2}(\Sigma_{k}(\theta_{k,0}))_{j_1j_2}^4\\
    &\leq \frac{C_3}{n^2h_n^4}\sum_{i=1}^n\E_{\theta_{k,0}}\left[\left|\Lambda_{k,0}[j_1,\cdot](f_{k,t_{i}^n}-f_{k,t_{i-1}^n})\Lambda_{k,0}[j_2,\cdot](f_{k,t_{i}^n}-f_{k,t_{i-1}^n})\right|^4|\mathscr{F}^{n}_{i-1}\right]\\
    &\quad +\frac{C_3}{n^2h_n^4}\sum_{i=1}^n\E_{\theta_{k,0}}\left[\left|\Lambda_{k,0}[j_1,\cdot](f_{k,t_{i}^n}-f_{k,t_{i-1}^n})(e^{(j_2)}_{t_{i}^n}-e^{(j_2)}_{t_{i-1}^n})\right|^4|\mathscr{F}^{n}_{i-1}\right]\\
    &\quad +\frac{C_3}{n^2h_n^4}\sum_{i=1}^n\E_{\theta_{k,0}}\left[\left|\Lambda_{k,0}[j_2,\cdot](f_{k,t_{i}^n}-f_{k,t_{i-1}^n})(e^{(j_1)}_{t_{i}^n}-e^{(j_1)}_{t_{i-1}^n})\right|^4|\mathscr{F}^{n}_{i-1}\right]\\
    &\quad +\frac{C_3}{n^2h_n^4}\sum_{i=1}^n\E_{\theta_{k,0}}\left[\left|(e^{(j_1)}_{t_{i}^n}-e^{(j_1)}_{t_{i-1}^n})(e^{(j_2)}_{t_{i}^n}-e^{(j_2)}_{t_{i-1}^n})\right|^4|\mathscr{F}^{n}_{i-1}\right]+\frac{C_2}{n^2}(\Sigma_{k}(\theta_{k,0}))_{j_1j_2}^4\\
    &\leq \frac{C_3}{n^2h_n^4}\sum_{i=1}^n\E_{\theta_{k,0}}\left[\left|\Lambda_{k,0}[j_1,\cdot](f_{k,t_{i}^n}-f_{k,t_{i-1}^n})\right|^8|\mathscr{F}^{n}_{i-1}\right]^\frac{1}{2}\E_{\theta_{k,0}}\left[\left|\Lambda_{k,0}[j_2,\cdot](f_{k,t_{i}^n}-f_{k,t_{i-1}^n})\right|^8|\mathscr{F}^{n}_{i-1}\right]^{\frac{1}{2}}\\
    &\quad +\frac{C_3}{n^2h_n^4}\sum_{i=1}^n\E_{\theta_{k,0}}\left[\left|\Lambda_{k,0}[j_1,\cdot](f_{k,t_{i}^n}-f_{k,t_{i-1}^n})\right|^8|\mathscr{F}^{n}_{i-1}\right]^{\frac{1}{2}}\E_{\theta_{k,0}}\left[\left|e^{(j_2)}_{t_{i}^n}-e^{(j_2)}_{t_{i-1}^n}\right|^8|\mathscr{F}^{n}_{i-1}\right]^{\frac{1}{2}}\\
    &\quad +\frac{C_3}{n^2h_n^4}\sum_{i=1}^n\E_{\theta_{k,0}}\left[\left|\Lambda_{k,0}[j_2,\cdot](f_{k,t_{i}^n}-f_{k,t_{i-1}^n})\right|^8|\mathscr{F}^{n}_{i-1}\right]^{\frac{1}{2}}\E_{\theta_{k,0}}\left[\left|e^{(j_1)}_{t_{i}^n}-e^{(j_1)}_{t_{i-1}^n}\right|^8|\mathscr{F}^{n}_{i-1}\right]^{\frac{1}{2}}\\
    &\quad +\frac{C_3}{n^2h_n^4}\sum_{i=1}^n\E_{\theta_{k,0}}\left[\left|e^{(j_1)}_{t_{i}^n}-e^{(j_1)}_{t_{i-1}^n}\right|^8|\mathscr{F}^{n}_{i-1}\right]^{\frac{1}{2}}\E_{\theta_{k,0}}\left[\left|e^{(j_2)}_{t_{i}^n}-e^{(j_2)}_{t_{i-1}^n}\right|^8|\mathscr{F}^{n}_{i-1}\right]^{\frac{1}{2}}+\frac{C_2}{n^2}(\Sigma_{k}(\theta_{k,0}))_{j_1j_2}^4\\
    &\leq \frac{C_3}{n^2h_n^4}\sum_{i=1}^n\E_{\theta_{k,0}}\left[\left|\Lambda_{k,0}[j_1,\cdot]\right|^8\left|f_{k,t_{i}^n}-f_{k,t_{i-1}^n}\right|^8|\mathscr{F}^{n}_{i-1}\right]^{\frac{1}{2}}\\
    &\qquad\qquad\qquad\qquad\qquad\qquad\qquad\qquad\qquad\qquad\times\E_{\theta_{k,0}}\left[\left|\Lambda_{k,0}[j_2,\cdot]\right|^8\left|f_{k,t_{i}^n}-f_{k,t_{i-1}^n}\right|^8|\mathscr{F}^{n}_{i-1}\right]^{\frac{1}{2}}\\
    &\quad +\frac{C_3}{n^2h_n^4}\sum_{i=1}^n\E_{\theta_{k,0}}\left[\left|\Lambda_{k,0}[j_1,\cdot]\right|^8\left|f_{k,t_{i}^n}-f_{k,t_{i-1}^n}\right|^8|\mathscr{F}^{n}_{i-1}\right]^{\frac{1}{2}}\E_{\theta_{k,0}}\left[\left|e^{(j_2)}_{t_{i}^n}-e^{(j_2)}_{t_{i-1}^n}\right|^8|\mathscr{F}^{n}_{i-1}\right]^{\frac{1}{2}}\\
    &\quad +\frac{C_3}{n^2h_n^4}\sum_{i=1}^n\E_{\theta_{k,0}}\left[\left|\Lambda_{k,0}[j_2,\cdot]\right|^8\left|f_{k,t_{i}^n}-f_{k,t_{i-1}^n}\right|^8|\mathscr{F}^{n}_{i-1}\right]^{\frac{1}{2}}\E_{\theta_{k,0}}\left[\left|e^{(j_1)}_{t_{i}^n}-e^{(j_1)}_{t_{i-1}^n}\right|^8|\mathscr{F}^{n}_{i-1}\right]^{\frac{1}{2}}\\
    &\quad +\frac{C_3}{n^2h_n^4}\sum_{i=1}^n\E_{\theta_{k,0}}\left[\left|e^{(j_1)}_{t_{i}^n}-e^{(j_1)}_{t_{i-1}^n}\right|^8|\mathscr{F}^{n}_{i-1}\right]^{\frac{1}{2}}\E_{\theta_{k,0}}\left[\left|e^{(j_2)}_{t_{i}^n}-e^{(j_2)}_{t_{i-1}^n}\right|^8|\mathscr{F}^{n}_{i-1}\right]^{\frac{1}{2}}+\frac{C_2}{n^2}(\Sigma_{k}(\theta_{k,0}))_{j_1j_2}^4\\    
    &\leq \frac{C_3}{n^2h_n^4}\sum_{i=1}^nR(h_n^4,f_{k,t_{i-1}^n})^{\frac{1}{2}}R(h_n^4,f_{k,t_{i-1}^n})^{\frac{1}{2}}+\frac{C_3}{n^2h_n^4}\sum_{i=1}^nR(h_n^4,f_{k,t_{i-1}^n})^{\frac{1}{2}}R(h_n^4,e_{t_{i-1}^n})^{\frac{1}{2}}\\
    &\quad +\frac{C_3}{n^2h_n^4}\sum_{i=1}^nR(h_n^4,e_{t_{i-1}^n})^{\frac{1}{2}}R(h_n^4,e_{t_{i-1}^n})^{\frac{1}{2}}+\frac{C_2}{n^2}(\Sigma_{k}(\theta_{k,0}))_{j_1j_2}^4\\
    &\leq \frac{C_3}{n}\frac{1}{n}\sum_{i=1}^nR(1,f_{k,t_{i-1}^n})+\frac{C_3}{n}\frac{1}{n}\sum_{i=1}^nR(1,f_{k,t_{i-1}^n})R(1,e_{t_{i-1}^n})\\
    &\quad+\frac{C_3}{n}\frac{1}{n}\sum_{i=1}^nR(1,e_{t_{i-1}^n})+\frac{C_2}{n^2}(\Sigma_{k}(\theta_{k,0}))_{j_1j_2}^4
    \stackrel{P_{\theta_{k,0}}}{\to}0\quad(j_1,j_2=1,\cdots,p),
\end{align*}
(\ref{th1-7}) is deduced, and we have (\ref{th1-1}). 
Let 
$f(x)=D_{p}^{+}x$. 
The continuous mapping theorem yields that
\begin{align*}
    f(\sqrt{n}(\vec(Q_{XX})-\vec(\Sigma_{k}(\theta_{k,0}))))
    &=\sqrt{n}(\vech(Q_{XX})-\vech(\Sigma_{k}(\theta_{k,0})))\\
    &\stackrel{d}{\to}f( N_{p^2}(0,\Gamma_{k}(\theta_{k,0})))
    =N_{\bar{p}}(0,D_{p}^{+}\Gamma_{k}(\theta_{k,0})D_{p}^{+\top}).
\end{align*}
Furthermore, since
\begin{align*}
    \left(D_{p}^{+}\Gamma_{k}(\theta_{k,0})D_{p}^{+\top}\right)_{ij}
    &=\sum_{\ell=1}^{p^{2}}\sum_{m=1}^{p^{2}}
    (D_{p}^{+})_{i\ell}(\Gamma_{k}(\theta_{k,0}))_{\ell m}(D_{p}^{+})_{jm}\\
    &=\sum_{j_1=1}^{p}\sum_{j_2=1}^{p}\sum_{j_3=1}^{p}\sum_{j_4=1}^{p}(D_{p}^{+})_{i,(j_1,j_2)}(\Gamma_{k}(\theta_{k,0}))_{(j_1,j_2),(j_3,j_4)}(D_{p}^{+})_{j,(j_3,j_4)}\\
    &=\sum_{j_1=1}^{p}\sum_{j_2=1}^{p}\sum_{j_3=1}^{p}\sum_{j_4=1}^{p}(D_{p}^{+})_{i,(j_1,j_2)}(\Sigma_{k}(\theta_{k,0}))_{j_1j_3}(\Sigma_{k}(\theta_{k,0}))_{j_2j_4}
    (D_{p}^{+})_{j,(j_3,j_4)}\\
    &\quad +\sum_{j_1=1}^{p}\sum_{j_2=1}^{p}\sum_{j_3=1}^{p}\sum_{j_4=1}^{p}(D_{p}^{+})_{i,(j_1,j_2)}(\Sigma_{k}(\theta_{k,0}))_{j_1j_4}(\Sigma_{k}(\theta_{k,0}))_{j_2j_3}
    (D_{p}^{+})_{j,(j_3,j_4)}\\
    &=\sum_{j_1=1}^{p}\sum_{j_2=1}^{p}\sum_{j_3=1}^{p}\sum_{j_4=1}^{p}(D_{p}^{+})_{i,(j_1,j_2)}(\Sigma_{k}(\theta_{k,0}))_{j_1j_3}(\Sigma_{k}(\theta_{k,0}))_{j_2j_4}
    (D_{p}^{+})_{j,(j_3,j_4)}\\
    &\quad +\sum_{j_1=1}^{p}\sum_{j_2=1}^{p}\sum_{j_4=1}^{p}\sum_{j_3=1}^{p}(D_{p}^{+})_{i,(j_1,j_2)}(\Sigma_{k}(\theta_{k,0}))_{j_1j_4}(\Sigma_{k}(\theta_{k,0}))_{j_2j_3}
    (D_{p}^{+})_{j,(j_4,j_3)}\\
    &=2\sum_{j_1=1}^{p}\sum_{j_2=1}^{p}\sum_{j_3=1}^{p}\sum_{j_4=1}^{p}(D_{p}^{+})_{i,(j_1,j_2)}(\Sigma_{k}(\theta_{k,0}))_{j_1j_3}(\Sigma_{k}(\theta_{k,0}))_{j_2j_4}
    (D_{p}^{+})_{j,(j_3,j_4)}\\
    &=2\sum_{j_1=1}^{p}\sum_{j_2=1}^{p}\sum_{j_3=1}^{p}\sum_{j_4=1}^{p}(D_{p}^{+})_{i,(j_1,j_2)}(\Sigma_{k}(\theta_{k,0})\otimes\Sigma_{k}(\theta_{k,0}))_{(j_1,j_2),(j_3,j_4)}(D_{p}^{+})_{j,(j_3,j_4)}\\
    &=2\sum_{\ell=1}^{p^2}\sum_{m=1}^{p^2}(D_{p}^{+})_{i\ell}(\Sigma_{k}(\theta_{k,0})\otimes\Sigma_{k}(\theta_{k,0}))_{\ell m}(D_{p}^{+})_{jm}\\
    &=(2D_{p}^{+}(\Sigma_{k}(\theta_{k,0})\otimes\Sigma_{k}(\theta_{k,0}))D_{p}^{+\top})_{ij}
    \quad (i,j=1,\cdots \bar{p}),
\end{align*}
it is proved that
\begin{align*}
    D_{p}^{+}\Gamma_{k}(\theta_{k,0})D_{p}^{+\top}=2D_{p}^{+}(\Sigma_{k}(\theta_{k,0})\otimes\Sigma_{k}(\theta_{k,0}))D_{p}^{+\top}(=W_{k}(\theta_{k,0})).
\end{align*} 
Therefore, (\ref{the1-2}) is obtained. 
\qed
\end{prf}
\begin{lemma} Under assumption [B2], 
\begin{align*}
W_{k}(\theta_{k})=D_{p}^{+}(\Sigma_{k}(\theta_{k})\otimes\Sigma_{k}(\theta_{k}))D_{p}^{+\top}>0.
\end{align*}
\end{lemma}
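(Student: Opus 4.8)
The plan is to deduce the positive definiteness of $W_k(\theta_k)$ from two ingredients: (i) $\Sigma_k(\theta_k)$ is positive definite, and (ii) the matrix $D_p^+$ has full row rank $\bar p$. Granted these, the claim reduces to a routine congruence argument.

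First I would establish (i). Write $\Sigma_k(\theta_k)=\Lambda_k\Sigma_{ff,k}\Lambda_k^\top+\Sigma_{ee}$ with $\Sigma_{ee}=\Diag(\sigma_1^2,\cdots,\sigma_p^2)$. By [B2] we have $\sigma_i^2>0$ for every $i$, so $\Sigma_{ee}>0$; and since $\Sigma_{ff,k}=S_kS_k^\top$ is positive semidefinite, $\Lambda_k\Sigma_{ff,k}\Lambda_k^\top$ is positive semidefinite as well. The sum of a positive semidefinite matrix and a positive definite matrix is positive definite, hence $\Sigma_k(\theta_k)>0$ (this sharpens the non-singularity noted in the Remark after [B4]). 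It then follows that $\Sigma_k(\theta_k)\otimes\Sigma_k(\theta_k)>0$, because the eigenvalues of the Kronecker square are the pairwise products of the eigenvalues of $\Sigma_k(\theta_k)$, all of which are strictly positive.

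For (ii) I would recall that the duplication matrix $D_p\in\mathbb R^{p^2\times\bar p}$ has full column rank $\bar p$ (its columns, which encode the selection patterns for the distinct entries of a symmetric matrix, are linearly independent), so $D_p^\top D_p$ is invertible and $D_p^+=(D_p^\top D_p)^{-1}D_p^\top$ satisfies $D_p^+D_p=I_{\bar p}$; in particular $D_p^+$ has full row rank $\bar p$ and $D_p^{+\top}$ has trivial kernel. Combining the two ingredients: for any $x\in\mathbb R^{\bar p}$ with $x\neq0$, set $y=D_p^{+\top}x$; then $y\neq0$, and
\begin{align*}
x^\top D_p^+(\Sigma_k(\theta_k)\otimes\Sigma_k(\theta_k))D_p^{+\top}x
=y^\top(\Sigma_k(\theta_k)\otimes\Sigma_k(\theta_k))y>0
\end{align*}
by (i). Thus $D_p^+(\Sigma_k(\theta_k)\otimes\Sigma_k(\theta_k))D_p^{+\top}>0$, and since $W_k(\theta_k)$ is a strictly positive multiple of this matrix, $W_k(\theta_k)>0$.

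There is essentially no obstacle here; the only point worth care is the full-rank property of $D_p^+$ (equivalently $D_p^+D_p=I_{\bar p}$), which is a standard fact about the duplication matrix and can simply be cited from Brown \cite{brown(1974)} or checked directly from the defining relations $\vec{A}=D_p\vech{A}$ and $D_p^+\vec{A}=\vech{A}$ used earlier. One may also remark that non-singularity of $\Sigma_k(\theta_k)$ alone would not suffice for the argument — positive definiteness is needed — but this is immediate from [B2] as in step (i).
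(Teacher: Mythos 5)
Your proposal is correct and follows essentially the same route as the paper's proof: positive definiteness of $\Sigma_{k}(\theta_{k})$ from [B2] together with $\Lambda_{k}\Sigma_{ff,k}\Lambda_{k}^{\top}\geq0$, positive definiteness of the Kronecker square, injectivity of $D_{p}^{+\top}$, and the congruence argument. The only cosmetic difference is that you phrase the injectivity of $D_{p}^{+\top}$ via $D_{p}^{+}D_{p}=I_{\bar{p}}$, while the paper derives it directly by left-multiplying $D_{p}(D_{p}^{\top}D_{p})^{-1}x=0$ by $D_{p}^{\top}$; these are the same fact.
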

\begin{proof}
From $\Sigma_{ff,k}=S_{k}S_{k}^\top\geq0$, 
\begin{align*}
    \Lambda_{k}^\top\Sigma_{ff,k}\Lambda_{k}=(\Sigma_{ff,k}^{\frac{1}{2}}\Lambda_{k})^\top(\Sigma_{ff,k}^{\frac{1}{2}}\Lambda_{k})\geq0.
\end{align*} Furthermore, 
\begin{align*}
    \Sigma_{ee}=\Diag(\sigma_1^2,\cdots,\sigma_p^2)^{\top}>0.
\end{align*}
Hence, 
\begin{align*}
    \Sigma_{k}(\theta_{k})=\Lambda_{k}^\top\Sigma_{ff,k}\Lambda_{k}+\Sigma_{ee}>0.
\end{align*}
Since
$\Sigma_{k}(\theta_{k})\otimes\Sigma_{k}(\theta_{k})>0$, 
it follows that for all $x\in\mathbb{R}^{p^{2}}(\neq0)$, 
\begin{align*}
    x^{\top}(\Sigma_{k}(\theta_{k})\otimes\Sigma_{k}(\theta_{k}))x>0.
\end{align*} 
Furthermore, for all $x\in\mathbb{R}^{\bar{p}}(\neq0)$, 
\begin{align*}
     D_{p}^{+\top}x=0 
    &\Leftrightarrow D_{p}(D_{p}^{\top}D_{p})^{-1}x=0\\
    &\Leftrightarrow
    D_{p}^{\top}D_{p}(D_{p}^{\top}D_{p})^{-1}x=D_{p}^{\top}0\\
    &\Leftrightarrow x=0.
\end{align*}
Therefore, for all $x\in\mathbb{R}^{\bar{p}}(\neq0)$, 
\begin{align*}
    x^\top D_{p}^{+}(\Sigma_{k}(\theta_{k})\otimes\Sigma_{k}(\theta_{k}))D_{p}^{+\top}x>0,
\end{align*}
and we obtain $W_{k}(\theta_{k})>0$. 
\end{proof}
\begin{lemma}Under assumptions [B2], [C2], 
\begin{align*}
    \det{\Delta_{k}^{\top}W_{k}(\theta_{k,0}) \Delta_{k}}\neq 0
\end{align*}
\end{lemma}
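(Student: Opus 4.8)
The plan is to establish the stronger fact that $\Delta_{k}^{\top}W_{k}(\theta_{k,0})\Delta_{k}$ is positive definite, from which the nonvanishing of its determinant is immediate, since a positive definite matrix is nonsingular.

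First I would observe that $\Delta_{k}=\partial_{\theta_{k}}\vech{\Sigma_{k}(\theta_{k,0})}$ is a $\bar{p}\times q_{k}$ matrix, so that assumption [C2], namely $\rank{\Delta_{k}}=q_{k}$, says precisely that $\Delta_{k}$ has full column rank; consequently $\Delta_{k}x\neq0$ for every $x\in\mathbb{R}^{q_{k}}$ with $x\neq0$. Next, since [B2] holds, Lemma 5 gives that $W_{k}(\theta_{k,0})$ is symmetric and positive definite, so that $y^{\top}W_{k}(\theta_{k,0})y>0$ for every $y\in\mathbb{R}^{\bar{p}}$ with $y\neq0$. Combining these, for arbitrary $x\in\mathbb{R}^{q_{k}}$ with $x\neq0$ I would set $y=\Delta_{k}x\neq0$ and write
\begin{align*}
    x^{\top}\Delta_{k}^{\top}W_{k}(\theta_{k,0})\Delta_{k}x=y^{\top}W_{k}(\theta_{k,0})y>0,
\end{align*}
so that $\Delta_{k}^{\top}W_{k}(\theta_{k,0})\Delta_{k}$ is a positive definite $q_{k}\times q_{k}$ matrix, hence nonsingular, and therefore $\det{\Delta_{k}^{\top}W_{k}(\theta_{k,0})\Delta_{k}}\neq0$.

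There is essentially no obstacle in this argument: it is the standard sandwich of a positive definite matrix between a matrix of full column rank and its transpose, and it draws only on the previously established Lemma 5 together with the rank condition [C2]. The one point worth recording explicitly is that $\Delta_{k}$ has $\bar{p}$ rows and $q_{k}$ columns (so that [C2] genuinely means full column rank, and implicitly $q_{k}\le\bar{p}$), which is exactly what validates the implication $\Delta_{k}x=0\Rightarrow x=0$ used above.
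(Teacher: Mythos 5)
Your proof is correct and rests on exactly the same two ingredients as the paper's: the positive definiteness of $W_{k}(\theta_{k,0})$ from Lemma 5 and the full column rank of $\Delta_{k}$ from [C2]. The paper phrases the conclusion through the rank identity $\rank\{(W_{k}^{1/2}\Delta_{k})^{\top}W_{k}^{1/2}\Delta_{k}\}=\rank\{W_{k}^{1/2}\Delta_{k}\}=\rank{\Delta_{k}}=q_{k}$, whereas you argue via the quadratic form and obtain the marginally stronger conclusion of positive definiteness, but these are essentially the same argument.
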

\begin{proof}
From Lemma 5, 
\begin{align*}
    \rank{\{\Delta_{k}^{\top}W_{k}(\theta_{k,0})\Delta_{k}\}}
    &=\rank{\{(W_{k}(\theta_{k,0})^{\frac{1}{2}}\Delta_{k})^{\top}W_{k}(\theta_{k,0})^{\frac{1}{2}}\Delta_{k}\}}\\
    &=\rank{\{W_{k}(\theta_{k,0})^{\frac{1}{2}}\Delta_{k}\}}\\
    &=\rank{\Delta_{k}}\\
    &=q_{k}.
\end{align*}
Therefore, $\Delta_{k}^\top W_{k}(\theta_{k,0})\Delta_{k}$ is nonsingular. 
\end{proof}
\begin{prf}
First, we will prove (\ref{the2-1}). 
Since one has the following decomposition
\begin{align*}
    &\quad F_{k,n}(Q_{XX},\Sigma_{k}({\theta_{k}}))\\
    &=(\vech(Q_{XX})-\vech(\Sigma_{k}(\theta_{k})))^{\top}W_{k}^{-1}(\theta_{k})(\vech(Q_{XX})-\vech(\Sigma_{k}(\theta_{k})))\\
    &=(\vech(Q_{XX})-\vech(\Sigma_{k}(\theta_{k,0})))^{\top}W_{k}^{-1}(\theta_{k})(\vech(Q_{XX})-\vech(\Sigma_{k}(\theta_{k,0})))\\
    &\quad +2(\vech(\Sigma_{k}(\theta_{k,0}))-\vech(\Sigma_{k}(\theta_{k})))^{\top}W_{k}^{-1}(\theta_{k})(\vech(Q_{XX})-\vech(\Sigma_{k}(\theta_{k,0})))\\
    &\quad +(\vech(\Sigma_{k}(\theta_{k,0}))-\vech(\Sigma_{k}(\theta_{k})))^{\top}W_{k}^{-1}(\theta_{k})(\vech(\Sigma_{k}(\theta_{k,0}))-\vech(\Sigma_{k}(\theta_{k}))),
\end{align*}
it is shown from Slutsky's theorem that
\begin{align*}
    0&\leq \sup_{\theta_{k}\in\Theta_{k}}\left|F_{k,n}(Q_{XX},\Sigma_{k}({\theta_{k}}))
    -U_{k}(\theta_{k})\right|\\
    &\leq\sup_{\theta_{k}\in\Theta_{k}}\left|(\vech(Q_{XX})-\vech(\Sigma_{k}(\theta_{k,0})))^{\top}W_{k}^{-1}(\theta_{k})(\vech(Q_{XX})-\vech(\Sigma_{k}(\theta_{k,0})))\right|\\
    &\quad +2\sup_{\theta_{k}\in\Theta_{k}}\left|(\vech(\Sigma_{k}(\theta_{k,0}))-\vech(\Sigma_{k}(\theta_{k})))^{\top}W_{k}^{-1}(\theta_{k})(\vech(Q_{XX})-\vech(\Sigma_{k}(\theta_{k,0})))\right|\\    
    &\leq|\vech(Q_{XX})-\vech(\Sigma_{k}(\theta_{k,0}))|\sup_{\theta_{k}\in\Theta_{k}}\|W_{k}^{-1}(\theta_{k})\||\vech(Q_{XX})-\vech(\Sigma_{k}(\theta_{k,0}))|\\
    &\quad +2\sup_{\theta_{k}\in\Theta_{k}}\left|\vech(\Sigma_{k}(\theta_{k,0}))-\vech(\Sigma_{k}(\theta_{k}))\right|\|W_{k}^{-1}(\theta_{k})\|\left|\vech(Q_{XX})-\vech(\Sigma_{k}(\theta_{k,0}))\right|\stackrel{P_{\theta_{k,0}}}{\to}0,
\end{align*}
which implies
\begin{align}
    F_{k,n}(Q_{XX},\Sigma_{k}({\theta_{k}}))\stackrel{P_{\theta_{k,0}}}{\to}U_{k}(\theta_{k})\quad \mbox{uniformly in $\theta_{k}$}.
    \label{th2-1}
\end{align}
Since $W_{k}(\theta_{k})$ is a positive definite matrix from Lemma 5, 
we obtain
\begin{align*}
    U_{k}(\theta_{k})=0 \Leftrightarrow \vech(\Sigma_{k}(\theta_{k,0}))-\vech(\Sigma_{k}(\theta_{k}))=0.
\end{align*}
Furthermore, 
noting that
\begin{align*}
    \vech(\Sigma_{k}(\theta_{k,0}))-\vech(\Sigma_{k}(\theta_{k}))=0
    \Leftrightarrow \Sigma_{k}(\theta_{k,0})=\Sigma_{k}(\theta_{k})
    \Leftrightarrow\theta_{k,0}=\theta_{k}
\end{align*}
from the assumption [C1], 
we have 
\begin{align*}
    \theta_{k}\neq \theta_{k,0}\Rightarrow U_{k}(\theta_{k})>0(=U_{k}(\theta_{k,0})).
\end{align*}
Hence, 
\begin{align}
    \forall\varepsilon>0\ , \ \exists\delta>0\ s.t.\ |\hat{\theta}_{k,n}-\theta_{k,0}|>\varepsilon
    \Rightarrow U_{k}(\hat{\theta}_{k,n})-U_{k}(\theta_{k,0})>\delta.
    \label{th2-2}
\end{align}
Therefore, from (\ref{th2-1}) and (\ref{th2-2}), 
\begin{align*}
    0&\leq \PP_{\theta_{k,0}}(|\hat{\theta}_{k,n}-\theta_{k,0}|>\varepsilon)\\
    &\leq  \PP_{\theta_{k,0}}\left(U_{k}(\hat{\theta}_{k,n})-U_{k}(\theta_{k,0})>\delta\right)\\
    &\leq \PP_{\theta_{k,0}}\left(U_{k}(\hat{\theta}_{k,n})-F_{k,n}(Q_{XX},\Sigma_{k}({\hat{\theta}_{k,n}}))\right.\\
    &\qquad\qquad\left.+F_{k,n}(Q_{XX},\Sigma_{k}({\hat{\theta}_{k,n}}))-F_{k,n}(Q_{XX},\Sigma_{k}({\theta_{k,0}}))+F_{k,n}(Q_{XX},\Sigma_{k}({\theta_{k,0}}))-U_{k}(\theta_{k,0})
    >\delta\right)\\
    &\leq \PP_{\theta_{k,0}}\left(U_{k}(\hat{\theta}_{k,n})-F_{k,n}(Q_{XX},\Sigma_{k}({\hat{\theta}_{k,n}}))>\frac{\delta}{3}\right)\\
    &\quad +\PP_{\theta_{k,0}}\left(F_{k,n}(Q_{XX},\Sigma_{k}({\hat{\theta}_{k,n}}))-F_{k,n}(Q_{XX},\Sigma_{k}({\theta_{k,0}}))>\frac{\delta}{3}\right)\\
    &\quad +\PP_{\theta_{k,0}}\left(F_{k,n}(Q_{XX},\Sigma_{k}({\theta_{k,0}}))-U_{k}(\theta_{k,0})>\frac{\delta}{3}\right)\\
    &\leq  2\PP_{\theta_{k,0}}\left(\sup_{\theta_{k}\in\Theta_{k}}\left|F_{k,n}\left(Q_{XX},\Sigma_{k}({\theta_{k}})\right)-U_{k}(\theta_{k})\right|>\frac{\delta}{3}\right)+0\stackrel{}{\to}0\quad (n\stackrel{}{\to}\infty),
\end{align*}
so that we obtain (\ref{the2-1}). Next, we will prove (\ref{the2-2}). Using a Taylor expansion of \begin{align*}\partial_{\theta_{k}}F_{k,n}(Q_{XX},\Sigma_{k}({\hat{\theta}_{k,n}}))
\end{align*}
around $\theta_{k,0}$, we have
\begin{align*}
\partial_{\theta_{k}}F_{k,n}(Q_{XX},\Sigma_{k}(\hat{\theta}_{k,n}))&=\partial_{\theta_{k}}F_{k,n}(Q_{XX},\Sigma_{k}({\theta_{k,0}}))\\
&\qquad\quad+\int_{0}^{1}\partial^2_{\theta_{k}}F_{k,n}(Q_{XX},\Sigma_{k}(\theta_{k,0}+\lambda(\hat{\theta}_{k,n}-\theta_{k,0})))d\lambda(\hat{\theta}_{k,n}-\theta_{k,0})
\end{align*}
By the definition of $\hat{\theta}_{k,n}$, 
\begin{align*}
    \partial_{\theta_{k}}F_{k,n}(Q_{XX},\Sigma_{k}({\hat{\theta}_{k,n}}))=0,
\end{align*}
and 
\begin{align} -\sqrt{n}\partial_{\theta_{k}}F_{k,n}(Q_{XX},\Sigma_{k}({\theta_{k,0}}))=\int_{0}^{1}\partial^2_{\theta_{k}}F_{k,n}(Q_{XX},\Sigma_{k}({\theta_{k,0}}+\lambda(\hat{\theta}_{k,n}-\theta_{k,0})))d\lambda\sqrt{n}(\hat{\theta}_{k,n}-\theta_{k,0}).
\label{th2-3}
\end{align}
First, we consider the left side of (\ref{th2-3}). 
Note that
\begin{align*}
    &\quad \sqrt{n}\partial_{\theta_{k}^{(i)}}F_{k,n}(Q_{XX},\Sigma_{k}({\theta_{k}}))\\
    &=-2\sqrt{n}\{\partial_{\theta_{k}^{(i)}}\vech(\Sigma_{k}(\theta_{k}))\}^{\top}W_{k}^{-1}(\theta_{k})(\vech(Q_{XX})-\vech(\Sigma_{k}(\theta_{k})))\\
    &\quad +\sqrt{n}\left(\vech(Q_{XX})-\vech(\Sigma_{k}(\theta_{k}))\right)^{\top}\{\partial_{\theta_{k}^{(i)}}W_{k}^{-1}(\theta_{k})\}\left(\vech(Q_{XX})-\vech(\Sigma_{k}(\theta_{k}))\right)\\
    &\qquad\qquad\qquad\qquad\qquad\qquad\qquad\qquad\qquad\qquad\qquad\qquad\qquad\qquad\qquad\qquad(i=1,\cdots,q_{k}).
\end{align*}
Since it is shown that
\begin{align*}
    &\quad \sqrt{n}\partial_{\theta_{k}^{(i)}}F_{k,n}(Q_{XX},\Sigma_{k}({\theta_{k,0}}))\\
    &=-2\{\partial_{\theta_{k}^{(i)}}\vech(\Sigma_{k}(\theta_{k,0}))\}^{\top}W_{k}^{-1}(\theta_{k,0})\sqrt{n}(\vech(Q_{XX})-\vech(\Sigma_{k}(\theta_{k,0})))\\
    &\quad +\sqrt{n}\left(\vech(Q_{XX})-\vech(\Sigma_{k}(\theta_{k,0}))\right)^{\top}\{\partial_{\theta_{k}^{(i)}}W_{k}^{-1}(\theta_{k,0})\}\left(\vech(Q_{XX})-\vech(\Sigma_{k}(\theta_{k,0}))\right)\\
    &=-2\{\partial_{\theta_{k}^{(i)}}\vech(\Sigma_{k}(\theta_{k,0}))\}^{\top}W_{k}^{-1}(\theta_{k,0})\sqrt{n}(\vech(Q_{XX})-\vech(\Sigma_{k}(\theta_{k,0})))+o_{p}(1)\\
    &\qquad\qquad\qquad\qquad\qquad\qquad\qquad\qquad\qquad\qquad\qquad\qquad\qquad\qquad\qquad\qquad\qquad(i=1,\cdots,q_{k}),
\end{align*}
we see from Theorem 1 that
\begin{align}
    \begin{split}
     -\sqrt{n}\partial_{\theta_{k}}F_{k,n}(Q_{XX},\Sigma_{k}({\theta_{k,0}}))
    &=2\Delta_{k}^{\top}\label{th2-4} W_{k}^{-1}(\theta_{k,0})\sqrt{n}(\vech(Q_{XX})-\vech(\Sigma_{k}(\theta_{k,0})))+o_{p}(1)\\
    &\stackrel{d}{\to}2\Delta_{k}^{\top} W_{k}^{-1}(\theta_{k,0})N_{q_{k}}(0,W_{k}(\theta_{k,0}))=
    N_{q_{k}}\left(0,4\Delta_{k}^{\top} W_{k}^{-1}(\theta_{k,0})\Delta_{k}\right).
    \end{split}
\end{align}
Next, we consider the right side of (\ref{th2-3}). 
It follows that
\begin{align*}
    &\quad\ \partial_{\theta_{k}^{(i)}}\partial_{\theta_{k}^{(j)}}F_{k,n}(Q_{XX},\Sigma_{k}({\theta_{k}}))\\
    &=-2\{\partial_{\theta_{k}^{(i)}}\partial_{\theta_{k}^{(j)}}\vech(\Sigma_{k}(\theta_{k}))\}^{\top}W_{k}^{-1}(\theta_{k})(\vech(Q_{XX})-\vech(\Sigma_{k}(\theta_{k})))\\
    &\quad-2\{\partial_{\theta_{k}^{(i)}}\vech(\Sigma_{k}(\theta_{k}))\}^{\top}\{\partial_{\theta_{k}^{(j)}}W_{k}^{-1}(\theta_{k})\}(\vech(Q_{XX})-\vech(\Sigma_{k}(\theta_{k})))\\
    &\quad+2\{\partial_{\theta_{k}^{(i)}}\vech(\Sigma_{k}(\theta_{k}))\}^{\top}W_{k}^{-1}(\theta_{k})\{\partial_{\theta_{k}^{(j)}}\vech(\Sigma_{k}(\theta_{k}))\}\\
    &\quad-2\{\partial_{\theta_{k}^{(j)}}\vech(\Sigma_{k}(\theta_{k}))\}^{\top}\{\partial_{\theta_{k}^{(i)}}W_{k}^{-1}(\theta_{k})\}\left(\vech(Q_{XX})-\vech(\Sigma_{k}(\theta_{k}))\right)\\
    &\quad+\left(\vech(Q_{XX})-\vech(\Sigma_{k}(\theta_{k}))\right)^{\top}\{\partial_{\theta_{k}^{(i)}}\partial_{\theta_{k}^{(j)}}W_{k}^{-1}(\theta_{k})\}\left(\vech(Q_{XX})-\vech(\Sigma_{k}(\theta_{k}))\right)\\
    &=-2\{\partial_{\theta_{k}^{(i)}}\partial_{\theta_{k}^{(j)}}\vech(\Sigma_{k}(\theta_{k}))\}^{\top}W_{k}^{-1}(\theta_{k})(\vech(Q_{XX})-\vech(\Sigma_{k}(\theta_{k,0})))\\
    &\quad -2\{\partial_{\theta_{k}^{(i)}}\partial_{\theta_{k}^{(j)}}\vech(\Sigma_{k}(\theta_{k}))\}^{\top}W_{k}^{-1}(\theta_{k})(\vech(\Sigma_{k}(\theta_{k,0}))-\vech(\Sigma_{k}(\theta_{k})))\\
    &\quad -2\{\partial_{\theta_{k}^{(i)}}\vech(\Sigma_{k}(\theta_{k}))\}^{\top}\{\partial_{\theta_{k}^{(j)}}W_{k}^{-1}(\theta_{k})\}(\vech(Q_{XX})-\vech(\Sigma_{k}(\theta_{k,0})))\\
    &\quad-2\{\partial_{\theta_{k}^{(i)}}\vech(\Sigma_{k}(\theta_{k}))\}^{\top}\{\partial_{\theta_{k}^{(j)}}W_{k}^{-1}(\theta_{k})\}(\vech(\Sigma_{k}(\theta_{k,0}))-\vech(\Sigma_{k}(\theta_{k})))\\
    &\quad +2\{\partial_{\theta_{k}^{(i)}}\vech(\Sigma_{k}(\theta_{k}))\}^{\top}W_{k}^{-1}(\theta_{k})\{\partial_{\theta_{k}^{(j)}}\vech(\Sigma_{k}(\theta_{k}))\}\\
    &\quad-2\{\partial_{\theta_{k}^{(j)}}\vech(\Sigma_{k}(\theta_{k}))\}^{\top}\{\partial_{\theta_{k}^{(i)}}W_{k}^{-1}(\theta_{k})\}\left(\vech(Q_{XX})-\vech(\Sigma_{k}(\theta_{k,0}))\right)\\
    &\quad-2\{\partial_{\theta_{k}^{(j)}}\vech(\Sigma_{k}(\theta_{k}))\}^{\top}\{\partial_{\theta_{k}^{(i)}}W_{k}^{-1}(\theta_{k})\}(\vech(\Sigma_{k}(\theta_{k,0}))-\vech(\Sigma_{k}(\theta_{k})))\\
    &\quad+\left(\vech(Q_{XX})-\vech(\Sigma_{k}(\theta_{k,0}))\right)^{\top}\{\partial_{\theta_{k}^{(i)}}\partial_{\theta_{k}^{(j)}}W_{k}^{-1}(\theta_{k})\}\left(\vech(Q_{XX})-\vech(\Sigma_{k}(\theta_{k,0}))\right)\\
    &\quad+2\left(\vech(Q_{XX})-\vech(\Sigma_{k}(\theta_{k,0}))\right)^{\top}\{\partial_{\theta_{k}^{(i)}}\partial_{\theta_{k}^{(j)}}W_{k}^{-1}(\theta_{k})\}\left(\vech(\Sigma_{k}(\theta_{k,0}))-\vech(\Sigma_{k}(\theta_{k}))\right)\\
    &\quad
    +\left(\vech(\Sigma_{k}(\theta_{k,0}))-\vech(\Sigma_{k}(\theta_{k}))\right)^{\top}\{\partial_{\theta_{k}^{(i)}}\partial_{\theta_{k}^{(j)}}W_{k}^{-1}(\theta_{k})\}\left(\vech(\Sigma_{k}(\theta_{k,0}))-\vech(\Sigma_{k}(\theta_{k}))\right)\\
    &\qquad\qquad\qquad\qquad\qquad\qquad\qquad\qquad\qquad\qquad\qquad\qquad\qquad\qquad\qquad\qquad\quad(i,j=1,\cdots,q_{k}).
\end{align*}
Setting
\begin{align*}
    &\quad\ \ U_{2,k,ij}(\theta_{k})\\
    &=-2\{\partial_{\theta_{k}^{(i)}}\partial_{\theta_{k}^{(j)}}\vech(\Sigma_{k}(\theta_{k}))\}^{\top}W_{k}^{-1}(\theta_{k})(\vech(\Sigma_{k}(\theta_{k,0}))-\vech(\Sigma_{k}(\theta_{k})))\\
    &\quad -2\{\partial_{\theta_{k}^{(i)}}\vech(\Sigma_{k}(\theta_{k}))\}^{\top}\{\partial_{\theta_{k}^{(j)}}W_{k}^{-1}(\theta_{k})\}(\vech(\Sigma_{k}(\theta_{k,0}))-\vech(\Sigma_{k}(\theta_{k})))\\
    &\quad +2\{\partial_{\theta_{k}^{(i)}}\vech(\Sigma_{k}(\theta_{k}))\}^{\top}W_{k}^{-1}(\theta_{k})\{\partial_{\theta_{k}^{(j)}}\vech(\Sigma_{k}(\theta_{k}))\}\\
    &\quad-2\{\partial_{\theta_{k}^{(j)}}\vech(\Sigma_{k}(\theta_{k}))\}^{\top}\{\partial_{\theta_{k}^{(i)}}W_{k}^{-1}(\theta_{k})\}(\vech(\Sigma_{k}(\theta_{k,0}))-\vech(\Sigma_{k}(\theta_{k})))\\
    &\quad+\left(\vech(\Sigma_{k}(\theta_{k,0}))-\vech(\Sigma_{k}(\theta_{k}))\right)^{\top}\{\partial_{\theta_{k}^{(i)}}\partial_{\theta_{k}^{(j)}}W_{k}^{-1}(\theta_{k})\}\left(\vech(\Sigma_{k}(\theta_{k,0}))-\vech(\Sigma_{k}(\theta_{k}))\right)\\
    &\qquad\qquad\qquad\qquad\qquad\qquad\qquad\qquad\qquad \qquad\qquad\qquad\qquad\qquad\qquad\qquad\quad(i,j=1,\cdots,q_{k}),
\end{align*}
we have
\begin{align*}
     0&\leq\sup_{\theta_{k}\in\Theta_{k}}| \partial_{\theta_{k}^{(i)}}\partial_{\theta_{k}^{(j)}}F_{k,n}(Q_{XX},\Sigma_{k}({\theta_{k}}))-U_{2,k,ij}(\theta_{k})|\\
     &\leq2\sup_{\theta_{k}\in\Theta_{k}}|\partial_{\theta_{k}^{(i)}}\partial_{\theta_{k}^{(j)}}\vech(\Sigma_{k}(\theta_{k}))|\|W_{k}^{-1}(\theta_{k})\||\vech(Q_{XX})-\vech(\Sigma_{k}(\theta_{k,0}))|\\
     &\quad +2\sup_{\theta_{k}\in\Theta_{k}}|\partial_{\theta_{k}^{(i)}}\vech(\Sigma_{k}(\theta_{k}))|\|\partial_{\theta_{k}^{(j)}}W_{k}^{-1}(\theta_{k})\||\vech(Q_{XX})-\vech(\Sigma_{k}(\theta_{k,0}))|\\
     &\quad +2\sup_{\theta_{k}\in\Theta_{k}}|\partial_{\theta_{k}^{(j)}}\vech(\Sigma_{k}(\theta_{k}))|\|\partial_{\theta_{k}^{(i)}}W_{k}^{-1}(\theta_{k})\||\vech(Q_{XX})-\vech(\Sigma_{k}(\theta_{k,0}))|\\
     &\quad+|\vech(Q_{XX})-\vech(\Sigma_{k}(\theta_{k,0}))|\sup_{\theta_{k}\in\Theta_{k}}\|\partial_{\theta_{k}^{(i)}}\partial_{\theta_{k}^{(j)}}W_{k}^{-1}(\theta_{k})\||\vech(Q_{XX})-\vech(\Sigma_{k}(\theta_{k,0}))|\\
    &\quad+2|\vech(Q_{XX})-\vech(\Sigma_{k}(\theta_{k,0}))|\sup_{\theta_{k}\in\Theta_{k}}\|\partial_{\theta_{k}^{(i)}}\partial_{\theta_{k}^{(j)}}W_{k}^{-1}(\theta_{k})\||\vech(\Sigma_{k}(\theta_{k,0})-\vech(\Sigma_{k}(\theta_{k}))|\\
     &\stackrel{P_{\theta_0}}{\to}0\quad (i,j=1,\cdots,q_{k}),
\end{align*}
and
\begin{align}
    \partial^2_{\theta_{k}}F_{k,n}(Q_{XX},\Sigma_{k}({\theta_{k}}))\stackrel{P_{\theta_{k,0}}}{\to}U_{2,k}(\theta_{k})\quad 
    \mbox{uniformly in $\theta_{k}$}, \label{th2-5}
\end{align}
where 
\begin{align*}
    U_{2,k}(\theta_{k})=(U_{2,k,ij}(\theta_{k}))_{i,j=1,\cdots,q_k}.
\end{align*} 
Furthermore, 
\begin{align*}
    U_{2,k,ij}(\theta_{k})&\rightarrow2\{\partial_{\theta_{k}^{(i)}}\vech(\Sigma_{k}(\theta_{k,0}))\}^{\top}W_{k}^{-1}(\theta_{k,0})\{\partial_{\theta_{k}^{(j)}}\vech(\Sigma_{k}(\theta_{k,0}))\}\quad(\theta_{k}\rightarrow\theta_{k,0})\\ &\qquad\qquad\qquad\qquad\qquad\qquad\qquad\qquad\qquad\qquad\qquad\qquad\quad(i,j=1,\cdots,q_{k}),
\end{align*}
and
\begin{align}
    U_{2,k}(\theta_{k})\rightarrow2\Delta_{k}^{\top}W_{k}^{-1}(\theta_{k,0})\Delta_{k}\quad(\theta_{k}\rightarrow\theta_{k,0}).
    \label{th2-6}
\end{align}
Let 
\begin{align*}
    A_n= \{|\hat{\theta}_{k,n}-\theta_{k,0}|\leq\rho_n\},
\end{align*}
where $\{\rho_n\}_{n\in\mathbb{N}}$ is the sequence such that $\rho_n\rightarrow0\ (n\rightarrow\infty)$. 
It follows from  $(\ref{th2-5})$ and $(\ref{th2-6})$ that for all $\varepsilon>0$, 
\begin{align*}
    0&\leq \PP_{\theta_{k,0}}\left(\left|\int_{0}^{1}\partial^2_{\theta_{k}}F_{k,n}(Q_{XX},\Sigma_{k}({\theta_{k,0}}+\lambda(\hat{\theta}_{k,n}-\theta_{k,0})))d\lambda-2\Delta_{k}^{\top}W_{k}^{-1}(\theta_{k,0})\Delta_{k}\right|>\varepsilon\right)\\
    &\leq \PP_{\theta_{k,0}}\left(\left\{\left|\int_{0}^{1}\partial^2_{\theta_{k}}F_{k,n}(Q_{XX},\Sigma_{k}({\theta_{k,0}}+\lambda(\hat{\theta}_{k,n}-\theta_{k,0})))d\lambda-2\Delta_{k}^{\top}W_{k}^{-1}(\theta_{k,0})\Delta_{k}\right|>\varepsilon\right\}\cap A_n\right)\\
    &\quad +\PP_{\theta_{k,0}}\left(\left\{\left|\int_{0}^{1}\partial^2_{\theta_{k}}F_{k,n}(Q_{XX},\Sigma_{k}({\theta_{k,0}}+\lambda(\hat{\theta}_{k,n}-\theta_{k,0})))d\lambda-2\Delta_{k}^{\top}W_{k}^{-1}(\theta_{k,0})\Delta_{k}\right|>\varepsilon\right\}\cap A_n^{c}\right)\\
    &\leq \PP_{\theta_{k,0}}\left(\sup_{|\theta_{k}-\theta_{k,0}|\leq\rho_n}|\partial^2_{\theta_{k}}F_{k,n}(Q_{XX},\Sigma_{k}(\theta_{k}))-2\Delta_{k}^{\top}W_{k}^{-1}(\theta_{k,0})\Delta_{k}|>\varepsilon\right)+\PP_{\theta_{k,0}}(A_n^{c})\\
    &\leq \PP_{\theta_{k,0}}\left(\sup_{|\theta_{k}-\theta_{k,0}|\leq\rho_n}|\partial^2_{\theta_{k}}F_{k,n}(Q_{XX},\Sigma_{k}(\theta_{k}))-U_{2,k}(\theta_{k})+U_2(\theta_{k})-2\Delta_{k}^{\top}W_{k}^{-1}(\theta_{k,0})\Delta_{k}|>\varepsilon\right)\\
    &\quad +\PP_{\theta_{k,0}}(A_n^{c})\\
    &\leq\PP_{\theta_{k,0}}\left(\sup_{|\theta_{k}-\theta_{k,0}|\leq\rho_n}|\partial^2_{\theta_{k}}F_{k,n}(Q_{XX},\Sigma_{k}(\theta_{k}))-U_{2,k}(\theta_{k})|>\frac{\varepsilon}{2}\right)\\
    &\quad +\PP_{\theta_{k,0}}\left(\sup_{|\theta_{k}-\theta_{k,0}|\leq\rho_n}|U_{2,k}(\theta_{k})-2\Delta_{k}^{\top}W_{k}^{-1}(\theta_{k,0})\Delta_{k}|>\frac{\varepsilon}{2}\right)+\PP_{\theta_{k,0}}(A_n^{c})\\
    &\leq \PP_{\theta_{k,0}}\left(\sup_{\theta_{k}\in\Theta_{k}}|\partial^2_{\theta_{k}}F_{k,n}(Q_{XX},\Sigma_{k}(\theta_{k}))-U_{2,k}(\theta_{k})|>\frac{\varepsilon}{2}\right)\\
    &\quad +\PP_{\theta_{k,0}}\left(\sup_{|\theta_{k}-\theta_{k,0}|\leq\rho_n}|U_{2,k}(\theta_{k})-2\Delta_{k}^{\top}W_{k}^{-1}(\theta_{k,0})\Delta_{k}|>\frac{\varepsilon}{2}\right)+\PP_{\theta_{k,0}}(A_n^{c})\rightarrow 0,
\end{align*}
so that we obtain
\begin{align}
    \int_{0}^{1}\partial^2_{\theta_{k}}F_{k,n}(Q_{XX},\Sigma_{k}({\theta_{k,0}}+\lambda(\hat{\theta}_{k,n}-\theta_{k,0})))d\lambda\stackrel{P_{\theta_{k,0}}}{\to}
    2\Delta_{k}^{\top}W_{k}^{-1}(\theta_{k,0})\Delta_{k}.
    \label{th2-7}
\end{align}
Therefore, Lemma 5, (\ref{th2-3}), (\ref{th2-4}) and (\ref{th2-7}) imply
\begin{align*}
    \sqrt{n}(\hat{\theta}_{k,n}-\theta_{k,0})&\stackrel{d}{\to}\{2\Delta_{k}^{\top}W_{k}^{-1}(\theta_{k,0})\Delta_{k}\}^{-1}N_{q_{k}}\left(0,4\Delta_{k}^{\top} W_{k}^{-1}(\theta_{k,0})\Delta_{k}\right)\\
    &=N_{q_{k}}\left(0,(\Delta_{k}^{\top} W_{k}^{-1}(\theta_{k,0})\Delta_{k})^{-1}\right).
\end{align*}
\qed
\end{prf}
\begin{lemma}
Let $X \sim N_{p}(0,I_{p})$. If $P\in\mathbb{R}^{p\times p}$ is a projection matrix of rank $r$, 
then
\begin{align*}
    X^{\top}P X\sim \chi^2_{r}.
\end{align*}
\end{lemma}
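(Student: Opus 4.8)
The plan is to reduce $X^{\top}PX$ to a sum of $r$ independent squared standard normal random variables by diagonalising $P$. Since $P$ is a projection matrix, it is symmetric and idempotent, so each eigenvalue of $P$ is either $0$ or $1$, and the multiplicity of the eigenvalue $1$ equals $\rank{P}=r$. Hence by the spectral theorem there exists an orthogonal matrix $U\in\mathbb{R}^{p\times p}$ such that $P=U\Lambda U^{\top}$, where $\Lambda$ is the diagonal matrix with exactly $r$ entries equal to $1$ and the remaining $p-r$ entries equal to $0$.

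Next I would set $Y=U^{\top}X$. Because $U$ is orthogonal and $X\sim N_{p}(0,I_{p})$, the vector $Y$ is again Gaussian with mean $U^{\top}0=0$ and covariance $U^{\top}I_{p}U=I_{p}$, that is, $Y\sim N_{p}(0,I_{p})$; in particular the components $Y^{(1)},\dots,Y^{(p)}$ are independent standard normal random variables. Then I would compute
\begin{align*}
    X^{\top}PX=X^{\top}U\Lambda U^{\top}X=Y^{\top}\Lambda Y=\sum_{i=1}^{r}(Y^{(i)})^{2}.
\end{align*}
By the definition of the chi-squared distribution, a sum of $r$ independent squared $N(0,1)$ random variables follows the $\chi^{2}_{r}$ distribution, so $X^{\top}PX\sim\chi^{2}_{r}$, which is the assertion.

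There is essentially no analytical difficulty in this argument; the one point that requires care is the meaning of the term \emph{projection matrix}. The spectral reduction above needs $P$ to be an orthogonal projection, i.e.\ symmetric in addition to idempotent, so that its eigenvalues are exactly $0$ and $1$ and the orthogonal diagonalisation is available. This is the convention used throughout the paper, and it is the form in which the lemma is later applied, so once it is in force the diagonalisation step is immediate and the rest is a routine distributional identification.
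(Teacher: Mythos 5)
Your proof is correct. The paper does not give its own argument for this lemma but simply cites Lemma 9.3 of Ferguson (1996); your spectral-decomposition proof (orthogonal diagonalisation of the symmetric idempotent $P$, rotation invariance of $N_p(0,I_p)$, and identification of $\sum_{i=1}^{r}(Y^{(i)})^2$ as $\chi^2_r$) is the standard self-contained version of that result, and your remark that ``projection'' must mean \emph{orthogonal} projection is exactly the right caveat --- it is satisfied where the lemma is applied, since the matrix $P_{k^*}(\theta_{k^*,0})$ is shown there to be both symmetric and idempotent.
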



\begin{proof}
See Lemma 9.3 in Ferguson \cite{ferguson(1996)}.
\end{proof}
\begin{prf}
Using a Taylor expansion of $T_{k^*,n}=nF_{k^*,n}(Q_{XX},\Sigma_{k^*}(\hat{\theta}_{k^*,n}))$ around $\theta_{k^*,0}$, we have
\begin{align}
    \begin{split}
   T_{k^*,n}
    &=nF_{k^*,n}(Q_{XX},\Sigma_{k^*}(\theta_{k^*,0}))\\
    &\quad+n\partial_{\theta_{k^*}}F_{k^*,n}(Q_{XX},\Sigma_{k^*}(\theta_{k^*,0}))^{\top}(\hat{\theta}_{k^*,n}-\theta_{k^*,0})\\
    &\qquad+\frac{n}{2}(\hat{\theta}_{k^*,n}-\theta_{k^*,0})^{\top}\\
    &\qquad\quad\times\left\{\int_{0}^{1}(1-\lambda)\partial^2_{\theta_{k^*}}F_{k^*,n}(Q_{XX},\Sigma_{k^*}(\theta_{k^*,0}+\lambda(\hat{\theta}_{k^*,n}-\theta_{k^*,0})))d\lambda\right\}\\
    &\qquad\qquad\qquad\qquad\qquad\qquad\qquad\qquad\qquad\qquad\qquad\qquad\qquad\times(\hat{\theta}_{k^*,n}-\theta_{k^*,0}).
    \label{th3-1}
    \end{split}
\end{align}
Since it follows from (\ref{th2-3}), (\ref{th2-4}) and (\ref{th2-7}) that
\begin{align*}
    \sqrt{n}(\hat{\theta}_{k^*,n}-\theta_{k^*,0})
    &=(\Delta_{k^*}^{\top}W_{k^*}^{-1}(\theta_{k^*,0})\Delta_{k^*})^{-1}\Delta_{k^*}^{\top}W_{k^*}^{-1}(\theta_{k^*,0})\\
    &\qquad\qquad\qquad\qquad\qquad\times\sqrt{n}(\vech(Q_{XX})-\vech(\Sigma_{k^*}(\theta_{k^*,0})))+o_{p}(1)
\end{align*}
under $H_0$, 
the second term on the right side in (\ref{th3-1}) is
\begin{align*}
    &\quad n\partial_{\theta_{k^*}}F_{k^*,n}(Q_{XX},\Sigma_{k^*}(\theta_{k^*,0}))^{\top}(\hat{\theta}_{k^*,n}-\theta_{k^*,0})\\
    &=\left\{-2\Delta_{k^*}^{\top}W_{k^*}^{-1}(\theta_{k^*,0})\sqrt{n}(\vech(Q_{XX})-\vech(\Sigma_{k^*}(\theta_{k^*,0})))+o_{p}(1)\right\}^{\top}\sqrt{n}(\hat{\theta}_{k^*,n}-\theta_{k^*,0})\\
    &=-2\sqrt{n}(\vech(Q_{XX})-\vech(\Sigma_{k^*}(\theta_{k^*,0})))^{\top}W_{k^*}^{-1}(\theta_{k^*,0})\Delta_{k^*}\\
    &\qquad \times 
    \left\{(\Delta_{k^*}^{\top}W_{k^*}^{-1}(\theta_{k^*,0})\Delta_{k^*})^{-1}\Delta_{k^*}^{\top} W_{k^*}^{-1}(\theta_{k^*,0})\right.\\
    &\qquad\quad\left.\times\sqrt{n}(\vech(Q_{XX})-\vech(\Sigma_{k^*}(\theta_{k^*,0})))+o_{p}(1)\right\}+o_{p}(1)\\
    &=-2\sqrt{n}(\vech(Q_{XX})-\vech(\Sigma_{k^*}(\theta_{k^*,0})))^{\top}H_{k^*}(\theta_{k^*,0})\sqrt{n}(\vech(Q_{XX})-\vech(\Sigma_{k^*}(\theta_{k^*,0})))+o_p(1)
\end{align*}
under $H_0$,  where
\begin{align*}
    H_{k^*}(\theta_{k^*,0})= W_{k^*}^{-1}(\theta_{k^*,0})\Delta_{k^*}(\Delta_{k^*}^{\top}W_{k^*}^{-1}(\theta_{k^*,0})\Delta_{k^*})^{-1}\Delta_{k^*}^{\top} W_{k^*}^{-1}(\theta_{k^*,0}).
\end{align*}
Noting that for all $\varepsilon>0$, 
\begin{align*}
    0\leq&\PP\left(\left|\int_{0}^{1}(1-\lambda)\partial^2_{\theta_{k^*}}F_{k^*,n}(Q_{XX},\Sigma_{k^*}(\theta_{k^*,0}+\lambda(\hat{\theta}_{k^*,n}-\theta_{k^*,0})))d\lambda-2\Delta_{k^*}^{\top}W_{k^*}^{-1}(\theta_{k^*,0})\Delta_{k^*}\right|>\varepsilon\right)\\
    \begin{split}
    &\leq \PP\left(\left\{\left|\int_{0}^{1}(1-\lambda)\partial^2_{\theta_{k^*}}F_{k^*,n}(Q_{XX},\Sigma_{k^*}(\theta_{k^*,0}+\lambda(\hat{\theta}_{k^*,n}-\theta_{k^*,0})))d\lambda\right.\right.\right.\\
    &\qquad\qquad\qquad\qquad\qquad\qquad\qquad\qquad\qquad\qquad\qquad\qquad\left.\left.\left.-2\Delta_{k^*}^{\top}W_{k^*}^{-1}(\theta_{k^*,0})\Delta_{k^*}\right|>\varepsilon\right\}\cap A_n\right)
    \end{split}\\
    &+\PP\left(\left\{\left|\int_{0}^{1}(1-\lambda)\partial^2_{\theta_{k^*}}F_{k^*,n}(Q_{XX},\Sigma_{k^*}(\theta_{k^*,0}+\lambda(\hat{\theta}_{k^*,n}-\theta_{k^*,0})))d\lambda\right.\right.\right.\\
    &\qquad\qquad\qquad\qquad\qquad\qquad\qquad\qquad\qquad\qquad\qquad\qquad\left.\left.\left.-2\Delta_{k^*}^{\top}W_{k^*}^{-1}(\theta_{k^*,0})\Delta_{k^*}\right|>\varepsilon\right\}\cap A_n^{c}\right)\\
    &\leq \PP\left(\sup_{|\theta_{k^*}-\theta_{k^*,0}|\leq \rho_n}\left|\partial^2_{\theta_{k^*}}F_{k^*,n}(Q_{XX},\Sigma_{k^*}(\theta_{k^*}))-2\Delta_{k^*}^{\top}W_{k^*}^{-1}(\theta_{k^*,0})\Delta_{k^*}\right|>\varepsilon\right)+\PP(A_n^{c})\rightarrow0
\end{align*}
under $H_0$, 
we have
\begin{align*}
    \int_{0}^{1}(1-\lambda)\partial^2_{\theta_{k^*}}F_{k^*,n}(Q_{XX},\Sigma_{k^*}(\theta_{k^*,0}+\lambda(\hat{\theta}_{k^*,n}-\theta_{k^*,0})))d\lambda\stackrel{P}{\to}
    2\Delta_{k^*}^{\top}W_{k^*}^{-1}(\theta_{k^*,0})\Delta_{k^*}
\end{align*}
under $H_0$. Hence, the third term on the right side in (\ref{th3-1}) is
\begin{align*}
    &\quad\ \frac{n}{2}(\hat{\theta}_{k^*,n}-\theta_{k^*,0})^\top\left\{\int_{0}^{1}(1-\lambda)\partial^2_{\theta_{k^*}}F_{k^*,n}(Q_{XX},\Sigma_{k^*}(\theta_{k^*,0}+\lambda(\hat{\theta}_{k^*,n}-\theta_{k^*,0})))d\lambda\right\}(\hat{\theta}_{k^*,n}-\theta_{k^*,0})\\
    &=\frac{1}{2}\left\{(\Delta_{k^*}^{\top}W_{k^*}^{-1}(\theta_{k^*,0})\Delta_{k^*})^{-1}\Delta_{k^*}^{\top} W_{k^*}^{-1}(\theta_{k^*,0})\sqrt{n}(\vech(Q_{XX})-\vech(\Sigma_{k^*}(\theta_{k^*,0})))+o_{p}(1)\right\}^{\top}\\
    &\quad \times\left\{2\Delta_{k^*}^{\top}W_{k^*}^{-1}(\theta_{k^*,0})\Delta_{k^*}+o_p(1)\right\}\\
    &\quad \times \left\{(\Delta_{k^*}^{\top}W_{k^*}^{-1}(\theta_{k^*,0})\Delta_{k^*})^{-1}\Delta_{k^*}^{\top}W_{k^*}^{-1}(\theta_{k^*,0})\sqrt{n}(\vech(Q_{XX})-\vech(\Sigma_{k^*}(\theta_{k^*,0})))+o_{p}(1)\right\}\\
    &=\sqrt{n}(\vech(Q_{XX})-\vech(\Sigma_{k^*}(\theta_{k^*,0})))^{\top}H_{k^*}(\theta_{k^*,0})\sqrt{n}(\vech(Q_{XX})-\vech(\Sigma_{k^*}(\theta_{k^*,0})))+o_{p}(1)
\end{align*}
under $H_0$. 
Therefore, 
\begin{align*}
    T_{k^*,n}&=\sqrt{n}(\vech(Q_{XX})-\vech(\Sigma_{k^*}(\theta_{k^*,0})))^{\top}W_{k^*}^{-1}(\theta_{k^*,0})\sqrt{n}(\vech(Q_{XX})-\vech(\Sigma_{k^*}(\theta_{k^*,0})))\\
    &\quad -2\sqrt{n}(\vech(Q_{XX})-\vech(\Sigma_{k^*}(\theta_{k^*,0})))^{\top}H_{k^*}(\theta_{k^*,0})\\
    &\qquad\qquad\qquad\qquad\qquad\qquad\qquad\qquad\times\sqrt{n}(\vech(Q_{XX})-\vech(\Sigma_{k^*}(\theta_{k^*,0})))+o_{p}(1)\\
    &\quad +\sqrt{n}(\vech(Q_{XX})-\vech(\Sigma_{k^*}(\theta_{k^*,0})))^{\top}H_{k^*}(\theta_{k^*,0})\\
    &\qquad\qquad\qquad\qquad\qquad\qquad\qquad\qquad\times\sqrt{n}(\vech(Q_{XX})-\vech(\Sigma_{k^*}(\theta_{k^*,0})))+o_{p}(1)\\
    &=\sqrt{n}(\vech(Q_{XX})-\vech(\Sigma_{k^*}(\theta_{k^*,0})))^{\top}(W_{k^*}^{-1}(\theta_{k^*,0})-H_{k^*}(\theta_{k^*,0}))\\
    &\qquad\qquad\qquad\qquad\qquad\qquad\qquad\qquad\times\sqrt{n}(\vech(Q_{XX})-\vech(\Sigma_{k^*}(\theta_{k^*,0})))+o_{p}(1)
\end{align*}
under $H_0$. 
Setting
\begin{align*}
    \xi_{k^*,n}= W_{k^*}^{-\frac{1}{2}}(\theta_{k^*,0})\sqrt{n}(\vech(Q_{XX})-\vech(\Sigma_{k^*}(\theta_{k^*,0}))), 
\end{align*}
we see from Theorem 1 that
\begin{align*}
    \xi_{k^*,n}\stackrel{d}{\to}N_{\bar{p}}(0,I_{\bar{p}})
\end{align*}
under $H_0$.  
Let
\begin{align*}
    P_{k^*}(\theta_{k^*,0})= W_{k^*}^{\frac{1}{2}}(\theta_{k^*,0})(W_{k^*}^{-1}(\theta_{k^*,0})-H_{k^*}(\theta_{k^*,0}))W_{k^*}^{\frac{1}{2}}(\theta_{k^*,0}).
\end{align*}
It follows from Slutsky's theorem that
\begin{align*}
    T_{k^*,n}=\xi_{k^*,n}^{\top}P_{k^*}(\theta_{k^*,0})\xi_{k^*,n}\stackrel{d}{\to}\xi^{\top}P_{k^*}(\theta_{k^*,0})\xi
\end{align*}
under $H_0$, where
\begin{align*}
    \xi\sim N_{\bar{p}}(0,I_{\bar{p}}).
\end{align*} 
Note that
\begin{align}
P_{k^*}(\theta_{k^*,0})=P_{k^*}(\theta_{k^*,0})^{\top}. 
\end{align}
By the definition of $H_{k^*}(\theta_{k^*,0})$, 
\begin{align*}
    H_{k^*}(\theta_{k^*,0})W_{k^*}(\theta_{k^*,0})H_{k^*}(\theta_{k^*,0})&= W_{k^*}^{-1}(\theta_{k^*,0})\Delta_{k^*}(\Delta_{k^*}^{\top}W_{k^*}^{-1}(\theta_{k^*,0})\Delta_{k^*})^{-1}\Delta_{k^*}^{\top} W_{k^*}^{-1}(\theta_{k^*,0})\\
    &=H_{k^*}(\theta_{k^*,0}),
\end{align*}
and
\begin{align}
    \begin{split}
    P^2_{k^*}(\theta_{k^*,0})&=W_{k^*}^{\frac{1}{2}}(\theta_{k^*,0})(W_{k^*}^{-1}(\theta_{k^*,0})-H_{k^*}(\theta_{k^*,0}))W_{k^*}^{\frac{1}{2}}(\theta_{k^*,0})\\
    &\qquad\qquad\qquad\qquad\qquad\times W_{k^*}^{\frac{1}{2}}(\theta_{k^*,0})(W_{k^*}^{-1}(\theta_{k^*,0})-H_{k^*}(\theta_{k^*,0}))W_{k^*}^{\frac{1}{2}}(\theta_{k^*,0})\\
    &=W_{k^*}^{\frac{1}{2}}(\theta_{k^*,0})(W_{k^*}^{-1}(\theta_{k^*,0})-H_{k^*}(\theta_{k^*,0}))(I_{\bar{p}}-W_{k^*}(\theta_{k^*,0})H_{k^*}(\theta_{k^*,0}))W_{k^*}^{\frac{1}{2}}(\theta_{k^*,0})\\
    &=W_{k^*}^{\frac{1}{2}}(\theta_{k^*,0})(W_{k^*}^{-1}(\theta_{k^*,0})-H_{k^*}(\theta_{k^*,0})-H_{k^*}(\theta_{k^*,0})+H_{k^*}(\theta_{k^*,0}))W_{k^*}^{\frac{1}{2}}(\theta_{k^*,0})\\
    &=W_{k^*}^{\frac{1}{2}}(\theta_{k^*,0})(W_{k^*}^{-1}(\theta_{k^*,0})-H_{k^*}(\theta_{k^*,0}))W_{k^*}^{\frac{1}{2}}(\theta_{k^*,0})=P_{k^*}(\theta_{k^*,0}).
    \end{split}
\end{align}
Furthermore, 
\begin{align*}
    \rank{P_{k^*}(\theta_{k^*,0})}&=\tr{P_{k^*}(\theta_{k^*,0})}\\
    &=\tr{\{I_{\bar{p}}-W_{k^*}^{\frac{1}{2}}(\theta_{k^*,0})H_{k^*}(\theta_{k^*,0})W_{k^*}^{\frac{1}{2}}(\theta_{k^*,0})\}}\\
    &=\tr{I_{\bar{p}}}-\tr{\left\{W_{k^*}(\theta_{k^*,0})H_{k^*}(\theta_{k^*,0})\right\}}\\
    &=\bar{p}-\tr{\left\{\Delta_{k^*}(\Delta_{k^*}^\top W_{k^*}^{-1}(\theta_{k^*,0})\Delta_{k^*})^{-1}\Delta_{k^*}^\top W_{k^*}^{-1}(\theta_{k^*,0})\right\}}\\
    &=\bar{p}-\tr{\left\{(\Delta_{k^*}^\top W_{k^*}^{-1}(\theta_{k^*,0})\Delta_{k^*})^{-1}\Delta_{k^*}^\top W_{k^*}^{-1}(\theta_{k^*,0})\Delta_{k^*}\right\}}\\
    &=\bar{p}-\tr{I_{q_{k^*}}}=\bar{p}-q_{k^*}.
\end{align*}
Hence, 
\begin{align*}
    \xi^{\top}P_{k^*}(\theta_{k^*,0})\xi\sim\chi^2_{\bar{p}-q_{k^*}}
\end{align*}
from Lemma 7. 
Therefore, 
\begin{align*}
    T_{k^*,n}\stackrel{d}{\to}\chi^2_{\bar{p}-q_{k^*}}
\end{align*}under $H_0$.\qed
\end{prf}
\begin{lemma}
Under assumptions  [A1], [A2], [B1], [B2], [C2] and [C3], if $h_n\rightarrow0$ and $nh_n\rightarrow\infty$, then 
\begin{align}
    \hat{\theta}_{k^*,n}\stackrel{P}{\to}\bar{\theta}_{k^*}
    \label{l8}
\end{align}
\end{lemma}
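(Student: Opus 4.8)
The plan is to prove (\ref{l8}) by the same argmin--consistency argument used for (\ref{the2-1}), with the true value $\theta_{k,0}$ replaced by $\bar\theta_{k^*}$ and the limiting contrast $U_{k}$ replaced by $U_{k^*}$. First I would record that, under [A1], [A2], [B1] and [B2], one has $Q_{XX}\stackrel{P_{\theta_{k,0}}}{\to}\Sigma_{k}(\theta_{k,0})$: this is exactly the first display in the proof of Theorem 1, which uses only Lemma 1 and Slutsky's theorem and does not require [B3]. In particular $\vech(Q_{XX})\stackrel{P_{\theta_{k,0}}}{\to}\vech(\Sigma_{k}(\theta_{k,0}))$.

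Next I would establish the uniform convergence
\begin{align*}
    F_{k^*,n}(Q_{XX},\Sigma_{k^*}(\theta_{k^*}))\stackrel{P_{\theta_{k,0}}}{\to}U_{k^*}(\theta_{k^*})\quad\mbox{uniformly in }\theta_{k^*}\in\Theta_{k^*}.
\end{align*}
Writing $a_n=\vech(Q_{XX})$, $b=\vech(\Sigma_{k}(\theta_{k,0}))$ and $c(\theta_{k^*})=\vech(\Sigma_{k^*}(\theta_{k^*}))$, the same algebraic decomposition used for $F_{k,n}$ in the proof of Theorem 2 gives
\begin{align*}
    F_{k^*,n}(Q_{XX},\Sigma_{k^*}(\theta_{k^*}))-U_{k^*}(\theta_{k^*})=(a_n-b)^{\top}W_{k^*}^{-1}(\theta_{k^*})(a_n-b)+2(a_n-b)^{\top}W_{k^*}^{-1}(\theta_{k^*})(b-c(\theta_{k^*})).
\end{align*}
Both terms carry a factor $(a_n-b)$, so the supremum over $\Theta_{k^*}$ of the right-hand side is bounded by $|a_n-b|^2\sup_{\theta_{k^*}\in\Theta_{k^*}}\|W_{k^*}^{-1}(\theta_{k^*})\|+2|a_n-b|\sup_{\theta_{k^*}\in\Theta_{k^*}}|b-c(\theta_{k^*})|\sup_{\theta_{k^*}\in\Theta_{k^*}}\|W_{k^*}^{-1}(\theta_{k^*})\|$. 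Here $\theta_{k^*}\mapsto\Sigma_{k^*}(\theta_{k^*})$ is continuous, so $c(\cdot)$ is bounded on the compact set $\Theta_{k^*}$; by Lemma 5, $W_{k^*}(\theta_{k^*})>0$ for every $\theta_{k^*}\in\Theta_{k^*}$, so $\theta_{k^*}\mapsto W_{k^*}^{-1}(\theta_{k^*})$ is continuous and hence bounded on $\Theta_{k^*}$; and $|a_n-b|\stackrel{P_{\theta_{k,0}}}{\to}0$ by the first step. The uniform convergence then follows from Slutsky's theorem. As a by-product, $U_{k^*}$ is continuous on the compact $\Theta_{k^*}$, so it attains its infimum, and by [C3] that infimum is attained only at $\bar\theta_{k^*}$, i.e. $\bar\theta_{k^*}$ is the unique minimiser.

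Finally I would repeat the argument of the proof of (\ref{the2-1}) almost verbatim. Continuity of $U_{k^*}$, compactness of $\Theta_{k^*}$ and uniqueness of $\bar\theta_{k^*}$ give, for every $\varepsilon>0$, some $\delta>0$ with $|\theta_{k^*}-\bar\theta_{k^*}|>\varepsilon\Rightarrow U_{k^*}(\theta_{k^*})-U_{k^*}(\bar\theta_{k^*})>\delta$. Using the definition of $\hat\theta_{k^*,n}$, which gives $F_{k^*,n}(Q_{XX},\Sigma_{k^*}(\hat\theta_{k^*,n}))\le F_{k^*,n}(Q_{XX},\Sigma_{k^*}(\bar\theta_{k^*}))$, and splitting $U_{k^*}(\hat\theta_{k^*,n})-U_{k^*}(\bar\theta_{k^*})$ into the three pieces $U_{k^*}(\hat\theta_{k^*,n})-F_{k^*,n}(Q_{XX},\Sigma_{k^*}(\hat\theta_{k^*,n}))$, $F_{k^*,n}(Q_{XX},\Sigma_{k^*}(\hat\theta_{k^*,n}))-F_{k^*,n}(Q_{XX},\Sigma_{k^*}(\bar\theta_{k^*}))\le0$ and $F_{k^*,n}(Q_{XX},\Sigma_{k^*}(\bar\theta_{k^*}))-U_{k^*}(\bar\theta_{k^*})$, each of which is dominated by $\sup_{\theta_{k^*}\in\Theta_{k^*}}|F_{k^*,n}(Q_{XX},\Sigma_{k^*}(\theta_{k^*}))-U_{k^*}(\theta_{k^*})|$, I would bound $\PP(|\hat\theta_{k^*,n}-\bar\theta_{k^*}|>\varepsilon)$ by a sum of probabilities each tending to $0$, which yields (\ref{l8}). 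The one ingredient that is genuinely new compared with the proof of Theorem 2 is the \emph{global} boundedness of $W_{k^*}^{-1}(\cdot)$ on all of $\Theta_{k^*}$ (not merely near the minimiser), and checking this carefully via Lemma 5 and compactness of $\Theta_{k^*}$ is the main point; the remaining steps are a transcription of the already-completed argument for (\ref{the2-1}).
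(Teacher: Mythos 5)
Your proposal is correct and follows essentially the same route as the paper's proof of Lemma 8: the same three-term decomposition of $F_{k^*,n}$ around $\vech(\Sigma_{k}(\theta_{k,0}))$, uniform convergence to $U_{k^*}$ via the consistency of $Q_{XX}$, the separation property derived from [C3], and the standard three-way splitting of $U_{k^*}(\hat{\theta}_{k^*,n})-U_{k^*}(\bar{\theta}_{k^*})$. Your explicit justification of the global boundedness of $W_{k^*}^{-1}(\cdot)$ on the compact $\Theta_{k^*}$ via Lemma 5 is a detail the paper leaves implicit, but it is not a different argument.
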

\begin{proof}
We have the following decomposition
\begin{align*}
    &\quad F_{k^*,n}(Q_{XX},\Sigma_{k^*}(\theta_{k^*}))\\
    &=(\vech(Q_{XX})-\vech{(\Sigma_{k^*}(\theta_{k^*}))})^{\top}W_{k^*}^{-1}(\theta_{k^*})(\vech(Q_{XX})-\vech{(\Sigma_{k^*}(\theta_{k^*}))})\\
    &=(\vech(Q_{XX})-\vech{(\Sigma_{k}(\theta_{k,0}))})^{\top}W_{k^*}^{-1}(\theta_{k^*})(\vech(Q_{XX})-\vech{(\Sigma_{k}(\theta_{k,0}))})\\
    &\quad +2(\vech(Q_{XX})-\vech{(\Sigma_{k}(\theta_{k,0}))})^{\top}W_{k^*}^{-1}(\theta_{k^*})(\vech(\Sigma_{k}(\theta_{k,0}))-\vech{(\Sigma_{k^*}(\theta_{k^*})))}\\
    &\quad +(\vech(\Sigma_{k}(\theta_{k,0}))-\vech{(\Sigma_{k^*}(\theta_{k^*}))})^{\top}W_{k^*}^{-1}(\theta_{k^*})(\vech(\Sigma_{k}(\theta_{k,0}))-\vech{(\Sigma_{k^*}(\theta_{k^*}))}).
\end{align*}
Since it follows from Theorem 1 that 
\begin{align*}
    0&\leq \sup_{\theta_{k^*}\in \Theta_{k^*}}|F_{k^*,n}(Q_{XX},\Sigma_{k^*}(\theta_{k^*}))-U_{k^*}(\theta_{k^*})|\\
    &\leq \sup_{\theta_{k^*}\in\Theta_{k^*}}|(\vech(Q_{XX})-\vech{(\Sigma_{k}(\theta_{k,0}))})^{\top}W_{k^*}^{-1}(\theta_{k^*})(\vech(Q_{XX})-\vech{(\Sigma_{k}(\theta_{k,0}))})|\\
    &\quad +2\sup_{\theta_{k^*}\in\Theta_{k^*}}|(\vech(Q_{XX})-\vech{(\Sigma_{k}(\theta_{k,0}))})^{\top}W_{k^*}^{-1}(\theta_{k^*})(\vech(\Sigma_{k}(\theta_{k,0}))-\vech{(\Sigma_{k^*}(\theta_{k^*}))})|\\
    &\leq |\vech(Q_{XX})-\vech{(\Sigma_{k}(\theta_{k,0}))}|\sup_{\theta_{k^*}\in\Theta_{k^*}}\|W_{k^*}^{-1}(\theta_{k^*})\||\vech(Q_{XX})-\vech{(\Sigma_{k}(\theta_{k,0}))}|\\
    &\quad +2|\vech(Q_{XX})-\vech{(\Sigma_{k}(\theta_{k,0}))}|\sup_{\theta_{k^*}\in\Theta_{k^*}}\|W_{k^*}^{-1}(\theta_{k^*})\||\vech(\Sigma_{k}(\theta_{k,0}))-\vech{(\Sigma_{k^*}(\theta_{k^*}))}|\\
    &\stackrel{P}{\to} 0,
\end{align*}
it is shown that
\begin{align}
    F_{k^*,n}(Q_{XX},\Sigma_{k^*}(\theta_{k^*}))\stackrel{P}{\to} U_{k^*}(\theta_{k^*})\quad \mbox{uniformly in $\theta_{k^*}$}.
\end{align}
From the assumption [C3], 
\begin{align*}
    \forall \varepsilon>0,\exists \delta>0 
    \ s.t.\ |\hat{\theta}_{k^*,n}-\bar{\theta}_{k^*}|>\varepsilon \Rightarrow U_{k^*}(\hat{\theta}_{k^*,n})-U_{k^*}(\bar{\theta}_{k^*})>\delta.
\end{align*}
Hence, 
\begin{align*}
    0&\leq \PP(|\hat{\theta}_{k^*,n}-\bar{\theta}_{k^*}|>\varepsilon)\\
    &\leq  \PP\left(U_{k^*}(\hat{\theta}_{k^*,n})-U_{k^*}(\bar{\theta}_{k^*})>\delta\right)\\
    &\leq \PP\left(U_{k^*}(\hat{\theta}_{k^*,n})-F_{k^*,n}(Q_{XX},\Sigma_{k^*}({\hat{\theta}_{k^*,n}}))\right.\\
    &\quad\left.+F_{k^*,n}(Q_{XX},\Sigma_{k^*}({\hat{\theta}_{k^*,n}}))-F_{k^*,n}(Q_{XX},\Sigma_{k^*}(\bar{\theta}_{k^*}))+F_{k^*,n}(Q_{XX},\Sigma_{k^*}(\bar{\theta}_{k^*}))-U_{k^*}(\bar{\theta}_{k^*})
    >\delta\right)\\
    &\leq \PP\left(U_{k^*}(\hat{\theta}_{k^*,n})-F_{k^*,n}(Q_{XX},\Sigma_{k^*}({\hat{\theta}_{k^*,n}}))>\frac{\delta}{3}\right)\\
    &\quad +\PP\left(F_{k^*,n}(Q_{XX},\Sigma_{k^*}({\hat{\theta}_{k^*,n}}))-F_{k^*,n}(Q_{XX},\Sigma_{k^*}(\bar{\theta}_{k^*}))>\frac{\delta}{3}\right)\\
    &\quad +\PP\left(F_{k^*,n}(Q_{XX},\Sigma_{k^*}(\bar{\theta}_{k^*}))-U_{k^*}(\bar{\theta}_{k^*})>\frac{\delta}{3}\right)\\
    &\leq  2\PP\left(\sup_{\theta_{k^*}\in\Theta_{k^*}}\left|F_{k^*,n}\left(Q_{XX},\Sigma_{k^*}({\theta_{k^*}})\right)-U_{k^*}(\theta_{k^*})\right|>\frac{\delta}{3}\right)+0\\
    &\stackrel{P}{\to}0,
\end{align*}
and (\ref{l8}) is deduced.
\end{proof}
\begin{lemma}
Let $X_n\stackrel{p}{\to} c>0$. Then, for all $\varepsilon>0$,
\begin{align*}
    \PP\left(X_n\leq\frac{\varepsilon}{n}\right)\stackrel{}{\to} 0\quad (n\rightarrow \infty)
\end{align*}
\end{lemma}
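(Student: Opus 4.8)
The plan is to combine the deterministic fact that $\varepsilon/n\to 0$ with the concentration of $X_n$ around the strictly positive limit $c$, so that for large $n$ the event $\{X_n\le \varepsilon/n\}$ is contained in a fixed event of the form $\{|X_n-c|\ge c/2\}$, whose probability vanishes by the assumed convergence in probability.

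First I would fix $\varepsilon>0$ and choose $N\in\mathbb{N}$ such that $\varepsilon/n< c/2$ for all $n\ge N$; this is possible since $c>0$. Then, for every $n\ge N$, I would note the inclusion of events
\begin{align*}
    \left\{X_n\le \frac{\varepsilon}{n}\right\}\subset \left\{X_n\le \frac{c}{2}\right\}\subset\left\{|X_n-c|\ge \frac{c}{2}\right\},
\end{align*}
so that $\PP\bigl(X_n\le \varepsilon/n\bigr)\le \PP\bigl(|X_n-c|\ge c/2\bigr)$ for all $n\ge N$.

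Next I would invoke the hypothesis $X_n\stackrel{p}{\to}c$, which gives $\PP\bigl(|X_n-c|\ge c/2\bigr)\to 0$ as $n\to\infty$. Combining this with the bound of the previous step and letting $n\to\infty$ yields $\PP\bigl(X_n\le \varepsilon/n\bigr)\to 0$, which is the claim. Since $\varepsilon>0$ was arbitrary, the proof is complete.

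The argument is entirely routine; there is no genuine obstacle. The only point that requires a moment of care is keeping the two convergences separate, namely that $\varepsilon/n\to 0$ is deterministic and fixes the threshold $N$, while the vanishing of the probability comes solely from convergence in probability applied with the fixed radius $c/2$. This lemma is presumably used in the proof of Theorem~4 (and Theorem~8) to show $T_{k^*,n}\to\infty$ in probability under $H_1$, by writing $T_{k^*,n}=nF_{k^*,n}(Q_{XX},\Sigma_{k^*}(\hat\theta_{k^*,n}))$ with $F_{k^*,n}\stackrel{p}{\to}U_{k^*}(\bar\theta_{k^*})>0$ (using Lemma~8 and [C4]) and then applying the contrapositive form of the present lemma.
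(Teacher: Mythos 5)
Your proof is correct. The paper itself does not prove this lemma; it simply cites Lemma 3 of Kitagawa and Uchida (2014), so there is no internal argument to compare against. Your self-contained version is the standard one and is watertight: choosing $N$ so that $\varepsilon/n<c/2$ for $n\ge N$ gives the event inclusion $\{X_n\le \varepsilon/n\}\subset\{X_n\le c/2\}\subset\{|X_n-c|\ge c/2\}$, and the last event has vanishing probability by the hypothesis $X_n\stackrel{p}{\to}c$ with the fixed radius $c/2$. Your closing remark about how the lemma is deployed also matches the paper: in the proof of Theorem 4 it is applied with $X_n=\frac{1}{n}T_{k^*,n}\stackrel{p}{\to}U_{k^*}(\bar{\theta}_{k^*})>0$ and $\varepsilon=\chi^2_{\bar{p}-q_{k^*}}(\alpha)$ to conclude $\PP(T_{k^*,n}\le\chi^2_{\bar{p}-q_{k^*}}(\alpha))\to0$. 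One cosmetic point: the final "since $\varepsilon>0$ was arbitrary" clause is harmless but unnecessary, as the argument already works for each fixed $\varepsilon$ separately.
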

\begin{proof}
See Lemma 3 in Kitagawa and Uchida \cite{kitagawa(2014)}.
\end{proof}
\begin{prf}
Since $U_{k^*}(\theta_{k^*})$ is a continuous function, 
it follows from the continuous mapping theorem that
\begin{align*}
    U_{k^*}(\hat{\theta}_{k^*,n})\stackrel{P}{\to} U_{k^*}(\bar{\theta}_{k^*})
\end{align*}
under $H_1$.
Hence, noting that for all $\varepsilon>0$,
\begin{align*}
    0&\leq \PP\left(\left|\frac{1}{n}T_{k^*,n}-U_{k^*}(\bar{\theta}_{k^*})\right|>\varepsilon\right)\\
    &=\PP\left(|F_{k^*,n}(Q_{XX},\Sigma_{k^*}(\hat{\theta}_{k^*,n}))-U_{k^*}(\bar{\theta}_{k^*})|>\varepsilon\right)\\
    &\leq \PP\left(|F_{k^*,n}(Q_{XX},\Sigma_{k^*}(\hat{\theta}_{k^*,n}))-U_{k^*}(\hat{\theta}_{k^*,n})|>\frac{\varepsilon}{2}\right)\\
    &\quad+\PP\left(|U_{k^*}(\hat{\theta}_{k^*,n})-U_{k^*}(\bar{\theta}_{k^*})|>\frac{\varepsilon}{2}\right)\\
    &\leq \PP\left(\sup_{\theta_{k^*}\in\Theta_{k^*}}|F_{k^*,n}(Q_{XX},\Sigma_{k^*}(\theta_{k^*}))-U_{k^*}(\theta_{k^*})|>\frac{\varepsilon}{2}\right)\\
    &\quad+\PP\left(|U_{k^*}(\hat{\theta}_{k^*,n})-U_{k^*}(\bar{\theta}_{k^*})|>\frac{\varepsilon}{2}\right)\stackrel{}{\to}0
\end{align*}
under $H_1$, 
we get
\begin{align*}
    \frac{1}{n}T_{k^*,n}\stackrel{P}{\to} U_{k^*}(\bar{\theta}_{k^*})
\end{align*}
under $H_1$. 
By $\Sigma_{k}(\theta_{k,0})\neq \Sigma_{k^*}(\bar{\theta}_{k^*})$ under $H_1$, 
\begin{align*}
    \vech{(\Sigma_{k}(\theta_{k,0}))}-\vech{(\Sigma_{k^*}(\bar{\theta}_{k^*}))}\neq 0
\end{align*}
under $H_1$.
Since it follows from Lemma 5 that $U_{k^*}(\bar{\theta}_{k^{*}})>0$ under $H_1$,
Lemma 9 yields that
\begin{align*}
    \PP(T_{k^*,n}>\chi^2_{\bar{p}-q_{k^*}}(\alpha))&=1-\PP(T_{k^*,n}\leq \chi^2_{\bar{p}-q_{k^*}}(\alpha))\\
    &=1-\PP\left(\frac{1}{n}T_{k^*,n}\leq \frac{1}{n}\chi^2_{\bar{p}-q_{k^*}}(\alpha)\right)\stackrel{}{\to}1
\end{align*}
under $H_1$.\qed
\end{prf}
{\bf Proofs of Theorems 5-8.}
Since $T$ is fix and $nh_n^2=h_nT \rightarrow0$, 
the proofs of Theorem $5$-$8$ is the same as the proofs of Theorem $1$-$4$, respectively.
\qed 

\end{document}